\numberwithin{equation}{section}
\newtheorem{Proposition}[equation]{Proposition}
\newtheorem{Lemma}[equation]{Lemma}
\newtheorem{Theorem}[equation]{Theorem}
\newtheorem{Corollary}[equation]{Corollary}
\newtheorem{MainTheorem}{Theorem}
\theoremstyle{definition}  
\newtheorem{Remark}[equation]{Remark}
\newtheorem{Example}[equation]{Example}
\newcommand\Comment[2][\relax]{\space\par\medskip\noindent%
   \fbox{\begin{minipage}{\textwidth}\textbf{Comment\ifx\relax#1\else---#1\fi}\newline%
        #2\end{minipage}}\medskip
}
\def\bs{\text{\boldmath$s$}}
\def\bt{\text{\boldmath$t$}}
\def\bc{\text{\boldmath$c$}}
\def\br{\text{\boldmath$r$}}
\def\b1{\text{\boldmath$1$}}
\def\bM{\text{\boldmath$M$}}
\def\bd{\text{\boldmath$d$}}
\def\alt{{\tt alt}}
\def\Tens{\operatorname{{\mathtt{Tens}}}}
\newcommand{\ttbb}{\mathrm{\mathbf b}}
\def\ula{\text{\boldmath$\lambda$}}
\def\umu{\text{\boldmath$\mu$}}
\def\unu{\text{\boldmath$\nu$}}
\def\pmod#1{\text{ }(\text{\rm mod } #1)\,}
\newcommand{\Hom}{\operatorname{Hom}}
\newcommand{\End}{\operatorname{End}}
\newcommand{\Z}{\mathbb{Z}}
\newcommand{\0}{{\bar 0}}
\newcommand{\1}{{\bar 1}}
\def\eps{{\varepsilon}}
\def\phi{{\varphi}}
\newcommand{\zc}{{\textsf{c}}}
\newcommand{\ze}{{\textsf{e}}}
\newcommand{\za}{{\textsf{a}}}
\newcommand{\F}{{\mathbb F}}
\newcommand{\Ga}{\Gamma}
\newcommand{\la}{\lambda}
\newcommand{\La}{\Lambda}
\newcommand{\al}{\alpha}
\newcommand{\be}{\beta}
\def\Si{\mathfrak{S}}
\newcommand{\si}{\sigma}
\newcommand{\om}{\omega}
\newcommand{\Om}{\Omega}
\newcommand{\de}{\delta}
\newcommand{\De}{\Delta}
\def\triv#1{\O_{#1}}
\newcommand{\Ind}{{\mathrm {Ind}}}
\newcommand{\Mat}{{\mathcal {M}}}
\def\rank{\mathop{\mathrm{ rank}}\nolimits}
\newcommand{\ad}{{\operatorname{ad}}}
\newcommand{\Sym}{\operatorname{{\mathtt{Sym}}}}
\newcommand{\Inv}{\operatorname{{\mathtt{Inv}}}}
\newcommand{\Q}{{\mathbb Q}}
\newcommand{\D}{{\mathscr D}}
\newcommand{\DC}{\mathcal D}
\newcommand{\GC}{{\mathcal G}}
\newcommand{\m}{\mathfrak m}
\newcommand{\Zig}{{\sf Z}}
\def\b{\mathfrak{b}}
\def\k{\Bbbk}
\def\K{\mathbb K}
\def\into{{\hookrightarrow}}
\renewcommand\O{\mathcal O}
\newcommand\OO{{}'{\mathcal M}}
\def\iso{\stackrel{\sim}{\longrightarrow}}
\def\ttB{{\mathtt B}}
\def\ttb{{\mathtt b}}
\def\triv{{\tt triv}}
\def\col{{\tt col}}
\def\row{{\tt row}}
\def\lan{\langle}
\def\ran{\rangle}
\def\Seq{{\tt Seq}}
\newcommand{\bC}{\mathbf{C}}
\newcommand{\bD}{\mathbf{D}}
\newcommand{\bE}{\mathbf{E}}
\newcommand{\bF}{\mathbf{F}}
\newcommand{\bG}{\mathbf{G}}
\newcommand{\Star}{\operatorname{\mathtt{Star}}}
  \gdef\set#1{\mathinner{\lbrace\,{\mathcode`\|"8000%
  \let|\midvert #1}\,\rbrace}}
\def\midvert{\egroup\mid\bgroup}
\colorlet{darkgreen}{green!50!black}
\tikzset{dots/.style={very thick,loosely dotted},
         greendot/.style={fill,circle,color=darkgreen,inner sep=1.5pt,outer sep=0},
         blackdot/.style={fill,circle,color=black,inner sep=1.5pt,outer sep=0},
         graydot/.style={fill,circle,color=gray,inner sep=1.1pt,outer sep=0}
}
\def\greendot(#1,#2){\node[greendot] at(#1,#2){}}
\def\blackdot(#1,#2){\node[blackdot] at(#1,#2){}}
\def\graydot(#1,#2){\node[graydot] at(#1,#2){}}
\newenvironment{braid}{
  \begin{tikzpicture}[baseline=6mm,black,line width=1pt, scale=0.32,
                      draw/.append style={rounded corners},
                      every node/.append style={font=\fontsize{5}{5}\selectfont}]%
  }{\end{tikzpicture}
}
\def\Grid(#1,#2){
  \draw[very thin,gray,step=2mm] (0,0)grid(#1,#2);
  \draw[very thin,darkgreen,step=10mm] (0,0)grid(#1,#2);
}
\newcommand\Tableau[2][\relax]{
  \begin{tikzpicture}[scale=0.5,draw/.append style={thick,black}]
    \ifx\relax#1\relax%
    \else 
      \foreach\box in {#1} { \filldraw[blue!30]\box+(-.5,-.5)rectangle++(.5,.5); }
    \fi
    \newcount\row\newcount\col
    \row=0
    \foreach \Row in {#2} {
       \col=1
       \foreach\k in \Row {
          \draw(\the\col,\the\row)+(-.5,-.5)rectangle++(.5,.5);
          \draw(\the\col,\the\row)node{\k};
          \global\advance\col by 1
       }
       \global\advance\row by -1
    }
  \end{tikzpicture}
}
\newcommand\YoungDiagram[2][\relax]{
  \begin{tikzpicture}[scale=0.5,draw/.append style={thick,black}]
    \ifx\relax#1\relax%
    \else 
    \foreach\box in {#1} {
      \filldraw[blue!30]\box rectangle ++(1,1);
    }
    \fi
    \newcount\row
    \row=0
    \foreach \col in {#2} {
       \draw(1,\the\row)grid ++(\col,1);
       \global\advance\row by -1
    }
  \end{tikzpicture}
}
\begin{document}

\title[Turner doubles and generalized Schur algebras]{{\bf Turner  doubles and generalized Schur algebras}}

\author{\sc Anton Evseev}
\address{School of Mathematics, University of Birmingham, Edgbaston, Birmingham B15 2TT, UK}
\email{a.evseev@bham.ac.uk}

\author{\sc Alexander Kleshchev}
\address{Department of Mathematics\\ University of Oregon\\Eugene\\ OR 97403, USA}
\email{klesh@uoregon.edu}

\subjclass[2010]{20G43, 16G30, 20C08, 16T10}

\thanks{The first author is supported by the EPSRC grant EP/L027283/1 and thanks the Max-Planck-Institut for hospitality. The second author is supported by the NSF grant DMS-1161094, the Max-Planck-Institut and the Fulbright Foundation.}

\begin{abstract}
Turner's Conjecture describes all blocks of symmetric groups and Hecke algebras up to derived  equivalence in terms of certain  double algebras. With a view towards a proof of this conjecture, we develop a general theory of Turner doubles.  In particular, we describe doubles as explicit maximal  symmetric subalgebras of certain generalized Schur algebras and establish a Schur-Weyl duality with wreath product algebras. 
\end{abstract}

\maketitle

\section{Introduction}
Turner's Conjecture \cite[Conjecture 165]{Turner} describes all blocks of symmetric groups and Hecke algebras up to derived  equivalence in terms of certain explicitly constructed {\em double algebras} $D_Q(n,d)$, where $Q$ is a quiver of finite type $A$. This paper is the first in a series of two papers where we prove Turner's Conjecture. To achieve this goal, in this paper  we develop a general theory of  Turner doubles, which we believe is of independent interest. 

For simplicity, in this introduction we describe the results only over the ground ring $\Z$. We fix a $\Z$-superalgebra $X=X_\0\oplus X_\1$ which is free of finite rank over $\Z$. Consider the invariants $\Inv^d X:=(X^{\otimes d})^{\Si_d}$ under the action of the symmetric group $\Si_d$. This action depends crucially on the superstructure on $X$, as do the structure and the dimension of $\Inv^d X$ and  of all algebras defined later in terms of $X$. 
There is a   natural superbialgebra structure on $\Inv X:=\bigoplus_{d\geq 0}\Inv^d X$. 
The {\em Turner double} is the superalgebra $D X:=\Inv X\otimes (\Inv X)^*$ with product defined in terms of the superbialgebra structures on $\Inv X$ and $(\Inv X)^*$. 

More precisely,  $(\Inv X)^*$ is naturally a superbimodule over $\Inv X$, and the product on $DX$ is described, using Sweedler's notation, as follows:
$$
(\xi\otimes x)(\eta\otimes y)=\sum \pm\, 
 \xi_{(2)}\eta_{(1)}\otimes (x\cdot\eta_{(2)})(\xi_{(1)}\cdot y),
$$
for homogeneous
$\xi,\eta\in \Inv X$ and $x,y\in (\Inv X)^*$, with signs   determined by superalgebra data. We explain in \S\ref{SSDoubles} why this agrees with Turner's definition in \cite{TurnerCat}. A key property of $D X$ is that it is always a {\em symmetric algebra}. Moreover, under some reasonable assumptions on $X$, the double $D X$ as well as all other algebras defined later in terms of $X$ are {\em non-negatively graded}.  In this case, the theorems below respect the gradings.

The superalgebra $(\Inv X)^*$ can be identified with the  symmetric superalgebra $\Sym(X^*)$, which is naturally a sublattice in the {\em divided power} superalgebra ${}'{\Sym(X^*)}$. We show that the superalgebra structure on $DX$ extends to that on ${}'{D X}:=\Inv X\otimes\, {}'{\Sym(X^*)}$. Thus $DX\subseteq {}'{D X}$ is a subsuperalgebra. Upon extension of scalars to a filed $\K$ of characteristic $0$, the embedding $DX\subseteq {}'{D X}$ induces an isomorphism $DX_\K\iso {}'{D X}_\K$. But, importantly, if  $\K$ has positive characteristic, the induced map is  neither injective nor surjective.

Let  $T_X=X\oplus X^*$ be the {\em trivial extension superalgebra} of $X$, with the product defined by 
$(\xi,x)(\eta,y)=(\xi\eta,\xi\cdot y+x\cdot \eta)$ for $\xi,\eta\in  X$ and $x,y\in X^*$. Let $*$ denote the shuffle product on $\bigoplus_{d\geq 0} (T_X)^{\otimes d}$. We show in Lemma~\ref{LemmaTri} that there is a natural isomorphism 
$
\kappa\colon {}'{\Sym(X^*)}\iso \Inv(X^*).
$
Our first main result is the following theorem, which often allows one to reduce the study of the double over $X$ to that of the invariants over $T_X$. 

\begin{MainTheorem}\label{TA}
We have:
\begin{enumerate}
\item[{\rm (i)}] The map  $\phi\colon {}'{D X}\to \Inv T_X, \ \xi\otimes x\mapsto \xi*\kappa(x)$ is an isomorphism of superalgebras. 
\item[{\rm (ii)}] The subalgebra $\phi(D X)\subseteq \Inv T_X$ is generated by $\Inv (X_\0)$ and all elements of the form $t* 1_X^{\otimes d}$ with $t\in T_X$ and $d\geq 0$.
\end{enumerate}
\end{MainTheorem}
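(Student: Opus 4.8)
The plan is to prove (i) in two steps --- first that $\phi$ is a bijection (an isomorphism of graded $\Z$-modules compatible with the superstructure), then that it is multiplicative --- and to deduce (ii) by carrying an explicit generating set of $DX$ across $\phi$. For the first step one decomposes $\Inv^d T_X = \bigl((X\oplus X^*)^{\otimes d}\bigr)^{\Si_d}$ according to which of the $d$ tensor slots lie in $X$ and which lie in $X^*$. As $\Si_d$ permutes these slots, summing over orbits gives a natural decomposition $\Inv T_X \cong \bigoplus_{a,b\geq 0}\Inv^a X\otimes\Inv^b(X^*)$ of graded $\Z$-modules --- here one uses that $X$ and $X^*$ are free over $\Z$, so that $(V^{\otimes a}\otimes W^{\otimes b})^{\Si_a\times\Si_b} = (V^{\otimes a})^{\Si_a}\otimes (W^{\otimes b})^{\Si_b}$. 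One then notes that the isomorphism $\bigoplus_{a,b}\Inv^aX\otimes\Inv^b(X^*)\to\Inv T_X$, $\xi\otimes z\mapsto\xi*z$, \emph{is} this decomposition map: the shuffle product of an all-$X$ word of length $a$ with an all-$X^*$ word of length $b$ is exactly the $\Si_{a+b}/(\Si_a\times\Si_b)$-symmetrization of their concatenation, i.e.\ the orbit sum spanning the $(a,b)$-summand. Composing with $\id\otimes\kappa$ and invoking Lemma~\ref{LemmaTri} identifies the source with ${}'DX = \Inv X\otimes{}'\Sym(X^*)$ and the composite with $\phi$; so $\phi$ is a graded $\Z$-module isomorphism respecting parity. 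This step is routine.

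The substance of (i) is that $\phi$ is an algebra homomorphism. Since ${}'DX$ and $\Inv T_X$ are free over $\Z$ with products defined over $\Z$, it is enough to check $\phi(ab)=\phi(a)\phi(b)$ after base change to $\Q$, where $\Inv$ becomes ordinary symmetric powers and ${}'DX$ collapses to $DX$; this lightens, but does not remove, the sign bookkeeping. One then computes $\bigl(\xi_1*\kappa(x_1)\bigr)\bigl(\xi_2*\kappa(x_2)\bigr)$ directly from the description of the multiplication of $\Inv T_X$ as a shuffle product with merges governed by the algebra structure of $T_X$, and matches it with Sweedler's formula for $(\xi_1\otimes x_1)(\xi_2\otimes x_2)$. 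The mechanism is a bookkeeping of merge patterns: each $X$-letter of $\xi_1$ either survives or is merged, via $X\cdot X\subseteq X$ or $X\cdot X^*\subseteq X^*$, with a letter of the other factor; the splitting of $\xi_1$ into the part absorbed into the $X^*$-letters $\kappa(x_2)$ and the surviving part is precisely a term of the coproduct $\xi_1\mapsto\sum(\xi_1)_{(1)}\otimes(\xi_1)_{(2)}$ on $\Inv X$, and likewise for $\xi_2$; absorption of $(\xi_1)_{(1)}$ into $\kappa(x_2)$ realizes the action $(\xi_1)_{(1)}\cdot x_2$, the shuffle-with-merges of the two surviving $X$-parts is the product $(\xi_1)_{(2)}(\xi_2)_{(1)}$ in $\Inv X$, and the $X^*$-letters --- which never merge, since $X^*\cdot X^* = 0$ --- reshuffle into the product $(x_1\cdot(\xi_2)_{(2)})((\xi_1)_{(1)}\cdot x_2)$ in ${}'\Sym(X^*)$, the Koszul signs from re-sorting $X$-letters to the left and $X^*$-letters to the right matching those in Sweedler's formula. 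I expect this verification --- organizing the merge patterns and placing every sign and every coproduct tensor factor correctly --- to be the main obstacle. It can also be organized structurally: $\Inv(X\oplus X^*)\cong\Inv X\otimes\Inv(X^*)$ as cocommutative coalgebras for formal reasons, and one identifies the transported product with the double product using that $X^*$ is a square-zero bimodule.

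For (ii), first observe that $DX$ is generated as an algebra by $\Inv X\otimes 1$ and $1\otimes\Sym(X^*)$: Sweedler's formula gives $(\xi\otimes 1)(1\otimes x) = \xi\otimes x$ plus terms whose first tensor factor has strictly smaller $\Inv X$-degree, so induction on that degree places every $\xi\otimes x$ in the subalgebra they generate. Applying $\phi$, using $\phi(\xi\otimes 1)=\xi$ (inside $\Inv X\subseteq\Inv T_X$) and $\phi(1\otimes x)=\kappa(x)$, and using that the algebra isomorphism $\kappa$ sends the subalgebra $\Sym(X^*)\subseteq{}'\Sym(X^*)$ --- generated by its degree-one part $X^*$ --- onto the subalgebra of $\Inv(X^*)$ generated by $X^* = (X^*)^{\otimes 1}$, we find that $\phi(DX)$ is the subalgebra of $\Inv T_X$ generated by $\Inv X$ and $X^*$. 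It remains to identify this with the subalgebra $B$ generated by $\Inv(X_\0)$ and all $t*1_X^{\otimes d}$. One inclusion is immediate: $t*1_X^{\otimes d}\in\Inv X$ when $t\in X$; when $t\in X^*$, since $1_X$ acts as the identity on $X^*$ the product $t\cdot 1_X^{\otimes d}$ in $\Inv T_X$ equals $t*1_X^{\otimes d}$ plus a multiple of $t*1_X^{\otimes(d-1)}$, so induction on $d$ puts $t*1_X^{\otimes d}$ in $\langle\Inv X,X^*\rangle$; and $\Inv(X_\0)\subseteq\Inv X$. For the reverse inclusion it suffices to show $\Inv X\subseteq B$, i.e.\ that $\Inv X$ is generated, under its multiplication, by $\Inv(X_\0)$ together with the odd part $X_\1$ (the elements $t*1_X^{\otimes 0}$, $t\in X_\1$). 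This is an induction on degree: a standard basis element of $\Inv X$ of degree $d$ with no odd letters lies in $\Inv(X_\0)$; if it has an odd letter, multiply the analogous basis element with that letter deleted --- of degree $d-1$, hence in $B$ by induction --- by the corresponding element of $X_\1$, recovering the original element up to a sign and up to merge-correction terms, the latter of degree $d-1$ and so in $B$ by induction. Combining the inclusions, $\phi(DX)=\langle\Inv X,X^*\rangle = B$, proving (ii).
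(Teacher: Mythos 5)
Part (i) essentially reproduces the paper's argument: bijectivity via the decomposition $\Inv T_X\cong\bigoplus_{a,b}\Inv^aX\otimes\Inv^b(X^*)$ (Lemma~\ref{L120116}) and $\kappa$ (Lemma~\ref{LemmaTri}), then matching the shuffle-with-merge product in $\Inv T_X$ against the double product, which is Lemma~\ref{L140116_1} in the paper. Your reduction to $\Q$ is a sensible simplification, though the verification is described rather than carried out.

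Part (ii) has a genuine error, and it is fatal. Your opening lemma --- that $DX$ is generated by $\Inv X\otimes 1$ and $1\otimes\Sym(X^*)$ --- is false. These subsets are $\bigoplus_d D^{d,0}X$ and $\bigoplus_d D^{0,d}X$, and Remark~\ref{R123}(ii) already observes that $D^{d,0}X\oplus D^{0,d}X$ is a subalgebra of $D^dX$ (isomorphic to $T_{\Inv^dX}$); so these two pieces generate nothing beyond themselves. Concretely, for $X=\O$ and $d=2$, they span $\O\cdot 1\oplus\O\cdot z^2\subsetneq\O[z]/(z^3)\cong D^2\O$, missing $z$ (cf.\ Example~\ref{ExTriv}). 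The Sweedler computation you cite is also wrong: by (\ref{ELReg}), $(\xi\otimes 1)(1\otimes x)=1\otimes(\xi\cdot x)$, which lies entirely in $D^{0,d}X$, not ``$\xi\otimes x$ plus lower-degree terms''. There is a second, independent problem in the next step: $\kappa\colon\Sym(X^*)\to\Inv(X^*)$ is an isomorphism onto $\Star(X^*)$ \emph{for the $*$-product}, whereas the multiplication in $\Inv T_X$ is componentwise in each $\Inv^dT_X$; in that product $X^*$ squares to zero, so the subalgebra of $\Inv T_X$ generated by $\Inv X$ and $X^*$ is only $\Inv X\oplus X^*$, not $\phi(DX)$.

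The two mistakes have the same root: you treat $X^*$ (a single copy inside degree one) as the right generator, whereas the point of (ii) is that the generators from $X^*$ must be $x*1_X^{\otimes d}$ for \emph{all} $d$, one per tensor degree. The paper's proof (Theorem~\ref{Lgen}, built on Lemmas~\ref{LY} and~\ref{Lstfo}) works within a fixed $\Inv^dT_X$, decomposes $\DC^dX=\bigoplus_{e=0}^d\Inv^{d-e}(X_\0)*\Star^eY$ with $Y=X_\1\oplus X^*$, and runs a double induction whose key step is that $(1^{\otimes(d-1)}*x)(1^{\otimes(d-e+1)}*y)$ equals $1^{\otimes(d-e)}*x*y$ up to terms with fewer $Y$-letters. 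The element $1^{\otimes(d-1)}*x$, not $x$ itself, is the right generator, and there is no way to produce it from $\Inv X$ and $X^*$ alone inside $\Inv T_X$. Your proposal does not engage with this mechanism, so (ii) needs to be redone along the paper's lines.
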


We have a natural superalgebra decomposition $D X=\bigoplus_{d\geq 0} D^d X$, with $$D^d X=\bigoplus_{0\leq e\leq d} \Inv^e X \otimes (\Inv^{d-e} X)^*,$$ 
where the last direct sum is that of $\Z$-modules,  
and similarly for ${}'{D X}$. Then the isomorphism $\phi$ of Theorem~\ref{TA} restricts to isomorphisms $\phi\colon {}'{D^d X}\iso\Inv^d T_X$.  

Let $A$ be a $\Z$-superalgebra which is free of finite rank over $\Z$, and consider the case where $X$ is the matrix superalgebra $M_n(A)$ for some fixed $n$. 
In this case we use the special notation 
$$
 D^A(n,d):=D^d M_n(A),\quad {}'{D^A(n,d)}:={}'D^d M_n(A). 
$$
We refer to the superalgebra $D^A (n,d)$ as a {\em Schur double}.
The following theorem shows that under a natural assumption,    the subalgebra $D^A(n,d)\subseteq {}'{D^A(n,d)}$ is a {\em maximal} symmetric subalgebra:

\begin{MainTheorem}\label{TB}
Let $d\le n$ and $C$ be a subalgebra of\, ${}' {D^A(n,d)}$ such that 
$D^A(n,d)\subseteq C \subseteq {}'{D^A(n,d)}$. 
Suppose that for every prime $p$  
the $\F_p$-algebra 
$C\otimes_{\Z} \F_p$ is symmetric. Then $C=D^A(n,d)$. 
\end{MainTheorem}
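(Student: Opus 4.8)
The plan is to exploit the integral symmetric structure of the Turner double together with the lattice duality it induces, and then to combine this with the concrete identification of ${}'D^A(n,d)$ coming from Theorem~\ref{TA}. Write $D:=D^A(n,d)$ and ${}'D:={}'D^A(n,d)$, so $D\subseteq C\subseteq{}'D$ and $D_\Q={}'D_\Q$. Since $DX$ is a symmetric $\Z$-algebra, $D$ carries a $\Z$-linear functional $\tau\colon D\to\Z$ for which $\langle a,b\rangle:=\tau(ab)$ is symmetric, associative, and induces an isomorphism $D\iso\Hom_\Z(D,\Z)$. After extending scalars, $\langle\cdot,\cdot\rangle$ is a non-degenerate symmetric associative form on $D_\Q$, so every full-rank lattice $L\subseteq D_\Q$ has a dual lattice $L^\vee:=\{z\in D_\Q:\langle z,L\rangle\subseteq\Z\}$; one has $D^\vee=D$, the operation $L\mapsto L^\vee$ reverses inclusions and is involutive, and $[M:L]=[L^\vee:M^\vee]$ for $L\subseteq M$.

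The formal core of the argument is then a reduction. Suppose $C\neq D$ and set $I:=C^\vee$. From $D\subseteq C$ we get $I\subseteq D^\vee=D$, and $I\neq D$ (else $C=I^\vee=D$). A one-line computation with associativity and symmetry of $\langle\cdot,\cdot\rangle$ shows $I$ is a two-sided ideal of $C$, hence of $D$: for $a\in I$ and $c,c'\in C$ we have $\langle ca,c'\rangle=\langle a,c'c\rangle\in\Z$ and $\langle ac,c'\rangle=\langle a,cc'\rangle\in\Z$ because $c'c,cc'\in C$, so $ca,ac\in C^\vee=I$. Moreover $I\supseteq({}'D)^\vee$ since $C\subseteq{}'D$, and $\langle\cdot,\cdot\rangle$ induces a perfect pairing of $D$-bimodules $D/I\times C/D\to\Q/\Z$ (the left, resp. right, radical of the induced $\Q/\Z$-valued form on $D\times C$ is $D\cap C^\vee=I$, resp. $C\cap D^\vee=D$); thus $C/D$ is the Pontryagin dual of the finite quotient ring $D/I$. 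Hence a counterexample would furnish a two-sided ideal $I$ of $D$ with $({}'D)^\vee\subseteq I\subsetneq D$ and with $I^\vee=C$ symmetric after every reduction $\otimes_\Z\F_p$; it remains to prove that this forces $I=D$.

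Ruling this out is the real content and, I expect, the main obstacle. Here I would invoke Theorem~\ref{TA} to identify ${}'D$ with the generalized Schur algebra $\Inv^d M_n(T_A)$, where $T_A=A\oplus A^*$ is the (always symmetric) trivial extension, and $D$ with the explicit subalgebra singled out in Theorem~\ref{TA}(ii). This provides a $\Z$-basis of ${}'D$ compatible with the decomposition $\bigoplus_e\Inv^e(M_nA)\otimes(\Inv^{d-e}(M_nA))^*$, in which $D$ is cut out inside ${}'D$ precisely by the multinomial denominators separating $\Sym(X^*)$ from ${}'{\Sym}(X^*)$ on the $X^*$-side. Now fix a prime $p$ with $p\mid[C:D]=[D:I]$ and pass to $\F_p$-coefficients (equivalently, localise at $p$). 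The obstruction to $C=D$ is concentrated in the two-sided ideal $I$, on which the canonical form of $D$ degenerates $p$-adically; using the explicit basis above I would show that the resulting extra degeneracy cannot be repaired — that $C\otimes\F_p$ cannot be self-injective (its socle cannot be the dual of its top, equivalently its Nakayama permutation is forced to be inconsistent) — contradicting symmetry of $C\otimes\F_p$. The genuine difficulty is that the symmetrizing forms witnessing symmetry of the various $C\otimes\F_p$ need not be reductions of any global form on $C$, so this incompatibility must be produced by hand inside $\Inv^d M_n(T_A)$; and this is exactly where the stable-range hypothesis $d\le n$ is used, as it is what makes $D\otimes\F_p$, ${}'D\otimes\F_p$, and the divided-power cokernels ${}'{\Sym}(X^*)/\Sym(X^*)$ explicit enough to carry out the comparison. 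Doing this for all $p$ yields $[C:D]=1$, i.e.\ $C=D$.
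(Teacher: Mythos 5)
Your formal setup — exploiting the global symmetrizing form on $D:=D^A(n,d)$, defining the dual lattice $I:=C^\vee\subseteq D$, checking that $I$ is a two-sided ideal, and extracting the perfect pairing $D/I\times C/D\to\Q/\Z$ — is correct and is even loosely consistent with the paper, which works with the degree-$(d,0)$ graded slice of exactly this dual lattice (the object the paper calls $N$). But at the point where the argument must actually \emph{produce the contradiction} — showing a proper two-sided ideal $I\subsetneq D$ with $({}'D)^\vee\subseteq I$ and $C=I^\vee$ symmetric mod every $p$ cannot exist — you do not give a proof; you describe it as the ``real content and main obstacle'' and propose, in outline, to show ``$C\otimes\F_p$ cannot be self-injective'' by inspecting socles and Nakayama permutations inside $\Inv^d M_n(T_A)$. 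That is not carried out, and it is not clear it could be, since $C$ is an arbitrary intermediate lattice whose $\F_p$-reduction has no a priori understood socle structure. So there is a genuine gap precisely where you flag one.

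The paper's actual mechanism for this step is quite different and worth recording. It uses the \emph{standard grading} $D^dX=\bigoplus_e D^{d-e,e}X$ in an essential way: one first shows $C^{d,0}=D^{d,0}=:S$ and, via the assumed symmetrizing form on $C_\k$ together with a dimension count, that the $S_\k$-bimodule $C^{0,d}_\k$ is isomorphic to $S_\k^*$. Passing to the dual lattice $N=(C^{0,d})^\vee\subseteq S_\K$, one gets $N_\k\cong S_\k$ as $S_\k$-bimodules; the image $z$ of $1$ under this isomorphism is then central in $S_\k$. The hypothesis $d\le n$ enters only now: the idempotent $\xi_\om$ lies in ${}'S=\Star^d M_n(A)\subseteq N$ (Corollary~\ref{CPerfPair}), so $\xi_\om\in S_\k z$, and Lemma~\ref{LInvertible} — a Schur--Weyl-duality argument about $W_d^A$ and the permutation modules $M^A_\la$ — forces $z$ to be invertible, hence $N=S$, hence $C^{0,d}=D^{0,d}$. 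A short bootstrap using the explicit symmetrizing form $F$ of Lemma~\ref{LFunction} and the grading then upgrades this to $C=D$. None of this — the reduction to the top graded slice, the bimodule identification via symmetry, the centrality of $z$, or the invertibility lemma — appears in your proposal, and they are the parts that do the work. Your lattice-duality frame is compatible with the paper's proof, but without the graded reduction and Lemma~\ref{LInvertible} it does not close.
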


Let $S^A(n,d):=\Inv^d M_n(A)$. If $A=\Z$, then $S^A(n,d)$ is just the (integral version of) the classical Schur algebra. The {\em generalized Schur algebras} $S^A(n,d)$ bear importance for the doubles, since, by Theorem~\ref{TA} and the easy observation that $T_{M_n(A)}\cong M_n(T_A)$, we can identify ${}'{D^A(n,d)}$ with $S^{T_A}(n,d)$ and $D^A(n,d)$ with an explicit subalgebra of $S^{T_A}(n,d)$. 

The superalgebras $S^A(n,d)$ can be studied using a generalized Schur-Weyl duality with the {\em super wreath product} $W^A_d:= A^{\otimes d}\rtimes \k\Si_d$. The superalgebra $M_n(A)$ can be identified with $\End_A(V)$, where $V:=A^{\oplus n}$. The following generalized version of Schur-Weyl duality is crucial for the proof of 
Turner's Conjecture, but is also of independent interest. 

\begin{MainTheorem}\label{TC}
The natural left $S^A(n,d)$-action and the natural right $W^A_d$-action on $V^{\otimes d}$ commute and yield an isomorphism $S^A(n,d)\cong \End_{W_d^A}(V^{\otimes d})$. 
\end{MainTheorem}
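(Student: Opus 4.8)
The plan is to realise $\End_{W^A_d}(V^{\otimes d})$ as $S^A(n,d)$ through a chain of natural identifications, the essential ingredient being a super tensor--Hom formula, and with no hypothesis relating $d$ and $n$ needed. Regard $V=A^{\oplus n}$ as a free right $A$-module of rank $n$, so that $M_n(A)=\End_A(V)$ acts on the left of $V$ by matrix multiplication; this left action commutes, by associativity, with right multiplication by $A$ on $V$, and hence the action of $M_n(A)^{\otimes d}$ on $V^{\otimes d}$ commutes with the right action of $A^{\otimes d}$. For the symmetric group, let $P_\sigma\in\End_\Z(V^{\otimes d})$ be the place-permutation operator attached to $\sigma\in\Si_d$, defined with the Koszul signs coming from the superstructure; a direct computation yields $P_\sigma\circ m=({}^{\sigma}m)\circ P_\sigma$ for $m\in M_n(A)^{\otimes d}$, where ${}^{\sigma}m$ is the place-permuted element, so $m$ commutes with every $P_\sigma$ exactly when $m\in(M_n(A)^{\otimes d})^{\Si_d}=S^A(n,d)$. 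In particular the $S^A(n,d)$- and $W^A_d$-actions on $V^{\otimes d}$ commute. Moreover, since $W^A_d=A^{\otimes d}\rtimes\k\Si_d$, an endomorphism of the right $W^A_d$-module $V^{\otimes d}$ is precisely an endomorphism of the right $A^{\otimes d}$-module $V^{\otimes d}$ commuting with all the $P_\sigma$; as conjugation by $P_\sigma$ preserves $\End_{A^{\otimes d}}(V^{\otimes d})$ (because $P_\sigma$ intertwines the $A^{\otimes d}$-action with its $\sigma$-twist), we get $\End_{W^A_d}(V^{\otimes d})=\bigl(\End_{A^{\otimes d}}(V^{\otimes d})\bigr)^{\Si_d}$, invariants for the conjugation action of $\Si_d$.

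The crucial step is to prove that the natural superalgebra homomorphism $\End_A(V)^{\otimes d}\to\End_{A^{\otimes d}}(V^{\otimes d})$, defined using the Koszul signs, is an isomorphism which is $\Si_d$-equivariant for the place-permutation action on the source and the conjugation action by the $P_\sigma$ on the target. Because $V$ is free of rank $n$ over $A$ and a right-$R$-module endomorphism of $R$ is left multiplication by an element of $R$, one has $\End_A(V)\cong M_n(A)$ and $\End_{A^{\otimes d}}(V^{\otimes d})\cong M_{n^d}(A^{\otimes d})$, while $M_n(A)^{\otimes d}\cong M_{n^d}(A^{\otimes d})$ through the standard isomorphism $M_r(A)\otimes M_s(B)\cong M_{rs}(A\otimes B)$ (in which no signs intervene, the matrix units being even). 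Over $\Z$, instead of quoting tensor--Hom directly, I would note that the two sides of the natural map are free $\Z$-modules of the same finite rank $(n^2\,\rank_\Z A)^d$ --- this is where the hypothesis that $A$ is free of finite rank over $\Z$ is used --- and that the map becomes bijective after applying $-\otimes_\Z\F_p$ for every prime $p$ and $-\otimes_\Z\Q$ (the classical tensor--Hom identity for free modules over a field), whence it is an isomorphism over $\Z$. The $\Si_d$-equivariance is just functoriality of $X\mapsto\End(X)$ applied to the place-permutation action on $V^{\otimes d}$, the resulting place-permutation action on $\End(V)^{\otimes d}=M_n(A)^{\otimes d}$ being exactly the one defining $\Inv^d M_n(A)$.

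Combining, $\End_{W^A_d}(V^{\otimes d})=\bigl(\End_{A^{\otimes d}}(V^{\otimes d})\bigr)^{\Si_d}\cong(M_n(A)^{\otimes d})^{\Si_d}=\Inv^d M_n(A)=S^A(n,d)$, and tracing through the identifications shows this is inverse to the structure map $S^A(n,d)\to\End_{W^A_d}(V^{\otimes d})$; in particular $V^{\otimes d}$ is a faithful $S^A(n,d)$-module and the structure map is the asserted isomorphism. The step I expect to be the main obstacle is the sign bookkeeping in the second paragraph: one must fix mutually compatible Koszul-sign conventions in the maps $\End_A(V)^{\otimes d}\to\End_{A^{\otimes d}}(V^{\otimes d})$, in the operators $P_\sigma$, and in the place-permutation action defining $\Inv^d$, so that every equivariance assertion holds on the nose. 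In contrast with the complementary surjectivity $\k\Si_d\twoheadrightarrow\End_{S^A(n,d)}(V^{\otimes d})$, which typically forces $d\le n$, here no such constraint appears, since the endomorphism ring is taken over the \emph{whole} wreath product $W^A_d$, for which passing to $\Si_d$-invariants suffices.
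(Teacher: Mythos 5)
Your proposal is correct and follows essentially the same route as the paper's proof (Lemma~\ref{LSchurIso}): identify $M_n(A)^{\otimes d}$ with $\End_{A^{\otimes d}}(V^{\otimes d})$, observe that this identification is $\Si_d$-equivariant for place permutation on the source and conjugation by the place-permutation operators on the target, and then take $\Si_d$-invariants to obtain $S^A(n,d)\cong\End_{W^A_d}(V^{\otimes d})$. The only substantive difference is that you spell out the tensor--Hom step via matrix algebras and add a (redundant, since the explicit matrix identification already suffices over any base ring) rank-counting descent argument, where the paper simply asserts the isomorphism~\eqref{EIsoEnd}.
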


As a right $W_d^A$-supermodule, $V^{\otimes d}$ decomposes explicitly as a direct sum of certain {\em permutation supermodules} $M_\la^A$ where $\la$ runs over the set $\La(n,d)$ of all compositions of $d$ with $n$ parts. So Theorem~\ref{TC} realizes $S^A(n,d)$ as 
$$\End_{W_d^A}\bigg(\bigoplus_{\la\in\La(n,d)}M^A_\la\bigg).$$
For the purposes of Turner's Conjecture, it is important to `desuperize' this description of $S^A(n,d)$ in the case where $A$ is a certain {\em zigzag superalgebra} $\Zig$ depending on a  quiver $Q$. Let $|X|$ denote the algebra obtained from a superalgebra $X$ by forgetting the superstructure. 
We construct a (rather delicate) explicit isomorphism $\si$ from the ordinary wreath product $W_d^{|\Zig|}$ to $|W_d^\Zig|$. Twisting with this isomorphism makes the permutation module $M_\la^\Zig$ into an explicit {\em alternating sign permutation module} $M_\la^{|\Zig|}$ over $W_d^{|\Zig|}$. Then 
$$|S^\Zig(n,d)|\cong \End_{W_d^{|\Zig|}}\bigg(\bigoplus_{\la\in\La(n,d)}M^{|\Zig|}_\la\bigg).$$
Using Theorems~\ref{TA},\ref{TB},\ref{TC}, we obtain an explicit description of $D_Q(n,d)$ as a maximal symmetric subalgebra of the endomorphism algebra on the right hand side. This description is used in \cite{EK2} to identify $D_Q(n,d)$ with an algebra Morita equivalent to  (a $\Z$-form of) a RoCK block of a Hecke algebra or a more general cyclotomic KLR algebra, thus proving Turner's Conjecture.

Now we describe the contents of the paper in more detail. In Section~\ref{Sprel} we set up some basic combinatorial notation. In Section~\ref{SSuper} we discuss superspaces and superalgebras, especially symmetric and divided power superalgebras and various products and coproducts on them. In \S\ref{SSTEA} we consider trivial extension superalgebras. 
In Section~\ref{SD} we begin to study Turner doubles. The properties of invariant algebras $\Inv X$ are investigated in \S\ref{SSInv}. The definition of $D X$ is given in \S\ref{SSDoubles}, and its divided power version ${}'{DX}$ is studied in \S\ref{SSDivDoub}. 
For Theorem~\ref{TA} see Theorems~\ref{TFund} in \S\ref{SSGen} and \ref{Lgen} in \S\ref{SSGenSet}. 
We discuss gradings on doubles in~\S\ref{SSGr} and symmetricity 
of doubles in~\S\ref{SSDoubleSymm}.

Section~\ref{SXSW} is on generalized Schur-Weyl duality. 
In \S\ref{SSWreath} we discuss wreath product algebras  and permutation modules over them. In \S\ref{SSTensorSpace} we study the generalized tensor space, prove Theorem~\ref{TC} 
(see Lemma~\ref{LSchurIso}) and discuss connections with permutation modules over wreath product algebras. 
We consider idempotent truncations of generalized Schur algebras in \S\ref{SSIdempotentTruncation} and idempotent refinements of permutation modules in~\S\ref{SSColored}. Desuperization is discussed in \S\ref{SSDesup}. 

Section~\ref{SSymm} is on Schur doubles. 
In~\S\ref{SSGeneratingND} we identify $D^A(n,d)$ with the subalgebra of $S^{T_A} (n,d)$ generated by certain explicit elements. 
Theorem~\ref{TB} is proved in \S\ref{SSSymLat}, see Theorem~\ref{Tsym}. 
In \S\ref{SSBases} we discuss bases and product rules of Schur doubles and their divided power versions. Section~\ref{SQuiver} is on the important special case of the quiver Schur  (schiver) doubles. Quivers and zigzag algebras are considered in \S\ref{SSQ}. Finally, in~\S\ref{SSSchiver}, we discuss the degree zero component of a schiver double and results related to schiver generation and  desuperization, which will be needed in \cite{EK2}.

\section{Preliminaries}\label{Sprel}
Throughout the paper, $\k$ is an arbitrary commutative (unital) ring. In some constructions, involving divided powers, we will need to work over a more special ring $\O$, which is assumed to be a (commutative) integral domain with field of fractions $\K$ of {\em characteristic zero}. We assume that there is a fixed ring homomorphism $\O\to\k$, which allows us to extend scalars from $\O$ to $\k$, i.e.~to consider 
\[
V_{\k}:=V\otimes_\O\k
\] 
for any $\O$-module $V$. 
If $U$ and $V$ are $\k$-modules, we denote $U\otimes V: = U\otimes_\k V$.
Important examples of triples $(\K,\O,\k)$ are $(\Q,\Z,\F_p)$ and  $(\Q_p,\Z_p,\F_p)$.

\subsection{Weights and sequences}
\label{SSWeSe}
Let $n\in\Z_{>0}$ and $d\in \Z_{\ge 0}$. We denote by $\La(n)$ the set of compositions $\la=(\la_1,\dots,\la_n)$ with $n$ parts $\la_1,\dots,\la_n\in\Z_{\geq 0}$. We refer to the elements of $\La(n)$ as {\em weights}. 
For $\la=(\la_1,\dots,\la_n)\in\La(n)$, we denote $|\la|:=\la_1+\dots+\la_n$. We set 
$$\La(n,d):=\{\la\in\La(n)\mid |\la|=d\}.$$ 
More generally, if $S$ is a finite  set, we denote by $\La(S,d)$ the set of tuples $(\la_s)_{s\in S}$ of non-negative integers such that $\sum_{s\in S}\la_s=d$. 
For $S=[1,n]$, we identify $\La(S,d)$ with $\La(n,d)$.

For $1\leq m\leq n$, we have special weights 
$$\eps_m:=(0,\dots,0,1,0,\dots,0)\in\La(n,1),$$ 
with $1$ in the $m$th position, so that 
$$
\la=(\la_1,\dots,\la_n)=\la_1\eps_1+\dots+\la_n\eps_n.
$$

For $m,n\in\Z$, we consider the (possibly empty) {\em segments} 
\begin{align*}
[m,n]&:=\{r\in\Z\mid m\leq r\leq n\},  && (m,n]:=\{r\in\Z\mid m< r\leq n\},
\\
[m,n)&:=\{r\in\Z\mid m\leq r< n\}. &&
\end{align*}
The symmetric group $\Si_n$ acts naturally on the left on $[1,n]$.

Let $\Seq(n,d):=[1,n]^d$ be the set 
  of (ordered) $d$-tuples $\br=(r_1,\dots,r_d)$ where $r_1,\dots,r_d\in [1,n]$. The action of the symmetric group $\Si_d$ on $[1,d]$ yields the right action of $\Si_d$ on $\Seq(n,d)$ by place permutations: for $\br\in \Seq(n,d)$ and $g\in\Si_d$, we have $\br g=\bs$ where 
$
s_a=r_{ga}$ for all $a\in[1,d]$. 

For  $\la\in \La(n,d)$ we set 
 \begin{equation}\label{ELaSeq}
 {}^\la \Seq:=\{\br\in \Seq(n,d)\mid \eps_{r_1}+\dots+\eps_{r_d}=\la\}.
 \end{equation}
Then $\Seq(n,d)=\bigsqcup_{\la\in\La(n,d)}{}^\la \Seq$ is the decomposition of $\Seq(n,d)$ into  $\Si_d$-orbits. 

For $\la\in\La(n,d)$ we define 
  $$\br^\la:=(1,\dots,1,2,\dots,2,\dots, n,\dots,n)\in {}^\la\Seq,$$ where each $r\in[1,n]$ is repeated $\la_r$ times.

\subsection{Integer-valued matrices and sequences}
\label{SSPairsSeq}
Define $\Mat(n)$ to be the set of $n\times n$-matrices with non-negative integer coefficients. Let $E_{r,s}\in\Mat(n)$ denote the {\em matrix unit}\, with $1$ in the $(r,s)$th position. 
For $C=(c_{r,s})_{1\leq r,s\leq n}\in \Mat(n)$, we set $|C|:=\sum_{r,s=1}^n c_{r,s}$, and we define
$$\Mat(n,d):=\{C\in \Mat(n)\mid |C|=d\}.$$ 
Given $C,D\in\Mat(n)$,
define the integers
$$
C!=\prod_{r,s\in [1,n]}{c_{r,s}!},\qquad\quad {C\choose D}:=\prod_{r,s\in [1,n]}{c_{r,s}\choose d_{r,s}}.
$$
For any $C\in \Mat(n,d)$,
we further set
\begin{align*}
 \al(C)&:=\big(\textstyle\sum_s c_{1,s},\sum_s c_{2,s},\dots ,\sum_s c_{n,s}\big)\in\La(n,d),
\\
\be(C)&:=\big(\textstyle\sum_r c_{r,1},\sum_r c_{r,2},\dots ,\sum_r c_{r,n}\big)\in\La(n,d).
\end{align*}
Let $\la,\mu\in\La(n,d)$. Define 
\begin{align*}
{}_\mu\Mat(n,d)_\la&:=\{C\in\Mat(n,d)\mid \al(C)=\mu\ \text{and}\ \be(C)=\la\}.
\end{align*}
The subsets of $\Mat(n)$ and $\Mat(n,d)$ consisting of 
$\{0,1\}$-matrices are denoted by
\begin{align*}
\OO(n)&:=\{C\in \Mat(n)\mid c_{r,s}\in\{0, 1\}\ \text{for all $1\leq r,s\leq n$}\},
\\
 \OO(n,d)&:=\Mat(n,d)\cap\, \OO(n).
\end{align*}

In~\S\ref{SSBases},
we will use the following generalization. Let $\ttB=\ttB_\0\sqcup \ttB_\1$ be a set split as a disjoint union of two subsets $\ttB_\0$ and  $\ttB_\1$. Set   
\begin{equation}\label{EOBN}
\Mat^\ttB(n):=\{\bC=(C^\ttb)_{\ttb\in \ttB}\mid \text{$C^\ttb\in \Mat(n)$ for $\ttb\in \ttB_\0$, $\, C^\ttb\in \OO(n)$ for $\ttb\in \ttB_\1$}\}.
\end{equation}
Let $\bC=(C^\ttb)_{\ttb\in \ttB}\in \Mat^\ttB(n)$. For every $\ttb\in \ttB$, we write $C^\ttb=(c^\ttb_{r,s})_{1\leq r,s\leq n}$. Denote $|\bC|_\0:=\sum_{\ttb\in \ttB_\0}|C^\ttb|$, $|\bC|_\1:=\sum_{\ttb\in \ttB_\1}|C^\ttb|$,
\begin{align}
|\bC|&:=|\bC|_\0+|\bC|_\1=\sum_{\ttb\in \ttB}|C^\ttb|=\sum_{(r,s,\ttb)\in [1,n]^2\times \ttB}c^\ttb_{r,s},
\\
\Mat^\ttB(n,d)&:=\{\bC\in\Mat^\ttB(n)\mid |\bC|=d\}.
\label{EO^B(n,d)}
\end{align}
Let $\bC=(C^\ttb)_{\ttb\in\ttB}$ and $\bD=(D^\ttb)_{\ttb\in\ttB}\in \Mat^\ttB(n)$. We define $\bC+\bD$ by $(\bC+\bD)^\ttb=C^\ttb+D^\ttb$ for all $\ttb\in \ttB$. Note that $\bC+\bD$ may or may not be an element of $\Mat^\ttB(n)$. We set 
$$
\bC!:=\prod_{\ttb\in \ttB} C^\ttb!=\prod_{\ttb\in \ttB_\0} C^\ttb!,\quad 
{\bC\choose \bD}:=\prod_{\ttb\in \ttB}{C^\ttb\choose D^\ttb}.
$$
 
Define $\Seq^\ttB (n,d)^2$ to be the set of tuples 
\[
(\br, \ttbb, \bs) = ((r_1,\dots,r_d), (\ttb_1,\dots,\ttb_d), (s_1,\dots,s_d) ) \in \Seq(n,d) \times \ttB^d \times \Seq(n,d)
\]
such that for any distinct $k,l\in [1,d]$ with
$(r_k,\ttb_k,s_k)=(r_l,\ttb_l, s_l)$ 
we have $\ttb_k\in \ttB_\0$. 
The left action of $\Si_d$ on $[1,d]$ induces a right action on each component 
of the direct product
 $\Seq(n,d) \times \ttB^d \times \Seq(n,d)$ as in~\S\ref{SSWeSe}, 
 so we have a right action of $\Si_d$ on $\Seq^\ttB (n,d)^2$. There is a  bijection
 \[
 \Seq^\ttB(n,d)^2 /\Si_d \iso \Mat^\ttB (n,d), \; 
 (\br, \ttbb, \bs) \mapsto  \bM[\br,\ttbb,\bs]:=((c^{\ttb}_{r,s})_{r,s\in [1,n]})_{\ttb\in B}
 \]
 where 
\[ 
 c^{\ttb}_{r,s}= \sharp \{ k\in [1,d] \mid (r_k,\ttb_k, s_k) = (r,\ttb,s)\}.
 \]
We always identify $\Seq^\ttB(n,d)^2/\Si_d$ with $\Mat^\ttB(n,d)$ via this bijection. In particular, given $\bC\in \Mat^\ttB(n,d)$,
we write $(\br,\ttbb,\bs) \in \bC$ if $\bM[\br,\ttbb,\bs]=\bC$.

\subsection{Cosets}
\label{SSCosets}
Let $(S,<)$ be a totally ordered finite set.
Recall the notation $\La(S,d)$ from \S\ref{SSWeSe}.  
Let $\la=(\la_s)_{s\in S}\in\La(S,d)$. 
The corresponding {\em standard set partition} 
$\Omega^{\la}$ is the partition of $[1,d]$ into the segments 
$$
\Om^{\la}_s:=\big(\textstyle\sum_{t<s}\la_t, \sum_{t\leq s}\la_t\big]\qquad(s\in S).
$$
Note that the segment $\Om^{\la}_s$ has $\la_s$ elements. 
Write $S=\{s_1<\dots<s_n\}$. 
The {\em standard parabolic subgroup} 
\begin{equation}\label{EStandardParabolic}
\Si_{\la}\cong \Si_{\la_{s_1}}\times\dots\times\Si_{\la_{s_n}}\leq \Si_d
\end{equation}
preserves the set partition $\Om^{\la}$. 
If $\la\in \La(n,d)$, we define $\Om^\la$ and $\Si_\la$ via the usual total order on $[1,n]$.

Let $\la\in \La(S,d)$ and $\D^\la$ be the set of shortest coset representatives for $\Si_d/\Si_\la$, where the length $\ell(g)$ 
of an element 
$g\in \Si_{\la}$ is the smallest integer $\ell$ such that $g$ can be represented as a product of $\ell$ transpositions of the form $(r,r+1)$, $1\le r<d$.
For $\mu\in\La(S,d)$, we also have the set ${}^\mu\D$ of shortest coset representatives for $\Si_\mu\backslash\Si_d$ and the set ${}^\mu\D^\la$ of shortest double coset representatives for $\Si_\mu\backslash \Si_d/\Si_\la$. 
Note that we have a bijection
\begin{equation}\label{ESingleCosetSeq}
{}^\mu\D\to {}^\mu \Seq,\ \br^\mu \mapsto \br^\mu g 
\end{equation}
and a bijection ${}^\mu\D\to \D^\mu, g\mapsto g^{-1}$.

It  is well known and easy to see (cf.~e.g.~\cite[1.3.10]{JK}) that 
for every $C=(c_{r,s})\in {}_{\mu} \Mat(n,d)_{\la}$ there exists a unique element $g(C)\in {}^\mu \D^\la$ such that 
\[
|g(C) (\Om_s^\la)\cap \Om_r^\mu | = c_{r,s} 
\]
for all  $r,s\in [1,n]$.
Moreover:

\begin{Lemma} 
For any $\la,\mu\in\La(n,d)$, the map $C\mapsto g(C)$ defines a bijection
$$
\quad {}_\mu\Mat(n,d)_\la\iso {}^\mu\D^\la.
$$
\end{Lemma}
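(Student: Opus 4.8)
The plan is to verify that $C\mapsto g(C)$ is a well-defined bijection by exhibiting an explicit inverse, using the combinatorics of double cosets already recalled before the statement. First I would recall that for each $C\in {}_\mu\Mat(n,d)_\la$ the element $g(C)\in {}^\mu\D^\la$ is characterized by $|g(C)(\Om^\la_s)\cap\Om^\mu_r|=c_{r,s}$ for all $r,s\in[1,n]$; this is the quoted fact from \cite[1.3.10]{JK}, so well-definedness of the map is given. Thus the only thing to prove is bijectivity, and since both sides are finite sets it suffices to construct a left inverse (equivalently, to show the map is surjective, or injective).

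The cleanest route is to build the inverse directly. Given $g\in {}^\mu\D^\la$, define a matrix $C(g)=(c_{r,s})_{r,s\in[1,n]}$ by $c_{r,s}:=|g(\Om^\la_s)\cap\Om^\mu_r|$. I would then check: (a) each $c_{r,s}\in\Z_{\ge 0}$ and $\sum_{r,s}c_{r,s}=\sum_s|\Om^\la_s|=d$ since the $\Om^\mu_r$ partition $[1,d]$ and $g$ is a bijection, so $C(g)\in\Mat(n,d)$; (b) $\sum_s c_{r,s}=|g(\bigsqcup_s\Om^\la_s)\cap\Om^\mu_r|=|\Om^\mu_r|=\mu_r$, giving $\al(C(g))=\mu$; (c) $\sum_r c_{r,s}=|g(\Om^\la_s)|=|\Om^\la_s|=\la_s$, giving $\be(C(g))=\la$; hence $C(g)\in {}_\mu\Mat(n,d)_\la$. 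By construction $g\mapsto C(g)$ and $C\mapsto g(C)$ satisfy $C(g(C))=C$ (immediate from the defining property of $g(C)$). It then remains to see $g(C(g))=g$, i.e.\ that the matrix $C(g)$ determines $g$ uniquely within the double coset $\Si_\mu g\Si_\la$; but that is exactly the uniqueness clause in the quoted fact: $g(C(g))$ is by definition the unique element of ${}^\mu\D^\la$ with $|g(C(g))(\Om^\la_s)\cap\Om^\mu_r|=c_{r,s}=|g(\Om^\la_s)\cap\Om^\mu_r|$, and $g\in {}^\mu\D^\la$ already satisfies this, so $g(C(g))=g$. This proves the two maps are mutually inverse.

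I expect the main (and only mildly nontrivial) point to be the uniqueness step $g(C(g))=g$, i.e.\ that a double coset representative in ${}^\mu\D^\la$ is pinned down by the intersection-size matrix; but this is precisely what \cite[1.3.10]{JK} (already invoked in the excerpt) supplies, so the argument is essentially a bookkeeping check that the proposed inverse lands in the right set and that the composites are identities. No new ideas beyond the cited classical fact about $\Si_\mu\backslash\Si_d/\Si_\la$ are needed; the whole proof is short.
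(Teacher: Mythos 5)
The paper itself gives no proof of this lemma: it is stated immediately after the cited existence-and-uniqueness fact from \cite[1.3.10]{JK} and is left as an evident consequence. Your proof is correct and is exactly the elaboration the paper implicitly intends — you construct the inverse $g\mapsto C(g)$ by reading off intersection sizes, verify $C(g)\in{}_\mu\Mat(n,d)_\la$ via the row/column-sum bookkeeping, and obtain both composite identities from the defining (existence-and-uniqueness) property of $g(C)$.
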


Given $C=(c_{r,s})\in{}_\mu\Mat(n,d)_\la$ and $1\leq s\leq n$, we have a composition
$$
\bc_{*,s}:=(c_{1,s},\dots,c_{n,s})\in\La(n,\la_s).
$$
Given elements $g_1\in\Si_{\la_1},\dots, g_n\in\Si_{\la_n}$, we consider $(g_1,\dots,g_n)\in\Si_{\la_1}\times\dots\times\Si_{\la_n}$ as an element of $\Si_d$ via the natural embedding of $\Si_{\la_1}\times\dots\times\Si_{\la_n}$ into $\Si_d$. 
Another easy and well-known result (see e.g.~\cite[Lemma 1.6]{DJ}) is:

\begin{Lemma} \label{LDJ} 
Let $\la,\mu\in\La(n,d)$. There is a bijection
\begin{align*}
\{(C,g_1,\dots,g_n)\mid C\in {}_\mu\Mat(n,d)_\la,\ g_s\in {}^{\bc_{*,s}}\D\ \text{for}\ s=1,\dots,n\} \iso {}^\mu \D
\end{align*}
defined by 
$(C,g_1,\dots,g_n)\mapsto g(C)(g_1,\dots,g_n).$
\end{Lemma}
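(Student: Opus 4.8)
The plan is to combine the standard bijection ${}^\mu\D^\la \iso {}_\mu\Mat(n,d)_\la$ (the previous Lemma, via $C\mapsto g(C)$) with the classical decomposition of a single coset representative in ${}^\mu\D$ according to a double coset it lies in, together with the parabolic factorization inside the Young subgroup $\Si_\la$. Concretely, given $g\in{}^\mu\D$, I would first determine the unique double coset $\Si_\mu g\Si_\la$ containing $g$; by the previous Lemma this corresponds to a unique matrix $C\in{}_\mu\Mat(n,d)_\la$, and its distinguished representative is $g(C)\in{}^\mu\D^\la$. Then $g = g(C)\, h$ for a unique $h\in\Si_\la$ with $\ell(g)=\ell(g(C))+\ell(h)$, since $g(C)$ is the shortest element of its double coset and $g\in{}^\mu\D$ is the shortest element of its left coset $\Si_\mu g$. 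Writing $h=(g_1,\dots,g_n)\in\Si_{\la_1}\times\cdots\times\Si_{\la_n}=\Si_\la$ uniquely, I must then show that the condition $g(C)(g_1,\dots,g_n)\in{}^\mu\D$ is equivalent to requiring $g_s\in{}^{\bc_{*,s}}\D$ for each $s$. This is the combinatorial heart of the argument.

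The key point is a length-additivity statement: for $C\in{}_\mu\Mat(n,d)_\la$ and $h=(g_1,\dots,g_n)\in\Si_\la$, the element $g(C)h$ lies in ${}^\mu\D$ if and only if, for each $s\in[1,n]$, the element $g_s\in\Si_{\la_s}$ is the shortest representative of its coset in $\Si_{\bc_{*,s}}\backslash\Si_{\la_s}$, i.e.\ $g_s\in{}^{\bc_{*,s}}\D$. To see why $\Si_{\bc_{*,s}}$ is the relevant subgroup: the distinguished double coset representative $g(C)$ has the property that $g(C)^{-1}\Si_\mu g(C)\cap\Si_\la$ is the standard parabolic subgroup $\Si_{\bc_{*,1}}\times\cdots\times\Si_{\bc_{*,n}}$ of $\Si_\la$, where the $s$-th factor $\Si_{\bc_{*,s}}\leq\Si_{\la_s}$ records how the block $\Om^\la_s$ of size $\la_s$ is further subdivided among the $\mu$-blocks $\Om^\mu_1,\dots,\Om^\mu_n$ according to the column entries $c_{1,s},\dots,c_{n,s}$. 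This is exactly the Mackey-type description of $\Si_\mu\backslash\Si_d/\Si_\la$ that underlies \cite[Lemma 1.6]{DJ}. Granting this, $\ell(g(C)h)=\ell(g(C))+\ell(h)$ and $g(C)h$ is the shortest element of $\Si_\mu g(C)h$ precisely when $h$ is the shortest element of $(\Si_{\bc_{*,1}}\times\cdots\times\Si_{\bc_{*,n}})\backslash\Si_\la$, which factors componentwise into the conditions $g_s\in{}^{\bc_{*,s}}\D$.

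To assemble the bijection: the map $(C,g_1,\dots,g_n)\mapsto g(C)(g_1,\dots,g_n)$ is well-defined into ${}^\mu\D$ by the equivalence just discussed. It is injective because $g(C)(g_1,\dots,g_n)$ determines its double coset (hence $C$, by the previous Lemma), whence $(g_1,\dots,g_n)=g(C)^{-1}g$ is determined, and uniqueness of the factorization $h=(g_1,\dots,g_n)$ in $\Si_\la=\prod_s\Si_{\la_s}$ pins down the $g_s$. It is surjective because any $g\in{}^\mu\D$ admits the factorization $g=g(C)h$ described above with $C$ the matrix of its double coset and $h\in\prod_s{}^{\bc_{*,s}}\D$ by the length-additivity criterion. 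The main obstacle is proving the length-additivity/shortest-element statement cleanly, i.e.\ correctly identifying $\Si_{\bc_{*,s}}$ as the $s$-th parabolic factor of $g(C)^{-1}\Si_\mu g(C)\cap\Si_\la$ and verifying $\ell(g(C)h)=\ell(g(C))+\ell(h)$; once that bookkeeping is in place everything else is formal. Since the statement is flagged as well known and attributed to \cite[Lemma 1.6]{DJ}, I would in fact simply cite that reference and note that the parametrization of double cosets by ${}_\mu\Mat(n,d)_\la$ matches the previous Lemma, rather than reproducing the full combinatorial verification.
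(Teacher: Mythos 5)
Your proposal is correct and matches the paper's approach: the paper gives no proof of this lemma, simply stating it as ``easy and well-known'' with a citation to \cite[Lemma 1.6]{DJ}, which is exactly what you conclude you would do. The sketch you provide of the underlying argument --- factoring $g\in{}^\mu\D$ as $g(C)h$ with length additivity, identifying $g(C)^{-1}\Si_\mu g(C)\cap\Si_\la$ with the standard parabolic $\Si_{\bc_{*,1}}\times\cdots\times\Si_{\bc_{*,n}}\leq\Si_\la$, and then factoring $h$ componentwise into the ${}^{\bc_{*,s}}\D$ --- is accurate, and correctly identifies the one point (the Mackey-type description of the intersection parabolic and the associated length additivity) that requires work.
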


\section{Superspaces and superalgebras}
\label{SSuper}
From now on, we write $\Z_2:=\Z/2\Z=\{\0,\1\}$.
Let $V=V_{\0}\oplus V_{\1}$ be a free $\k$-supermodule of finite rank. We refer to $V$ as a ($\k$-){\em superspace}. The $\k$-rank of $V$ is denoted by $\dim V$.
 For parities of elements, we write $\bar v=\0$ if $v\in V_\0$ and $\bar v=\1$ if $v\in V_\1$. 
Whenever $\bar v$ appears in a formula, this means that we assume that $v$ is a homogeneous element. 
If $V$ is an (associative unital) $\k$-superalgebra, we denote by $|V|$ the same algebra without the 
$\Z_2$-grading. 

By a {\em $\Z$-supergrading} on a superspace $V$ we mean a $\Z$-grading $V=\bigoplus_{m\in\Z} V^m$ such that $V^m=(V^m\cap V_\0)\oplus  (V^m\cap V_\1)$ for all $m\in\Z$.

\subsection{Dual superspaces and tensor products}
\label{SSSuper}

The dual $V^*:=\Hom_\k (V,\k)$ 
is a superspace in a natural way. 
We have the pairing $\langle \cdot, \cdot \rangle$ between $V$ and $V^*$:
$$
\lan  v,\be\ran=\lan \be, v\ran :=\be(v)\qquad(v\in V,\ \be\in V^*).
$$

Let $d\in \Z_{> 0}$, and $V_1,\dots,V_d$ be superspaces. The  tensor product $V_1\otimes \dots \otimes V_d$ is again a superspace in a natural way. 
We always identify $(V_1\otimes \dots \otimes V_d)^*$ with $V_1^*\otimes \dots\otimes V_d^*$ via
\begin{equation}\label{EDualTensor}
\langle \be_1\otimes\dots\otimes \be_d, v_1\otimes\dots\otimes v_d\rangle:=
(-1)^{[ \be_1,\dots, \be_d; v_1,\dots, v_d ]}
\langle\be_1,v_1\rangle \dots \langle \be_d,v_d\rangle,
\end{equation}
where $\be_a\in V_a^*, v_a\in V_a$ for $a=1,\dots,d$, and where 
\begin{equation}\label{ETPSign}
[ \be_1,\dots, \be_d; v_1,\dots, v_d ]:=\sum_{1\leq a<c\leq d} \bar \be_c \bar v_a
\end{equation}
is defined for (homogeneous) elements $\be_1,\dots, \be_d, v_1,\dots, v_d$ of arbitrary superspaces. Note that 
\begin{align*}
\langle \be_1\otimes\dots\otimes \be_d, v_1\otimes\dots\otimes v_d\rangle
&=\langle v_1\otimes\dots\otimes v_d, \be_1\otimes\dots\otimes \be_d\rangle
\\
&:=
(-1)^{[ v_1,\dots, v_d; \be_1,\dots, \be_d ]}
\langle v_1,\be_1\rangle \dots \langle v_d,\be_d\rangle,
\end{align*}
since $\langle v_a,\be_a\rangle=0$ unless $\bar v_a=\bar\be_a$ for any $1\leq a\leq d$. 

If $V_1,\dots,V_d$ are  $\k$-superalgebras,  
then $V_1\otimes\dots\otimes V_d$ is again a superalgebra with
$$
(v_1\otimes\dots\otimes v_d)(w_1\otimes\dots\otimes w_d)=
(-1)^{[ v_1,\dots, v_d; w_1,\dots, w_d ]}
v_1w_1\otimes\dots\otimes v_dw_d,
$$
for $v_a,w_a\in V_a$, $a=1,\dots,d$.

The symmetric group $\Si_d$ acts on the superspace $V^{\otimes d}$ on the right by (super) place permutations. More precisely, for $g\in\Si_d$ and $v_1,\dots, v_d\in V$, we define 
\begin{equation}\label{EGSign}
[ g;v_1,\dots,v_d]:=
\sum_{1\leq a<c\leq d,\ g^{-1}a>g^{-1}c}\bar v_a \bar v_c,
\end{equation}
and
\begin{equation}
\label{ESGAction}
(v_1\otimes\dots\otimes v_d)^g:=(-1)^{[ g;v_1,\dots,v_d]}v_{g1}\otimes\dots\otimes v_{gd}.
\end{equation}
If $V$ is a superalgebra, then $\Si_d$ acts on $V^{\otimes d}$ with algebra  automorphisms. 

\subsection{Symmetric and divided power superalgebras}
\label{SSSymDiv}
Recall that $\O$ is a domain of characteristic zero. 
Let $V=V_{\0}\oplus V_{\1}$ be an $\O$-superspace with bases $\ttB_\0=\{x_1,\dots,x_l\}$ of $V_{\0}$ and $\ttB_\1=\{x_{l+1},\dots,x_{l+m}\}$ of $V_{\1}$. Then $\ttB=\ttB_\0\sqcup \ttB_\1$ is a homogeneous basis of $V$. We identify $V_\k:=V\otimes_\O \k$ with the free $\k$-supermodule with basis $\ttB$, and we identify $V$ with the $\O$-subsupermodule $V\otimes 1\subseteq V_\K$. 

For every $d\in \Z_{\ge 0}$, consider the $\O$-superspace
\[
\Tens^d V:= V^{\otimes d}.
\]
Let
$$\Tens V:=\bigoplus_{d\in\Z_{\geq 0}}\Tens^d V$$ 
be the tensor superalgebra of $V$ and  $\Sym V=\bigoplus_{d\in\Z_{\geq 0}}\Sym^d V$ be the {\em symmetric superalgebra} on $V$. 
That is, $\Sym V$ is the quotient of $\Tens V$ by the ideal generated by all elements of the form 
$v\otimes u-(v\otimes u)^{(1,2)}$ for $u,v\in V$ and all elements of the form $v\otimes v$ for $v\in V_{\1}$. 
Moreover,  for every $d\in \Z_{\ge 0}$, the subsuperspace $\Sym^d V\le \Sym V$ is the intersection of $\Sym V$ with the subsuperspace $\Tens^d V$ of $\Tens V$.

We consider $\Sym V$ as an $\O$-form of $\Sym V_\K$. We will also need another $\O$-form. 
The {\em divided powers superalgebra}\, ${}'{\Sym V}=\bigoplus\, {}'{\Sym^d V}$ is the $\O$-subalgebra of $\Sym V_\K$ generated by the divided powers $v^{(m)}:=v^m/m!$ for all $v\in V_{\0}$ and $m\in\Z_{\geq 0}$ together with all $v\in V_{\1}$. We now define  
${}'{\Sym V_\k}:=({}'{\Sym V})\otimes_\O \k$ and write $v^{(m)}:=v^{(m)}\otimes 1\in {}'{\Sym V_\k}$. 

For every $d\in\Z_{\geq 0}$, we have the fixed points  
$
\Inv^d V:=\big(\Tens^d V)^{\Si_d}$ 
of the action (\ref{ESGAction}) and set $
\Inv V:=\bigoplus_{d\geq 0} \Inv^dV. 
$
It is a subalgebra of $\Tens V$ with respect to a new product, which we now define.

For $d,e\in\Z_{\geq 0}$, recall that ${}^{(d,e)}\D$ stands for the set of the shortest coset representatives for $(\Si_d\times\Si_e)\backslash \Si_{d+e}$.  We consider the linear map 
$$\Tens^d V\otimes  \Tens^e V\to \Tens^{d+e} V,\ t\otimes s\mapsto t*s,$$ 
defined by
\begin{equation}\label{EStar}
(x_1\otimes\dots\otimes x_d)*(y_1\otimes\dots\otimes y_e):=\sum_{g\in {}^{(d,e)}\D}(x_1\otimes\dots\otimes x_d\otimes y_1\otimes\dots\otimes y_e)^g
\end{equation}
for all $x_1,\dots,x_d,y_1,\dots,y_e\in V$. This new {\em $*$-product} (or {\em shuffle product}) on $\Tens V$ makes it an 
associative supercommutative superalgebra. Moreover, 
 $\Inv V$ is a subsuperalgebra of $\Tens V$ with respect to the $*$-product.

 Let $V = U\oplus W$ be a direct sum decomposition of $\O$-supermodules.
 For every $e\ge 0$, we identify $\Tens^e U$ and $\Tens^e W$ with subsupermodules of 
 $\Tens^e V$ in the obvious way. 
The following is easy to see: 

\begin{Lemma} \label{L120116} 
Let $d\in \Z_{\ge 0}$. 
For every $e\in [0,d]$, the $\O$-supermodule homomorphism 
\[
\Inv^e U \otimes \Inv^{d-e} W \to \Inv^d V, \; s\otimes t \mapsto s * t
\]
 is injective, and we have a direct sum decomposition of $\O$-superspaces:
\[
 \Inv^d V = \bigoplus_{e=0}^d\, (\Inv^e U) * (\Inv^{d-e} W).
\]
\end{Lemma}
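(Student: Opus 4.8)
The plan is to work with an explicit homogeneous basis of $V$ adapted to the decomposition $V = U \oplus W$. Fix homogeneous bases $\ttB^U = \ttB^U_\0 \sqcup \ttB^U_\1$ of $U$ and $\ttB^W = \ttB^W_\0 \sqcup \ttB^W_\1$ of $W$, and let $\ttB = \ttB^U \sqcup \ttB^W$, totally ordered so that every element of $\ttB^U$ precedes every element of $\ttB^W$. Then $\Tens^d V$ has the standard monomial basis $\{x_{i_1} \otimes \cdots \otimes x_{i_d} : x_{i_a} \in \ttB\}$, and $\Si_d$ permutes these basis vectors (up to signs) according to the place-permutation action (\ref{ESGAction}). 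A standard fact is that $\Inv^d V = (\Tens^d V)^{\Si_d}$ has a basis indexed by $\Si_d$-orbits of monomials: for each orbit, one takes the (signed) orbit sum of a chosen representative, and these orbit sums form an $\O$-basis. Concretely, each orbit contains a unique \emph{sorted} monomial (nondecreasing indices with respect to the chosen order on $\ttB$ and with no repeated odd factors), and the orbit sums of sorted monomials form a basis of $\Inv^d V$ — provided one is careful that for monomials involving odd basis vectors with a repeated odd entry the orbit sum vanishes, so such monomials are excluded; since $\O$ is a domain of characteristic zero this causes no torsion issues.

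Next I would analyze the $*$-product on such orbit sums. The key combinatorial observation is that for a sorted monomial $u = x_{i_1} \otimes \cdots \otimes x_{i_e}$ with all $x_{i_a} \in \ttB^U$ and a sorted monomial $w = x_{j_1} \otimes \cdots \otimes x_{j_{d-e}}$ with all $x_{j_b} \in \ttB^W$, the product $(\text{orbit sum of } u) * (\text{orbit sum of } w)$ is, up to sign and a nonzero integer coefficient, the orbit sum of the concatenated sorted monomial $x_{i_1} \otimes \cdots \otimes x_{i_e} \otimes x_{j_1} \otimes \cdots \otimes x_{j_{d-e}}$ — and crucially this concatenation is \emph{already sorted} because every $U$-index precedes every $W$-index. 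One must check the coefficient is $1$ (or a unit): unwinding (\ref{EStar}), $*$ is a sum over shuffles, and the orbit sum on the left already accounts for permutations within each block, so the total is exactly the orbit sum of the shuffle-type; the "mixing" never creates a cancellation because $U$- and $W$-entries are distinguishable and the sign conventions in (\ref{EDualTensor})–(\ref{EGSign}) are consistent. This shows the map $s \otimes t \mapsto s*t$ sends the basis $\{(\text{sorted } U\text{-orbit sum}) \otimes (\text{sorted } W\text{-orbit sum})\}$ of $\Inv^e U \otimes \Inv^{d-e} W$ bijectively onto the set of orbit sums of sorted $d$-monomials whose entries split as $e$ entries from $\ttB^U$ followed by $d-e$ entries from $\ttB^W$, which is a \emph{subset} of the monomial basis of $\Inv^d V$. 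Injectivity of $\Inv^e U \otimes \Inv^{d-e} W \to \Inv^d V$ follows immediately.

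Finally, for the direct sum decomposition, I would observe that \emph{every} sorted $d$-monomial in the basis of $\Inv^d V$ arises from exactly one value of $e$: since every entry lies in $\ttB^U \sqcup \ttB^W$ and the monomial is sorted with $U$-indices first, the number $e$ of $U$-entries is determined, and the monomial is the concatenation of its (unique) sorted $U$-part and sorted $W$-part. Hence the images of $\Inv^e U \otimes \Inv^{d-e} W$ for $e = 0, \ldots, d$ together span $\Inv^d V$ and have pairwise disjoint basis images, giving $\Inv^d V = \bigoplus_{e=0}^d (\Inv^e U) * (\Inv^{d-e} W)$.

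The main obstacle I anticipate is the sign bookkeeping: verifying that the coefficient of the concatenated orbit sum in $s * t$ is a unit (in fact $\pm 1$) rather than something that could vanish modulo a prime, and that no unexpected cancellation occurs among the shuffled terms. This requires carefully tracking the signs in (\ref{EDualTensor}), (\ref{EGSign}), and (\ref{EStar}) and using that distinct $U$- and $W$-basis vectors never coincide, so two shuffle terms landing on the same monomial must come from the same permutation — but this is precisely the kind of routine-but-delicate verification that the statement "The following is easy to see" is flagging, and working over a characteristic-zero domain removes any torsion subtlety.
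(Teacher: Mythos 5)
Your proof is correct, and since the paper offers no proof of Lemma~\ref{L120116} (it is prefaced only by ``the following is easy to see''), your argument supplies the missing reasoning in the natural way. The one step you flag as potentially delicate --- verifying that $(\text{orbit sum of } u)*(\text{orbit sum of } w)$ is a unit multiple of a basis element of $\Inv^d V$, with no hidden integer factor --- becomes painless if you invoke the associativity and supercommutativity of the $*$-product, which the paper asserts right after~\eqref{EStar}. Writing $u^{*\bc}=u_1^{\otimes c_1}*\cdots*u_a^{\otimes c_a}$ and $w^{*\bd}=w_1^{\otimes d_1}*\cdots*w_b^{\otimes d_b}$ with $u_i\in\ttB^U$ and $w_j\in\ttB^W$ all distinct, associativity gives
\[
u^{*\bc}*w^{*\bd}=u_1^{\otimes c_1}*\cdots*u_a^{\otimes c_a}*w_1^{\otimes d_1}*\cdots*w_b^{\otimes d_b},
\]
and supercommutativity says this equals $\pm 1$ times the same product with its factors rearranged into whatever global total order on $\ttB=\ttB^U\sqcup\ttB^W$ one fixed for~\eqref{EBasis3}; hence it is $\pm 1$ times a basis element of $\Inv^d V$. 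Distinct pairs $(\bc,\bd)$ with $|\bc|=e$ and $|\bd|=d-e$ give distinct such basis elements, and every basis element of $\Inv^d V$ arises for exactly one value of $e$ (the number of $U$-entries in it), which yields both the injectivity and the direct sum decomposition in one stroke. One small correction of phrasing: for a monomial with a repeated odd factor, it is not quite that ``the orbit sum vanishes'' --- rather, the shuffle sum over shortest coset representatives of the stabilizer fails to be $\Si_d$-invariant, because the stabilizer acts on the monomial through a nontrivial sign character --- so such monomial types simply contribute nothing to $\Inv^d V$, which is exactly what the constraint $c_i\in\{0,1\}$ for odd $x_i$ in $\Mat^\ttB_d$ encodes.
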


To describe bases, set 
\begin{align*}
\Mat^\ttB:=\{(c_1,\dots,c_l,c_{l+1},\dots,c_{l+m})\mid c_1,\dots,c_l\in\Z_{\geq 0},\ c_{l+1},\dots,c_{l+m}\in\{0,1\}\}.
\end{align*}
For $\bc=(c_1,\dots,c_{l+m})$, define 
$
|\bc|:=c_1+\dots+c_{l+m}, 
$ 
and denote 
\begin{align*}
\Mat^\ttB_d:=\{\bc\in\Mat^\ttB \mid |\bc|=d\}.
\end{align*}
In terms of  (\ref{EOBN}), (\ref{EO^B(n,d)}), we have $\Mat^\ttB=\Mat^\ttB(1)$ and $\Mat^\ttB_d=\Mat^\ttB(1,d)$. 
Then 
\begin{equation}\label{EBasis1}
\{x_1^{c_1}\cdots x_{l+m}^{c_{l+m}}\mid (c_1,\dots,c_{l+m})\in\Mat^\ttB_d\}
\end{equation}
is a basis of $\Sym^d V$, 
\begin{equation}\label{EBasis2}
\{x_1^{(c_1)}\cdots x_{l+m}^{(c_{l+m})}\mid (c_1,\dots,c_{l+m})\in\Mat^\ttB_d\}
\end{equation}
is a basis of ${}'{\Sym^d V}$, and 
\begin{equation}\label{EBasis3}
\{x_1^{\otimes c_1} * \dots * x_{l+m}^{\otimes c_{l+m}}\mid(c_1,\dots,c_{l+m})\in\Mat^\ttB_d\}
\end{equation}
is a basis of $\Inv^dV$. 

Define
\[
\Star^d V :=\underbrace{V* \dots * V}_{d\ \text{times}}, \qquad  \Star V := \bigoplus_{d\ge 0} \Star^d V,
\]
so that $\Star V$ is an $\O$-subsupermodule of $\Inv V$.

\begin{Lemma} \label{LemmaTri} 
There is an isomorphism of algebras 
$\kappa \colon {}'{\Sym V}\iso \Inv V$ which maps 
$x_1^{(c_1)}\cdots x_{l_m}^{(c_{l+m})}$ to $ x_1^{\otimes c_1} * \dots * x_{l+m}^{\otimes c_{l+m}}$ for all $(c_1,\dots,c_{l+m})\in\Mat^\ttB_d$.
Moreover, $\kappa(\Sym(V))=\Star V$. 
\end{Lemma}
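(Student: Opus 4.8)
The plan is to build the map $\kappa$ explicitly on the free generating set and check it is a well-defined algebra homomorphism, then use the bases listed in the excerpt to see it is bijective. First I would recall that ${}'{\Sym V}$ is, by definition, the $\O$-subalgebra of $\Sym V_\K$ generated by $v^{(m)}$ for $v\in V_\0,\ m\ge 0$ and by $v\in V_\1$; correspondingly $\Star V\subseteq\Inv V$ is generated under the $*$-product by $V=\Star^1 V$. Over $\K$, both $\Sym V_\K$ and $\Inv V_\K$ are free supercommutative superalgebras on $V_\K$ — the latter because the symmetrization map $\Tens V_\K\to\Inv V_\K$ is an algebra isomorphism from $(\Tens V_\K,*)$-symmetrized onto the shuffle algebra, a standard fact in characteristic $0$. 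Hence there is a unique isomorphism of $\K$-superalgebras $\kappa_\K\colon\Sym V_\K\iso\Inv V_\K$ sending $x_i\mapsto x_i$ (i.e.\ $v\mapsto v$ for $v\in V_\K$). I would then verify that $\kappa_\K$ carries the $\O$-subalgebra ${}'{\Sym V}$ isomorphically onto the $\O$-subalgebra $\Inv V$.

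For the forward containment, it suffices to check $\kappa_\K$ sends each algebra generator of ${}'{\Sym V}$ into $\Inv V$: for $v\in V_\1$ we have $\kappa_\K(v)=v\in\Inv^1 V$, and for $v\in V_\0$ one computes $\kappa_\K(v^{(m)})=\kappa_\K(v^m)/m! = v^{*m}/m!$; the key combinatorial identity is that the $*$-power $v^{\otimes 1}*\dots*v^{\otimes 1}$ ($m$ factors) equals $m!\,(v^{\otimes m})$ — more precisely $v^{*m}=m!\,v^{\otimes m}$ when one unravels~\eqref{EStar}, since all $m!$ shuffles of $m$ equal tensor slots coincide (and there is no sign because $v\in V_\0$). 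Hence $\kappa_\K(v^{(m)})=v^{\otimes m}\in\Inv^m V$. More generally, by multiplicativity, $\kappa_\K\big(x_1^{(c_1)}\cdots x_{l+m}^{(c_{l+m})}\big)=x_1^{\otimes c_1}*\dots*x_{l+m}^{\otimes c_{l+m}}$ for every $\bc\in\Mat^\ttB_d$ — one has to be mildly careful that the generators are multiplied in a fixed order and that passing between the $\cdot$-product on $\Sym$ and the $*$-product introduces no sign issues beyond those already encoded in~\eqref{EStar}, \eqref{EGSign}. This shows $\kappa_\K$ maps the spanning set~\eqref{EBasis2} of ${}'{\Sym^d V}$ to the spanning set~\eqref{EBasis3} of $\Inv^d V$; since both are in fact $\O$-bases (listed in the excerpt), $\kappa:=\kappa_\K|_{{}'{\Sym V}}$ is an $\O$-module isomorphism onto $\Inv V$, and being the restriction of an algebra map it is an algebra isomorphism. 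This proves the first assertion.

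For the last sentence, $\Sym V\subseteq\Sym V_\K$ is the $\O$-subalgebra generated by $V$ (over $\O$, with the ordinary — not divided — powers), so $\kappa(\Sym V)$ is the $\O$-subalgebra of $\Inv V$ generated by $\kappa(V)=V=\Star^1 V$ under the $*$-product, which is by definition $\Star V$. Equivalently, at the level of bases: $\kappa$ sends the monomial basis~\eqref{EBasis1} of $\Sym^d V$ to $\{x_1^{\otimes c_1}*\dots*x_{l+m}^{\otimes c_{l+m}}\}$, but now these elements are the ones lying in $\Star^d V$ (indeed $\Star^d V$ is spanned by $d$-fold $*$-products of elements of $V$, and such products expand, via supercommutativity of $*$ and the identity $v^{*c}=c!\,v^{\otimes c}$, into $\O$-combinations of the basis monomials~\eqref{EBasis3}), so $\kappa$ restricts to $\Sym V\iso\Star V$.

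\textbf{Main obstacle.} The delicate point is bookkeeping of signs when identifying the $*$-product structure on $\Inv V_\K$ with the polynomial multiplication on $\Sym V_\K$ — i.e.\ verifying that the naive generator assignment $v\mapsto v$ really does extend to an \emph{algebra} isomorphism in the super setting, and that the formula $\kappa(x_1^{(c_1)}\cdots x_{l+m}^{(c_{l+m})})=x_1^{\otimes c_1}*\dots*x_{l+m}^{\otimes c_{l+m}}$ holds on the nose with no stray $\pm$. Concretely one needs: (a) $(\Tens V_\K,*)$ is supercommutative and, restricted to the symmetric tensors (equivalently: the free supercommutative superalgebra on $V_\K$ sits inside it as $\Inv V_\K$); (b) for $v\in V_\0$, $v^{*m}=m!\,v^{\otimes m}$, and for $v\in V_\1$, $v*v=0$ in $\Inv V_\K$ (from~\eqref{EGSign}: the transposition contributes a sign $-1$, forcing $v\otimes v + (v\otimes v)^{(1,2)}=v\otimes v - v\otimes v=0$ after symmetrization) — these are exactly the defining relations of $\Sym V_\K$, which is why the universal map exists and is an iso. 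Once (a)–(b) are nailed down, everything else is routine base-change and basis-matching.
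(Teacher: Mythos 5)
Your proposal is correct and follows essentially the same route as the paper: build the map sending $v\mapsto v$ (the paper constructs it over $\O$ as $\Sym V\to\Inv V$ and then base-changes to $\K$, you start directly with $\kappa_\K$ over $\K$ — a cosmetic difference), track basis monomials via the identity $x^{*c}=c!\,x^{\otimes c}$, and observe that the resulting $\K$-isomorphism matches up the $\O$-bases \eqref{EBasis2} and \eqref{EBasis3} so that it restricts to ${}'{\Sym V}\iso\Inv V$. The sign bookkeeping you flag as the main obstacle is indeed the point the paper dismisses as "easy from the definitions," and your items (a)--(b) are exactly the checks that make it rigorous.
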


\begin{proof}
It follows easily from the definitions that 
there is a homomorphism of superalgebras $\Sym V\to \Inv V$ which is the identity on $V$.  Under this map, for any $(c_1,\dots,c_{l+m})\in\Mat^\ttB_d$, the basis element $x_1^{c_1}\dots x_{l_m}^{c_{l+m}}$ is sent to 
$$x_1^{* c_1} * \dots * x_{l+m}^{* c_{l+m}}=c_1!\dots c_{l+m}!\, x_1^{\otimes c_1} * \dots * x_{l+m}^{\otimes c_{l+m}}.$$ 
Extending scalars to $\K$ and restricting to ${}'{\Sym V}$, we obtain the desired isomorphism 
${}'{\Sym V}\iso \Inv V$. The final statement of the lemma is clear. 
\end{proof}

\subsection{Coproducts}\label{SCoprod}
We can also consider $
\Tens V
$ as a {\em supercoalgebra}, with the coproduct 
\begin{equation}\label{EBasicCoproduct}
\begin{split}
\De\colon 
\Tens^d V&\to 
\bigoplus_{e,f\geq 0,\ e+f=d}
\Tens^e V\otimes \Tens^f V,
\\ 
v_1\otimes\dots\otimes v_d &\mapsto \sum_{e,f\geq 0,\ e+f=d} (v_1\otimes\dots\otimes v_e)\otimes(v_{e+1}\otimes\dots\otimes v_d).
\end{split}
\end{equation}

For a supercoalgebra $(X,\Delta)$ and $x\in X$, we repeatedly use Sweedler's notation $$\De(x)=\sum x_{(1)}\otimes x_{(2)}
$$
where $x_{(1)}$ and $x_{(2)}$ are homogeneous whenever $x$ is. 

The following is a superalgebra version of the well-known fact (see e.g. \cite[Proposition 1.9]{Re}) that $\Tens V$ is a bialgebra with respect to $(*,\De)$:

\begin{Lemma}\label{LstDe} 
Let $s,t\in\Tens V$. Then
$$
\De(s*t)=\sum(-1)^{\bar s_{(2)}\bar t_{(1)}}(s_{(1)}*t_{(1)})\otimes (s_{(2)}*t_{(2)}). 
$$
\end{Lemma}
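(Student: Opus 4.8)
The plan is to prove the identity $\De(s*t)=\sum(-1)^{\bar s_{(2)}\bar t_{(1)}}(s_{(1)}*t_{(1)})\otimes(s_{(2)}*t_{(2)})$ by reducing to the case where $s$ and $t$ are pure tensors, since both sides are $\k$-bilinear in $(s,t)$ and the pure tensors span $\Tens V$. So I would fix homogeneous elements $x_1,\dots,x_d,y_1,\dots,y_e\in V$ and set $s=x_1\otimes\dots\otimes x_d$, $t=y_1\otimes\dots\otimes y_e$. By the definition \eqref{EStar}, $s*t=\sum_{g\in{}^{(d,e)}\D}(x_1\otimes\dots\otimes x_d\otimes y_1\otimes\dots\otimes y_e)^g$, so I first compute $\De$ applied to each shuffled term, then regroup.

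The key combinatorial step is to match up the double indexing. On the left, applying $\De$ to a single shuffle $w^g$ (with $w=x_1\otimes\dots\otimes x_d\otimes y_1\otimes\dots\otimes y_e$ and $g\in{}^{(d,e)}\D$) cuts the resulting word after position $k$ for each $k\in[0,d+e]$; the first $k$ letters form a shuffle of an initial segment $x_1\otimes\dots\otimes x_a$ with an initial segment $y_1\otimes\dots\otimes y_b$ where $a+b=k$, and the last $d+e-k$ letters form a shuffle of $x_{a+1}\otimes\dots\otimes x_d$ with $y_{b+1}\otimes\dots\otimes y_e$. Thus the data $(g,k)$ with $g\in{}^{(d,e)}\D$ corresponds bijectively to a choice of $(a,b)$ with $0\le a\le d$, $0\le b\le e$ together with a shortest representative $g'\in{}^{(a,b)}\D$ (shuffling the two initial segments) and a shortest representative $g''\in{}^{(d-a,e-b)}\D$ (shuffling the two final segments). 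This bijection is exactly the content of the associativity-type decomposition of double coset representatives; under it the term on the left coming from $(g,k)$ is, up to sign, $(x_1\otimes\dots\otimes x_a)^{g'}\otimes\cdots$ paired with the corresponding final shuffle, which is precisely a summand of $(s_{(1)}*t_{(1)})\otimes(s_{(2)}*t_{(2)})$ once we recall $\De(s)=\sum_{a}(x_1\otimes\dots\otimes x_a)\otimes(x_{a+1}\otimes\dots\otimes x_d)$ and similarly for $t$.

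The main obstacle will be bookkeeping the signs. The left side carries the place-permutation sign $(-1)^{[g;x_1,\dots,x_d,y_1,\dots,y_e]}$ from \eqref{EGSign}, while the right side carries $(-1)^{[g';x_1,\dots,x_a,y_1,\dots,y_b]}$, $(-1)^{[g'';x_{a+1},\dots,x_d,y_{b+1},\dots,y_e]}$, and the extra factor $(-1)^{\bar s_{(2)}\bar t_{(1)}}=(-1)^{(\bar x_{a+1}+\dots+\bar x_d)(\bar y_1+\dots+\bar y_b)}$. I would show these agree by analyzing inversions of $g$ relative to the splitting point: inversions between two $x$-letters or two $y$-letters on the same side of the cut contribute to $[g';\dots]$ or $[g'';\dots]$; an $x$-letter before the cut and a $y$-letter before the cut that are inverted contribute to $[g';\dots]$; an $x$ after and $y$ after to $[g'';\dots]$; there are no inversions with the first letter after the cut and the second before it (that would contradict $g$ being a shuffle respecting the internal orders, or is ruled out since $g\in{}^{(d,e)}\D$ preserves relative order within the $x$-block and within the $y$-block); and finally the pairs consisting of an $x_i$ with $i>a$ (hence placed after the cut) and a $y_j$ with $j\le b$ (placed before the cut) are exactly the pairs that are necessarily inverted by $g$ and are not accounted for by $g'$ or $g''$ — these give precisely the factor $(-1)^{(\bar x_{a+1}+\dots+\bar x_d)(\bar y_1+\dots+\bar y_b)}$. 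Assembling the sign equality summand-by-summand and then summing over all $(a,b,g',g'')$ yields the claimed formula. Finally I would note that the homogeneity requirement in Sweedler's notation is automatic here since each $x_i,y_j$ is homogeneous, so every term produced is homogeneous.
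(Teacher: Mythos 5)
Your proof is correct and follows essentially the same route as the paper's: reduce to pure tensors, expand the shuffle sum, apply $\De$ term by term, and re-index via the bijection between a pair (shuffle in ${}^{(d,e)}\D$, cut position $k$) and a triple (cut data $(a,b)$, initial shuffle in ${}^{(a,b)}\D$, final shuffle in ${}^{(d-a,e-b)}\D$). The paper packages this bijection as its Lemma~\ref{LDJ} applied with $n=2$ (the $2\times2$ matrix $C$ records how many $x$'s and $y$'s land on each side of the cut), whereas you invoke it as the standard coset-representative decomposition, and your inversion count produces exactly the paper's correction sign $(-1)^m$ with $m=\bar s_{(2)}\bar t_{(1)}$.
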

\begin{proof}
We may assume that $s=s_1\otimes \dots\otimes s_a$ and $t=t_1\otimes \dots\otimes t_b$ for some $s_1,\dots,s_a,t_1,\dots,t_b\in V$. 
Let $\pi^{e,f}$ be the projection from $\Tens V\otimes \Tens V$ onto the summand $\Tens^e V\otimes \Tens^f V$. Fix $e\in[0,a+b]$ and denote by $\sum_{(C,g_1,g_2)}$ the sum over all triples $(C,g_1,g_2)$ corresponding to taking $\la=(e,a+b-e)$ and $\mu=(a,b)$ in Lemma~\ref{LDJ}. Then using that lemma, we get 
\begin{align*}
\pi^{e,a+b-e}\De(s*t)=&\pi^{e,a+b-e}\De\sum_{h\in{}^{(a,b)}\D}(s\otimes t)^h
\\
=&\pi^{e,a+b-e}\De\sum_{(C,g_1,g_2)}(s\otimes t)^{g(C)(g_1,g_2)}
\\
=&\sum_{(C,g_1,g_2)}(-1)^m
(s_1\otimes\dots\otimes s_{c_{1,1}} \otimes t_1 \otimes \dots \otimes t_{c_{2,1}})^{g_1}
\\
&\otimes  (s_{c_{1,1}+1}\otimes\dots\otimes s_a\otimes t_{c_{2,1}+1}\otimes\dots\otimes t_b)^{g_2}
\\
=&\sum_{C}(-1)^m
\big((s_1\otimes\dots\otimes s_{c_{1,1}}) * (t_1 \otimes \dots \otimes t_{c_{2,1}})\big)
\\
&\otimes \big( (s_{c_{1,1}+1}\otimes\dots\otimes s_a) * (t_{c_{2,1}+1}\otimes\dots\otimes t_b)\big)
\\
=&\pi^{e,a+b-e}\sum(-1)^{\bar s_{(2)}\bar t_{(1)}}(s_{(1)}*t_{(1)})\otimes (s_{(2)}*t_{(2)}),
\end{align*}
where $m=(\bar t_1 + \dots + \bar t_{c_{2,1}})(\bar s_{c_{1,1}+1}+\dots+ \bar s_a)$. This completes the proof.
\end{proof}

Note that $\Inv V$ is a subsupercoalgebra of $\Tens V$.
The supercoalgebra $\Inv V$ is {\em supercocommutative}, i.e.\ if $\De(\xi)=\sum\xi_{(1)}\otimes\xi_{(2)}$ in Sweedler's notation for a (homogeneous) $\xi\in \Inv V$, then 
\begin{equation}\label{ESuperCoCo}
\De(\xi)=\sum(-1)^{\bar \xi_{(1)}\bar\xi_{(2)}}\xi_{(2)}\otimes\xi_{(1)}.
\end{equation}
Hence the (restricted) dual
$$
(\Inv V)^*:=\bigoplus_{d\geq 0} (\Inv^d V)^*
$$
has a superalgebra structure which is dual to the coalgebra structure on 
$\Inv V$. 
More precisely, the superbialgebra structure on $(\Inv V)^*$ is determined by the identity 
\[
\langle \xi \eta, x \rangle = \langle \xi \otimes \eta, \Delta(x) \rangle
\quad (\xi,\eta\in (\Inv V)^*, x\in \Inv X),
\]
where
as usual we identify $(\Inv V)^* \otimes (\Inv V)^*$ with $(\Inv V \otimes \Inv V)^*$ via~\eqref{EDualTensor}.
This makes $(\Inv V)^*$  a supercommutative superalgebra. Given $\xi_1,\dots,\xi_d\in V^*$, 
we have the functional $\xi_1\otimes\dots\otimes \xi_d\in (\Tens^d V)^*$. Extending by zero to the whole $\Tens V$ and restricting to $\Inv V$, we can interpret $\xi_1\otimes\dots\otimes \xi_d$ as an element of $(\Inv V)^*$. The following is now clear:

\begin{Lemma} \label{LemmaOdin}
The natural map $V^*\to (\Inv V)^*$ extends to the isomorphism of superalgebras $\Sym(V^*)\iso (\Inv V)^*$, which maps any product 
$\xi_1\cdots\xi_d\in \Sym^d(V^*)$ with $\xi_1,\dots,\xi_d\in V^*$ to the functional $\xi_1\otimes\dots\otimes \xi_d\in (\Inv V)^*$. 
\end{Lemma}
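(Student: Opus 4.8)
The plan is to construct the map explicitly as the composite of two identifications and then check it is a morphism of superalgebras. First I would write down a natural $\k$-linear map $\Sym(V^*)\to(\Inv V)^*$: on a product $\xi_1\cdots\xi_d$ of elements of $V^*$ we send it to the restriction to $\Inv V$ of the functional $\xi_1\otimes\dots\otimes\xi_d\in(\Tens^d V)^*$. This is well defined because the functional $\xi_1\otimes\dots\otimes\xi_d\in(\Tens^d V)^*$ restricted to the $\Si_d$-invariants $\Inv^d V$ is, by the symmetrization formula, insensitive to permuting the $\xi_a$ up to sign (place permutations act on $(\Tens^d V)^*$ and the invariant subspace is a common eigenspace for the averaging), and it kills $\xi\otimes\xi$ for odd $\xi$ after restriction, so the universal property of $\Sym(V^*)$ applies. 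Concretely, the cleanest way is to use the chosen basis $\ttB$ of $V$ with dual basis $\{\xi_1,\dots,\xi_{l+m}\}$ of $V^*$: by~\eqref{EBasis1} the monomials $\xi_1^{c_1}\cdots\xi_{l+m}^{c_{l+m}}$ with $(c_1,\dots,c_{l+m})\in\Mat^\ttB_d$ form a basis of $\Sym^d(V^*)$, and one checks directly against the basis~\eqref{EBasis3} of $\Inv^d V$ that the corresponding functionals form the dual basis (up to a nonzero scalar, namely the value of $\xi_1^{\otimes c_1}*\dots$ paired with the matching tensor-power element is $1$ by~\eqref{EStar} and~\eqref{EDualTensor}, while it vanishes on every other basis element by parity/multidegree considerations). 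Hence the map is a $\k$-linear isomorphism in each degree $d$, and therefore an isomorphism $\Sym(V^*)\iso(\Inv V)^*$ of graded superspaces.

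Next I would check multiplicativity. The product on $(\Inv V)^*$ is by definition dual to the coproduct $\De$ on $\Inv V$, which is the restriction of~\eqref{EBasicCoproduct}. So for $\xi_1,\dots,\xi_d\in V^*$ and $\eta_1,\dots,\eta_e\in V^*$ one must verify that the functional associated to the product $(\xi_1\cdots\xi_d)(\eta_1\cdots\eta_e)=\xi_1\cdots\xi_d\eta_1\cdots\eta_e\in\Sym^{d+e}(V^*)$ equals the product in $(\Inv V)^*$ of the functionals associated to $\xi_1\cdots\xi_d$ and $\eta_1\cdots\eta_e$. Evaluating the latter on a basis element $x_1^{\otimes c_1}*\dots*x_{l+m}^{\otimes c_{l+m}}\in\Inv^{d+e}V$ via $\langle\xi\eta,x\rangle=\langle\xi\otimes\eta,\De(x)\rangle$, one applies the description of $\De$ on a $*$-product provided by Lemma~\ref{LstDe} together with the sign conventions~\eqref{EDualTensor}, and sees that the resulting sum over ways of splitting the tensor factors matches exactly the value of $(\xi_1\otimes\dots\otimes\xi_d\otimes\eta_1\otimes\dots\otimes\eta_e)$ on the same element; the signs agree because both are instances of the single sign convention~\eqref{ETPSign}. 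Unitality is clear since the degree-zero components are $\k$ on both sides, matched by the counit.

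The main obstacle I anticipate is purely bookkeeping: tracking the Koszul signs through the identification $(\Tens^\bullet V)^*\cong(\Tens^\bullet V^*)$ of~\eqref{EDualTensor}, the coproduct~\eqref{EBasicCoproduct}, and Lemma~\ref{LstDe}, and confirming that the pairing of the basis~\eqref{EBasis3} of $\Inv^d V$ with the monomial functionals is exactly the dual basis rather than off by some factorials. The superalgebra $\Inv V$ is supercocommutative by~\eqref{ESuperCoCo}, which is what forces the target to be supercommutative and makes the symmetrization on $\Sym(V^*)$ compatible; so once the degree-wise bijection and the single multiplicativity identity on basis elements are verified, the proof is complete. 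An alternative, essentially sign-free route is to note that $(\Inv V)^*$ is the graded dual of the bialgebra $(\Inv V,*,\De)$, hence itself a bialgebra whose product is dual to $\De$ and whose coproduct is dual to $*$; since $(\Inv V,\De)$ is the cofree-like cocommutative structure on $V$, its graded dual is the free supercommutative algebra on $V^*$, i.e.\ $\Sym(V^*)$ — but carrying this out still requires the same sign checks, so I would present the direct basis computation.
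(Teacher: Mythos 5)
The paper leaves this lemma as ``now clear,'' so your proof is filling the gap rather than replicating a given argument; and your argument is essentially the intended one: restrict $\xi_1\otimes\dots\otimes\xi_d\in(\Tens^d V)^*$ to $\Inv^d V$, check this factors through $\Sym(V^*)$, compare bases to get a linear isomorphism, and then verify multiplicativity against the dual-to-$\De$ product. Two small remarks. First, the claim that the pairing of $\xi_1^{\otimes c_1}\otimes\dots\otimes\xi_{l+m}^{\otimes c_{l+m}}$ with $x_1^{\otimes c_1}*\dots*x_{l+m}^{\otimes c_{l+m}}$ equals $1$ is not quite right: only the identity shuffle survives (which you correctly use), but the Koszul sign from \eqref{EDualTensor} is $(-1)^{\binom{k}{2}}$, where $k$ is the number of odd $c_i$'s that equal $1$; since you already phrased it as ``up to a nonzero scalar'' this does not affect the conclusion, but you should not assert the scalar is $1$. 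Second, invoking Lemma~\ref{LstDe} to compute $\De$ of a $*$-product is heavier machinery than needed for multiplicativity: since both $\langle\xi\eta,\cdot\rangle$ and $\langle\xi_1\otimes\dots\otimes\xi_d\otimes\eta_1\otimes\dots\otimes\eta_e,\cdot\rangle$ extend to functionals on all of $\Tens^{d+e}V$, it suffices to compare them on pure tensors $v_1\otimes\dots\otimes v_{d+e}$, where the $(d,e)$-component of $\De$ from \eqref{EBasicCoproduct} is just the split at position $d$; the coherence of the sign convention \eqref{ETPSign} under concatenation then gives the identity immediately, with no need to track shuffles. Either route works; the direct one is cleaner.
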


\begin{Corollary} \label{CPerfPair} 
Identifying the $\O$-submodule ${}'{\Sym(V^*)}\subseteq \Sym(V^*_\K)$ with an $\O$-submodule of $(\Inv V_\K)^*$ via Lemma~\ref{LemmaOdin}, we have 
$$
\Star V=\{x\in \Inv V_\K\mid \langle x, \xi\rangle\in\O\ \text{for all $\xi\in {}'{\Sym(V^*)}$}\}.
$$ 
\end{Corollary}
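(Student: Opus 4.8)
The plan is to reduce to a single graded degree and then recognize $\Star^d V$ and ${}'{\Sym^d(V^*)}$ as mutually dual lattices for the canonical pairing, working with the monomial bases throughout. For the reduction: $\Inv V_\K=\bigoplus_{d\ge 0}\Inv^d V_\K$, $\Star V=\bigoplus_{d\ge 0}\Star^d V$, ${}'{\Sym(V^*)}=\bigoplus_{d\ge 0}{}'{\Sym^d(V^*)}$, and under the identification of Lemma~\ref{LemmaOdin} an element of ${}'{\Sym^d(V^*)}$ annihilates $\Inv^e V_\K$ whenever $e\ne d$. So, writing $x=\sum_d x_d$ for the homogeneous decomposition of $x\in\Inv V_\K$, we get that $\langle x,\xi\rangle\in\O$ for all $\xi\in{}'{\Sym(V^*)}$ if and only if $\langle x_d,\xi\rangle\in\O$ for all $d$ and all $\xi\in{}'{\Sym^d(V^*)}$, and it suffices to prove $\Star^d V=\{x\in\Inv^d V_\K\mid\langle x,\xi\rangle\in\O\ \text{for all}\ \xi\in{}'{\Sym^d(V^*)}\}$ for each fixed $d$.

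Fix $d$. For $\bc=(c_1,\dots,c_{l+m})\in\Mat^\ttB_d$ write $\bc!:=c_1!\cdots c_{l+m}!$, set $\xi_\bc:=x_1^{\otimes c_1}*\dots*x_{l+m}^{\otimes c_{l+m}}\in\Inv^d V$, and let $\phi_\bc\in(\Inv^d V)^*$ be the image of $(x_1^*)^{c_1}\cdots(x_{l+m}^*)^{c_{l+m}}\in\Sym^d(V^*)$ under the isomorphism of Lemma~\ref{LemmaOdin}; thus $\phi_\bc$ is the functional $(x_1^*)^{\otimes c_1}\otimes\dots\otimes(x_{l+m}^*)^{\otimes c_{l+m}}$ on $\Tens^d V$. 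The four relevant $\O$-bases are: $\{\xi_\bc\}_{\bc}$, an $\O$-basis of $\Inv^d V$ by~\eqref{EBasis3}; $\{\phi_\bc\}_{\bc}$, an $\O$-basis of $\Sym^d(V^*)=(\Inv^d V)^*$ by Lemma~\ref{LemmaOdin} and~\eqref{EBasis1}; $\{\tfrac1{\bc!}\phi_\bc\}_{\bc}$, an $\O$-basis of ${}'{\Sym^d(V^*)}$ by~\eqref{EBasis2} applied to $V^*$ (since $(x_i^*)^{(c_i)}=\tfrac1{c_i!}(x_i^*)^{c_i}$); and $\{\bc!\,\xi_\bc\}_{\bc}$, an $\O$-basis of $\Star^d V$, because $\kappa$ maps the basis element $x_1^{(c_1)}\cdots x_{l+m}^{(c_{l+m})}$ of ${}'{\Sym^d V}$ to $\xi_\bc$, hence maps $x_1^{c_1}\cdots x_{l+m}^{c_{l+m}}=\bc!\,x_1^{(c_1)}\cdots x_{l+m}^{(c_{l+m})}$ to $\bc!\,\xi_\bc$, while $\kappa(\Sym^d V)=\Star^d V$ by Lemma~\ref{LemmaTri}.

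The heart of the proof is that the matrix $\big(\langle\xi_\bc,\phi_\bd\rangle\big)_{\bc,\bd\in\Mat^\ttB_d}$ is diagonal with invertible diagonal entries. By~\eqref{EDualTensor}, $\phi_\bd$ vanishes on every tensor-monomial basis vector of $\Tens^d V$ other than $x_1^{\otimes d_1}\otimes\dots\otimes x_{l+m}^{\otimes d_{l+m}}$, i.e.\ on all those whose ``content'' differs from $\bd$; and expanding the $*$-product via~\eqref{EStar} shows that $\xi_\bc$ is a $\Z$-linear combination of tensor-monomials all of content $\bc$. Hence $\langle\xi_\bc,\phi_\bd\rangle=0$ whenever $\bc\ne\bd$. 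Since $\{\phi_\bd\}_{\bd}$ is an $\O$-basis of the dual module $(\Inv^d V)^*$ and $\{\xi_\bc\}_{\bc}$ is an $\O$-basis of $\Inv^d V$, the matrix $\big(\langle\xi_\bc,\phi_\bd\rangle\big)$ is invertible over $\O$; being diagonal, each diagonal entry $u_\bc:=\langle\xi_\bc,\phi_\bc\rangle$ is then a unit of $\O$. (A direct computation in fact gives $u_\bc=\pm1$, but only invertibility is used.)

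Finally, for $x=\sum_{\bc}a_\bc\xi_\bc\in\Inv^d V_\K$ with $a_\bc\in\K$ we compute $\langle x,\tfrac1{\bd!}\phi_\bd\rangle=\tfrac1{\bd!}\,u_\bd\,a_\bd$. Since $\{\tfrac1{\bd!}\phi_\bd\}_{\bd}$ is an $\O$-basis of ${}'{\Sym^d(V^*)}$ and every $u_\bd$ is a unit of $\O$, it follows that $\langle x,\xi\rangle\in\O$ for all $\xi\in{}'{\Sym^d(V^*)}$ exactly when $a_\bd\in\bd!\,\O$ for all $\bd$, i.e.\ exactly when $x\in\bigoplus_\bd\bd!\,\O\,\xi_\bd=\Star^d V$; combined with the reduction of the first paragraph, this proves the corollary. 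The one step requiring genuine care is the diagonality of the pairing matrix — in effect the bookkeeping that $\xi_\bc$ is supported on tensor-monomials of content $\bc$; once that is in place, invertibility of the diagonal entries is automatic from Lemma~\ref{LemmaOdin}, so there is no serious obstacle.
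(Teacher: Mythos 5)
Your argument is correct and follows the same route as the paper: fix bases of $\Star V$ and ${}'{\Sym(V^*)}$ indexed by $\Mat^\ttB$ and observe that they are dual up to units, so the lattice dual to ${}'{\Sym(V^*)}$ inside $\Inv V_\K$ is exactly $\Star V$. The only (minor, and pleasant) difference is that where the paper simply asserts $\langle x^{*\bc},\xi^{(\bd)}\rangle=\pm\de_{\bc,\bd}$, you derive the unit property of the diagonal entries indirectly from the diagonality of the pairing matrix together with the fact that $\{\xi_\bc\}$ and $\{\phi_\bd\}$ are $\O$-bases of $\Inv^d V$ and $(\Inv^d V)^*$ respectively.
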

\begin{proof}
Recall the basis $\ttB=\{x_1,\dots,x_{l+m}\}$ of $V$, and let $\{\xi_1,\dots,\xi_{l+m}\}$ be the dual basis of $V^*$. By Lemma~\ref{LemmaTri}, 
$$
\{x^{*\bc}:=x_1^{* c_1}*\dots *x_{l+m}^{*c_{l+m}}\mid \bc=(c_1,\dots,c_{l+m})\in\Mat^\ttB\}
$$
is an $\O$-basis of $\Star V$. On the other hand,
$$
\{\xi^{(\bc)}:=\xi_1^{(c_1)}\cdots \xi_{l+m}^{(c_{l+m})}\mid \bc=(c_1,\dots,c_{l+m})\in\Mat^\ttB\}
$$
is an $\O$-basis of ${}'{\Sym(V^*)}$. It remains to note that
$\langle  x^{*\bc},\xi^{(\bd)}\rangle=\pm\de_{\bc,\bd}$. 
\end{proof}

\subsection{Trivial extension algebras}\label{SSTEA}
Let $A$ be a $\k$-superalgebra. We consider $A^*$ as an $A$-bimodule with respect to the {\em dual regular actions}  given by
\begin{equation}\label{EActions}
\langle \al\cdot a,b\rangle=\langle \al,a b\rangle,\ \langle b,a\cdot \al\rangle=\langle ba,\al\rangle\qquad(a,b\in A,\ \al\in A^*).
\end{equation}
We refer to this bimodule as the {\em dual regular superbimodule}. 

The {\em trivial extension superalgebra} $T_A$ of $A$ is 
$T_A=A\oplus A^*$ as a superspace, with multiplication 
\begin{equation}\label{ETrivInt}
(a,\al)(b,\be)=(ab,a\cdot \be+\al\cdot b)\qquad (a,b\in A,\ \al,\be\in A^*).
\end{equation}

Let $m\colon A\otimes A\to A$ be the multiplication map on $A$ and
$$
m^* \colon A^*\to A^*\otimes A^* 
$$ 
be the dual map. For $\al\in A^*$, we write $m^*(\al)=\sum \al_{(1)}\otimes \al_{(2)}$ using Sweedler's notation. Then
\begin{align*}
\langle bc,\al\rangle&=\langle b\otimes c,m^*(\al)\rangle
=\langle b\otimes c,\sum \al_{(1)}\otimes \al_{(2)}\rangle
=\sum (-1)^{\bar c\bar\al_{(1)}}\langle b,\al_{(1)}\rangle\langle c, \al_{(2)}\rangle,
\\
\langle \al,bc\rangle&=\langle m^*(\al),b\otimes c\rangle
=\langle \sum \al_{(1)}\otimes \al_{(2)},b\otimes c\rangle
=\sum (-1)^{\bar\al_{(2)}\bar b}\langle \al_{(1)},b\rangle\langle  \al_{(2)},c\rangle.
\end{align*}
Note that the right hand sides above are equal to each other since $\langle \al_{(1)},b\rangle=0$ unless $\bar\al_{(1)}=\bar b$ and $\langle  \al_{(2)},c\rangle=0$ unless $\bar \al_{(2)} = \bar c$.
The formulas imply that for any $a\in A$ and $\al\in A^*$, we have 
\begin{align}
a\cdot\al&=\sum(-1)^{\bar a\bar \al_{(1)}}\langle a,\al_{(2)}\rangle \al_{(1)},
\label{ELRegNew}
\\
\al\cdot a&=\sum(-1)^{\bar \al_{(2)}\bar a}\langle a,\al_{(1)}\rangle \al_{(2)}.
\label{ERRegNew}
\end{align}

Let $n\in\Z_{>0}$. The matrix algebra $M_n(A)$ is naturally a superalgebra. For $1\leq r,s\leq n$ and $a\in A$, 
the matrix $aE_{r,s}\in X$ with $a$ in the $(r,s)$th position and zeros  elsewhere will be denoted by
$\xi_{r,s}^a$. Then 
$
\xi_{r,s}^a\xi_{t,u}^b=\de_{s,t}\xi_{r,u}^{ab}.
$ 
We have $\overline {\xi_{r,s}^a}=\bar a$. For $\al\in A^*$ and $1\leq r,s\leq n$, we have the element $x_{r,s}^{\al}\in M_n(A)^*$ defined from
\begin{equation}\label{EXRSAl}
\langle x_{r,s}^{\al},\xi_{t,u}^a\rangle=\de_{r,t}\de_{s,u}\langle \al,a\rangle\qquad (1\leq t,u\leq n,\ a\in A).
\end{equation}

\begin{Lemma} \label{LTMx} 
There is an isomorphism of superalgebras
\begin{align*}
M_n(T_A)\iso T_{M_n(A)},\ \xi_{r,s}^{(a,\al)}\mapsto (\xi_{r,s}^a, x_{s,r}^\al) 
\end{align*}
for all $1\leq r,s\leq n$, $a\in A$ and $\al\in A^*$. 
\end{Lemma}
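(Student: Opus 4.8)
The plan is to check directly that the stated assignment $\xi_{r,s}^{(a,\al)}\mapsto (\xi_{r,s}^a, x_{s,r}^\al)$ extends to a well-defined homomorphism of $\k$-superalgebras and is bijective. First I would observe that both sides are free $\k$-supermodules of the same (finite) rank: $M_n(T_A)$ has rank $n^2(\dim A + \dim A^*) = 2n^2\dim A$, and $T_{M_n(A)} = M_n(A)\oplus M_n(A)^*$ has the same rank. Moreover, since every element of $T_A$ is uniquely of the form $(a,\al)$ with $a\in A$, $\al\in A^*$, the elements $\xi_{r,s}^{(a,\al)}$ span $M_n(T_A)$ as $(a,\al)$ ranges over $T_A$ and $r,s$ over $[1,n]$, and the map $(a,\al)\mapsto (\xi_{r,s}^a,x_{s,r}^\al)$ is clearly $\k$-linear in $(a,\al)$ for each fixed $(r,s)$. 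So the assignment extends uniquely to a $\k$-linear map $\Phi\colon M_n(T_A)\to T_{M_n(A)}$. It sends parity-homogeneous elements to parity-homogeneous elements of the same parity, since $\overline{\xi_{r,s}^{(a,\al)}}$ equals the parity of $(a,\al)$, which matches the parity of $\xi^a_{r,s}$ (resp. $x^\al_{s,r}$) when $\al=0$ (resp. $a=0$). It is visibly bijective, because it carries the obvious $\k$-basis of $M_n(T_A)$ (matrix units times a basis of $A$, and matrix units times a basis of $A^*$) to the obvious $\k$-basis of $T_{M_n(A)}$ (the $\xi^a_{r,s}$ for $a$ in a basis of $A$, and the $x^\al_{r,s}$ for $\al$ in a basis of $A^*$), up to the relabelling $(r,s)\leftrightarrow(s,r)$ in the $A^*$-part.

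The substantive point is multiplicativity. Write a general homogeneous element of $M_n(T_A)$ as $\xi^{(a,\al)}_{r,s}$ and compute both $\Phi(\xi^{(a,\al)}_{r,s}\xi^{(b,\be)}_{t,u})$ and $\Phi(\xi^{(a,\al)}_{r,s})\Phi(\xi^{(b,\be)}_{t,u})$. On the $M_n(T_A)$ side, using $\xi^{(a,\al)}_{r,s}\xi^{(b,\be)}_{t,u}=\de_{s,t}\,\xi^{(a,\al)(b,\be)}_{r,u}$ together with the trivial-extension product~\eqref{ETrivInt}, $(a,\al)(b,\be)=(ab,\ a\cdot\be+\al\cdot b)$, we get $\de_{s,t}\,\xi^{(ab,\ a\cdot\be+\al\cdot b)}_{r,u}$, which $\Phi$ sends to $\de_{s,t}\bigl(\xi^{ab}_{r,u},\ x^{a\cdot\be}_{u,r}+x^{\al\cdot b}_{u,r}\bigr)$. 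On the $T_{M_n(A)}$ side, $\Phi(\xi^{(a,\al)}_{r,s})\Phi(\xi^{(b,\be)}_{t,u})=(\xi^a_{r,s},x^\al_{s,r})(\xi^b_{t,u},x^\be_{u,t})$, and the trivial-extension product on $T_{M_n(A)}$ expands this as $\bigl(\xi^a_{r,s}\xi^b_{t,u},\ \xi^a_{r,s}\cdot x^\be_{u,t} + x^\al_{s,r}\cdot \xi^b_{t,u}\bigr)$, with $\xi^a_{r,s}\xi^b_{t,u}=\de_{s,t}\xi^{ab}_{r,u}$. So it remains to verify the two identities
\[
\xi^a_{r,s}\cdot x^\be_{u,t}=\de_{s,t}\,x^{a\cdot\be}_{u,r},
\qquad
x^\al_{s,r}\cdot \xi^b_{t,u}=\de_{s,t}\,x^{\al\cdot b}_{u,r},
\]
where on the left the dots denote the dual regular $M_n(A)$-bimodule actions from~\eqref{EActions}, and on the right the dots denote the dual regular $A$-bimodule actions from~\eqref{EActions} applied inside $A$. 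These are checked by pairing both sides against an arbitrary matrix unit $\xi^c_{p,q}\in M_n(A)$ and using the defining property~\eqref{EXRSAl} of the functionals $x^{(-)}_{(-),(-)}$, the multiplication rule for the $\xi$'s, and the definitions~\eqref{EActions} of $\al\cdot a$ and $a\cdot\al$ in $A$. For instance, $\langle \xi^a_{r,s}\cdot x^\be_{u,t},\,\xi^c_{p,q}\rangle=\pm\langle x^\be_{u,t},\,\xi^c_{p,q}\xi^a_{r,s}\rangle=\pm\de_{q,r}\langle x^\be_{u,t},\xi^{ca}_{p,s}\rangle=\pm\de_{q,r}\de_{u,p}\de_{t,s}\langle\be,ca\rangle$, while $\langle \de_{s,t}x^{a\cdot\be}_{u,r},\xi^c_{p,q}\rangle=\de_{s,t}\de_{u,p}\de_{r,q}\langle a\cdot\be,c\rangle=\de_{s,t}\de_{u,p}\de_{r,q}\langle\be,ca\rangle$ by~\eqref{EActions}; the two match once the Koszul signs are tracked. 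The second identity is entirely analogous using the right action.

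The main obstacle is bookkeeping of the Koszul signs coming from the superstructure: the dual actions~\eqref{EActions}, the identification~\eqref{EDualTensor} of dual tensor products, and the sign conventions in the products on $T_A$ and $T_{M_n(A)}$ all introduce factors $(-1)^{\bar a\bar\be}$ etc. One has to confirm that the signs produced on the two sides of each of the two identities above agree. The cleanest way to do this is to note that in every term a functional is paired with an algebra element, so a factor is nonzero only when the two have equal parity; this forces, e.g., $\bar a=\bar c$ (resp. $\bar\be$ equal to the parity of whatever it is paired with), collapsing the a priori different sign expressions to the same one. After this sign check, associativity and unitality of $\Phi$ are automatic from linearity and the computation on the spanning set $\{\xi^{(a,\al)}_{r,s}\}$, so $\Phi$ is an isomorphism of superalgebras, as claimed.
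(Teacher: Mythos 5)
Your proposal is correct and follows essentially the same route as the paper: reduce to multiplicativity on the spanning set $\{\xi^{(a,\al)}_{r,s}\}$, and verify the key identity $\xi_{r,s}^a\cdot x_{u,t}^\be + x_{s,r}^\al\cdot \xi_{t,u}^b=\de_{s,t}x_{u,r}^{a\cdot\be+\al\cdot b}$ (which you split by linearity into two halves) by evaluating both sides on an arbitrary $\xi^c_{v,w}$. One small remark: the $\pm$ signs you insert and then argue away are not actually present, since the dual regular actions in~\eqref{EActions} are defined without Koszul signs; the computation is entirely sign-free, as in the paper's proof.
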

\begin{proof}
Let $1\leq r,s,t,u,v,w\leq n$ and $a,b,c\in A$. 
On the one hand, we have 
\begin{align*}
\xi_{r,s}^{(a,\al)}\xi_{t,u}^{(b,\be)}=\de_{s,t}\xi_{r,u}^{(a,\al)(b,\be)}
=\de_{s,t}\xi_{r,u}^{(ab,a\cdot\be+\al\cdot b)}
\mapsto \de_{s,t}(\xi_{r,u}^{ab},x_{u,r}^{a\cdot\be+\al\cdot b}).
\end{align*}
On the other hand, 
\begin{align*}
(\xi_{r,s}^a, x_{s,r}^\al)(\xi_{t,u}^b, x_{u,t}^\be)=
(\xi_{r,s}^a\xi_{t,u}^b,  \xi_{r,s}^a\cdot x_{u,t}^\be + x_{s,r}^\al\cdot \xi_{t,u}^b). 
\end{align*}
Since $\xi_{r,s}^a\xi_{t,u}^b=\de_{s,t}\xi_{r,u}^{ab}$, we just need to prove that 
\begin{equation}\label{E080116}
\xi_{r,s}^a\cdot x_{u,t}^\be + x_{s,r}^\al\cdot \xi_{t,u}^b=\de_{s,t}x_{u,r}^{a\cdot\be+\al\cdot b}.
\end{equation} 
But  
\begin{align*}
(\xi_{r,s}^a\cdot x_{u,t}^\be + x_{s,r}^\al\cdot \xi_{t,u}^b)(\xi_{v,w}^c)&=
x_{u,t}^\be(\xi_{v,w}^c\xi_{r,s}^a)+x_{s,r}^\al( \xi_{t,u}^b\xi_{v,w}^c)
\\
&=\de_{w,r}x_{u,t}^\be(\xi_{v,s}^{ca})+\de_{u,v}x_{s,r}^\al( \xi_{t,w}^{bc})
\\
&=\de_{w,r}\de_{u,v}\de_{t,s}\langle\be,ca\rangle+\de_{u,v}\de_{s,t}\de_{r,w}\langle\al,bc\rangle
\\
&=\de_{s,t}\de_{u,v}\de_{r,w}(a\cdot\be+\al\cdot b)(c)
\\
&=\de_{s,t}x_{u,r}^{a\cdot\be+\al\cdot b}(\xi_{v,w}^c),
\end{align*}
proving (\ref{E080116}).
\end{proof}

\section{Turner doubles}\label{SD}

In this section, we review and develop Turner's theory of doubles  \cite{Turner,TurnerT,TurnerCat}. We will freely use the notation and conventions of Section~\ref{SSuper}.  
Let $X$ be an  $\O$-superalgebra, free of  finite rank as an  $\O$-supermodule. We consider $X_\k=X\otimes_\O\k$ as a $\k$-superalgebra. 

\subsection{Invariants}\label{SSInv}

For $d\in\Z_{\geq 0}$ we have a superalgebra structure on $\Tens^d X:=X^{\otimes d}$ induced by that on $X$. 
So we have a (locally-unital) superalgebra structure on 
$\Tens X:=\bigoplus_{d\geq 0} \Tens^d X$, 
with the product on each summand $\Tens^d X$ being as above, and $xy=0$ for $x\in \Tens^d X$ and $y\in \Tens^e X$ with $d\neq e$. Note that this algebra structure is different from the two  algebra structures on $\Tens X$ considered in \S\ref{SSSymDiv}, namely the product $\otimes$ and the product $*$.

In  fact, $
\Tens X
$ is now even a {\em superbialgebra}\, with the coproduct (\ref{EBasicCoproduct}). Since $\Si_d$ acts on $\Tens^d X$ with superalgebra automorphisms, the fixed points 
$
\Inv^d X=(\Tens^d X)^{\Si_d}
$
 is a subsuperalgebra of $\Tens^d X$. By observations made in \S\ref{SCoprod},  
$
\Inv X=\bigoplus_{d\geq 0} \Inv^dX
$
is a supercocommutative subsuperbialgebra of $\Tens X$.

\begin{Lemma}\label{Lstfoprem}
Let $x,y\in \Tens X$ and $z\in \Inv X$.  Then  
\begin{align*}
 (x*y) z &= \sum (-1)^{\bar y \bar z_{(1)}} (xz_{(1)}) * (y z_{(2)}), \\
 z (x*y) &= \sum (-1)^{\bar z_{(2)} \bar x} (z_{(1)} x) * (z_{(2)} y). 
\end{align*}
\end{Lemma}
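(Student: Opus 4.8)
The statement is a "module version" of the bialgebra compatibility in Lemma~\ref{LstDe}: there the coproduct $\De$ was intertwined with the $*$-product; here we want to intertwine the internal product of $\Tens X$ (the "concatenation" superalgebra structure, graded componentwise) with the $*$-product, when one of the factors is $\Si$-invariant. The plan is to reduce to the combinatorial core of Lemma~\ref{LstDe}, or better, to derive the identities directly from that lemma together with the fact that the internal product on $\Tens^d X$ is $\Si_d$-equivariant.

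First I would reduce to pure tensors: write $x = x_1\otimes\cdots\otimes x_a$, $y = y_1\otimes\cdots\otimes y_b$, and note that since $z\in\Inv X$ is a sum of homogeneous components, we may assume $z\in\Inv^{a+b}X$ (the internal product kills mismatched degrees, so only the degree-$(a+b)$ component of $z$ contributes to $(x*y)z$). Now expand $x*y = \sum_{g\in{}^{(a,b)}\D}(x\otimes y)^g$ using \eqref{EStar}, so that $(x*y)z = \sum_{g}\,(x\otimes y)^g\, z$. Because the internal product on $\Tens^{a+b}X$ commutes with the $\Si_{a+b}$-action (the action is by superalgebra automorphisms, as recalled in \S\ref{SSSuper}), and $z$ is $\Si_{a+b}$-invariant, we get $(x\otimes y)^g z = ((x\otimes y)\, z^{g^{-1}})^g = ((x\otimes y) z)^g$, up to the sign produced by moving the $\Si$-action past the product — but since the action is by automorphisms there is no extra sign, only the reordering sign $[g;\cdots]$ already built into $(\cdot)^g$. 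Hence $(x*y)z = \sum_g ((x\otimes y)\,z)^g$.

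Next I would compute $(x\otimes y)\,z$ componentwise: writing the degree-$(a+b)$ element $z$ and using Sweedler's notation for the coproduct $\De(z) = \sum z_{(1)}\otimes z_{(2)}$ with $z_{(1)}\in\Tens^a X$, $z_{(2)}\in\Tens^b X$ (recall $\De$ splits a length-$(a+b)$ tensor into its first $a$ and last $b$ entries, \eqref{EBasicCoproduct}), the internal product of the pure tensor $x\otimes y\in\Tens^a X\otimes\Tens^b X\subseteq\Tens^{a+b}X$ with $z$ multiplies entrywise: $(x\otimes y)\,z = \sum \pm\, (x z_{(1)})\otimes(y z_{(2)})$, with the sign being exactly the Koszul sign from the superalgebra structure on $\Tens^a X\otimes \Tens^b X$, namely $(-1)^{\bar y\,\bar z_{(1)}}$ — this is the same sign bookkeeping as in the definition of the product on a tensor product of superalgebras in \S\ref{SSSuper}. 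Now apply $\sum_g(\cdot)^g$ and reassemble: $\sum_g\big((xz_{(1)})\otimes(yz_{(2)})\big)^g = (xz_{(1)})*(yz_{(2)})$ by \eqref{EStar} again, since $xz_{(1)}\in\Tens^a X$ and $yz_{(2)}\in\Tens^b X$. This gives the first identity. The second identity follows by the completely symmetric argument (multiplying on the left), or alternatively by applying the first identity in the opposite superalgebra and using supercocommutativity \eqref{ESuperCoCo} of $\Inv X$ to rewrite $\sum z_{(1)}\otimes z_{(2)}$ as $\sum(-1)^{\bar z_{(1)}\bar z_{(2)}}z_{(2)}\otimes z_{(1)}$; I would do the direct computation to avoid sign-chasing through an opposite-algebra argument.

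The main obstacle is purely the sign bookkeeping: one must check that the Koszul signs produced by (a) the superalgebra structure on $\Tens^a X\otimes\Tens^b X$, (b) the $\Si$-action signs $[g;\cdots]$ in \eqref{EStar}, and (c) the reindexing when pulling $(\cdot)^g$ back out, collapse exactly to the single factor $(-1)^{\bar y\,\bar z_{(1)}}$ stated (and correspondingly $(-1)^{\bar z_{(2)}\bar x}$ in the second identity). I expect this to work cleanly precisely because $z$ is homogeneous and $\De(z)$ is homogeneous, so $\bar z_{(1)} + \bar z_{(2)} = \bar z$ and $\bar x z_{(1)}$ sits in a fixed degree; the bilinearity then lets one assume all of $x,y,z_{(1)},z_{(2)}$ are homogeneous, and the sign analysis is identical to that already carried out in the proof of Lemma~\ref{LstDe} (with $s\leftrightarrow x$, $t\leftrightarrow y$, and the roles of product and coproduct swapped). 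So in the write-up I would phrase it as: "arguing exactly as in the proof of Lemma~\ref{LstDe}", and only spell out the one-line sign match.
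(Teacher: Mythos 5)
Your proposal is correct and follows the paper's own proof essentially step for step: expand $x*y$ over ${}^{(e,d-e)}\D$, use the $\Si_d$-invariance of $z$ and the fact that $\Si_d$ acts by superalgebra automorphisms to write $(x\otimes y)^g z = ((x\otimes y)z)^g$, compute $(x\otimes y)z$ via the degree-$(e,d-e)$ component of $\De(z)$ with the Koszul sign $(-1)^{\bar y \bar z_{(1)}}$, and reassemble into a $*$-product. The paper handles the second identity exactly as you do, by a symmetric left-multiplication argument, and also takes care to note (as you implicitly do) that the restricted sum $\sum'$ over the relevant Sweedler component can be replaced by the full $\sum$ because the other summands vanish.
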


\begin{proof}
We may assume that $z\in \Inv^d X$, $x\in \Tens^e X$ and $y\in \Tens^{d-e} X$ for some non-negative integers $d\ge e$. 
Write $\sum' z_{(1)} \otimes z_{(2)}$ for the $\Inv^e X \otimes \Inv^{d-e} X$-component of 
$\De(z)$. 
Then, since $z$ is $\Si_d$-invariant, we have
\begin{align*}
(x*y) z & = \sum_{g\in\, {}^{(e,d-e)}\D} (x\otimes y)^g z \\
&= \sum_{g\in\, {}^{(e,d-e)}\D} ((x\otimes y) z)^g \\
&=  \sum_{g\in\, {}^{(e,d-e)}\D} 
\Big( \textstyle\sum' \big( (x\otimes y) (z_{(1)} \otimes z_{(2)}) \big) \Big)^g \\
&= \sum_{g\in\, {}^{(e,d-e)}\D} \Big( \textstyle\sum' (-1)^{\bar y \bar z_{(1)}} xz_{(1)}\otimes y z_{(2)} \Big)^g \\
&=
\sum\nolimits' (-1)^{\bar y \bar z_{(1)}} (xz_{(1)}) * (yz_{(2)})\\
&= \sum\nolimits (-1)^{\bar y \bar z_{(1)}} (xz_{(1)}) * (yz_{(2)}),
\end{align*}
where the last equality holds because a summand on the right hand side is zero unless 
$z_{(1)} \in \Inv^e X$ and $z_{(2)} \in \Inv^{d-e} X$. 
The second equality in the lemma is proved similarly. 
\end{proof}

\begin{Lemma}\label{Lstfo}
Let $x,y,z,u\in \Inv X.$ Then 
\[
(x*y) (z*u) = \sum (-1)^{s}
  (x_{(1)} z_{(1)}) *  (y_{(1)}z_{(2)}) * (x_{(2)} u_{(1)}) * (y_{(2)} u_{(2)}),
\]
where $s=(\bar x_{(2)}+\bar y_{(2)}) \bar z + \bar y_{(1)} (\bar x_{(2)} +  \bar z_{(1)}) 
+ \bar y_{(2)} \bar u_{(1)}$. 
\end{Lemma}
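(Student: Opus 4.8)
The plan is to reduce the four-fold product to a two-fold product by using Lemma~\ref{Lstfoprem} twice. More precisely, writing $z*u$ as a single element of $\Inv X$ and applying the first identity of Lemma~\ref{Lstfoprem} with $x,y$ in place of $x,y$ and $z*u$ in place of $z$, we obtain
\[
(x*y)(z*u)=\sum (-1)^{\bar y\,\overline{(z*u)}_{(1)}}\,\big(x\,(z*u)_{(1)}\big)*\big(y\,(z*u)_{(2)}\big).
\]
Next I would compute $\De(z*u)$ by Lemma~\ref{LstDe}, which gives $\De(z*u)=\sum (-1)^{\bar z_{(2)}\bar u_{(1)}}(z_{(1)}*u_{(1)})\otimes(z_{(2)}*u_{(2)})$. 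Substituting, the two factors become $x\,(z_{(1)}*u_{(1)})$ and $y\,(z_{(2)}*u_{(2)})$. I would then apply the \emph{second} identity of Lemma~\ref{Lstfoprem} to each of these: $x\,(z_{(1)}*u_{(1)})=\sum (-1)^{\bar x_{(2)}\bar z_{(1)}}(x_{(1)}z_{(1)})*(x_{(2)}u_{(1)})$ and similarly $y\,(z_{(2)}*u_{(2)})=\sum(-1)^{\bar y_{(2)}\bar z_{(2)}}(y_{(1)}z_{(2)})*(y_{(2)}u_{(2)})$ (here using supercocommutativity of $\Inv X$, or equivalently that $\De$ applied to $z_{(1)}$ and $z_{(2)}$ can be reorganized, but actually it is cleaner to directly split $z$ and $u$ into three tensor factors each via coassociativity — see below). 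Assembling the four surviving $*$-factors and reordering them from $(x_{(1)}z_{(1)})*(x_{(2)}u_{(1)})*(y_{(1)}z_{(2)})*(y_{(2)}u_{(2)})$ into the stated order $(x_{(1)}z_{(1)})*(y_{(1)}z_{(2)})*(x_{(2)}u_{(1)})*(y_{(2)}u_{(2)})$ using supercommutativity of $*$ produces the final sign.

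A cleaner bookkeeping route, which I would actually write up, is to use coassociativity from the start: write $\De^{(2)}(z)=\sum z_{(1)}\otimes z_{(2)}\otimes z_{(3)}$ is not needed; rather split $z$ and $u$ into two parts each and track which part multiplies which part of $x$ and $y$. The point is that after the two applications of Lemma~\ref{Lstfoprem}, the indices $x_{(1)},x_{(2)},y_{(1)},y_{(2)},z_{(1)},z_{(2)},u_{(1)},u_{(2)}$ appear exactly as in the statement, so only the signs need care. I would collect the signs step by step: the factor $(-1)^{\bar y\,\overline{(z*u)}_{(1)}}$ from the first application contributes $(\bar z_{(1)}+\bar u_{(1)})\bar y$ (noting $\overline{(z*u)}_{(1)}=\bar z_{(1)}+\bar u_{(1)}$ on the relevant component); the sign $(-1)^{\bar z_{(2)}\bar u_{(1)}}$ from Lemma~\ref{LstDe}; the signs $(-1)^{\bar x_{(2)}\bar z_{(1)}}$ and $(-1)^{\bar y_{(2)}\bar z_{(2)}}$ from the second applications; and finally the transposition sign $(-1)^{(\bar x_{(2)}+\bar u_{(1)})(\bar y_{(1)}+\bar z_{(2)})}$ from swapping the middle two $*$-factors. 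I would then verify that the total exponent simplifies modulo $2$ to $s=(\bar x_{(2)}+\bar y_{(2)})\bar z+\bar y_{(1)}(\bar x_{(2)}+\bar z_{(1)})+\bar y_{(2)}\bar u_{(1)}$, using the homogeneity constraints $\bar z=\bar z_{(1)}+\bar z_{(2)}$, $\bar u=\bar u_{(1)}+\bar u_{(2)}$, $\bar y=\bar y_{(1)}+\bar y_{(2)}$, $\bar x=\bar x_{(1)}+\bar x_{(2)}$, and the fact that $\langle\,,\rangle$-type vanishing forces parities to match where products are nonzero (so e.g.\ $\bar x_{(1)}$ contributes nothing that is not already accounted for).

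The main obstacle will be purely the sign arithmetic: there are several sources of signs coming from place permutations, from the coproduct formula of Lemma~\ref{LstDe}, and from the supercommutativity of $*$ used to reorder factors, and they must be combined correctly modulo $2$. There is no conceptual difficulty — the structural identity follows immediately from Lemmas~\ref{Lstfoprem} and~\ref{LstDe} together with supercommutativity and coassociativity — but I would be careful to fix, once and for all, an ordering convention for the Sweedler components and to expand $\De^{(2)}$ consistently, so that the exponent manipulation does not accumulate errors. I expect that, after reducing everything to a single display and cancelling terms that appear twice (hence vanish mod $2$) or that are forced to be even by parity matching, the claimed formula for $s$ drops out.
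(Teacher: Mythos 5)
Your proof is correct and rests on exactly the two lemmas the paper uses (Lemma~\ref{Lstfoprem} applied twice and Lemma~\ref{LstDe} applied once); the only difference is a bookkeeping choice. The paper applies Lemma~\ref{Lstfoprem} first to split $x*y$ (treating $x*y$ as the $\Inv X$ element and $z,u$ as the $\Tens X$ arguments), then expands $\Delta(x*y)$ via Lemma~\ref{LstDe} and applies Lemma~\ref{Lstfoprem} once more to each of $(x_{(1)}*y_{(1)})z$ and $(x_{(2)}*y_{(2)})u$; this lands directly on $(x_{(1)}z_{(1)})*(y_{(1)}z_{(2)})*(x_{(2)}u_{(1)})*(y_{(2)}u_{(2)})$ with no reordering required. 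You instead split $z*u$ first, which produces the factors in the order $(x_{(1)}z_{(1)})*(x_{(2)}u_{(1)})*(y_{(1)}z_{(2)})*(y_{(2)}u_{(2)})$ and therefore requires a final supercommutativity swap of the two middle $*$-factors, contributing the extra cross term $(\bar x_{(2)}+\bar u_{(1)})(\bar y_{(1)}+\bar z_{(2)})$ to your exponent. I have checked that your total exponent $\bar y(\bar z_{(1)}+\bar u_{(1)}) + \bar z_{(2)}\bar u_{(1)} + \bar x_{(2)}\bar z_{(1)} + \bar y_{(2)}\bar z_{(2)} + (\bar x_{(2)}+\bar u_{(1)})(\bar y_{(1)}+\bar z_{(2)})$ does reduce modulo $2$ to $s$: the pairs $\bar z_{(2)}\bar u_{(1)}$ and $\bar y_{(1)}\bar u_{(1)}$ each occur twice and cancel, and the remaining terms recombine using $\bar y = \bar y_{(1)}+\bar y_{(2)}$ and $\bar z = \bar z_{(1)}+\bar z_{(2)}$. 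One small clarification: no supercocommutativity or extra coassociativity argument is needed where you parenthetically worry about it, since each of $x,y,z,u$ is split only once and Lemma~\ref{Lstfoprem} is applied with the unsplit element of $\Inv X$ in the role that gets coproducted, exactly as in the paper. So both routes are valid; the paper's ordering is marginally cleaner because it avoids the final transposition and the accompanying cancellation check.
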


\begin{proof}
 Writing $\De(x*y)=\sum (x*y)_{(1)} \otimes (x*y)_{(2)}$,
we have
 \begin{align*}
 (x*y) (z*u) &= \sum (-1)^{\overline{(x*y)_{(2)}}\bar z} ((x*y)_{(1)} z) * ((x*y)_{(2)} u) \\
 &=\sum (-1)^{(\bar x_{(2)}+\bar y_{(2)}) \bar z + \bar y_{(1)} \bar x_{(2)}}
 ( (x_{(1)}*y_{(1)}) z) * ((x_{(2)}*y_{(2)})u) \\
 &= 
 \sum (-1)^s
 (x_{(1)} z_{(1)}) * (y_{(1)} z_{(2)}) * (x_{(2)} u_{(1)}) * (y_{(2)} u_{(2)} ),
 \end{align*}
 where $s = (\bar x_{(2)}+\bar y_{(2)}) \bar z + \bar y_{(1)} \bar x_{(2)} + \bar y_{(1)} \bar z_{(1)} + \bar y_{(2)} \bar u_{(1)}$ is as in the statement of the lemma, the first and third equalities hold by Lemma~\ref{Lstfoprem}, and the second one is due to Lemma~\ref{LstDe}.
\end{proof}

\begin{Lemma} \label{LExercise} 
Let $l\in\Z_{>0}$, $d_1,\dots,d_l,f_1,\dots,f_l\in\Z_{\geq 0}$, and $1_X=e_1+\dots+e_l$ with $e_ie_j=\de_{i,j}e_i$ for all $i,j$. If $x_i\in(\Tens^{d_i} X) e_i^{\otimes d_i}$ and $y_i\in e_i^{\otimes f_i}(\Tens^{d_i}  X) $ for $i=1,\dots,l$, then 
$$
(x_1*\dots*x_l)(y_1*\dots*y_l)=(-1)^{[x_1,\dots,x_l;y_1,\dots,y_l]}\de_{d_1,f_1}\dots\de_{d_l,f_l}(x_1y_1)*\dots*(x_ly_l).
$$
\end{Lemma}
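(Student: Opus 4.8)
The plan is to reduce the general statement to the $l=2$ case and then to proceed by a direct computation using Lemma~\ref{Lstfo} (or rather its proof via Lemma~\ref{Lstfoprem}), exploiting the orthogonality of the idempotents $e_i$ to kill almost every term. First I would note that it suffices to treat $l=2$: given the claim for two factors, one writes $x_1*\dots*x_l = x_1*(x_2*\dots*x_l)$ and $y_1*\dots*y_l = y_1*(y_2*\dots*y_l)$, observes that $x_2*\dots*x_l$ lies in $(\Tens^{d_2+\dots+d_l}X)(e_2+\dots+e_l)^{\otimes(d_2+\dots+d_l)}$ (and similarly for the $y$'s), applies the $l=2$ case with the idempotent decomposition $1_X = e_1 + (e_2+\dots+e_l)$, and then induction on $l$. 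The sign bookkeeping is compatible because $[x_1,\dots,x_l;y_1,\dots,y_l]$ is additive in the obvious way when one groups the last $l-1$ entries, using that $\bar x_i = \bar y_i$ on the nose when $d_i = f_i$ (and both sides vanish otherwise, so we may assume $d_i = f_i$ for all $i$ throughout).

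Next, for the $l=2$ case, I would expand $(x_1*x_2)(y_1*y_2)$ using Lemma~\ref{Lstfoprem} twice (this is exactly what the proof of Lemma~\ref{Lstfo} does), getting a sum over Sweedler components
\[
(x_1*x_2)(y_1*y_2) = \sum (-1)^s (x_{1,(1)} y_{1,(1)}) * (x_{2,(1)} y_{1,(2)}) * (x_{1,(2)} y_{2,(1)}) * (x_{2,(2)} y_{2,(2)}).
\]
The key point is that $x_i \in (\Tens^{d_i}X)e_i^{\otimes d_i}$ means every tensor factor of $x_i$ ends in $e_i$, so the coproduct components $x_{i,(1)}, x_{i,(2)}$ inherit this; similarly $y_{i,(1)}, y_{i,(2)}$ begin in $e_i$ in every tensor slot. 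Therefore a product like $x_{2,(1)} y_{1,(2)}$ is a product of matrix-unit-type elements landing in $e_2(\cdots)e_1$-shaped tensors, but since the multiplication in $\Tens^kX$ is componentwise in $X$ and $e_2 e_1 = 0$, such a term vanishes unless $x_{2,(1)}$ or $y_{1,(2)}$ has tensor degree zero. The surviving terms are precisely those where $x_{2,(1)}$ and $y_{1,(2)}$ are the degree-zero (scalar) Sweedler components, i.e. where $x_{2,(1)} = 1$, $x_{2,(2)} = x_2$, $y_{1,(1)} = y_1$, $y_{1,(2)} = 1$ (up to the identification of degree-zero tensors with scalars), forcing also the degree-matching $\delta_{d_1,f_1}\delta_{d_2,f_2}$. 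This collapses the sum to the single term $(x_1 y_1)*(x_2 y_2)$.

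The remaining work is to check that the sign $s$ in the surviving term equals $[x_1,x_2;y_1,y_2] = \bar x_2 \bar y_1$ (using $\bar y_1 = \bar x_1$... no: $[x_1,x_2;y_1,y_2] = \sum_{1\le a<c\le 2}\bar x_c \bar y_a = \bar x_2 \bar y_1$, with $\bar y_1$ meaning the parity of $y_1$). Plugging the surviving Sweedler components $\bar x_{1,(1)} = \bar x_1$, $\bar x_{1,(2)} = 0$, $\bar x_{2,(1)} = 0$, $\bar x_{2,(2)} = \bar x_2$, $\bar y_{1,(1)} = \bar y_1$, $\bar y_{1,(2)} = 0$, $\bar y_{2,(1)} = 0$, $\bar y_{2,(2)} = \bar y_2$ into $s = (\bar x_{2}+\bar y_{2})\bar z|_{\dots} + \dots$ — more precisely into the formula $s = (\bar x_{(2)}+\bar y_{(2)})\bar z + \bar y_{(1)}(\bar x_{(2)}+\bar z_{(1)}) + \bar y_{(2)}\bar u_{(1)}$ from Lemma~\ref{Lstfo} with $x\rightsquigarrow x_1, y\rightsquigarrow x_2, z\rightsquigarrow y_1, u\rightsquigarrow y_2$ — one finds that all terms involving a degree-zero component drop out and $s$ reduces to $\bar x_2 \bar y_1$, matching $[x_1,x_2;y_1,y_2]$. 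I expect this final sign verification to be the only genuinely fiddly part; the vanishing argument via $e_ie_j=\delta_{i,j}e_i$ is conceptually the heart of the proof but mechanically routine. An alternative to invoking Lemma~\ref{Lstfo} directly would be to prove the $l=2$ case from scratch by unwinding the definition of $*$ as a sum over shuffles and noting that the componentwise multiplication forces the shuffle to respect the block structure, but routing through Lemma~\ref{Lstfo} keeps the sign computation centralized.
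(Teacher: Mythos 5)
Your primary route has a genuine gap: Lemma~\ref{Lstfo} (and Lemma~\ref{Lstfoprem}, on which it rests) is stated and proved for $x,y,z,u\in\Inv X$, but Lemma~\ref{LExercise} imposes no invariance at all on the $x_i$ and $y_i$ — they are arbitrary elements of $(\Tens^{d_i}X)e_i^{\otimes d_i}$ and $e_i^{\otimes f_i}(\Tens^{d_i}X)$. The invariance is used essentially in the proof of Lemma~\ref{Lstfoprem}: the key step is $\sum_{g}(x\otimes y)^g z = \sum_g\big((x\otimes y)z^{g^{-1}}\big)^g = \sum_g\big((x\otimes y)z\big)^g$, which needs $z^{g^{-1}}=z$. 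Since $y_1*y_2$ (resp.\ $x_1*x_2$) is not $\Si_{d}$-invariant for generic $y_i$ (resp.\ $x_i$), the Sweedler expansion you write down is not available, and neither is the variant you'd need for the inductive step. The sign check you perform is internally consistent with the formula of Lemma~\ref{Lstfo}, but that formula itself does not apply here.

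The fix is exactly the ``alternative'' you mention at the end, and it is what the paper does: expand both $*$-products directly as $\sum_{g\in{}^{\la}\D}(x_1\otimes\cdots\otimes x_l)^g$ and $\sum_{h\in{}^{\mu}\D}(y_1\otimes\cdots\otimes y_l)^h$ with $\la=(d_1,\dots,d_l)$, $\mu=(f_1,\dots,f_l)$, observe that $(x_1\otimes\cdots\otimes x_l)^g(y_1\otimes\cdots\otimes y_l)^h=0$ unless $\la=\mu$ and $g=h$ (because a slot ending in $e_i$ meeting a slot beginning in $e_j$ with $i\ne j$ gives $e_ie_j=0$), and then use that $\Si_d$ acts by superalgebra automorphisms to pull the remaining diagonal sum back into a single $*$-product $(x_1y_1)*\cdots*(x_ly_l)$. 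This handles all $l$ at once, without the reduction to $l=2$; the sign $(-1)^{[x_1,\dots,x_l;y_1,\dots,y_l]}$ comes out directly from multiplying $(x_1\otimes\cdots\otimes x_l)(y_1\otimes\cdots\otimes y_l)$ inside $\Tens^d X$. Your inductive reduction is harmless in principle (and the block‑structure check that $x_2*\cdots*x_l$ lives over $e_2+\cdots+e_l$ is correct), but it buys nothing once the direct shuffle argument is in place, and it cannot be combined with Lemma~\ref{Lstfo} for the reason above.
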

\begin{proof}
Let $\la=(d_1,\dots,d_l)$ and $\mu=(f_1,\dots,f_l)$. Note that 
$(x_1\otimes\dots\otimes x_l)^g(y_1\otimes\dots\otimes y_l)^h=0
$
if $g\in{}^\la\D$, $h\in{}^\mu\D$ and either $\la\neq \mu$ or $g\neq h$. Since $\Si_d$ acts on $\Tens^d X$ with superalgebra automorphisms for every $d$, the result follows.  
\end{proof}

\begin{Corollary} \label{CExercise} 
If $X= X_1\oplus\dots\oplus X_l$ is a direct sum of superalgebras, then there is an isomorphism of superalgebras
$$
\bigoplus_{(d_1,\dots,d_l)\in \La(l,d)}\Inv^{d_1} X_1\otimes \dots\otimes  \Inv^{d_l} X_l\iso \Inv^d X,\ x_1\otimes\dots\otimes x_l\mapsto x_1*\dots*x_l.
$$
\end{Corollary}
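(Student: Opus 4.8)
The plan is to deduce Corollary~\ref{CExercise} from Lemma~\ref{LExercise} by exhibiting mutually inverse superalgebra homomorphisms. First I would fix the orthogonal idempotents $e_i\in X$ corresponding to the decomposition $X=X_1\oplus\dots\oplus X_l$, so that $X_i=e_iXe_i$ and $1_X=e_1+\dots+e_l$. Since $\Si_{d_i}$ preserves each $\Tens^{d_i}X_i$ inside $\Tens^{d_i}X$, Lemma~\ref{L120116} (applied repeatedly, or its obvious $l$-fold generalization) shows that the map
\[
\Phi\colon\bigoplus_{(d_1,\dots,d_l)\in\La(l,d)}\Inv^{d_1}X_1\otimes\dots\otimes\Inv^{d_l}X_l\to\Inv^d X,\qquad x_1\otimes\dots\otimes x_l\mapsto x_1*\dots*x_l
\]
is a well-defined injective $\O$-supermodule homomorphism. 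For surjectivity I would compare ranks via the basis~\eqref{EBasis3}: a basis of $\Inv^d X$ is indexed by the set $\Mat^\ttB_d$ for a homogeneous basis $\ttB$ of $X$ adapted to the decomposition $X=\bigoplus X_i$, and such a $\bc$ splits uniquely as $(\bc^{(1)},\dots,\bc^{(l)})$ with $|\bc^{(i)}|=d_i$ summing to $d$; this matches exactly the basis of the left-hand side obtained by tensoring the bases~\eqref{EBasis3} of the $\Inv^{d_i}X_i$. Hence $\Phi$ is an isomorphism of $\O$-supermodules.

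The substantive point is that $\Phi$ is multiplicative. Here I would invoke Lemma~\ref{LExercise} directly: given basis-type elements $x_1*\dots*x_l$ and $y_1*\dots*y_l$ with $x_i\in\Tens^{d_i}X_i=(\Tens^{d_i}X)e_i^{\otimes d_i}$ and $y_i\in\Tens^{f_i}X_i=e_i^{\otimes f_i}(\Tens^{f_i}X)$, Lemma~\ref{LExercise} gives
\[
(x_1*\dots*x_l)(y_1*\dots*y_l)=(-1)^{[x_1,\dots,x_l;y_1,\dots,y_l]}\,\de_{d_1,f_1}\cdots\de_{d_l,f_l}\,(x_1y_1)*\dots*(x_ly_l),
\]
and the right-hand side is precisely $\Phi$ applied to the product $(x_1\otimes\dots\otimes x_l)(y_1\otimes\dots\otimes y_l)$ in the tensor-product superalgebra, whose multiplication carries exactly the Koszul sign $(-1)^{[x_1,\dots,x_l;y_1,\dots,y_l]}$ by the formula for products on tensor products of superalgebras in~\S\ref{SSSuper}, together with the convention that components of different internal degree multiply to zero (which produces the Kronecker deltas). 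Linearity then extends this to all elements. One should also note $\Phi$ sends the unit $e_1\otimes\dots\otimes e_l$ of the source (in the locally-unital sense) to $e_1*\dots*e_l=1_X^{*d}/\text{(nothing)}$; more precisely the identity of $\Inv^d X$ is $\sum_{(d_1,\dots,d_l)}e_1^{\otimes d_1}*\dots*e_l^{\otimes d_l}$, matching the identity of the source summand by summand.

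I expect the only genuine obstacle to be bookkeeping: verifying that the sign $[x_1,\dots,x_l;y_1,\dots,y_l]$ produced by Lemma~\ref{LExercise} coincides with the sign in the multiplication rule for $\bigoplus_{(d_1,\dots,d_l)}\Inv^{d_1}X_1\otimes\dots\otimes\Inv^{d_l}X_l$ as a tensor product of superalgebras, and that the grading constraint $|\bc^{(i)}|=d_i$ is respected on both sides. Both reduce to the definitions in~\S\ref{SSSuper} and~\S\ref{SSSymDiv}, so the proof is short; I would phrase it as: ``$\Phi$ is bijective by Lemma~\ref{L120116} and a rank count using~\eqref{EBasis3}, and multiplicative by Lemma~\ref{LExercise} together with the definition of the superalgebra structure on a tensor product, hence an isomorphism of superalgebras.''
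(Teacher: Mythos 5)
Your proof is correct and matches the paper's own (one-line) argument, which simply cites Lemmas~\ref{L120116} and \ref{LExercise}. You supply the details the paper leaves implicit: the $\O$-supermodule isomorphism from iterating Lemma~\ref{L120116} (the auxiliary rank count via~\eqref{EBasis3} is fine but redundant, since the lemma already asserts both injectivity and the direct-sum decomposition), and multiplicativity by matching the Koszul sign and Kronecker deltas of Lemma~\ref{LExercise} against the tensor-product superalgebra structure of the source and its product-of-algebras structure across summands.
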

\begin{proof}
This follows from Lemmas~\ref{L120116} and \ref{LExercise}.
\end{proof}

Recall from from \S\ref{SSTEA} that we consider $X^*$ as a bimodule over $X$. Note for $d\in\Z_{\geq 0}$ that $\Tens^d (X^*)$ is naturally a bimodule over $\Tens^d X$ with respect to
\begin{equation}\label{EdAction}
(x_1\otimes \dots\otimes x_d)\cdot (\xi_1\otimes\dots\otimes \xi_d)=(-1)^{[x_1,\dots,x_d;\xi_1,\dots,\xi_d]}
(x_1\cdot\xi_1)\otimes \dots\otimes (x_d\cdot \xi_d),
\end{equation}
where $x_1,\dots,x_d\in X$ and $\xi_1,\dots,\xi_d\in X^*$, or $\xi_1,\dots,\xi_d\in X$ and $x_1,\dots,x_d\in X^*$. 
As usual, if $d\neq e$ we define the action trivially: $\Tens^d X\cdot \Tens^e(X^*)=\Tens^e(X^*)\cdot \Tens^d X=0$. This yields a $\Tens X$-bimodule structure on $\Tens (X^*)$. 
Upon restriction, we now get an $\Inv X$-superbimodule structure on $\Inv (X^*)$. We refer to this superbimodule structure as the 
{\em  standard superbimodule} structure. On the other hand, we have the dual regular $\Inv X$-superbimodule structure on $(\Inv X)^*$, see 
 (\ref{EActions}). 
By Lemmas~\ref{LemmaOdin} and \ref{LemmaTri}, we have an embedding
\begin{equation}\label{EIota}
\iota\colon (\Inv X)^*\iso \Sym(X^*)\,\, \into\,\, {}'{\Sym(X^*)}\iso \Inv(X^*).
\end{equation}

\begin{Lemma}\label{Liotahom} 
The embedding $\iota$ is a homomorphism of $\Inv X$-bimodules. 
\end{Lemma}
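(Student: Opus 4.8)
The plan is to verify directly that $\iota$ intertwines the standard and dual-regular bimodule structures by checking compatibility with left and right multiplication on a generating set, using the explicit formulas already assembled in the excerpt. Since $\iota$ is the composite $(\Inv X)^* \iso \Sym(X^*) \into {}'{\Sym(X^*)} \iso \Inv(X^*)$ from~\eqref{EIota}, and the second and fourth maps are superalgebra isomorphisms coming from Lemmas~\ref{LemmaOdin} and~\ref{LemmaTri}, the claim amounts to showing that for $x\in\Inv X$ and $\phi\in(\Inv X)^*$ one has $\iota(x\cdot\phi)=x\cdot\iota(\phi)$ and $\iota(\phi\cdot x)=\iota(\phi)\cdot x$, where the left-hand actions are the dual regular ones from~\eqref{EActions} and the right-hand actions are the standard ones from~\eqref{EdAction}.

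First I would reduce to $x\in X\subseteq\Inv^1 X$ and, by $\O$-linearity and the basis description~\eqref{EBasis1}, to $\phi$ of the form $\xi_1\cdots\xi_d\in\Sym^d(X^*)$ with $\xi_a\in X^*$; under Lemma~\ref{LemmaOdin} this $\phi$ corresponds to the functional $\xi_1\otimes\dots\otimes\xi_d$ on $\Inv^d X$. The reduction to $x\in X$ is legitimate because $\Inv X$ is generated as an algebra (under $*$) by $X$ via~\eqref{EBasis3}, and both bimodule structures are defined using the same (shuffle) product, so once compatibility with a $*$-generating set of the acting algebra is established, compatibility with all of $\Inv X$ follows formally. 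Then, for $x\in X$, I would compute $x\cdot(\xi_1\otimes\dots\otimes\xi_d)$ in $(\Inv X)^*$ by unwinding the coproduct~\eqref{EBasicCoproduct}: $\langle \xi_1\otimes\dots\otimes\xi_d, \Delta(-)\rangle$ applied against the relevant components, keeping track of the Koszul signs from~\eqref{EDualTensor} and~\eqref{ETPSign}. On the other side, I would compute $x\cdot\iota(\xi_1\cdots\xi_d)$ using the $\Inv X$-bimodule structure on $\Inv(X^*)$, which by Lemma~\ref{Lstfoprem} expands a left multiplication of a $*$-product by $x$ into a sum over the coproduct of $x$; since $x\in X$ is primitive-like (its coproduct $\Delta(x)=x\otimes 1+1\otimes x$ in degrees $(1,0)$ and $(0,1)$), this sum telescopes into $\sum_a \pm\, \xi_1*\dots*(x\cdot\xi_a)*\dots*\xi_d$ with signs governed by~\eqref{EGSign}. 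Matching the two expressions is then a bookkeeping check that the signs from the coproduct-dual pairing agree with the signs from~\eqref{EdAction}; the right-action identity is entirely parallel, using the second formula of Lemma~\ref{Lstfoprem} and~\eqref{ERRegNew} in place of~\eqref{ELRegNew}.

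The main obstacle I expect is purely the sign bookkeeping: reconciling the sign $(-1)^{[\be_1,\dots,\be_d;v_1,\dots,v_d]}$ built into the identification $(\Tens^d V)^*\cong (V^*)^{\otimes d}$ with the signs $(-1)^{[g;v_1,\dots,v_d]}$ produced by the $\Si_d$-action and the $*$-product, and with the sign in the componentwise dual action~\eqref{EdAction}. A clean way to organize this is to fix a basis $\{x_r\}$ of $X$ with dual basis $\{\xi_r\}$, test both sides of the desired identities against arbitrary basis monomials $x_{i_1}\otimes\dots\otimes x_{i_d}\in\Inv^d X$ (or rather against their symmetrizations, i.e.\ $*$-products of the $x_r$), and observe that both sides are supported on the same monomials with the same coefficient $\pm1$ after the reductions above — here Corollary~\ref{CPerfPair} and the pairing $\langle x^{*\bc},\xi^{(\bd)}\rangle=\pm\delta_{\bc,\bd}$ make the comparison transparent because at most one term survives in each evaluation. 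Once the $x\in X$ case is done on both sides, I would conclude by the generation argument that $\iota$ is a homomorphism of $\Inv X$-bimodules, completing the proof.
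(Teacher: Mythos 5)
Your reduction to $x\in X\subseteq\Inv^1 X$ is where the argument breaks. The two bimodule structures you are trying to intertwine are not built from the shuffle product at all: the dual regular action on $(\Inv X)^*$ comes from the componentwise product on $\Tens^d X$, and the standard action on $\Inv(X^*)$ is the componentwise action~\eqref{EdAction}. Both are degree-by-degree, meaning $\Inv^e X$ acts as zero on $(\Inv^f X)^*$ and on $\Inv^f(X^*)$ unless $e=f$. In particular $X=\Inv^1 X$ does not generate $\Inv^d X$ under the relevant (componentwise) product, so verifying the intertwining for $x\in X$ is essentially vacuous and says nothing about $x\in\Inv^d X$ for $d\geq 2$, which is the actual content of the lemma. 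Relatedly, the telescoped formula you extract from Lemma~\ref{Lstfoprem} is wrong: for $x\in X$ and $d\geq 2$, every term in the expansion of $x\cdot(\xi_1*\dots*\xi_d)$ contains a factor of the form $1\cdot\xi_a$ with $1\in\Tens^0 X=\O$ (the empty tensor coming from $\Delta(x)=x\otimes 1+1\otimes x$, not $1_X\in X$), and by the degree convention this factor is zero. So $x\cdot(\xi_1*\dots*\xi_d)=0$ rather than a ``derivation-type'' sum $\sum_a\pm\,\xi_1*\dots*(x\cdot\xi_a)*\dots*\xi_d$. A degree-by-degree action cannot look like a derivation, and the sign bookkeeping you anticipate never gets started.

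The paper's proof avoids element-wise computation entirely via a scalar trick. Writing $\xi|_{\Inv X}$ for the restriction of $\xi\in\Tens^d(X^*)$ to $\Inv X$, one has $\iota(\xi|_{\Inv X})|_{\Inv X}=d!\,\xi|_{\Inv X}$, because $\xi_1*\dots*\xi_d$ restricted to the $\Si_d$-invariants equals $d!$ times $(\xi_1\otimes\dots\otimes\xi_d)|_{\Inv X}$. Since the dual regular action commutes with restriction (i.e.\ $x\cdot(\xi|_{\Inv X})=(x\cdot\xi)|_{\Inv X}$) and the standard action manifestly commutes with restriction as well, restricting both sides of the desired identity to $\Inv X$ reduces it to the trivial statement that multiplication by $d!$ commutes with the action of $x$, with no signs to track and for arbitrary $x\in\Inv X$ at once. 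The proof is then completed by showing the restriction map $\Inv(X^*)\to(\Inv X)^*$ is injective, which again uses $d!\cdot{}'{\Sym(X^*)}\subseteq\Sym(X^*)$ and the same scalar relation. If you want to pursue a direct verification, you would have to check the intertwining for a spanning set of $\Inv^d X$, not just $\Inv^1 X$, which is what the $d!$ argument achieves in one stroke.
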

\begin{proof}
Every element of $\Inv(X^*)$ is by definition a linear combination of functions of the form $\xi_1\otimes\dots\otimes \xi_d$ with $\xi_1,\dots, \xi_d\in X^*$. 
On the other hand, by Lemma~\ref{LemmaOdin}, $(\Inv X)^*$ is spanned by the functions of the form $(\xi_1\otimes\dots\otimes \xi_d)|_{\Inv X}$ with $\xi_1,\dots, \xi_d\in X^*$, and 
$$
\iota((\xi_1\otimes\dots\otimes \xi_d)|_{\Inv X})=\xi_1*\dots*\xi_d.
$$ 
Note that 
$$
(\xi_1*\dots*\xi_d)|_{\Inv X}=d!(\xi_1\otimes\dots\otimes \xi_d)|_{\Inv X}.
$$
We have proved for any $\xi\in \Tens^d(X^*)$ that 
\begin{equation}\label{E130116}
\iota(\xi|_{\Inv X})|_{\Inv X}=d!\xi|_{\Inv X}.
\end{equation}

Let $x\in\Inv X$. We now prove that $\iota(x\cdot (\xi|_{\Inv X}))=x\cdot\iota( \xi|_{\Inv X})$, the proof that $\iota((\xi|_{\Inv X})\cdot x)=\iota( \xi|_{\Inv X})\cdot x$ being similar. Using (\ref{E130116}), we get
\begin{align*}
\iota(x\cdot (\xi|_{\Inv X}))|_{\Inv X}
&=\iota((x\cdot \xi)|_{\Inv X})|_{\Inv X}
=d!(x\cdot \xi)|_{\Inv X}
=d!\,x\cdot (\xi|_{\Inv X})
\\
&=x\cdot\big((\iota( \xi|_{\Inv X}))|_{\Inv X}\big)
=(x\cdot\iota( \xi|_{\Inv X}))|_{\Inv X}.
\end{align*}

To prove that $\iota(x\cdot (\xi|_{\Inv X}))=x\cdot\iota( \xi|_{\Inv X})$ it now suffices to show that the map $\Inv(X^*)\to (\Inv X)^*$ given by $\eta\mapsto \eta |_{\Inv X}$ is injective. Let $\eta\in \Inv^d(X^*)$ satisfy $\eta |_{\Inv X}=0$. Since $d!({}'{\Sym(X^*)})\subseteq \Sym(X^*)$, we can write $d!\eta=\iota(\xi|_{\Inv X})$ for some $\xi\in \Tens^d(X^*)$. Then, using (\ref{E130116}), 
$$
0=d!\eta|_{\Inv X}=\iota(\xi|_{\Inv X})|_{\Inv X}
=d!\xi|_{\Inv X}
$$
Hence $\xi|_{\Inv X}=0$. 
But $\iota(\xi|_{\Inv X})=d!\eta$, 
whence $\eta=0$, as desired.
\end{proof}

Recall  
the trivial extension algebra $T_X=X\oplus X^*$ from \S\ref{SSTEA}.
For $d,e\in\Z_{\geq 0}$, we define $\Tens^{d,e} T_X$ to be the span in $\Tens^{d+e} T_X$ of pure tensors $y_1\otimes \dots\otimes y_{d+e}$ such that $d$ of the $y$'s are in $X$ and $e$ of the $y$'s are in $X^*$. 
We identify $\Tens^d X$ with $\Tens^{d,0} T_X$ and $\Tens^d (X^*)$ with $\Tens^{0,d} T_X$ in the obvious way. 
Then for $\xi\in \Tens^d X$ and $x\in \Tens^e (X^*)$, we have
$$
\xi x=\xi\cdot x\quad \text{and}\quad x\xi= x\cdot \xi,
$$
where the left hand sides are products in the algebra $\Tens^d T_X$ and the right hand sides are the {\em standard} actions in the sense of (\ref{EdAction}). (Note the change of our notational `paradigm': from now on we use Greek letters to denote elements of $\Tens X$ and Roman letters for elements of
$\Tens (X^*)$.)

\begin{Lemma} \label{L140116_1} 
Let $a,b,d\in\Z_{\geq 0}$ with $a,b\leq d$. Suppose that
$x\in\Inv^a (X^*)$, $y\in\Inv^b (X^*)$, 
$\xi\in\Inv^{d-a}X$, and $\eta\in\Inv^{d-b}X$. Then in $\Inv^d T_X$ we have
$$
(\xi*x)(\eta*y)=\sum (-1)^{\bar\xi_{(1)}(\bar\xi_{(2)}+\bar\eta+\bar x)+\bar \eta_{(1)}\bar x} 
 \xi_{(2)}\eta_{(1)} * (x\cdot\eta_{(2)})*(\xi_{(1)}\cdot y).
$$
\end{Lemma}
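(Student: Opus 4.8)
The plan is to expand the left-hand side using the multiplication rule of the trivial extension algebra $\Tens^d T_X$ together with the shuffle/coproduct compatibility established earlier. First I would recall that in $\Tens T_X$ the product of an element of $\Tens^{d-a}X = \Tens^{d-a,0}T_X$ with an element of $\Tens^a(X^*) = \Tens^{0,a}T_X$ is just the standard bimodule action, as noted right before the statement. The idea is to compute $(\xi * x)(\eta * y)$ inside $\Inv^d T_X$ (which is a subalgebra of $\Tens^d T_X$ for the ordinary tensor-algebra product, not the $*$-product) by repeatedly applying Lemma~\ref{Lstfoprem}, which tells us how the tensor product interacts with a $*$-product when one factor is $\Si$-invariant. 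Here both $\xi$ and $\eta$ lie in $\Inv X$, $x$ and $y$ in $\Inv(X^*)$, so all four factors are invariant and Lemma~\ref{Lstfoprem} (and hence Lemma~\ref{Lstfo}) applies.

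The key steps, in order: (1) Apply the second formula of Lemma~\ref{Lstfoprem} with the invariant element on the left being $\xi$ and the $*$-factored element being... actually more carefully, since $(\xi * x)$ and $(\eta * y)$ are each $*$-products of two invariant elements, I would apply Lemma~\ref{Lstfo} directly with the substitution $x \rightsquigarrow \xi$, $y \rightsquigarrow x$, $z \rightsquigarrow \eta$, $u \rightsquigarrow y$. That gives
\[
(\xi * x)(\eta * y) = \sum (-1)^{s}\, (\xi_{(1)}\eta_{(1)}) * (x_{(1)}\eta_{(2)}) * (\xi_{(2)} y_{(1)}) * (x_{(2)} y_{(2)}),
\]
with $s = (\bar\xi_{(2)}+\bar x_{(2)})\bar\eta + \bar x_{(1)}(\bar\xi_{(2)} + \bar\eta_{(1)}) + \bar x_{(2)}\bar y_{(1)}$. (2) Now observe that each of the four products appearing is a product of a "Greek" (element of $\Tens X$) with a "Roman" (element of $\Tens(X^*)$) or similar; crucially, a product like $\xi_{(1)}\eta_{(1)}$ of two elements of $\Tens X$ stays in $\Tens X$, while $x_{(1)}\eta_{(2)}$ and $\xi_{(2)} y_{(1)}$ are mixed and land in $\Tens(X^*)$ via the standard action, and $x_{(2)} y_{(2)}$ is a product of two elements of $\Tens(X^*)$, which is zero unless one of them is the scalar in degree $0$. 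Since $x \in \Inv^a(X^*)$ and $y \in \Inv^b(X^*)$ with the four tensor-degrees forced to sum correctly inside $\Inv^d T_X$, the degree bookkeeping forces $x_{(2)} y_{(2)}$ to be a scalar term: effectively only the terms where $x_{(2)}$ or $y_{(2)}$ has degree $0$ survive, which—after using cocommutativity and reindexing the Sweedler sums—collapses the four-fold shuffle into the three-fold shuffle $\xi_{(2)}\eta_{(1)} * (x\cdot\eta_{(2)}) * (\xi_{(1)}\cdot y)$ in the statement. (3) Finally, track the sign: combine $s$ with the sign $(-1)^{[\dots]}$ coming from the identification of products in $T_X$ with standard actions, use that $\langle\cdot,\cdot\rangle$-type parings vanish unless parities match to simplify, and match against $(-1)^{\bar\xi_{(1)}(\bar\xi_{(2)}+\bar\eta+\bar x)+\bar\eta_{(1)}\bar x}$.

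I expect the main obstacle to be the sign computation in step (3): reconciling the sign $s$ from Lemma~\ref{Lstfo}, the Koszul signs from \eqref{EdAction} when reinterpreting mixed products as standard actions, and the signs absorbed when the degree-$0$ parts of the Sweedler decompositions are pulled out and relabeled. The relabeling itself (step 2) is also slightly delicate: one must argue that $x_{(2)}$ being forced into degree $0$ means $\De(x)$ effectively contributes only $x\otimes 1$ in the relevant component, hence $x_{(1)}$ can be replaced by $x$ and $x_{(2)}$ by $1$, and symmetrically for $y$, and that this is exactly what makes the four-fold shuffle product degenerate—using that $z * 1 = z$ for the unit in degree $0$. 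Supercocommutativity \eqref{ESuperCoCo} of $\Inv X$ may be needed to bring the surviving Sweedler indices into the order $\xi_{(2)}\eta_{(1)}$, $\eta_{(2)}$, $\xi_{(1)}$ displayed in the target formula, generating the cross-terms $\bar\xi_{(1)}(\bar\xi_{(2)}+\bar\eta+\bar x)$ and $\bar\eta_{(1)}\bar x$. All of this is routine once organized carefully, but it is the step where an error is most likely to creep in, so I would verify it by a small example (e.g. $\dim X = 1$, $d$ small) as a sanity check.
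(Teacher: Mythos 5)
Your proposal matches the paper's proof exactly: both invoke Lemma~\ref{Lstfo} with the substitution you describe, use $X^*X^*=0$ in $T_X$ to kill the fourth shuffle factor $x_{(2)}y_{(2)}$ (forcing $x_{(2)}=y_{(2)}=1$, $x_{(1)}=x$, $y_{(1)}=y$), and apply supercocommutativity~\eqref{ESuperCoCo} to the $\xi$-coproduct to reindex and obtain the stated sign $(-1)^{\bar\xi_{(1)}(\bar\xi_{(2)}+\bar\eta+\bar x)+\bar\eta_{(1)}\bar x}$. The one spurious worry in your step (3) is the anticipated ``sign coming from the identification of products in $T_X$ with standard actions''---there is none, since the paper defines $\xi x=\xi\cdot x$ and $x\xi=x\cdot\xi$ without any extra sign, so the direct computation closes cleanly.
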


\begin{proof}
Since $X^* X^*=0$ in $T_X$, the result follows from ~\eqref{ESuperCoCo} and Lemma~\ref{Lstfo}.
\end{proof}

\subsection{Doubles}\label{SSDoubles}
We have a natural pairing $\langle \cdot, \cdot \rangle$ between $\Inv X$ and $(\Inv X)^*$, with $\langle x, \xi \rangle=\langle \xi, x \rangle=0$ for $\xi\in\Inv^d X$ and $x\in (\Inv^e X)^*$ with $d\neq e$. Also, for every $d\in\Z_{\geq 0}$ we have the {\em dual regular} actions (\ref{EActions}) of $\Inv^d X$ on $(\Inv^d X)^*$. Again, we declare that $\xi\cdot x=x\cdot \xi=0$ if $\xi\in\Inv^d X$ and $x\in (\Inv^e X)^*$ with $d\neq e$. 
There is a superbialgebra structure on $(\Inv X)^*$ which is dual to that on $\Inv X$. We write 
\begin{equation}\label{ENabla}
\nabla\colon (\Inv X)^*\to (\Inv X)^*\otimes (\Inv X)^*
\end{equation}
for the corresponding coproduct. 
Note that $\nabla((\Inv^d X)^*)\subseteq (\Inv^d X)^* \otimes (\Inv^d X)^*$ for all $d\in \Z_{\ge 0}$. 

We now recall Turner's construction \cite{TurnerCat} of a {\em double superalgebra} $DX$. As an $\O$-superspace, 
$$
DX:=\Inv X\otimes (\Inv X)^*.
$$
The product is defined, using Sweedler's notation for $\Delta$, as follows:
\begin{equation}\label{EProductDoubleBasic}
(\xi\otimes x)(\eta\otimes y)=\sum (-1)^{\bar\xi_{(1)}(\bar\xi_{(2)}+\bar\eta+\bar x)+\bar \eta_{(1)}\bar x} 
 \xi_{(2)}\eta_{(1)}\otimes (x\cdot\eta_{(2)})(\xi_{(1)}\cdot y)
\end{equation}
for $\xi,\eta\in \Inv X$ and $x,y\in  (\Inv X)^*$. 
The associativity of the product can be checked by a straightforward 
computation, cf.~\cite[Theorem 1.1]{TurnerCat}.
In view of (\ref{ELRegNew}) and (\ref{ERRegNew}), this product formula can be rewritten, using Sweedler's notation for $\Delta$ and $\nabla$, to match \cite[Remark 1.3]{TurnerCat}:
\begin{equation}\label{EProductDoubleGen}
(\xi\otimes x)(\eta\otimes y)=\sum (-1)^s 
\langle \xi_{(1)},y_{(2)}\rangle
\langle x_{(1)},\eta_{(2)}\rangle
 \xi_{(2)}\eta_{(1)}\otimes x_{(2)}y_{(1)},
\end{equation}
where 
$$
s=\bar\xi_{(1)}\bar\xi_{(2)}+\bar\xi_{(1)}\bar\eta_{(1)}+\bar x_{(2)}\bar y_{(2)}+\bar y_{(1)}\bar y_{(2)} + \bar x_{(1)}\bar \eta_{(1)}+\bar x_{(2)}\bar \eta_{(2)}+\bar x_{(2)}\bar \eta_{(1)}.
$$

It is easy to see that we can write the superalgebra $DX$ as a direct sum of subsuperalgebras
$$
DX=\bigoplus_{d\geq 0} D^d X,
$$
where
\begin{equation}\label{EDirectSum}
D^dX:=\bigoplus_{e,f\geq 0,\ e+f=d}\Inv^e X\otimes(\Inv^f X)^*.
\end{equation}
We use the following notation for the summands on the right hand side above:
\begin{equation}\label{EDEF}
D^{e,f}X:=\Inv^e X\otimes(\Inv^f X)^*.
\end{equation}

\begin{Remark} 
{\rm 
The definition of the double $D^d X$ makes sense for any $\k$-algebra $X$, without any assumption on the ring $\k$. We also note that Lemmas~\ref{Lstfoprem}, \ref{Lstfo}, and \ref{L140116_1} do not need the assumption that $\k=\O$. However, it is crucial to work over $\O$ when we deal with the divided power version ${}'{D^d X}$ below. 
}
\end{Remark}

\begin{Remark} \label{R123}
{\rm The direct sum decomposition in (\ref{EDirectSum}) is a priori only a  decomposition of $\O$-modules. But one can say a little more. 

(i) $D^{d,0} X$ is a subalgebra of $D^dX$ naturally isomorphic to the algebra $\Inv^dX$. 

(ii) $D^{0,d} X$ is an ideal  in $D^dX$. 
Moreover, $$(D^{e,f}X )( D^{0,d} X)   = (D^{0,d} X)( D^{e,f}X)=0$$ unless $e=d$, in which case for $\xi\in \Inv^dX$ and $x\in (\Inv^d X)^*$, we have
\begin{align}
(\xi\otimes 1)(1\otimes x)&=1\otimes (\xi\cdot x),
\label{ELReg}
\\
(1\otimes x)(\xi\otimes 1)&=1\otimes (x\cdot \xi).
\label{ERReg}
\end{align}
In particular, $D^{d,0} X\oplus D^{0,d} X$ is a subalgebra of $D^dX$, isomorphic to $T_{\Inv^d X}$. 
As a still more special case, we get $D^1X\cong T_X$.
}
\end{Remark}

\subsection{Divided power doubles}\label{SSDivDoub}
In view of Lemma~\ref{LemmaOdin}, we identify the superalgebras 
\begin{equation}\label{EId}
(\Inv X)^*=\Sym(X^*).
\end{equation} 
Then 
$$
\Sym(X^*)\subseteq {}'{\Sym(X^*)}\subseteq \Sym(X^*)\otimes_\O\K\cong \Sym(X^*_\K)=(\Inv X_\K)^*,
$$
where we have used the identification (\ref{EId}) over $\K$ for the last equality. We have the left and right dual regular actions of $\Inv X_{\K}$ on $(\Inv X_{\K})^*$. Since $\Inv X\subseteq \Inv X_{\K}$ in a natural way, we can also speak of the dual regular actions of $\Inv X$ on $(\Inv X_{\K})^*$. 

\begin{Lemma} \label{L140116_2} 
The $\O$-submodule ${}'{\Sym(X^*)}\subset (\Inv X_{\K})^*$ 
is invariant with respect to the dual regular actions of $\Inv X$ on $(\Inv X_{\K})^*$. Thus, ${}'{\Sym(X^*)}$ is an $\Inv X$-superbimodule. With respect to this $\Inv X$-superbimodule structure on ${}'{\Sym(X^*)}$ and the standard $\Inv X$-superbimodule structure on $\Inv(X^*)$, the map 
$\kappa\colon {}'{\Sym (X^*)} \iso \Inv(X^*)$ of Lemma~\ref{LemmaTri}
 is an isomorphism of $\Inv X$-superbimodules. 
\end{Lemma}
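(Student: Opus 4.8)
The plan is to establish two things: first, that $'\Sym(X^*)$ is closed under the dual regular actions of $\Inv X$ inside $(\Inv X_\K)^*$; and second, that the isomorphism $\kappa$ of Lemma~\ref{LemmaTri} intertwines these actions with the standard actions on $\Inv(X^*)$. The key observation is that we already have, by Lemma~\ref{Liotahom}, that the embedding $\iota\colon (\Inv X)^* = \Sym(X^*) \hookrightarrow \Inv(X^*)$ is a homomorphism of $\Inv X$-bimodules, where the right-hand side carries the standard bimodule structure; and by construction $\iota = \kappa|_{\Sym(X^*)}$. So over $\K$ the map $\kappa_\K\colon (\Inv X_\K)^* \iso \Inv(X^*_\K)$ is an isomorphism of $\Inv X_\K$-bimodules (apply Lemma~\ref{Liotahom} over $\K$, or note both sides are the scalar extension of the integral statement). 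Since $\kappa_\K$ restricts to $\kappa\colon {}'\Sym(X^*) \iso \Inv(X^*)$ by Lemma~\ref{LemmaTri}, and $\Inv(X^*)$ is manifestly stable under the standard $\Inv X$-action (it is defined integrally, and $X^*$ is an $X$-bimodule, so $\Tens^d(X^*)$ is a $\Tens^d X$-bimodule via \eqref{EdAction} and this restricts to $\Inv$), we may transport: for $z \in \Inv X$ and $x \in {}'\Sym(X^*)$, compute $z \cdot x$ inside $(\Inv X_\K)^*$, apply $\kappa_\K$ to get $\kappa_\K(z\cdot x) = z \cdot \kappa_\K(x) = z \cdot \kappa(x) \in \Inv(X^*)$, and since $\kappa$ is an isomorphism onto $\Inv(X^*)$, conclude $z\cdot x \in {}'\Sym(X^*)$. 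The same argument handles the right action.

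Concretely I would organize the proof as follows. First I would recall that $\iota = \kappa|_{\Sym(X^*)}$ by comparing the defining formulas in Lemmas~\ref{LemmaTri} and~\ref{LemmaOdin}, and extend scalars to $\K$: since $\iota$ is an $\Inv X$-bimodule map (Lemma~\ref{Liotahom}), its $\K$-linear extension $\kappa_\K = \iota_\K\colon (\Inv X_\K)^* \to \Inv(X^*_\K)$ is an $\Inv X_\K$-bimodule isomorphism. (One should note that $\kappa_\K$ is an isomorphism because it is $\kappa$ tensored up with $\K$, and $\kappa$ is an isomorphism over $\O$.) Second, I would observe that $\Inv(X^*) \subseteq \Inv(X^*_\K)$ is stable under the $\Inv X$-subbimodule structure, this being immediate from the integrally-defined bimodule structure \eqref{EdAction} on $\Tens(X^*)$ and the fact that $\Inv X$ and $\Inv(X^*)$ are defined over $\O$. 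Third, for $z \in \Inv X$ and $x \in {}'\Sym(X^*)$, I would chase the diagram: $\kappa_\K(z \cdot x) = z \cdot \kappa_\K(x)$; but $\kappa_\K(x) = \kappa(x) \in \Inv(X^*)$ by Lemma~\ref{LemmaTri}, hence $z \cdot \kappa(x) \in \Inv(X^*)$, hence $z \cdot x = \kappa_\K^{-1}(z \cdot \kappa(x)) \in \kappa_\K^{-1}(\Inv(X^*)) = {}'\Sym(X^*)$, using once more Lemma~\ref{LemmaTri} that $\kappa$ maps ${}'\Sym(X^*)$ onto $\Inv(X^*)$. This proves the invariance claim and simultaneously shows $\kappa\colon {}'\Sym(X^*) \iso \Inv(X^*)$ is a map of $\Inv X$-bimodules. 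The right-action statement is entirely symmetric.

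The main obstacle — really the only subtlety — is bookkeeping about which bimodule structure lives where: the dual regular action of $\Inv X$ on $(\Inv X)^*$ (and its $\K$-extension) versus the standard action on $\Inv(X^*)$, and making sure that Lemma~\ref{Liotahom} is being invoked with the correct structures on both sides. One must be careful that "dual regular action of $\Inv X$ on $(\Inv X_\K)^*$" means precisely restricting the dual regular $\Inv X_\K$-action along $\Inv X \hookrightarrow \Inv X_\K$, as stipulated in the paragraph preceding the Lemma, so that $\K$-linearity of $\kappa_\K$ as an $\Inv X_\K$-bimodule map indeed specializes to $\Inv X$-equivariance. Beyond this there is no real computation: everything reduces to scalar extension of Lemma~\ref{Liotahom} together with the already-established Lemma~\ref{LemmaTri}. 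A one-line alternative, if one prefers to avoid passing through $\iota$, is to verify the action formulas directly on the $\O$-bases \eqref{EBasis2} and \eqref{EBasis3} using \eqref{ELRegNew}, \eqref{ERRegNew} and \eqref{EdAction}, but the bimodule-transport argument above is cleaner and avoids sign computations.
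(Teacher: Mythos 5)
Your proof is correct and follows essentially the same route as the paper: invoke Lemma~\ref{Liotahom}, extend scalars to $\K$ to get the $\Inv X_\K$-bimodule isomorphism $\iota_\K=\kappa_\K$, use that $\Inv(X^*)$ is integrally defined and hence stable under the $\Inv X$-action, and transport via $\kappa_\K^{-1}(\Inv(X^*))={}'{\Sym(X^*)}$. The bookkeeping points you flag (that "dual regular action of $\Inv X$ on $(\Inv X_\K)^*$" means restriction along $\Inv X\hookrightarrow\Inv X_\K$, and that $\iota=\kappa|_{\Sym(X^*)}$) are exactly the ones the paper relies on implicitly.
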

\begin{proof}
By (\ref{EIota}) and Lemma~\ref{Liotahom}, we have an $\Inv X$-bimodule homomorphism 
\begin{equation}\label{E190216}
\iota\colon (\Inv X)^*=\Sym(X^*)\,\into\, {}'{\Sym(X^*)}\stackrel{\kappa}{\longrightarrow}\Inv(X^*).
\end{equation}
As 
$$\Sym(X_\K^*)\cong\Sym(X^*)\otimes_\O\K\cong {}'{\Sym(X^*)}\otimes_\O\K\cong {}'{\Sym(X_\K^*)},$$ 
extending scalars in (\ref{E190216}), we get an $\Inv X_{\K}$-superbimodule isomorphism 
$$\iota_{\K} \colon (\Inv X_\K)^*=\Sym(X_\K^*)\,=\, {}'{\Sym(X_\K^*)}\iso\Inv(X_\K^*).$$ 
Considering ${}'{\Sym(X^*)}$ as the sublattice in $\Sym(X_\K^*)$, the restriction 
$\iota_\K|_{{}'{\Sym(X^*)}}$ is the isomorphism $\kappa\colon {}'{\Sym (X^*)} \iso \Inv(X^*)$.  
Now the standard left and right actions of $\Inv X\subseteq \Inv X_\K$ on $\Inv (X^*_{\K})$ leave 
 $\Inv (X^*)=\Inv (X^*_{\K})\cap \Tens (X^*)$  invariant, and we have 
$\iota_{\K}^{-1} (\Inv (X^*)) = {}'{\Sym(X^*)}$. 
This implies the lemma.
\end{proof}

The identification $\Sym(X^*_{\K})=(\Inv X_\K)^*$ from  (\ref{EId}) together with the coproduct~\eqref{ENabla}
yield a coproduct 
$$
\nabla_{\K} \colon \Sym(X^*_{\K}) \to \Sym(X^*_{\K}) \otimes \Sym(X^*_{\K}).
$$ 

\begin{Lemma}\label{LDeinv}
We have 
\[
 \nabla_{\K} ({}'{\Sym (X^*)}) \subseteq \big( \Sym(X^*) \otimes {}'{\Sym (X^*)} \big) \cap
 \big( {}'{\Sym(X^*)} \otimes \Sym (X^*) \big).
\]
\end{Lemma}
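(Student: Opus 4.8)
The plan is to compute $\nabla_\K$ explicitly on divided powers of elements of $X^*$ and check that each tensor factor lands in the correct $\O$-lattice. First I would recall that under the identification $(\Inv X)^* = \Sym(X^*)$ from Lemma~\ref{LemmaOdin}, the coproduct $\nabla$ is dual to the $*$-product on $\Inv X$; concretely, for $\xi_1,\dots,\xi_d\in X^*$ viewed as an element of $\Sym^d(X^*)$, one has $\langle \nabla(\xi_1\cdots\xi_d), a\otimes b\rangle = \langle \xi_1\cdots\xi_d, a*b\rangle$ for $a,b\in\Inv X$. Using the coproduct formula \eqref{EBasicCoproduct} on $\Tens X$ together with the definition \eqref{EStar} of $*$, a standard shuffle-bialgebra computation (the same one underlying Lemma~\ref{LstDe}, applied on the dual side) shows that for a single generator $\xi\in X^*$,
\[
\nabla_\K(\xi) = \xi\otimes 1 + 1\otimes \xi,
\]
and hence $\nabla_\K$ is the unique algebra homomorphism $\Sym(X^*_\K)\to\Sym(X^*_\K)\otimes\Sym(X^*_\K)$ with this property — i.e.\ the usual "primitive" coproduct on a symmetric algebra.

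Granting that, the computation of $\nabla_\K$ on a divided power is immediate: for $\xi\in X^*_\0$,
\[
\nabla_\K\big(\xi^{(m)}\big) = \sum_{i+j=m} \xi^{(i)}\otimes \xi^{(j)},
\]
since $\nabla_\K$ is an algebra map and $\xi^{(m)} = \xi^m/m!$, so $\nabla_\K(\xi^{(m)}) = (\xi\otimes 1 + 1\otimes\xi)^m/m! = \sum_{i+j=m}\binom{m}{i}(\xi^i\otimes\xi^j)/m! = \sum_{i+j=m}\xi^{(i)}\otimes\xi^{(j)}$. For $\xi\in X^*_\1$ we simply have $\nabla_\K(\xi) = \xi\otimes 1 + 1\otimes\xi$ already in the required form. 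Now take a general basis element $\xi^{(\bc)} = \xi_1^{(c_1)}\cdots\xi_{l+m}^{(c_{l+m})}$ of ${}'{\Sym(X^*)}$ (using the basis \eqref{EBasis2} applied to $X^*$, with $\{\xi_i\}$ the dual basis of $\ttB$). Since $\nabla_\K$ is multiplicative and supercommutative up to signs, expanding the product $\prod_i \nabla_\K(\xi_i^{(c_i)})$ gives a sum of terms $\pm\, \xi^{(\bd)}\otimes\xi^{(\be)}$ with $\bd + \be = \bc$ componentwise. In every such term both $\xi^{(\bd)}$ and $\xi^{(\be)}$ are genuine divided-power monomials, hence lie in ${}'{\Sym(X^*)}$; a fortiori each lies in $\Sym(X^*_\K)$. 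To get the sharper claim I need one tensor factor in $\Sym(X^*)$ rather than just ${}'{\Sym(X^*)}$: rewrite $\xi^{(\bd)} = \xi^{\bd}/\bd!$ and $\xi^{(\be)} = \xi^{\be}/\be!$, and since $\bd! \,\be!$ divides $\bc!$ when $\bd+\be=\bc$, clearing denominators shows the coefficient of $\xi^{\bd}\otimes\xi^{(\be)}$ is in $\O$, so the term lies in $\Sym(X^*)\otimes{}'{\Sym(X^*)}$; symmetrically it lies in ${}'{\Sym(X^*)}\otimes\Sym(X^*)$. Summing over all terms gives the asserted inclusion.

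The main obstacle — really the only non-formal point — is establishing that $\nabla_\K$ is the primitive coproduct, i.e.\ identifying the map dual to the $*$-product under the isomorphism of Lemma~\ref{LemmaOdin}. This requires carefully tracking the sign conventions in \eqref{EDualTensor}, \eqref{ETPSign}, \eqref{EStar}, and the $\Si_d$-action \eqref{ESGAction}, exactly as in the proof of Lemma~\ref{LstDe} but transposed to the dual side; one checks that $\langle\nabla_\K(\xi_1\cdots\xi_d), a\otimes b\rangle$ picks out, for basis vectors $a,b$, precisely the "unshuffle" contributions, which reassemble into $\sum \pm\,(\xi_{(1)}\text{-part})\otimes(\xi_{(2)}\text{-part})$ with the $\xi_i$ distributed in all ways between the two factors. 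Alternatively, and perhaps more cleanly, I would deduce the shape of $\nabla_\K$ directly: by Lemma~\ref{LemmaOdin} the pairing between $\Inv X$ and $(\Inv X)^*$ restricts to a pairing between $\Star V$ (for $V=X$, with its $*$-monomial basis from Lemma~\ref{LemmaTri}) and the $\xi^{(\bc)}$-basis of ${}'{\Sym(X^*)}$, and by Corollary~\ref{CPerfPair} these are dual up to signs; then $\nabla_\K$ on ${}'{\Sym(X^*)}$ is forced to be dual to the $*$-multiplication $\Star V\otimes\Star V\to\Star V$, whose structure constants are integral (indeed the $*$-product of $*$-monomials is a $\pm 1$ combination of $*$-monomials, by Lemma~\ref{L120116} / the explicit shuffle description), giving $\nabla_\K({}'{\Sym(X^*)})\subseteq {}'{\Sym(X^*)}\otimes{}'{\Sym(X^*)}$ at once, and the refinement to $\Sym(X^*)$ on one side from the divided-power bookkeeping above. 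Either route reduces the lemma to bookkeeping once the primitivity of $\nabla_\K$ on $X^*$ is in hand.
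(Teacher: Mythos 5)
Your proof has a fundamental error: you have identified $\nabla$ with the wrong coproduct. You claim that $\nabla$ is dual to the $*$-product on $\Inv X$ and hence is the primitive coproduct with $\nabla_\K(\xi)=\xi\otimes 1+1\otimes\xi$ for $\xi\in X^*$. But $\nabla$ is dual to the \emph{pointwise} multiplication on $\Inv X$ (the product inherited from $\Tens^d X$ restricted to $\Si_d$-invariants, as set up in \S\ref{SSInv}), not to the shuffle product $*$. The $*$-product is paired instead with the coproduct $\De$ of~\eqref{EBasicCoproduct}, and it is the \emph{product} of $\Sym(X^*)=(\Inv X)^*$ that is dual to $\De$ — this is precisely Lemma~\ref{LemmaOdin}, which you cite, but you have swapped the roles of the two dual pairings. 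The two bialgebra operations on $(\Inv X)^*$ are: the $\Sym(X^*)$-product dual to $\De$ (degree-additive), and $\nabla$ dual to pointwise multiplication (degree-preserving).

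The simplest way to see the mistake concretely: the paper records, immediately after defining $\nabla$ in \eqref{ENabla}, that $\nabla((\Inv^d X)^*)\subseteq (\Inv^d X)^*\otimes(\Inv^d X)^*$. In particular, for $\xi\in X^*=(\Inv^1 X)^*$, $\nabla_\K(\xi)$ lies in $X^*_\K\otimes X^*_\K$, whereas $\xi\otimes 1+1\otimes\xi$ lives in $(\Sym^1\otimes\Sym^0)\oplus(\Sym^0\otimes\Sym^1)$ — the degrees do not match. In the toy case $X=\O$ of Example~\ref{ExTriv}, with $y\in X^*$ dual to $1$, one has $\nabla(y^d)=y^d\otimes y^d$, not $\sum_{i+j=d}y^{(i)}\otimes y^{(j)}$. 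Every subsequent step in your argument (divided-power expansion, integrality of coefficients from $\bd!\,\be!\mid\bc!$, the alternative route via duality with $\Star V$) leans on the primitivity claim and so does not get off the ground. The paper's proof instead fixes a homogeneous basis $\{\xi_i\}$ of $\Inv^d X$ with dual basis $\{x_i\}$ of $\Sym^d(X^*)$, writes $\nabla_\K(x)=\sum_j y_j\otimes x_j$ (legitimate precisely because $\nabla$ is degree-preserving), and uses the identity $\xi_i\cdot x=\pm y_i$ from \eqref{ELRegNew} together with Lemma~\ref{L140116_2} (invariance of ${}'{\Sym(X^*)}$ under the dual regular $\Inv X$-action) to conclude $y_i\in{}'{\Sym^d(X^*)}$. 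That basis-free invariance argument is the ingredient your proposal is missing.
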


\begin{proof}
Let $x\in {}'{\Sym^d (X^*)}$ for some $d\in \Z_{\ge 0}$. 
Let $\{ \xi_1,\ldots,\xi_m\}$ be a homogeneous basis of $\Inv^d X$ and 
$\{ x_1,\ldots,x_m\}$ be the dual basis of $(\Inv^d X)^*=\Sym^d(X^*)$, cf. (\ref{EId}). We can write $\nabla_{\K} (x) = \sum_{j=1}^m y_j \otimes x_j$, where $y_j\in \Sym^d (X^*_{\K})$ for $j=1,\dots,m$. 
By Lemma~\ref{L140116_2}, 
${}'{\Sym(X^*)}$ is invariant under the left dual regular action of $\Inv X$, so
$\xi_i \cdot x \in {}'{\Sym^d (X^*)}$ for any 
$i\in \{1,\dots,m\}$. On the other hand, by~\eqref{ELRegNew}, 
\[
\xi_i \cdot x = \sum_{j=1}^m (-1)^{\bar \xi_i \bar y_j} \langle \xi_i,x_j\rangle y_j
= (-1)^{\bar \xi_i \bar y_i} y_i,
\]
whence $y_i\in {}'{\Sym^d (X^*)}$. We have proved that 
$\nabla_{\K} ({}'{\Sym (X)}) \subseteq \big( {}'{\Sym(X^*)} \otimes \Sym (X^*) \big)$.
The other inclusion is proved similarly. 
\end{proof}


By Lemma~\ref{LDeinv}, we have a coproduct 
\begin{equation}\label{ENewDe}
\nabla\colon {}'{\Sym(X^*)} \to {}'{\Sym(X^*)} \otimes {}'{\Sym (X^*)}
\end{equation} 
obtained by restricting $\nabla_{\K}$. 
Recalling (\ref{EId}), note that 
\begin{equation}\label{EDXZ}
DX=\Inv X\otimes (\Inv X)^*=\Inv X\otimes \Sym (X^*)
\end{equation}
is an $\O$-form of $DX_\K$. 
We define a larger $\O$-form
$$
{}'{DX}:=\Inv X\otimes {}'{\Sym(X^*)},
$$
which is closed under the multiplication (\ref{EProductDoubleBasic}) because 
$'{\Sym (X^*)}$ is invariant under the left and right dual regular actions of $\Inv X$ by Lemma~\ref{L140116_2}. The product in ${}'{DX}$ is also given by the 
formula~\eqref{EProductDoubleGen} where we use the coproduct 
(\ref{ENewDe}) on $x$ and $y$.   
We have ${}'{D X} = \bigoplus_{d\ge 0} {}'{D^d X}$, where 
\[
{}'{D^d X} := \sum_{e,f\ge 0, \; e+f=d} \Inv^e X \otimes {}'{\Sym^f (X^*)}. 
\]
We use the following notation for the summands on the right hand side above:
\begin{equation}\label{EDEFPrime}
{}'{D^{e,f}X}:=\Inv^e X\otimes {}'{\Sym^f (X^*)}.
\end{equation}

The following result often allows one to reduce the study of $D X$ to that of $\Inv T_X$. Recall the isomorphism $\kappa$ from Lemma~\ref{LemmaTri}. 

\begin{Theorem} \label{TFund} 
There is an isomorphism of $\O$-superalgebras 
\[
{}'{D X} \iso \Inv T_{X} , \quad \xi \otimes x \mapsto \xi * \kappa(x)\quad 
(\xi\in \Inv X, \, x\in {}'{\Sym(X^*)}).
\]
\end{Theorem}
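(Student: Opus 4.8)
The plan is to show that the map $\Phi\colon {}'{D X}\to \Inv T_X$, $\xi\otimes x\mapsto \xi*\kappa(x)$, is a bijective superalgebra homomorphism. Bijectivity is the easy half: using the basis $\ttB$ of $X$ and the corresponding dual basis of $X^*$, the trivial extension $T_X$ has a homogeneous basis given by the disjoint union of the basis of $X$ and that of $X^*$, so by Lemma~\ref{L120116} (applied to the decomposition $T_X=X\oplus X^*$) we have $\Inv^d T_X=\bigoplus_{e+f=d}(\Inv^e X)*(\Inv^f(X^*))$, with each summand $\Inv^e X\otimes\Inv^f(X^*)\to\Inv^d T_X$, $\xi\otimes z\mapsto\xi*z$, injective. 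Since $\kappa\colon{}'{\Sym(X^*)}\iso\Inv(X^*)$ is an isomorphism by Lemma~\ref{LemmaTri} (restricting to $\kappa\colon{}'{\Sym^f(X^*)}\iso\Inv^f(X^*)$), it follows that $\Phi$ restricts to an $\O$-module isomorphism ${}'{D^{e,f}X}\iso(\Inv^e X)*(\Inv^f(X^*))$ for each $e,f$, hence $\Phi$ is an isomorphism of $\O$-superspaces; in particular it restricts to isomorphisms ${}'{D^d X}\iso\Inv^d T_X$.

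It remains to check that $\Phi$ is multiplicative, and this is where the real content lies. Take $\xi\otimes x,\ \eta\otimes y\in {}'{D X}$ with $\xi\in\Inv^{d-a}X$, $x\in{}'{\Sym^a(X^*)}$, $\eta\in\Inv^{d'-b}X$, $y\in{}'{\Sym^b(X^*)}$; by the direct-sum decomposition of ${}'{DX}$ we may assume these are homogeneous pieces, and the product is nonzero only when $d=d'$. On the ${}'{DX}$ side, the product is given by~\eqref{EProductDoubleBasic}, where now the dual regular actions of $\Inv X$ are taken on ${}'{\Sym(X^*)}$ as in Lemma~\ref{L140116_2}. Applying $\Phi$ and using that $\kappa$ is an isomorphism of $\Inv X$-superbimodules (Lemma~\ref{L140116_2}), the element $\Phi\big((\xi\otimes x)(\eta\otimes y)\big)$ becomes a sum over $\De(\xi),\De(\eta)$ of terms $\pm\,\xi_{(2)}\eta_{(1)}*\big(\kappa(x)\cdot\eta_{(2)}\big)*\big(\xi_{(1)}\cdot\kappa(y)\big)$, where now $\kappa(x)\in\Inv^a(X^*)$, $\kappa(y)\in\Inv^b(X^*)$ and $\cdot$ is the standard $\Inv X$-bimodule action on $\Inv(X^*)$. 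On the other side, $\Phi(\xi\otimes x)\cdot\Phi(\eta\otimes y)=(\xi*\kappa(x))(\eta*\kappa(y))$ is a product computed inside $\Inv T_X$; this is exactly the situation of Lemma~\ref{L140116_1} (with $x,y$ there replaced by $\kappa(x),\kappa(y)$, noting $\kappa(x)\in\Inv^a(X^*)$ etc.), which gives
\[
(\xi*\kappa(x))(\eta*\kappa(y))=\sum(-1)^{\bar\xi_{(1)}(\bar\xi_{(2)}+\bar\eta+\bar x)+\bar\eta_{(1)}\bar x}\,\xi_{(2)}\eta_{(1)}*\big(\kappa(x)\cdot\eta_{(2)}\big)*\big(\xi_{(1)}\cdot\kappa(y)\big).
\]
Comparing this with the formula for $\Phi\big((\xi\otimes x)(\eta\otimes y)\big)$ obtained above, the two sums match term by term, including the signs, since the sign in~\eqref{EProductDoubleBasic} is literally the same as the sign in Lemma~\ref{L140116_1}, and $\bar x=\overline{\kappa(x)}$ because $\kappa$ is parity-preserving.

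The main obstacle is bookkeeping rather than ideas: one must make sure that the multiplication on ${}'{DX}$ really is given by~\eqref{EProductDoubleBasic} with the actions interpreted on the divided-power bimodule ${}'{\Sym(X^*)}$ (this is where Lemma~\ref{L140116_2} is essential, both to know the product is well-defined on the $\O$-form and to transport it across $\kappa$), and that the identifications $(\Inv X)^*=\Sym(X^*)\subseteq{}'{\Sym(X^*)}$, the dual regular versus standard bimodule structures, and the $*$-product on the three-fold product $\Inv X*\Inv(X^*)*\Inv(X^*)$ are all handled consistently. Once the conventions are pinned down, the multiplicativity reduces to the observation that~\eqref{EProductDoubleBasic} and Lemma~\ref{L140116_1} are the same formula, so no further computation is needed. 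Unitality ($1\otimes 1\mapsto 1$) and the fact that $\Phi$ is parity-preserving are immediate.
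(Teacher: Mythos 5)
Your proposal is correct and follows essentially the same route as the paper: bijectivity via Lemma~\ref{LemmaTri} and the decomposition of Lemma~\ref{L120116} applied to $T_X=X\oplus X^*$, and multiplicativity by applying $\Phi$ to~\eqref{EProductDoubleBasic}, transporting through $\kappa$ using both its algebra-homomorphism property (Lemma~\ref{LemmaTri}) and its $\Inv X$-bimodule property (Lemma~\ref{L140116_2}), and then matching with Lemma~\ref{L140116_1}. The only cosmetic difference is that you attribute the whole transport step to Lemma~\ref{L140116_2}, whereas splitting $\kappa$ across the product $(x\cdot\eta_{(2)})(\xi_{(1)}\cdot y)$ specifically uses the algebra-isomorphism part of Lemma~\ref{LemmaTri}, which the paper cites separately.
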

\begin{proof}
The map $\phi$ in the theorem is an isomorphism of $\O$-supermodules by Lemmas~\ref{LemmaTri} and \ref{L120116}. To see that it is an algebra homomorphism, we compute for 
$\xi,\eta \in \Inv X$ and $x,y\in {}'{\Sym(X^*)}$:
\begin{align*}
\phi\big((\xi\otimes x)(\eta\otimes y)\big)&=\phi\big(\sum (-1)^{\bar\xi_{(1)}(\bar\xi_{(2)}+\bar\eta+\bar x)+\bar \eta_{(1)}\bar x} 
 \xi_{(2)}\eta_{(1)}\otimes (x\cdot\eta_{(2)})(\xi_{(1)}\cdot y)\big)
\\
&=\sum (-1)^{\bar\xi_{(1)}(\bar\xi_{(2)}+\bar\eta+\bar x)+\bar \eta_{(1)}\bar x} 
 (\xi_{(2)}\eta_{(1)}) * \kappa\big((x\cdot\eta_{(2)})(\xi_{(1)}\cdot y)\big)
\\
&=\sum (-1)^{\bar\xi_{(1)}(\bar\xi_{(2)}+\bar\eta+\bar x)+\bar \eta_{(1)}\bar x} 
 (\xi_{(2)}\eta_{(1)}) * \kappa(x\cdot\eta_{(2)})*\kappa(\xi_{(1)}\cdot y)
 \\
&=\sum (-1)^{\bar\xi_{(1)}(\bar\xi_{(2)}+\bar\eta+\bar x)+\bar \eta_{(1)}\bar x} 
 (\xi_{(2)}\eta_{(1)}) * (\kappa(x)\cdot\eta_{(2)})*(\xi_{(1)}\cdot \kappa(y))
\\
&=(\xi*\kappa(x))(\eta*\kappa(y))
\\
&=
\phi(\xi\otimes x)\phi(\eta\otimes y),
\end{align*}
where we have used (\ref{EProductDoubleBasic}) for the first equality, Lemma~\ref{LemmaTri}  for the third equality,
Lemma~\ref{L140116_2} for the fourth equality
 and  Lemma~\ref{L140116_1} for the fifth equality. 
\end{proof}

\begin{Example} \label{ExTriv} 
{\rm 
Let $\O[z]_d$ be the truncated polynomial algebra $\O[z]/(z^{d+1})$, and ${}'{\O[z]_d}$ be the divided power truncated polynomial algebra defined as the  $\O$-subalgebra of $\K[z]/(z^{d+1})$ spanned by all $z^{(e)}$ with $e=0,\dots,d$.
If $X$ is the trivial algebra $\O$, let $y\in X^*$ be the function which sends $1$ to $1$. Then $D^d X\cong \O[z]_d$, with $1^{\otimes d-e}\otimes y^{e}\in \Inv^{d-e} X\otimes \Sym^e(X^*)$ corresponding to $z^e$, and ${}'{D^d X}\cong {}'{\O[z]_d}$, with $1^{\otimes d-e}\otimes y^{(e)}\in \Inv^{d-e} X\otimes {}'{\Sym^e(X^*)}$ corresponding to $z^{(e)}$.
}
\end{Example}

\subsection{A generating set for a Turner double}\label{SSGenSet}
For any $d\in\Z_{\ge 0}$, define
$\DC^d X\subseteq \Inv^d T_{X}$ to be the image of $D^d X$ under the isomorphism of Theorem~\ref{TFund}, and set 
$\DC X := \bigoplus_{d\ge 0} \DC^d X$. Of course $\DC^d X$ is just an isomorphic copy of $D^d X$, considered as an explicit subalgebra of $\Inv^d T_{X}$. 
By~\eqref{EDXZ} and Lemma~\ref{LemmaTri}, we have 
\begin{equation}\label{EDCdXZ}
\DC^d X = \bigoplus_{e=0}^d \Inv^{d-e} (X) * \Star^e (X^*). 
\end{equation}

\label{SSGen}
Let  $Y = X_{\1} \oplus X^*$, so that $Y$ is naturally an $X_{\0}$-superbimodule and 
$T_{X} = X_{\0} \oplus Y$.

\begin{Lemma}\label{LY}
For any $d\in \Z_{\ge 0}$, we have 
\[
\DC^d X = \bigoplus_{e=0}^d \Inv^{d-e} (X_{\0}) * \Star^e Y.
\]
\end{Lemma}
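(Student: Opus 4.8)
The plan is to read the statement off from the description~\eqref{EDCdXZ} of $\DC^d X$ by transferring the odd part $X_{\1}$ of $X$ out of the invariant factor $\Inv^{d-e}(X)$ and into the starred factor, the point being that $X_{\1}\oplus X^*$ is exactly $Y$. The only auxiliary fact I would record first is that for a purely odd superspace one has $\Star^k(X_{\1})=\Inv^k(X_{\1})$ for all $k\ge 0$: with $\ttB_{\1}=\{x_{l+1},\dots,x_{l+m}\}$ a homogeneous basis of $X_{\1}$, both sides are spanned by the $*$-products $x_{i_1}*\dots*x_{i_k}$, which vanish as soon as two of the indices coincide (since $u*u=0$ for odd $u$) and otherwise coincide, up to sign, with one of the standard basis vectors of $\Inv^k(X_{\1})$ from~\eqref{EBasis3} by supercommutativity of $*$.

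Next I would apply Lemma~\ref{L120116} twice. Applied to $X=X_{\0}\oplus X_{\1}$ it gives, for each $e$, a decomposition $\Inv^{d-e}(X)=\bigoplus_{j}\Inv^{j}(X_{\0})*\Star^{d-e-j}(X_{\1})$ (using the auxiliary fact to replace $\Inv^{d-e-j}(X_{\1})$ by $\Star^{d-e-j}(X_{\1})$). Applied to $Y=X_{\1}\oplus X^*$ it gives $\Inv^{f}(Y)=\bigoplus_{a+b=f}\Inv^{a}(X_{\1})*\Inv^{b}(X^*)$; since $\Star^{a}(X_{\1})=\Inv^{a}(X_{\1})$ and $\Star^{b}(X^*)\subseteq\Inv^{b}(X^*)$, the subspaces $\Star^{a}(X_{\1})*\Star^{b}(X^*)$ lie in distinct summands, and supercommutativity of $*$ (which lets one move every $X_{\1}$-factor to the front of a product $y_1*\dots*y_f$ of elements of $Y$) shows that these subspaces span $\Star^{f}(Y)$; hence $\Star^{f}(Y)=\bigoplus_{a+b=f}\Star^{a}(X_{\1})*\Star^{b}(X^*)$.

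Substituting the first decomposition into~\eqref{EDCdXZ} and regrouping by associativity and supercommutativity of $*$, I would get
\[
\DC^d X=\bigoplus_{e=0}^d\Inv^{d-e}(X)*\Star^e(X^*)=\sum_{j=0}^{d}\Inv^{j}(X_{\0})*\Bigl(\sum_{a+b=d-j}\Star^{a}(X_{\1})*\Star^{b}(X^*)\Bigr)=\sum_{j=0}^{d}\Inv^{j}(X_{\0})*\Star^{d-j}(Y),
\]
and then reindex via $j=d-e$. All sums occurring here are in fact direct, since each is a sub-sum of a direct-sum decomposition of $\Inv^d T_{X}$ furnished by Lemma~\ref{L120116} (applied to $T_{X}=X_{\0}\oplus Y$ for the right-hand side, and to $T_{X}=X\oplus X^*$ for~\eqref{EDCdXZ} itself). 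If one prefers to avoid the bookkeeping about directness, the very same rewriting proves the inclusions ``$\subseteq$'' and ``$\supseteq$'' separately, which already gives equality of subspaces of $\Inv^d T_{X}$.

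I do not expect a genuine obstacle here: the content is a repeated application of Lemma~\ref{L120116} together with the elementary identity $\Star^k(X_{\1})=\Inv^k(X_{\1})$. The only place that needs care is keeping the grading parameters straight --- in particular, noticing that the summation index $e$ in~\eqref{EDCdXZ} and the index $e$ in the statement measure complementary quantities (one counts the tensor slots lying in the starred part, the other their complement), so that it is the substitution $e\mapsto d-e$ that matches the two expressions.
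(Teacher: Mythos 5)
Your proof is correct and takes essentially the same route as the paper's: both start from~\eqref{EDCdXZ}, decompose $\Inv^{d-e}(X)$ via Lemma~\ref{L120116} applied to $X=X_{\0}\oplus X_{\1}$, use the identity $\Inv^f(X_{\1})=\Star^f(X_{\1})$, and regroup the $X_{\1}$- and $X^*$-factors into $\Star Y$. The only cosmetic differences are that the paper gets $\Inv^f(X_{\1})=\Star^f(X_{\1})$ by citing Lemma~\ref{LemmaTri} rather than by the direct basis check you give, and that it leaves implicit the bookkeeping you spell out about directness and the decomposition $\Star^g Y=\bigoplus_{a+b=g}\Star^a(X_{\1})*\Star^b(X^*)$.
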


\begin{proof}
 By Lemma~\ref{L120116}, 
\[
\Inv^{d-e} (X) = \bigoplus_{f=0}^{d-e} \Inv^{d-e-f} (X_{\0}) * \Inv^f (X_{\1}).
\]
It follows from Lemma~\ref{LemmaTri} that $\Inv^f (X_{\1})=\Star^f (X_{\1})$ for all $f\in \Z_{\ge 0}$, so by~\eqref{EDCdXZ} we have 
\begin{align*}
\DC^d X &= \bigoplus_{e=0}^d \bigoplus_{f=0}^e \Inv^{d-e-f} (X_{\0}) *
\Star^f (X_{\1}) * \Star^{e} (X^*)  \\
&=
\bigoplus_{e=0}^d \Inv^{d-e} (X_{\0}) * \Star^e Y. \qedhere
\end{align*}
\end{proof}

In the rest of this subsection, we write $1$ for the identity element $1_{X}$ of $X$. 

\begin{Theorem}\label{Lgen}
For any $d\in\Z_{>0}$,
the $\O$-superalgebra $\DC^d X$ 
is generated by $\Inv^d X_\0$ and $1^{\otimes (d-1)} * Y$. 
\end{Theorem}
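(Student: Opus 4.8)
The plan is to work inside $\Inv T_X$ via the identification $\DC^d X = \bigoplus_{e=0}^d \Inv^{d-e}(X_\0) * \Star^e Y$ from Lemma~\ref{LY}, and to show that every summand $\Inv^{d-e}(X_\0)*\Star^e Y$ lies in the subalgebra $B$ generated by $\Inv^d X_\0$ and $1^{\otimes(d-1)}*Y$. I would argue by downward induction on $e$ (equivalently, filter by the number of $Y$-tensor-legs), the case $e=0$ being $\Inv^d X_\0 \subseteq B$ by hypothesis. The key multiplicative input is Lemma~\ref{LExercise}/Corollary~\ref{CExercise}: since $T_X = X_\0 \oplus Y$ is a decomposition of $X_\0$-\emph{bimodules} (not algebras) with $Y\cdot Y = X^*\cdot X^* = 0$ inside $T_X$ after projecting appropriately, products of elements supported on disjoint tensor legs behave like $*$-products up to sign, and $\Star^e Y$ is spanned by $e$-fold $*$-products of elements of $Y$.

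The concrete mechanism I would use: first observe that $1^{\otimes(d-1)}*y \in \DC^d X$ for $y\in Y$, and more generally that $B$ contains $1^{\otimes(d-e)} * \Star^e Y$ for every $e$ by taking $*$-powers — here one computes $(1^{\otimes(d-1)}*y_1)(1^{\otimes(d-1)}*y_2)\cdots$ inside $\Inv T_X$ and checks, using $Y\cdot Y=0$ together with Lemma~\ref{Lstfo} (or directly Lemma~\ref{L140116_1}), that this product equals a nonzero scalar multiple of $1^{\otimes(d-e)}*(y_1*\cdots*y_e)$ plus terms with strictly fewer $Y$-legs, which are already in $B$ by induction. Since $\O$ has characteristic zero and we are allowed scalars in $\O$ only, one must be slightly careful: the relevant scalars here are products of binomial-type coefficients coming from reshuffling $1$'s, and I expect them to be $1$ (or at worst invertible by a direct count, because the $1$'s among the first $d-1$ legs are genuinely repeated and the shuffle just redistributes them), so no denominators are introduced. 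Then, to upgrade from $1^{\otimes(d-e)}$ to an arbitrary $\zeta \in \Inv^{d-e}(X_\0)$ in the left factor, I would multiply $1^{\otimes(d-e)}*(y_1*\cdots*y_e)$ on the appropriate side by an element of $\Inv^d X_\0$ whose restriction/coproduct acts as $\zeta$ on the first $d-e$ legs; concretely, using the product formula~\eqref{EProductDoubleBasic} transported through $\phi$, the element $\zeta * 1^{\otimes e}\in \Inv^d X_\0$ times $1^{\otimes(d-e)}*(y_1*\cdots*y_e)$ produces $\zeta * (y_1*\cdots*y_e)$ modulo lower-$e$ terms, again by Lemma~\ref{LExercise} since the $X_\0$-legs and the $Y$-legs do not interact through $X_\0\cdot Y \subseteq Y$ in a way that changes leg-count — one uses that $\zeta$ is already $\Si$-invariant to absorb the shuffle.

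The main obstacle, as I see it, is bookkeeping the signs and, more substantively, verifying that the "lower-order terms" really are lower order in the filtration by number of $Y$-legs and that the leading coefficient is a unit in $\O$ (ideally $1$). The $Y = X_\1 \oplus X^*$ decomposition means one has both odd elements of $X$ and functionals appearing, and when two $Y$-elements land on the same tensor leg the product is forced into $X_\1 \cdot X_\1 \subseteq X_\0$ or $X_\1 \cdot X^* \subseteq X^*$ or $X^*\cdot X_\1$ — all of which either stay in $Y$ (fine) or drop a leg into $X_\0$ (this is precisely the "fewer $Y$-legs" term). So the filtration statement should come out cleanly from $Y\cdot Y \subseteq X_\0 \oplus Y$ with the $X_\0$-part strictly decreasing the $Y$-count. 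I would isolate this as the one computation to do carefully; everything else is an induction that repackages Lemmas~\ref{L120116}, \ref{LExercise}, \ref{L140116_1} and the product rule~\eqref{EProductDoubleBasic}. A cleaner alternative worth attempting first: show directly that the span of $\{\zeta * (y_1*\cdots*y_e) : \zeta\in\Inv^{d-e}(X_\0),\ y_i\in Y\}$ over all $e$ is closed under multiplication and contains the generators — i.e.\ verify it \emph{is} a subalgebra — which by Lemma~\ref{LY} equals $\DC^d X$, so it is the \emph{smallest} subalgebra containing the generators, giving the theorem; this trades the induction for a single closure check, which may be the most economical route.
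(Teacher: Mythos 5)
Your overall plan — decompose via Lemma~\ref{LY} as $\DC^d X = \bigoplus_e \Inv^{d-e}(X_\0)*\Star^e Y$ and filter by the number of $Y$-tensor-legs — is exactly what the paper does, and your first step (showing $1^{\otimes(d-e)}*\Star^e Y$ lies in the generated subalgebra $B$, via Lemma~\ref{Lstfo}, with the extra product terms having strictly fewer $Y$-legs) is essentially the paper's first induction and is sound. Two of your side remarks are incorrect but not fatal to that step: Lemma~\ref{LExercise}/Corollary~\ref{CExercise} require an idempotent decomposition $1_X=\sum e_i$, not a bimodule decomposition like $T_X=X_\0\oplus Y$, so they don't apply here (one really does need Lemma~\ref{Lstfo}); and the claim ``$Y\cdot Y = X^*\cdot X^* =0$'' is wrong since $X_\1 X_\1\subseteq X_\0$ need not vanish, though the weaker fact $Y\cdot Y\subseteq X_\0\oplus Y$ suffices and you do invoke it correctly elsewhere.

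The genuine gap is in your second step. You propose to obtain $\zeta*(y_1*\cdots*y_e)$ for arbitrary $\zeta\in\Inv^{d-e}(X_\0)$ by computing $(\zeta*1^{\otimes e})(1^{\otimes(d-e)}*z)$ and claiming the error terms are ``lower-$e$,'' i.e.\ have fewer $Y$-legs. This is false: writing $\zeta*1^{\otimes e}=x*y$ and $1^{\otimes(d-e)}*z=\tilde z*u$ and applying Lemma~\ref{Lstfo}, every nonzero term is of the form $(\zeta_{(1)}\cdot 1^{\otimes p})*(1^{\otimes(d-e-p)}\cdot 1^{\otimes(d-e-p)})*(\zeta_{(2)}\cdot z_{(1)})*(1^{\otimes(\cdot)}\cdot z_{(2)})$, and since $X_\0\cdot Y\subseteq Y$ the factors $\zeta_{(2)}\cdot z_{(1)}$ and $z_{(2)}$ together contribute exactly $e$ $Y$-legs. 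So \emph{every} term has $e$ $Y$-legs; what varies is $p$, the number of legs occupied by nontrivial $\Inv(X_\0)$-elements rather than $1$'s. Your single induction on $e$ therefore cannot absorb the error terms: they live in $1^{\otimes(d-e-p)}*\Inv^p(X_\0)*\Star^e Y$ with $p<d-e$, which you have not yet shown to be in $B$. Indeed, your choice of multiplier corresponds to the top case $f=d-e$ of the paper's \emph{second} induction, which multiplies instead by $1^{\otimes(d-f)}*\xi$ for $\xi\in\Inv^f(X_\0)$ and works up from $f=0$: then the error terms for $a>d-e-f$ land in $1^{\otimes a}*\Inv^{d-e-a}(X_\0)*\Star^e Y$ with $d-e-a<f$, and the inductive hypothesis on $f$ disposes of them. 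Without that second filtration and the ``start small'' ordering, the argument is circular.

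Your ``cleaner alternative'' at the end is also not a proof: showing that $\DC^d X$ is itself a subalgebra containing the generators only gives $B\subseteq\DC^d X$, not $\DC^d X\subseteq B$. A subalgebra containing a generating set is not thereby the smallest such; identifying the generated subalgebra is precisely the nontrivial direction, and it is the content of the two nested inductions.
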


\begin{proof}
Let $\GC$ be the subalgebra of $\DC^d X$ generated by 
$\Inv^d X_{ \0}$ and $1^{\otimes (d-1)} * Y$.  
By Lemma~\ref{LY}, it suffices to show that
$\DC^{d-e,e} X:=\Inv^{d-e} (X_{\0}) * \Star^e Y \subseteq \GC$ for all $e\in [0,d]$. 
We will prove this by induction on $e$, the case $e=0$ being clear.

Let $0<e\le d$ and assume that $\DC^{d-f,f} X \subseteq \GC$ 
for all $f\in[0,e)$. 
Let $x\in Y$ and $y\in \Star^{e-1} Y$. 
It follows from Lemma~\ref{Lstfo} that 
\[
(1^{\otimes (d-1)}*x)(1^{\otimes (d-e+1)} * y) \in 1^{\otimes (d-e)} * x *y + 
\bigoplus_{f=0}^{e-1} \DC^{d-f,f} X.
\]
So $1^{\otimes (d-e)} * \Star^e Y \subseteq \GC$. 

For every $f\in [0,d-e]$, write 
$\DC^{d-e-f,f,e} X := 1^{\otimes (d-e-f)} * \Inv^f (X_{\0}) * \Star^e Y$.
We claim that $\DC^{d-e-f,f,e} X\subseteq \GC$ for all such $f$. If the claim is true, then $\DC^{d-e,e} X= \DC^{0,d-e,e} X \subseteq \GC$, which implies the lemma. 
We prove the claim by induction on $f$. 
The base case $f=0$ was established in the previous paragraph.

Given $f\in (0,d-e]$ and assuming that our claim is true for smaller $f$, let 
$\xi\in \Inv^{f} (X_{\0})$ and $z\in \Star^e Y$. By Lemma~\ref{Lstfo}, we have 
\begin{align*}
&(1^{\otimes (d-f)} * \xi) ( 1^{\otimes (d-e)} * z) = \\
& \quad =
\sum_{a=0}^{\min(d-e,d-f)} 
\sum \pm 
(1^{\otimes a}) *
(\xi_{(1)} 1^{\otimes (d-e-a)}) 
*(1^{\otimes (d-f-a)} z_{(1)})
* (\xi_{(2)} z_{(2)})
\\
& \quad =
\sum_{a=0}^{\min(d-e,d-f)} 
\sum \pm 
(1^{\otimes a}) *
(\xi_{(1)} 1^{\otimes (d-e-a)}) 
* (\xi_{(2)} z_{(2)})
*(1^{\otimes (d-f-a)} z_{(1)}),
\end{align*}
where supercommutativity of $*$ has been used for the last equality. Note that $\xi_{(1)} \in \Inv^b (X_{\0})$ for some $b\le f$, so $\xi_{(1)} 1^{\otimes (d-e-a)}=0$ if $a<d-e-f$. Moreover,  any term in the sum with $a>d-e-f$ belongs to $\DC^{a,d-e-a,e} X$ and hence to $\GC$ by the inductive hypothesis. The remaining term is $1^{\otimes (d-e-f)}* \xi * z$, so 
$1^{\otimes (d-e-f)} * \xi * z\in \GC$, and we have proved our claim.
\end{proof}

Let $W$ be an $X_\0$-bimodule.
For any $\xi\in X_\0$, define $\ad(\xi)\in \End_\O (W)$
by
$
\ad(\xi) (w) := \xi w - w\xi
$
for all $w\in W$. 
Further, for any $r\in \Z_{\ge 0}$, we define  
$\ad^r (X_{\0})\subseteq \End_\O (W)$ as the $\O$-span of all compositions
$\ad(\xi_1) \circ \dots \circ \ad(\xi_r)$ for $\xi_1,\dots,\xi_r\in X_\0$. 
As usual, if $F$ is a subset of $\End_\O (W)$ and $U$ is a subset of $W$, we denote by $F(U)$ the $\O$-span of the elements $f(u)$ for all $f\in F$ and $u\in U$. 

\begin{Corollary}\label{CGen}
Let $U$ be a subsuperspace of $Y$ such that 
$\sum_{r\ge 0} \ad^r (X_\0) (U) = Y$, and let $d\in \Z_{>0}$. Then  the $\O$-superalgebra $\DC^d X$ is generated by 
$\Inv^d (X_{\0})$ and $1^{\otimes (d-1)} * U$.
\end{Corollary}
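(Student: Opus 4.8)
The plan is to reduce everything to Theorem~\ref{Lgen}. Write $\GC\subseteq\DC^d X$ for the $\O$-subalgebra generated by $\Inv^d(X_\0)$ and $1^{\otimes(d-1)}*U$; the goal is to show $\GC=\DC^d X$. Since $\GC$ already contains $\Inv^d(X_\0)$, by Theorem~\ref{Lgen} it suffices to prove $1^{\otimes(d-1)}*Y\subseteq\GC$. As $w\mapsto 1^{\otimes(d-1)}*w$ is $\O$-linear and $Y=\sum_{r\ge 0}\ad^r(X_\0)(U)$ by hypothesis, I would instead prove $1^{\otimes(d-1)}*\ad^r(X_\0)(U)\subseteq\GC$ for every $r\ge 0$, by induction on $r$. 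The base case $r=0$ is precisely the assumption $1^{\otimes(d-1)}*U\subseteq\GC$.

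The engine of the induction is the commutator identity
\[
\bigl[\,\xi*1^{\otimes(d-1)},\ 1^{\otimes(d-1)}*w\,\bigr]=1^{\otimes(d-1)}*\ad(\xi)(w)\qquad(\xi\in X_\0,\ w\in Y),
\]
to be checked inside $\DC^d X\subseteq\Inv^d T_X\subseteq (T_X)^{\otimes d}$. For this, write $\iota_i(v)$ for the pure tensor in $(T_X)^{\otimes d}$ with $v$ in slot $i$ and $1_X$ in every other slot; then $\xi*1^{\otimes(d-1)}=\sum_{i=1}^d\iota_i(\xi)$ and $1^{\otimes(d-1)}*w=\sum_{i=1}^d\iota_i(w)$, with no Koszul signs, since in each expansion at most one tensor factor differs from the even element $1_X$ (and $\xi$ is itself even). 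Multiplying slotwise in $(T_X)^{\otimes d}$: for $i\neq j$ both $\iota_i(\xi)\iota_j(w)$ and $\iota_j(w)\iota_i(\xi)$ equal the pure tensor with $\xi$ in slot $i$ and $w$ in slot $j$ (trivial sign, as every entry of $\iota_i(\xi)$ is even), so the off-diagonal terms cancel in the commutator; for $i=j$ one gets $\iota_i(\xi w)$ and $\iota_i(w\xi)$. Summing over $i$, the commutator equals $\sum_i\iota_i(\xi w-w\xi)=1^{\otimes(d-1)}*\ad(\xi)(w)$. I would also remark that $\ad(\xi)(w)\in Y$ (because $X_\0 X_\1\subseteq X_\1$ and $X^*$ is stable under the dual regular $X_\0$-actions), so the right-hand side is genuinely of the form $1^{\otimes(d-1)}*(\text{element of }Y)$. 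One could instead extract this identity from Lemma~\ref{Lstfo}, but the slotwise computation is the most direct.

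Granting the identity, the inductive step is short: any element of $\ad^r(X_\0)(U)$ is an $\O$-linear combination of elements $\ad(\xi)(w)$ with $\xi\in X_\0$ and $w\in\ad^{r-1}(X_\0)(U)$; by the inductive hypothesis $1^{\otimes(d-1)}*w\in\GC$, while $\xi*1^{\otimes(d-1)}\in\Inv^d(X_\0)\subseteq\GC$, so the commutator $1^{\otimes(d-1)}*\ad(\xi)(w)$ lies in $\GC$; $\O$-linearity then gives $1^{\otimes(d-1)}*\ad^r(X_\0)(U)\subseteq\GC$. This completes the induction, whence $1^{\otimes(d-1)}*Y\subseteq\GC$ and therefore $\GC=\DC^d X$ by Theorem~\ref{Lgen}. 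The only part requiring genuine care is the commutator identity, and there the one thing to watch — the Koszul signs in $(T_X)^{\otimes d}$ — collapses entirely precisely because $X_\0$ and $1_X$ sit in the even part; morally, conjugating the ``diagonal'' element $1^{\otimes(d-1)}*w$ by $\xi*1^{\otimes(d-1)}$ just applies $\ad(\xi)$ slot by slot.
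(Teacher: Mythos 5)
Your proof is correct and takes essentially the same route as the paper: both reduce to Theorem~\ref{Lgen}, both hinge on the commutator identity $[1^{\otimes(d-1)}*\xi,\ 1^{\otimes(d-1)}*w]=1^{\otimes(d-1)}*\ad(\xi)(w)$, and both close by inducting on $r$ in $\ad^r(X_\0)(U)$. The only stylistic difference is that the paper obtains the commutator identity from Lemma~\ref{Lstfo} (after first disposing of $d=1$ separately), whereas you verify it directly by the slotwise computation in $(T_X)^{\otimes d}$, which has the minor advantage of working uniformly for all $d\geq 1$.
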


\begin{proof}
If $d=1$, the result is clear, so we assume that $d\ge 2$. 
By Lemma~\ref{Lstfo}, for any $\xi\in X_{\0}$ and $x\in Y$, we have 
\begin{align*}
 (1^{\otimes (d-1)}* \xi)(1^{\otimes (d-1)} * x) &= 
 1^{\otimes (d-2)} * \xi * x+1^{\otimes (d-1)} * (\xi x), \\
 (1^{\otimes (d-1)}* x)(1^{\otimes (d-1)} * \xi) &= 
 1^{\otimes (d-2)} * x * \xi+1^{\otimes (d-1)} * (x \xi).
\end{align*}
Since $\xi$ has degree $\0$, we have $x * \xi=\xi* x$, so
\[
1^{\otimes (d-1)} * (\ad (\xi) (x)) = (1^{\otimes (d-1)}* \xi)(1^{\otimes (d-1)} * x)
- (1^{\otimes (d-1)}* x)(1^{\otimes (d-1)} * \xi).
\]
We have proved that if $1^{\otimes (d-1)}*x$ belongs to the subalgebra $\GC\subseteq \DC^d X$ 
generated by $\Inv^d (X_{\0})$ and $1^{\otimes (d-1)} * U$, then 
$1^{\otimes (d-1)}*(\ad(\xi) (x))\in \GC$ for all $\xi\in X_\0$. 
In view of the hypothesis, this implies that 
$1^{\otimes (d-1)} * Y\subseteq \GC$, and the result now follows by Theorem~\ref{Lgen}.
\end{proof}

\subsection{Gradings}\label{SSGr}
By (\ref{EProductDoubleBasic}), the algebra $D^d X$ (resp.~${}'{D^dX}$) is $\Z_{\geq 0}$-graded with the graded degree $e$ component being $D^{d-e,e}X$ (resp.~${}'{D^{d-e,e}X}$) for $e=0,\dots,d$.  We refer to this grading as the {\em standard grading}. In fact, it is a {\em supergalgebra grading}, which means that it is an algebra grading and a supergrading in the sense of Section~\ref{SSuper}. If a superalgebra has a superalgebra grading, we just say that it is graded. 

Assume now that the multiplication in $X$ satisfies $X_\1 X_\1=0$. Then $X$ is a $\Z$-graded algebra with $X^0=X_\0$, $X^1=X_\1$ and $X^m=0$ for $m\neq 0,1$. 
We will always work with the grading on $X^*$ which is the {\em shift by $2$} of the canonical grading, i.e.~$\deg \xi=2$ if $\xi\in X^*$ satisfies $\xi(X^1)=0$ and $\deg \xi=1$ if $\xi\in X^*$ satisfies $\xi(X^0)=0$. Now $T_X=X\oplus X^*$ is also graded, and it is easy to see that this is a superalgebra grading.

This yields $\Z_{\geq 0}$-gradings on 
$\Inv X$, $\Sym(X^*)$, ${}'{\Sym(X^*)}$ and $\Inv T_X$. Moreover, we let $(\Inv X)^*$ inherit the grading from $\Sym(X^*)$ via the identification (\ref{EId}). So we have $\Z_{\ge 0}$-gradings
 on the $\O$-superspaces 
$DX=\Inv X\otimes (\Inv X)^*$ and ${}'{DX}=\Inv X\otimes {}'{\Sym(X^*)}$, which we refer to as {\em Turner's gradings}, cf. \cite[Remark 156]{Turner}. 
If $Y=DX$ or ${}'{DX}$ with Turner's grading, then $Y_\0=\bigoplus_{m\ \text{even}}Y^m$ and $Y_\1=\bigoplus_{m\ \text{odd}}Y^m$. In particular, Turner's grading is a supergrading. 

\begin{Lemma} \label{LGrDNew} 
Let the superalgebra $X$ have the property that $X_\1 X_\1=0$. Then, for every $d\in\Z_{\geq 0}$, the superalgebras $D^dX$ and ${}'{D^dX}$ are $\Z_{\geq 0}$-graded with respect to Turner's gradings. Moreover, the isomorphism of Theorem~\ref{TFund} is an isomorphism of graded superalgebras. 
\end{Lemma}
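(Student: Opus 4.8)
The plan is to reduce everything to the isomorphism $\phi$ of Theorem~\ref{TFund}: once $\phi$ is seen to be degree-preserving, the assertion for ${}'{D^dX}$ follows by transporting the evident grading of $\Inv^d T_X$ back along $\phi$, and the assertion for $D^dX$ follows because it is a graded subalgebra of ${}'{D^dX}$.

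First I would record the relevant gradings. Since $X_\1 X_\1=0$, the superalgebra $X$ is $\Z$-graded with $X^0=X_\0$, $X^1=X_\1$, and with the shift-by-$2$ convention $X^*$ lies in degrees $1$ and $2$; as already observed in the text, $T_X=X\oplus X^*$ is then a graded superalgebra, concentrated in degrees $0,1,2$. Consequently $\Tens^d T_X=T_X^{\otimes d}$ is a graded superalgebra for the total-degree grading, the place-permutation action~\eqref{ESGAction} of $\Si_d$ preserves total degree (a summand $(v_1\otimes\cdots\otimes v_d)^g$ merely permutes the tensor factors, up to a sign), and hence $\Inv^d T_X=(\Tens^d T_X)^{\Si_d}$ is a graded subsuperalgebra. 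The same remarks apply verbatim to $\Inv^d X$ and to $\Inv^d (X^*)$, and the identifications $\Inv X,\,\Inv(X^*)\subseteq\Inv T_X$ are degree-preserving. The same observation about place permutations also shows that the shuffle product $*$ of~\eqref{EStar} is graded, i.e.\ $\deg(s*t)=\deg s+\deg t$ for homogeneous $s,t$.

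The key step is that $\phi$ preserves degree. For homogeneous $\xi\in\Inv^e X$ and $x\in{}'{\Sym^f(X^*)}$ (so that $\xi\otimes x$ has Turner degree $\deg\xi+\deg x$) the previous paragraph gives $\deg\phi(\xi\otimes x)=\deg(\xi*\kappa(x))=\deg\xi+\deg\kappa(x)$, so it suffices to check that $\kappa\colon{}'{\Sym(X^*)}\iso\Inv(X^*)$ is graded. Fixing a homogeneous $\O$-basis $x_1,\dots,x_{l+m}$ of $X^*$, Lemma~\ref{LemmaTri} says $\kappa$ sends the homogeneous basis vector $x_1^{(c_1)}\cdots x_{l+m}^{(c_{l+m})}$ of ${}'{\Sym^f(X^*)}$ to $x_1^{\otimes c_1}*\cdots*x_{l+m}^{\otimes c_{l+m}}$; since each $x_i^{\otimes c_i}$ is homogeneous of degree $c_i\deg x_i$ and $*$ is graded, this image is homogeneous of degree $\sum_i c_i\deg x_i$, which is exactly the degree of the source. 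Hence $\kappa$, and therefore $\phi$, is degree-preserving. As $\phi$ is in addition an algebra isomorphism onto the graded superalgebra $\Inv^d T_X$, its source ${}'{D^dX}$ is a $\Z_{\ge 0}$-graded superalgebra for Turner's grading, and $\phi$ is an isomorphism of graded superalgebras.

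Finally, $D^dX$ is a subalgebra of ${}'{D^dX}$, and for each $e+f=d$ the summand $D^{e,f}X=\Inv^e X\otimes\Sym^f(X^*)$ is a graded $\O$-submodule of ${}'{D^{e,f}X}=\Inv^e X\otimes{}'{\Sym^f(X^*)}$: the basis vector $x_1^{c_1}\cdots x_{l+m}^{c_{l+m}}$ of $\Sym^f(X^*)$ equals $(\prod_i c_i!)\,x_1^{(c_1)}\cdots x_{l+m}^{(c_{l+m})}$, hence is homogeneous of the same degree as the corresponding basis vector of ${}'{\Sym^f(X^*)}$. Summing over $e+f=d$ shows $D^dX$ is a graded $\O$-submodule of ${}'{D^dX}$ that is closed under the product, hence a graded subsuperalgebra, so it too is $\Z_{\ge 0}$-graded for Turner's grading. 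The only point requiring care is the bookkeeping of which grading (number of tensor factors, internal degree on $X$, shift-by-$2$ degree on duals) is in play at each stage; no genuine difficulty arises once the graded nature of $*$ and of $\kappa$ has been noted.
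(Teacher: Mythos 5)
Your proof is correct, and it takes a route that is genuinely different from (and, arguably, slightly cleaner than) the paper's. The paper's proof verifies directly that the product formula~\eqref{EProductDoubleBasic} is degree-preserving: it checks that $\Inv X$, $\Sym(X^*)$, ${}'{\Sym(X^*)}$ are graded algebras, that $\Inv(X^*)$ is graded for the $*$-product, that $\Sym(X^*)$ and ${}'{\Sym(X^*)}$ are graded $\Inv X$-bimodules, and that $\Delta$ and $\kappa$ are degree-zero; the gradedness of the double's product then falls out of the formula, and the gradedness of $\phi$ is a separate (easy) consequence of $\kappa$ being graded. You instead prove that $\phi$ is degree-preserving first — needing only that $*$ and $\kappa$ are graded — and then pull the grading on the algebra $\Inv^d T_X$ back across the algebra isomorphism $\phi$ to conclude that ${}'{D^dX}$ is graded and that $\phi$ is a graded isomorphism in one stroke, finishing with the observation that $D^dX$ is a graded submodule closed under multiplication, hence a graded subalgebra. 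What your approach buys is that you never have to verify that $\Delta$ is degree-zero or that the dual-regular bimodule actions on $\Sym(X^*)$ and ${}'{\Sym(X^*)}$ are graded; those facts are absorbed into the statement that $\Inv^d T_X$ is a graded algebra, which is immediate once one knows $T_X$ is graded. Both proofs rest on the same key observation — $\kappa$ is homogeneous of degree zero — but you reverse the logical flow, letting Theorem~\ref{TFund} do the structural heavy lifting rather than the product formula.
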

\begin{proof}
It is easy to check that $\Inv X$, $\Sym(X^*)$, ${}'{\Sym(X^*)}$ are $\Z_{\geq 0}$-graded superalgebras. Moreover, $\Inv(X^*)$ is graded with respect to the $*$-product. Next, one checks that both $\Sym(X^*)$ and ${}'{\Sym(X^*)}$ are graded $\Inv X$-bimodules. Finally, the homomorphisms $\Delta\colon \Inv X\to \Inv X\otimes \Inv X$ and $\kappa\colon {}'{\Sym (X^*)} \iso \Inv(X^*)$ are  homogeneous of degree zero. 
So the lemma follows from (\ref{EProductDoubleBasic}). 
\end{proof}

\subsection{Symmetricity of doubles}
\label{SSDoubleSymm}
Let $X$ be a $\k$-superalgebra which is free of finite rank as a $\k$-module. The Turner double superalgebra $D^d X$ defined in \S\ref{SSDoubles} is {\em symmetric}. To see this, we define the bilinear form on $D^d X$ via
$$
(\xi\otimes x,\eta\otimes y):=\langle \xi,y\rangle\langle x,\eta\rangle.
$$

We give another description of the form $(\cdot,\cdot)$. 
Recall the standard grading on $D^d X$ from \S\ref{SSGr}.
Let $F\in (D^d X)^*$ be defined by requiring that $F$ is zero on all standard graded components $D^{d-e,e}X$ for $0\leq e<d$, and $F(1\otimes x)=x(1_X^{\otimes d})$ for $x\in (\Inv^d X)^*$. 

\begin{Lemma} \label{LFunction} 
For any $t,u\in D^d X$, we have $(t,u) = F(tu)$. 
\end{Lemma}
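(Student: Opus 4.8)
The plan is to compute $F(tu)$ for $t=\xi\otimes x$ and $u=\eta\otimes y$ with $\xi\in\Inv^e X$, $x\in(\Inv^e X)^*$, $\eta\in\Inv^f X$, $y\in(\Inv^f X)^*$ (a spanning set for $D^dX$ by \eqref{EDirectSum}), and to check that it equals $(\xi\otimes x,\eta\otimes y)=\langle\xi,y\rangle\langle x,\eta\rangle$. First I would observe that $\langle\xi,y\rangle\langle x,\eta\rangle$ vanishes unless $e=d-f$ and $f=e$, i.e.\ unless $e+f=d$ and $e=f$; so the form is supported on the pairs of summands $D^{e,f}X\times D^{f,e}X$ with $e+f=d$. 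On the other side, by the product formula \eqref{EProductDoubleBasic}, $tu\in D^{d}X$ and its component in $D^{0,d}X=1\otimes(\Inv^dX)^*$ is the only part that $F$ sees; moreover $tu$ has a nonzero $D^{0,d}X$-component only when the "$\Inv$-degree" of $\xi_{(2)}\eta_{(1)}$ is $0$ for every surviving Sweedler term, which forces $e=f$ (the invariant tensor factors of $\xi$ and $\eta$ must be entirely "used up" against the dual factors). This already matches the support of $(\cdot,\cdot)$.

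Next, on the diagonal blocks $e=f$, I would use Remark~\ref{R123}(ii): for $\xi\in\Inv^dX$, $x\in(\Inv^dX)^*$ we have $(\xi\otimes 1)(1\otimes x)=1\otimes(\xi\cdot x)$ by \eqref{ELReg}, and $(1\otimes x)(\xi\otimes 1)=1\otimes(x\cdot\xi)$ by \eqref{ERReg}. Writing a general element of $D^{e,e}X$ as a sum of products $(\xi\otimes 1)(1\otimes x)$ and using that $F$ kills everything outside $D^{0,d}X$, the computation of $F$ on a product reduces to evaluating $F(1\otimes(\text{something}))$, which by definition is that functional applied to $1_X^{\otimes d}$. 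Concretely, for $t=\xi\otimes x$ and $u=\eta\otimes y$ with $\xi,\eta\in\Inv^dX$ (the $e=f=d$ extreme case) one gets $tu=\pm\,\xi\eta\otimes(\text{dual part})$ — wait, more carefully: the relevant surviving term of \eqref{EProductDoubleBasic} has $\xi_{(2)}\eta_{(1)}$ sitting in $\Inv^0X=\O\cdot 1$, which by supercocommutativity and counit identities extracts $\langle\varepsilon,\xi\rangle$-type scalars; I would make this precise using that the counit of $\Inv X$ is the projection to $\Inv^0X$ and that $\Delta$ is coassociative and counital. The upshot is $F(tu)=\langle\xi,y\rangle\langle x,\eta\rangle$ after carefully tracking the sign $s$ in \eqref{EProductDoubleBasic} and checking it is $+1$ on the surviving terms (the surviving Sweedler components that contribute are those where $\xi_{(1)}$ or $\eta_{(2)}$ has been "absorbed," and a parity bookkeeping shows the exponent vanishes there).

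The cleanest route, which I would actually take, is to avoid the general-block computation: it suffices to verify the identity $(t,u)=F(tu)$ on a basis, and by bilinearity and the support observation above, only pairs $(t,u)\in D^{e,f}X\times D^{f,e}X$ with $e=f$ matter. For such a pair I would write $t=\xi\otimes x$, $u=\eta\otimes y$ with $\xi,\eta\in\Inv^eX$, $x\in(\Inv^eX)^*$, $y\in(\Inv^eX)^*$ (identifying $(\Inv^{d-e}X)^*$ with $(\Inv^fX)^*$, $f=e$), plug into \eqref{EProductDoubleBasic}, and note that the $D^{0,d}X$-component picks out exactly the Sweedler terms with $\xi_{(2)},\eta_{(1)}\in\Inv^0X$; by the counit axiom these replace $\xi_{(1)}\mapsto\xi$, $\eta_{(2)}\mapsto\eta$ and contribute factors $\langle\varepsilon,\xi_{(2)}\rangle=1$, $\langle\varepsilon,\eta_{(1)}\rangle=1$, giving $1\otimes(x\cdot\eta)(\xi\cdot y)$ up to sign. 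Then $F$ of this is $\big((x\cdot\eta)(\xi\cdot y)\big)(1_X^{\otimes d})$, which by the definitions \eqref{EActions} of the dual regular actions and the definition of the product on $(\Inv X)^*$ as dual to $\Delta$ unwinds to $\langle x,\eta\rangle\langle\xi,y\rangle$.

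The main obstacle I expect is the sign bookkeeping: verifying that the sign $s$ in \eqref{EProductDoubleBasic}, restricted to the Sweedler terms that actually survive after applying $F$, equals the sign implicit in $\langle\xi,y\rangle\langle x,\eta\rangle$ (which, via the conventions \eqref{EDualTensor}, \eqref{ETPSign}, carries its own sign when $x$ and $\xi$ or $y$ and $\eta$ are swapped in pairings). I would handle this by noting that on the surviving terms $\xi_{(2)}$ and $\eta_{(1)}$ are scalars (parity $\bar 0$), which kills several summands in $s$, and that the residual parity constraints $\bar\xi=\bar y$, $\bar x=\bar\eta$ (forced by nonvanishing of the pairings) collapse the rest; a short parity computation then shows $s\equiv 0$ and the two sides agree. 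Everything else is a routine unwinding of the definitions already recorded in \S\ref{SSInv}--\S\ref{SSDoubles}.
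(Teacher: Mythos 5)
Your core strategy is the same as the paper's: restrict attention to the cases where the bilinear form can be nonzero, plug $t=\xi\otimes x$, $u=\eta\otimes y$ into~\eqref{EProductDoubleBasic}, observe that $F$ kills every graded component except $D^{0,d}X$ and so only the Sweedler term with $\xi_{(2)}\eta_{(1)}\in\Inv^0 X$ survives (forcing $\xi_{(2)}=1$, $\eta_{(1)}=1$, $\xi_{(1)}=\xi$, $\eta_{(2)}=\eta$), and finally unwind $\bigl((x\cdot\eta)(\xi\cdot y)\bigr)(1_X^{\otimes d})=\langle x,\eta\rangle\langle\xi,y\rangle$ via the dual regular actions~\eqref{EActions}. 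Your disposal of the sign by parity matching ($\bar x=\bar\eta$ forced by nonvanishing of $\langle x,\eta\rangle$) is also exactly the paper's move.

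However, there is a genuine indexing error in your parameterization of a spanning set for $D^d X$, and it corrupts the case analysis. You take $t=\xi\otimes x$ with $\xi\in\Inv^e X$ \emph{and} $x\in(\Inv^e X)^*$; but by~\eqref{EDirectSum} the summands of $D^dX$ are $\Inv^e X\otimes(\Inv^f X)^*$ with $e+f=d$, \emph{not} $e=f$. With your indices, $\xi\otimes x$ lies in $D^{e,e}X\subseteq D^{2e}X$ and is an element of $D^d X$ only when $2e=d$. Consequently your claimed support condition ``$e=f$'' covers only a tiny slice of $D^d X$ and the proof as written leaves all blocks $D^{e,d-e}X$ with $e\neq d/2$ unverified. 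The paper instead writes $\xi\in\Inv^{d-e}X$, $x\in(\Inv^e X)^*$, $\eta\in\Inv^{d-f}X$, $y\in(\Inv^f X)^*$, and the support condition is $e=d-f$ (i.e.\ $t\in D^{d-e,e}X$ and $u\in D^{d-f,f}X$ with matching complementary indices); once you adopt that parameterization your third paragraph reproduces the paper's proof correctly. Your second paragraph should be discarded: the products $(\xi\otimes 1)(1\otimes x)$ from Remark~\ref{R123}(ii) lie in $D^{0,e}X$ by~\eqref{ELReg}, so they do not span $D^{e,e}X$, and that route does not work.
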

\begin{proof}
We may assume that $t=\xi\otimes x$ and $u=\eta\otimes y$, where
$\xi\in \Inv^{d-e} X$,
 $x\in (\Inv^e X)^*$, $\eta\in \Ind^{d-f} X$ and $y\in (\Inv^f X)^*$ for some $0\leq e,f\leq d$. We may further assume that $e=d-f$, for otherwise both sides of the equation in the lemma are zero. Then, using~\eqref{EProductDoubleBasic}, we have
\begin{align*}
F\big((\xi\otimes x)(\eta\otimes y)\big)&=
\sum (-1)^{\bar\xi_{(1)}(\bar\xi_{(2)}+\bar\eta+\bar x)+\bar \eta_{(1)}\bar x} 
F\big( \xi_{(2)}\eta_{(1)}\otimes (x\cdot\eta_{(2)})(\xi_{(1)}\cdot y)\big)
\\
&=(-1)^{\bar\xi(\bar\eta+\bar x)} 
F\big( 1\otimes (x\cdot\eta)(\xi\cdot y)\big)
\\
&=(-1)^{\bar\xi(\bar\eta+\bar x)}\big((x\cdot\eta)(\xi\cdot y)\big)(1_X^{\otimes d})
\\
&=(-1)^{\bar\xi(\bar\eta+\bar x)}\big((x\cdot\eta)(1_X^{\otimes e})\big)\big((\xi\cdot y)(1_X^{\otimes f})\big)
\\
&=(-1)^{\bar\xi(\bar\eta+\bar x)}\langle x,\eta\rangle\langle \xi,y\rangle,
\end{align*}
where we have used (\ref{EActions})  for the last equality. It remains to note that we can drop the sign since $\langle x,\eta\rangle=0$ unless $\bar x=\bar\eta$. 
\end{proof}

Note that over an arbitrary $\k$, non-degeneracy of a bilinear form $(\cdot,\cdot)$ on a free $\k$-module $V$ of a finite rank means that for every $\k$-basis $\{v_1,\dots,v_m\}$ of $V$ there is another basis $\{w_1,\dots,w_m\}$ such that $(v_a,w_b)=\de_{a,b}$. The following corollary shows that $D^d X$ is a {\em symmetric algebra}. 

\begin{Corollary}\label{CSymmetric} \cite[Theorem 1.1]{TurnerCat} 
The form $(\cdot,\cdot)$ on $D^dX$ is non-degenerate, symmetric and associative. 
\end{Corollary}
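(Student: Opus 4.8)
The plan is to derive all three properties from the reformulation $(t,u)=F(tu)$ of Lemma~\ref{LFunction} together with the block decomposition $D^dX=\bigoplus_{e+f=d}D^{e,f}X$ recalled in \S\ref{SSGr}; no serious computation is involved.

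Symmetry I would read off directly from the defining formula $(\xi\otimes x,\eta\otimes y)=\langle\xi,y\rangle\langle x,\eta\rangle$. The evaluation pairing between $\Inv X$ and $(\Inv X)^*$ is symmetric and vanishes on elements of opposite parity, so $\langle\xi,y\rangle=\langle y,\xi\rangle$ and $\langle x,\eta\rangle=\langle\eta,x\rangle$; since the two scalars commute, this gives $(\xi\otimes x,\eta\otimes y)=(\eta\otimes y,\xi\otimes x)$ with no sign correction. Associativity is then immediate: each $D^dX$ is closed under the (associative) product of $DX$, and Lemma~\ref{LFunction} turns the claim into $(st,u)=F\big((st)u\big)=F\big(s(tu)\big)=(s,tu)$.

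The substance is in non-degeneracy, and the key preliminary observation is that the form is `block-diagonal': $(\xi\otimes x,\eta\otimes y)$ can be nonzero only when $\xi$ and $y$ lie in the same homogeneous piece $\Inv^eX$ and $x,\eta$ lie in the same piece $\Inv^fX$, so the form pairs $D^{e,f}X$ with $D^{f,e}X$ and annihilates every other pair of blocks. It therefore suffices to show that for each $e$ (with $f=d-e$) the induced pairing $D^{e,f}X\times D^{f,e}X\to\k$ is perfect. Unwinding $D^{e,f}X=\Inv^eX\otimes(\Inv^fX)^*$ and $D^{f,e}X=\Inv^fX\otimes(\Inv^eX)^*$, this pairing is, after reordering the two tensor slots, the external product of the evaluation pairing of $\Inv^eX$ against its dual with that of $\Inv^fX$ against its dual. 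Taking a basis $\{\xi_i\}$ of $\Inv^eX$ with dual basis $\{x_i\}$ of $(\Inv^eX)^*$, and a basis $\{\eta_j\}$ of $\Inv^fX$ with dual basis $\{y_j\}$ of $(\Inv^fX)^*$ — which exist since these modules are free of finite rank over $\k$ — one checks $(\xi_i\otimes y_j,\ \eta_{j'}\otimes x_{i'})=\langle\xi_i,x_{i'}\rangle\langle y_j,\eta_{j'}\rangle=\delta_{i,i'}\delta_{j,j'}$, so that $\{\xi_i\otimes y_j\}_{i,j}$ and $\{\eta_j\otimes x_i\}_{i,j}$ are mutually dual bases of $D^{e,f}X$ and $D^{f,e}X$. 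Concatenating these over $e\in[0,d]$ produces a $\k$-basis of $D^dX$ admitting a dual basis, which is exactly non-degeneracy in the sense recalled just before the statement.

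I expect the only real obstacle to be conceptual rather than computational: recognising that the form splits along $\bigoplus_e D^{e,f}X$, so that non-degeneracy localizes to perfectness of the standard evaluation pairings $\Inv^eX\otimes(\Inv^eX)^*\to\k$. Everything else — symmetry, associativity, and the dual-basis bookkeeping — is routine, and in particular no sign analysis is needed anywhere, since the form is $\k$-valued as a product of two commuting scalars.
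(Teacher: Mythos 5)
Your proof is correct and follows essentially the same route as the paper's: symmetry and non-degeneracy are read off directly from the defining formula (the paper simply declares these ``clear''), and associativity is derived from the reformulation $(t,u)=F(tu)$ of Lemma~\ref{LFunction}, exactly as in the paper. You have merely filled in the block-diagonal decomposition and dual-basis bookkeeping that the paper leaves implicit; your observation that the $\k$-valued form requires no sign analysis is also correct, since $\langle\cdot,\cdot\rangle$ is symmetric by construction and vanishes on mismatched parities.
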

\begin{proof}
The non-degeneracy and symmetricity are clear, while the associativity follows from Lemma~\ref{LFunction}. 
\end{proof}

\section{Generalized Schur-Weyl duality}\label{SXSW}
Throughout this section, $A=A_\0\oplus A_\1$ is a $\k$-superalgebra with $\k$-bases $\ttB_\0$ of $A_\0$, $\ttB_\1$ of $A_\1$, and $\ttB=\ttB_\0\sqcup \ttB_\1$ of $A$. 
Fix $d\in\Z_{\geq 0}$ and $n\in \Z_{> 0}$.

\subsection{Wreath product superalgebras}
\label{SSWreath}
We will consider {\em super wreath products} 
\begin{equation}\label{EWreath}
W^A_d:= A^{\otimes d}\rtimes \k\Si_d,
\end{equation}
with $\k\Si_d$ concentrated in degree $\0$. 
We identify $A^{\otimes d}$ and $\k\Si_d$ with the subsuperalgebras $A^{\otimes d}\otimes 1_{\Si_d}$ and $1_{A}^{\otimes d}\otimes \k\Si_d$ of $W^A_d$, respectively. 
The multiplication in $W^A_d$ is then uniquely determined by the additional requirement that 
\begin{equation}\label{EWreathProductDetermined}
g^{-1}(x_1\otimes\dots\otimes x_d)g=(x_1\otimes\dots\otimes x_d)^g
\end{equation}
for $g\in\Si_d$ and $x_1,\dots,x_d\in A$, see (\ref{ESGAction}). 
Given $x\in A$ and $1\leq c\leq d$, we denote 
$$
x[c]:=1_A\otimes \dots\otimes1_A\otimes x\otimes 1_A\otimes\dots\otimes  1_A\in A^{\otimes d},
$$
with $x$ in the $c$th position. 
The following lemma is obvious:

\begin{Lemma} \label{LWrChar}
Let $A$ be a superalgebra and $d\in \Z_{\geq 0}$. 
Then the superalgebra $W^A_d$ 
is generated by the elements $\{x[c]\mid x\in A,\ 1\leq c\leq d\}\sqcup \Si_d$ subject only to the following relations:
\begin{align*}
x[c]\cdot y[c]&=xy[c]&(x,y\in A,\ 1\leq c\leq d),
\\
x[b]\cdot y[c]&=(-1)^{\bar x\bar y}y[c]\cdot x[b] &(x,y\in A,\ 1\leq b\neq c\leq d),
\\
g\cdot  h&=gh &(g,h\in \Si_d),
\\
 g \cdot x[c]&= x[gc]\cdot g &(g\in \Si_d,\ x\in A,\ 1\leq c\leq d).
\end{align*}
\end{Lemma}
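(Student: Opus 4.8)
The statement is that $W^A_d$ is presented by generators $\{x[c] : x\in A,\ 1\le c\le d\}\sqcup\Si_d$ modulo the four families of relations. The strategy is the standard ``normal form'' argument for such presentations. Let $\tilde W$ denote the abstract superalgebra defined by these generators and relations, and let $\pi\colon\tilde W\to W^A_d$ be the canonical surjection; we must show $\pi$ is an isomorphism. Surjectivity is immediate since the listed elements generate $W^A_d$ (the products $x_1[1]\cdots x_d[d]\cdot g$ already span). For injectivity it suffices to exhibit a spanning set of $\tilde W$ whose image under $\pi$ is linearly independent, and the natural candidate is
\[
\{\, b_1[1]\cdots b_d[d]\cdot g \mid b_1,\dots,b_d\in\ttB,\ g\in\Si_d \,\}.
\]
The image of this set under $\pi$ is exactly the standard $\k$-basis of $A^{\otimes d}\rtimes\k\Si_d$ coming from the basis $\ttB$ of $A$, so it is linearly independent; the work is to show this set spans $\tilde W$.

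To see it spans $\tilde W$: first, using the first relation $x[c]\cdot y[c]=xy[c]$ together with $\k$-bilinearity, any monomial in the generators can be rewritten as a $\k$-linear combination of alternating words in ``$A$-type'' factors $x[c]$ (at most... well, arbitrarily many) and ``$\Si_d$-type'' factors $g$. Next, using the fourth relation $g\cdot x[c]=x[gc]\cdot g$ repeatedly, one moves all group elements to the right past all $A$-type factors, so every element becomes a $\k$-combination of words of the form $x_{i_1}[c_{i_1}]\cdots x_{i_N}[c_{i_N}]\cdot g$ with a single group element $g\in\Si_d$ (using the third relation $g\cdot h=gh$ to collapse the tail). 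Then the second relation $x[b]\cdot y[c]=(-1)^{\bar x\bar y}y[c]\cdot x[b]$ for $b\ne c$ lets one sort the $A$-type factors into increasing order of position index, picking up Koszul signs; factors sharing the same position are then merged by the first relation. Finally, expanding each resulting $A$-slot factor $x[c]$ in the basis $\ttB$ gives a $\k$-combination of the claimed words $b_1[1]\cdots b_d[d]\cdot g$ (with $b_c=1_A$ allowed, which is fine as $1_A$ is in the span of $\ttB$, or one treats $1_A$ separately). This shows $\tilde W$ is spanned by that set, completing the argument.

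Actually, since the lemma is stated as ``obvious'', the cleanest writeup is simply: verify that the displayed relations hold in $W^A_d$ (each is immediate from \eqref{EWreathProductDetermined} and the multiplication rule on $A^{\otimes d}$), hence $\pi$ is well-defined and surjective; then observe that the above rewriting procedure shows $\dim_\k\tilde W\le |\ttB|^d\cdot d!=\dim_\k W^A_d$ (when these are finite; in general, $\tilde W$ is spanned by a set in bijection with a basis of $W^A_d$ via $\pi$), forcing $\pi$ to be injective. No sign subtleties beyond the Koszul signs in the second relation arise, and those are exactly the signs built into the multiplication of $A^{\otimes d}$.

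\textbf{Main obstacle.} There is no real obstacle — the only thing requiring a modicum of care is bookkeeping the Koszul signs when commuting $A$-type factors past each other to reach the normal form, and checking that these signs match those in \eqref{EWreathProductDetermined}; but since \eqref{EWreathProductDetermined} is defined via exactly the same sign rule \eqref{ESGAction}, the match is automatic. Hence the lemma genuinely is obvious and a one-line proof (``verify the relations hold and count dimensions via the normal form'') suffices.
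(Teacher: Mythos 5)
The paper supplies no proof for this lemma—it is stated as "obvious"—so there is no author argument to compare against. Your normal-form / dimension-count argument is the standard rigorous justification, and it is essentially correct: verify the relations hold in $W^A_d$ (immediate from the multiplication in $A^{\otimes d}$ and \eqref{EWreathProductDetermined}), get a surjection $\pi\colon\tilde W\twoheadrightarrow W^A_d$, use the four relations to rewrite any word into the form $x_1[1]\cdots x_d[d]\cdot g$, and observe that the images of $b_1[1]\cdots b_d[d]\cdot g$ with $b_c\in\ttB$ form the standard $\k$-basis of $W^A_d$, forcing $\pi$ to be an isomorphism. One small point deserves to be made explicit: for the padding step (turning a word with missing position indices into one with exactly $d$ slots) you need $1_A[c]=1_{\tilde W}$, which is not literally among the four displayed relations. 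This is the usual implicit convention that "generated by the elements $x[c]$, $x\in A$" means there is a \emph{unital} algebra map $A\to\tilde W$ for each $c$ (and likewise that the image of $\Si_d$ is taken to respect the group identity); with that understanding your rewriting procedure goes through and the signs match by construction, exactly as you say.
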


Let $\la\in\La(n,d)$. We always consider the group algebra $\k\Si_\la$ of the standard parabolic subgroup $\Si_\la$ as a subalgebra $\k\Si_\la\subseteq \k\Si_d\subseteq W^A_d$. In particular, $\k\Si_\la$ acts naturally on the left on $W^A_d$. This makes $W^A_d$ into a left $\k\Si_\la$-module, which is free with basis 
$$\{g(\ttb_1\otimes\dots\otimes \ttb_d) \mid g\in  {}^\la\D,\ \ttb_1,\dots,\ttb_d\in \ttB\}.$$ 
So, denoting by $\triv_{\la}$ the trivial right $\k\Si_\la$-module 
$\k \cdot 1_\la$, 
we have the (right) induced   $W^A_d$-module 
\begin{equation}\label{EMMuNew}
M_\la^A:=\triv_{\la}\otimes_{\k\Si_\la} W^A_d
\end{equation}
with generator $m_\la:=1_\la\otimes 1$. We refer to $M_\la^A$ as a {\em permutation module}.

\subsection{Tensor space}\label{SSTensorSpace}
 The matrix algebra $M_n(A)$ is a superalgebra in its own right. We use the elements 
\begin{equation}\label{EXiX}
\xi_{r,s}^x:=xE_{r,s}\in M_n(A)\qquad (x\in A,\ 1\leq r,s\leq n)
\end{equation}
as in \S\ref{SSTEA}. 
We also introduce the special notation 
$$
S^A(n,d):=\Inv^d (M_n(A))\quad\text{and}\quad
S^A(n):=\Inv (M_n(A))=\bigoplus_{d\geq 0} S^A(n,d).
$$
If $A=\k$, the algebra $S^A(n,d)$ is nothing but the classical Schur algebra $S(n,d)$ as in \cite{Green}.

Let $V=A^{\oplus n}$, considered as a right $A$-supermodule in the natural way. Note that we have a natural isomorphism
$
M_n(A)\iso \End_A(V),
$ 
where we consider $V$ as column vectors and the isomorphism sends a matrix $\xi$ to the left multiplication by $\xi$. This implies the isomorphism
\begin{equation}\label{EIsoEnd}
\Tens^d M_n(A)\iso \End_{\Tens^d A}(\Tens^dV).
\end{equation}

Recall from (\ref{ESGAction}) that $\Si_d$ acts on $\Tens^dV$ with $\k$-linear maps, and write $vg:=v^g$ for $v\in V$, $g\in \Si_d$.
 Thus we have right supermodule structures  on $\Tens^dV$ over both $k\Si_d$ and $\Tens^d A$. 
 In view of Lemma~\ref{LWrChar}, the superspace $\Tens^dV$ becomes a right $W^A_d$-supermodule. We refer to this right action of $W^A_d$ on $\Tens^dV$ as the {\em standard permutation action}. 

 \begin{Lemma} \label{LSchurIso} 
 The natural embedding 
 $$S^A(n,d)\,\into\, \Tens^d M_n(A)\iso \End_{\Tens^d A}(\Tens^dV)$$ 
 defines an isomorphism of superalgebras 
 $$
S^A(n,d)\cong \End_{W^A_d}(\Tens^d V).
 $$
 \end{Lemma}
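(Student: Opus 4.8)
The plan is to reduce the statement to the elementary identity $\Inv^d M_n(A)=(\Tens^d M_n(A))^{\Si_d}$ together with the $\Si_d$-equivariance of the isomorphism~\eqref{EIsoEnd}. By Lemma~\ref{LWrChar} the superalgebra $W^A_d$ is generated by $\Tens^d A$ and $\k\Si_d$, so a $\k$-linear endomorphism of $\Tens^d V$ belongs to $\End_{W^A_d}(\Tens^d V)$ precisely when it is $\Tens^d A$-linear and commutes with the right $\Si_d$-action of~\eqref{ESGAction}. Using~\eqref{EIsoEnd} to identify $\End_{\Tens^d A}(\Tens^d V)=\Tens^d M_n(A)$, with $\Tens^d M_n(A)$ acting on $\Tens^d V$ on the left, it therefore suffices to show that an element of $\Tens^d M_n(A)$ commutes with the right $\Si_d$-action on $\Tens^d V$ exactly when it lies in $(\Tens^d M_n(A))^{\Si_d}=\Inv^d M_n(A)=S^A(n,d)$.

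For this, for each $g\in\Si_d$ let $\rho_g\in\End_\k(\Tens^d V)$ be the operator $v\mapsto v^g$, which is invertible with inverse $\rho_{g^{-1}}$. The relations of Lemma~\ref{LWrChar} (equivalently~\eqref{EWreathProductDetermined}) show that the conjugation $\phi\mapsto\rho_g\circ\phi\circ\rho_g^{-1}$ carries the subalgebra $\Tens^d A$ of $\End_\k(\Tens^d V)$ onto itself, merely permuting the tensor factors; hence it preserves $\End_{\Tens^d A}(\Tens^d V)=\Tens^d M_n(A)$ and defines an action of $\Si_d$ on $\Tens^d M_n(A)$ by superalgebra automorphisms whose fixed points are exactly the elements commuting with every $\rho_g$. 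The key computation is that this conjugation action coincides with the super place-permutation action~\eqref{ESGAction} on $\Tens^d M_n(A)$: evaluating $\rho_g\circ\Xi\circ\rho_g^{-1}$ on a pure tensor $v_1\otimes\dots\otimes v_d$ for $\Xi=\Xi_1\otimes\dots\otimes\Xi_d$ --- apply $\rho_g^{-1}$, then the factorwise action of the $\Xi_c$, then $\rho_g$, and collect the Koszul signs --- yields the action of $(\Xi_1\otimes\dots\otimes\Xi_d)^g$; this is the same sign bookkeeping that underlies~\eqref{EWreathProductDetermined}. Thus the elements of $\Tens^d M_n(A)$ commuting with all $\rho_g$ are exactly those fixed under~\eqref{ESGAction}, i.e.\ the elements of $\Inv^d M_n(A)=S^A(n,d)$.

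Combining the two steps, $\End_{W^A_d}(\Tens^d V)$ is precisely the image of $S^A(n,d)$ under the natural embedding $S^A(n,d)\into\Tens^d M_n(A)\iso\End_{\Tens^d A}(\Tens^d V)$, and since this embedding is the restriction of the superalgebra isomorphism~\eqref{EIsoEnd} it induces a superalgebra isomorphism $S^A(n,d)\cong\End_{W^A_d}(\Tens^d V)$, as required. (The commutativity of the left $S^A(n,d)$-action and the right $W^A_d$-action on $\Tens^d V$ is automatic, since $S^A(n,d)\subseteq\Tens^d M_n(A)=\End_{\Tens^d A}(\Tens^d V)$ commutes with $\Tens^d A$ and, being $\Si_d$-invariant, with $\k\Si_d$, hence with all of $W^A_d$.) The only genuine work is the sign tracking in identifying the conjugation action with the place-permutation action~\eqref{ESGAction}; the remainder is formal.
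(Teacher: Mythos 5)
Your proposal is correct and takes essentially the same approach as the paper: the paper defines a right $\Si_d$-action on $\End_{\Tens^d A}(\Tens^d V)$ by $(\phi\cdot g)(v)=\phi(vg^{-1})g$, which is exactly your conjugation $\rho_g\circ\phi\circ\rho_g^{-1}$, asserts that the isomorphism~\eqref{EIsoEnd} intertwines this with the action~\eqref{ESGAction} on $\Tens^d M_n(A)$, and passes to invariants. Your writeup is somewhat more explicit about why invariance under conjugation is equivalent to commuting with all of $W^A_d$ (invoking Lemma~\ref{LWrChar}), but the mathematical content and the key sign-tracking step are identical.
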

 \begin{proof}
The action of $\Si_d$  on $\Tens^dV$ yields the action on $\End_{\Tens^d A}(\Tens^dV)$ via $(\phi\cdot g)(v)=\phi (v g^{-1})g$
for $\phi\in \End_{\Tens^d A}(\Tens^dV)$, $g\in\Si_d$ and $v\in \Tens^dV$. 

Let $\al\colon \Tens^d M_n(A)\iso\End_{\Tens^d A}(\Tens^dV)$ be the isomorphism (\ref{EIsoEnd}). We have the $\Si_d$-action on $\End_{\Tens^d A}(\Tens^dV)$ defined in the previous paragraph, and the $\Si_d$-action on $\Tens^d M_n(A)$ defined by 
(\ref{ESGAction}). 
It is easy to see that $\al$ intertwines the two actions. 
Taking invariants, 
we get an isomorphism between $S^A(n,d)=(\Tens^d M_n(A))^{\Si_d}$ and 
$\End_{W^A_d}(\Tens^d V)=(\End_{\Tens^d A}(\Tens^d V))^{\Si_d}.
$
 \end{proof}

For $1\leq r\leq n$, we set 
\begin{equation}\label{EVR}
v_r:=(0,\dots,0,1_A,0,\dots,0)\in V,
\end{equation}
where $1_A$ is in the $r$th position. For $\br=(r_1,\dots,r_d)\in\Seq(n,d)$, we define
$$
v_\br:=v_{r_1}\otimes\dots\otimes v_{r_d}\in \Tens^d V.
$$
Since $\{v_1,\dots,v_n\}$ is an $A$-basis of $V$, the set 
$
\{v_\br\mid \br\in \Seq(n,d)\}
$
is a $\Tens^d A$-basis of $\Tens^d V$. Note that 
\begin{equation}\label{EEBRG}
v_\br g= v_{\br g}\qquad(g\in\Si_d,\ \br\in \Seq(n,d)).
\end{equation}
Let $\la\in\La(n,d)$. 
We denote by $\Tens^\la V$ the $\Tens^d A$-span of all $v_\br$ such that $\br\in{}^\la\Seq$, cf.~(\ref{ELaSeq}). By (\ref{EEBRG}), $\Tens^\la V$ is a $W^A_d$-submodule of $\Tens^d V$. 
We have a special vector 
$$
v_\la:=v_1^{\otimes \la_1}\otimes\dots\otimes v_n^{\otimes \la_n}\in \Tens^\la V.
$$ 
We have the decomposition of $W^A_d$-modules: 
\begin{equation}\label{E100216}
\Tens^d V=\bigoplus_{\la\in\La(n,d)} \Tens^\la V.
\end{equation}

\begin{Lemma} \label{LTLaMLa} 
Let $\la\in\La(n,d)$. There is an isomorphism of  right $W_d^A$-modules 
$
\Tens^\la V\iso M_\la^A
$
which maps $v_\la$ to the standard generator $m_\la$ of~$M_\la^A$.
\end{Lemma}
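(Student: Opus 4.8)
The plan is to exhibit an explicit $W_d^A$-module homomorphism $\Tens^\la V \to M_\la^A$ sending $v_\la \mapsto m_\la$ and then check it is bijective by comparing bases. Recall from~\S\ref{SSWeSe} the bijection~\eqref{ESingleCosetSeq}, ${}^\la\D \to {}^\la\Seq$, $g \mapsto \br^\la g$, where $\br^\la = (1,\dots,1,2,\dots,2,\dots,n,\dots,n)$ with each $r$ repeated $\la_r$ times, so that $v_{\br^\la} = v_\la$. The first step is to observe that the stabilizer in $\Si_d$ of $v_\la$ (acting by place permutations without signs, since all the $v_r$ lie in $V_{\0}$) is exactly the standard parabolic $\Si_\la$, and that $v_\la g = v_{\br^\la g}$ for $g\in \Si_d$ by~\eqref{EEBRG}. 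Hence the map $\k\Si_d \to \Tens^\la V$, $w \mapsto v_\la w$, factors through $\triv_\la \otimes_{\k\Si_\la} \k\Si_d$ and, more to the point, the map $W_d^A \to \Tens^\la V$, $u \mapsto v_\la u$, factors through $\triv_\la \otimes_{\k\Si_\la} W_d^A = M_\la^A$; here one uses that $\Si_\la$ commutes (up to the braiding, which is trivial here because $\Si_\la$ fixes $\br^\la$) appropriately with the $A^{\otimes d}$-part, i.e.\ the left $\k\Si_\la$-action on $W_d^A$ from~\S\ref{SSWreath}. This defines a $W_d^A$-module homomorphism $\theta\colon M_\la^A \to \Tens^\la V$, $m_\la \mapsto v_\la$; let $\psi$ denote the map in the opposite direction induced by $v_\la \mapsto m_\la$ which we are after — really it is $\theta$ that is canonically defined, and the lemma asserts $\theta$ is an isomorphism.

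The second step is to compute both sides as $\Tens^d A$-modules, equivalently as $\k$-modules, using the basis $\ttB$ of $A$. On the one hand, $\{v_{\br} \mid \br \in {}^\la\Seq\}$ is a $\Tens^d A$-basis of $\Tens^\la V$ by construction, and via~\eqref{ESingleCosetSeq} this is indexed by ${}^\la\D$; tensoring with the $\k$-basis $\{\ttb_1\otimes\dots\otimes\ttb_d \mid \ttb_i \in \ttB\}$ of $\Tens^d A$ gives the $\k$-basis $\{v_{\br^\la g}\cdot(\ttb_1\otimes\dots\otimes\ttb_d) \mid g\in{}^\la\D,\ \ttb_i\in\ttB\}$ of $\Tens^\la V$. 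On the other hand, by the description in~\S\ref{SSWreath}, $M_\la^A = \triv_\la \otimes_{\k\Si_\la} W_d^A$ has $\k$-basis $\{m_\la\cdot g(\ttb_1\otimes\dots\otimes\ttb_d) \mid g\in{}^\la\D,\ \ttb_i\in\ttB\}$, since $W_d^A$ is $\k\Si_\la$-free on $\{g(\ttb_1\otimes\dots\otimes\ttb_d)\}$. The third step is to check that $\theta$ carries the second basis to the first: $\theta(m_\la\cdot g(\ttb_1\otimes\dots\otimes\ttb_d)) = v_\la\cdot g(\ttb_1\otimes\dots\otimes\ttb_d) = (v_\la g)\cdot(\ttb_1\otimes\dots\otimes\ttb_d) = v_{\br^\la g}\cdot(\ttb_1\otimes\dots\otimes\ttb_d)$, using the $W_d^A$-module structure (the permutation $g$ acts first, then the algebra part) and~\eqref{EEBRG}. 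Since $\theta$ sends a $\k$-basis bijectively onto a $\k$-basis, it is an isomorphism of $\k$-modules, hence of $W_d^A$-modules, and it sends $m_\la$ to $v_\la$ as required.

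The one genuinely delicate point — and the main obstacle worth care — is verifying that the assignment $u \mapsto v_\la u$ really does factor through the relative tensor product over $\k\Si_\la$, i.e.\ that $v_\la$ is annihilated (on the right, in $\Tens^\la V$) by the augmentation ideal of $\k\Si_\la$ in the appropriate sense, and that this is compatible with the left $\k\Si_\la$-module structure on $W_d^A$ used to form $M_\la^A$. Concretely one must check $v_\la\cdot(h u) = v_\la\cdot u$ for $h\in\Si_\la$ and $u\in W_d^A$, which follows from $v_\la h = v_\la$ together with associativity of the $W_d^A$-action; the sign issue that could in principle arise from the braiding~\eqref{EGSign} does not, because $\Si_\la$ permutes only equal indices $r_a = r_c$ with $v_{r_a}, v_{r_c}\in V_{\0}$, so $[\,h; v_{\br^\la}\,] = 0$. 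Once this factorization is in hand, everything else is the routine basis bookkeeping sketched above.
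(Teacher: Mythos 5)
Your proposal is correct and follows essentially the same route as the paper: use the $\Si_\la$-invariance of $v_\la$ (with the observation that no signs interfere since the $v_r$ are even) to induce a $W_d^A$-homomorphism $M_\la^A\to\Tens^\la V$, $m_\la\mapsto v_\la$, and then verify it carries the $\Tens^d A$-basis $\{m_\la g\mid g\in{}^\la\D\}$ onto $\{v_\br\mid\br\in{}^\la\Seq\}$ via the bijection~\eqref{ESingleCosetSeq}. Your extra care in checking the factorization over $\k\Si_\la$ and the sign vanishing $[h;v_{\br^\la}]=0$ is simply a spelled-out version of the paper's terse ``it is immediate that $v_\la$ is $\Si_\la$-invariant.''
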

\begin{proof}
It is immediate that $v_\la$ is $\Si_\la$-invariant, which yields a homomorphism 
$
M_\la^A\to \Tens^\la V,\ m_\la\mapsto v_\la. 
$ 
This is an isomorphism, since it maps the $\Tens^d A$-basis $\{m_\la g\mid g\in{}^\la\D\}$ of $M_\la^A$ to the $\Tens^d A$-basis 
$\{v_\br\mid \br\in {}^\la\Seq\}$ of $\Tens^\la V$, cf. the bijection (\ref{ESingleCosetSeq}). 
\end{proof}

For any $\la\in\La(n,d)$, we define
\begin{equation}\label{EXiLa}
\xi_\la:=E_{1,1}^{\otimes \la_1}*\dots* E_{n,n}^{\otimes \la_n}\in S^A(n,d).
\end{equation}

\begin{Lemma} \label{L189116} 
Let $\la,\mu\in\La(n,d)$. Then:
\begin{enumerate}
\item[{\rm (i)}] $\xi_\la\xi_\mu=\de_{\la,\mu}\xi_\la$ and\, 
$\sum_{\nu\in \La(n,d)}\xi_\nu=1$.
\item[{\rm (ii)}] $\xi_\la \Tens^d V=\Tens^\la V$. 
\end{enumerate}
\end{Lemma}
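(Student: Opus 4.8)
The plan is to reduce both parts to one explicit computation in $\Tens^d M_n(A)$. For $\br=(r_1,\dots,r_d)\in\Seq(n,d)$ put $E_\br:=E_{r_1,r_1}\otimes\dots\otimes E_{r_d,r_d}$. The first step is to establish the identity
\[
\xi_\la=\sum_{\br\in{}^\la\Seq}E_\br\qquad\text{in }\Tens^d M_n(A).
\]
Since $\overline{E_{r,r}}=\overline{1_A}=\0$, every sign in the definition~\eqref{EStar} of $*$ and in~\eqref{ESGAction} is trivial here, so iterating~\eqref{EStar} gives $E_{1,1}^{\otimes\la_1}*\dots*E_{n,n}^{\otimes\la_n}=\sum_g (E_{1,1}^{\otimes\la_1}\otimes\dots\otimes E_{n,n}^{\otimes\la_n})^g$, the sum over the shortest coset representatives $g$ for $\Si_\la\backslash\Si_d$; as the $r$th block consists of $\la_r$ copies of $E_{r,r}$, the summand for $g$ is exactly $E_{\br}$, where $\br$ is the image of $g$ under the bijection~\eqref{ESingleCosetSeq}, and these exhaust ${}^\la\Seq$ without repetition. (Equivalently, $\xi_\la$ is the basis element~\eqref{EBasis3} of $\Inv^d M_n(A)$ attached to the multiset in which $E_{r,r}$ has multiplicity $\la_r$.)

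For (i): the product on $\Tens^d M_n(A)$ is componentwise and, on the even elements $E_{r,r}$, sign-free, and $E_{r,r}E_{s,s}=\de_{r,s}E_{r,r}$, so $E_\br E_\bs=\de_{\br,\bs}E_\br$. Hence $\xi_\la\xi_\mu=\sum_{\br\in{}^\la\Seq}\sum_{\bs\in{}^\mu\Seq}\de_{\br,\bs}E_\br$, which is $0$ unless ${}^\la\Seq\cap{}^\mu\Seq\neq\varnothing$, i.e.\ unless $\la=\mu$, in which case it equals $\xi_\la$; thus $\xi_\la\xi_\mu=\de_{\la,\mu}\xi_\la$. (Alternatively, orthogonality follows at once from Lemma~\ref{LExercise} applied with $e_i=E_{i,i}$.) Since $\Seq(n,d)=\bigsqcup_{\nu}{}^\nu\Seq$, we get
\[
\sum_{\nu\in\La(n,d)}\xi_\nu=\sum_{\br\in\Seq(n,d)}E_\br=\Bigl(\sum_{r=1}^n E_{r,r}\Bigr)^{\otimes d}=1_{M_n(A)}^{\otimes d},
\]
the identity element of $\Tens^d M_n(A)$ and hence of the subalgebra $S^A(n,d)$.

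For (ii): via the embedding $S^A(n,d)\into\Tens^d M_n(A)\iso\End_{\Tens^d A}(\Tens^d V)$ of~\eqref{EIsoEnd} and Lemma~\ref{LSchurIso}, the matrix $E_{r,r}$ acts on $V$ as left multiplication, i.e.\ as the $A$-linear projection $v_s\mapsto\de_{r,s}v_r$; hence $E_\br$ acts on $\Tens^d V$ as the $\Tens^d A$-linear map $v_\bs\mapsto\de_{\br,\bs}v_\br$. Combining this with the first step, $\xi_\la v_\bs=v_\bs$ for $\bs\in{}^\la\Seq$ and $\xi_\la v_\bs=0$ otherwise. Since $\{v_\bs\mid\bs\in\Seq(n,d)\}$ is a $\Tens^d A$-basis of $\Tens^d V$ and $\xi_\la$ is $\Tens^d A$-linear, this gives $\xi_\la\Tens^d V=\bigoplus_{\bs\in{}^\la\Seq}v_\bs\,\Tens^d A=\Tens^\la V$, by the definition of $\Tens^\la V$.

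The only step that is not a mechanical unwinding of the definitions is the expansion $\xi_\la=\sum_{\br\in{}^\la\Seq}E_\br$; once it is in hand, parts (i) and (ii) follow immediately. Accordingly, that identity is where I would be most careful, in particular in checking that the iterated shuffle produces no pure tensor with multiplicity $>1$.
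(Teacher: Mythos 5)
Your proof is correct, but it takes a different route from the paper's. You begin by establishing the explicit expansion $\xi_\la=\sum_{\br\in{}^\la\Seq}E_\br$ inside $\Tens^d M_n(A)$ and then deduce both parts by direct algebra: part (i) by orthogonality of the idempotents $E_\br$ in the tensor algebra itself, and part (ii) by reading off the action of the $E_\br$ on the $\Tens^d A$-basis $\{v_\bs\}$. The paper instead computes only the single evaluation $\xi_\la v_\mu=\de_{\la,\mu}v_\la$ on the distinguished vectors $v_\mu$ and then invokes module-theoretic machinery: Lemma~\ref{LTLaMLa} (that $v_\la$ generates $\Tens^\la V$ as a right $W_d^A$-module) and Lemma~\ref{LSchurIso} (both the commutation of the $S^A(n,d)$- and $W_d^A$-actions and the faithfulness of the $S^A(n,d)$-action) to conclude that $\xi_\la$ is the projection onto $\Tens^\la V$, obtaining (ii); part (i) then follows from faithfulness. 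Your approach is more self-contained for part (i), as it does not invoke Schur--Weyl duality at all and in particular does not rely on faithfulness, while the paper's approach is shorter to write down because it leans on the structural results already proved. Both are valid; the load-bearing claim you rightly flag — the iterated shuffle expansion of $\xi_\la$ with multiplicity one — checks out by the standard fact that shortest coset representatives for $(\Si_{\la_1}\times\dots\times\Si_{\la_n})\backslash\Si_d$ factor as products of shortest coset representatives at each stage, and because all the $E_{r,r}$ are even, no signs intervene.
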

\begin{proof}
Note that $\xi_\la v_\mu=\de_{\la,\mu}v_\la$. But $v_\la$ generates $\Tens^\la V$ as a right $W^A_d$-module by Lemma~\ref{LTLaMLa}, and the action of $S^A(n,d)$ on $\Tens^d V$ commutes with that of $W^A_d$ by Lemma~\ref{LSchurIso}, so $\xi_\la$ acts as the projection onto $\Tens^\la V$ along $\bigoplus_{\nu\neq \la}\Tens^\nu V$. The lemma follows since $S^A(n,d)$ acts on $\Tens^d V$ faithfully thanks to Lemma~\ref{LSchurIso}.  
\end{proof}

\subsection{Idempotent truncation} \label{SSIdempotentTruncation} 
Throughout the subsection we assume that $d\leq n$ and set 
\begin{equation}\label{EOm}
\om:=\eps_1+\dots +\eps_d\in\La(n,d). 
\end{equation}
The main goal of this subsection is to explicitly identify $\xi_\om S^A(n,d)\xi_\om$ with $W_d^A$ and $S^A(n,d)\xi_\om$ with $\Tens^d V$ so that the natural right action of $\xi_\om S^A(n,d)\xi_\om$ on $S^A(n,d)\xi_\om$ becomes the  standard permutation action of $W_d^A$ on $\Tens^d V$, cf.~\cite[Chapter 6]{Green} for the case when $A=\k$.

\begin{Lemma} \label{LIdempotentTruncation}
There is a superalgebra isomorphism 
$$
\phi\colon W^A_d\iso \xi_\om S^A(n,d)\xi_\om,\ (x_1\otimes\dots\otimes x_d)g\mapsto \xi^{x_1}_{1,g^{-1}1}*\dots*\xi^{x_d}_{d,g^{-1}d}.
$$
Moreover, for any $w\in W^A_d$, its image $\phi(w)$ is the unique element of $\xi_\om S^A(n,d)\xi_\om$ such that $\phi(w)v_\om=v_\om w$. 
\end{Lemma}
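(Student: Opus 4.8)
The plan is to construct the inverse map $\psi\colon \xi_\om S^A(n,d)\xi_\om\to W^A_d$ abstractly, prove it is an isomorphism of superalgebras, and then check that $\psi^{-1}$ is the map in the statement. First I would record the relevant facts from the preceding results: by (the proof of) Lemma~\ref{L189116}, $\xi_\om$ acts on $\Tens^d V$ as the projection onto $\Tens^\om V$ along $\bigoplus_{\nu\ne\om}\Tens^\nu V$; by Lemma~\ref{LSchurIso}, $S^A(n,d)$ acts faithfully on $\Tens^d V$ and equals $\End_{W^A_d}(\Tens^d V)$; and, crucially, $\Tens^\om V$ is a free right $W^A_d$-module of rank one with generator $v_\om$. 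The last point holds because, by Lemma~\ref{LTLaMLa}, $\Tens^\om V\cong M_\om^A$ via $v_\om\mapsto m_\om$, and since $\om_i\le 1$ for all $i$ the parabolic $\Si_\om$ is trivial, so $M_\om^A=\triv_\om\otimes_{\k\Si_\om}W^A_d\cong W^A_d$. Hence every element of $\Tens^\om V$ is $v_\om w$ for a unique $w\in W^A_d$.

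Next I would define $\psi(f)$, for $f\in\xi_\om S^A(n,d)\xi_\om$, to be the unique $w\in W^A_d$ with $fv_\om=v_\om w$; this is well defined since $f=\xi_\om f$ forces $fv_\om\in\xi_\om\Tens^d V=\Tens^\om V$. As the left $S^A(n,d)$-action and the right $W^A_d$-action on $\Tens^d V$ commute (Lemma~\ref{LSchurIso}), we get $(fg)v_\om=f(v_\om\psi(g))=(fv_\om)\psi(g)=v_\om\psi(f)\psi(g)$, so $\psi$ is multiplicative; it is unital because $\xi_\om v_\om=v_\om$, and it preserves parity because $v_\om$ is an even vector. It is injective: any $f=\xi_\om f\xi_\om$ has image inside $\Tens^\om V$ and annihilates $\bigoplus_{\nu\ne\om}\Tens^\nu V$, hence is determined by its (right $W^A_d$-linear) restriction to $\Tens^\om V=v_\om W^A_d$, and thus by $fv_\om$; here faithfulness is used. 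It is surjective: given $w\in W^A_d$, the rule $v_\om u\mapsto v_\om wu$ ($u\in W^A_d$) defines a right $W^A_d$-endomorphism of $\Tens^\om V$, which extended by zero on $\bigoplus_{\nu\ne\om}\Tens^\nu V$ yields $f\in\End_{W^A_d}(\Tens^d V)=S^A(n,d)$ with $f=\xi_\om f\xi_\om$ and $\psi(f)=w$. Therefore $\psi$ is an isomorphism of superalgebras; set $\phi:=\psi^{-1}$. By construction $\phi(w)$ is exactly the unique element of $\xi_\om S^A(n,d)\xi_\om$ with $\phi(w)v_\om=v_\om w$, which is the last assertion of the lemma.

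It remains to identify $\phi$ with the explicit map. Given $w=(x_1\otimes\dots\otimes x_d)g\in W^A_d$, put $f:=\xi^{x_1}_{1,g^{-1}1}*\dots*\xi^{x_d}_{d,g^{-1}d}$; then $f\in\Star^d M_n(A)\subseteq\Inv^d M_n(A)=S^A(n,d)$ since $\Inv$ is closed under $*$. I would expand the $d$-fold shuffle as $f=\sum_{\sigma\in\Si_d}(-1)^{[\sigma;\bar x_1,\dots,\bar x_d]}\,\xi^{x_{\sigma 1}}_{\sigma 1,\,g^{-1}\sigma 1}\otimes\dots\otimes\xi^{x_{\sigma d}}_{\sigma d,\,g^{-1}\sigma d}$ and apply it to $v_\om=v_1\otimes\dots\otimes v_d$, using $\xi^{x}_{r,s}v_t=\de_{s,t}\,v_rx$. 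Each tensor factor forces $g^{-1}\sigma a=a$ for every $a$, so only $\sigma=g$ survives and one gets $fv_\om=(-1)^{[g;\bar x_1,\dots,\bar x_d]}\,v_{g1}x_{g1}\otimes\dots\otimes v_{gd}x_{gd}=v_\om\,(x_1\otimes\dots\otimes x_d)g$. The same computation with $v_\kappa$ in place of $v_\om$ vanishes for $\kappa\ne\om$, and $f$ is right $W^A_d$-linear, so $f$ preserves $\Tens^\om V$ and kills the complement; hence $f=\xi_\om f\xi_\om\in\xi_\om S^A(n,d)\xi_\om$. By the uniqueness from the previous paragraph, $\phi(w)=f$, as desired.

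The only genuinely delicate point is the sign bookkeeping in the last step: one must check that $fv_\om$ and $v_\om\,(x_1\otimes\dots\otimes x_d)g$ carry the same sign. This turns out to be painless precisely because $v_\om$ is an even vector, so all componentwise-action signs are trivial, and the lone surviving sign $(-1)^{[g;\bar x_1,\dots,\bar x_d]}$ — arising from the place permutation $\sigma=g$ applied to the $\xi$'s on the left-hand side and, via~\eqref{ESGAction}, to the vectors $v_ix_i$ on the right-hand side — appears identically on both sides.
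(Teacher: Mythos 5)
Your proof is correct and follows essentially the same route as the paper's: you identify $\xi_\om S^A(n,d)\xi_\om$ with $\End_{W^A_d}(\Tens^\om V)$ via Lemma~\ref{LSchurIso} and Lemma~\ref{L189116}, use that $\Tens^\om V\cong M^A_\om$ is a free right $W^A_d$-module of rank one (since $\Si_\om$ is trivial) via Lemma~\ref{LTLaMLa}, and then verify the explicit formula by the action computation $f\,v_\om = v_\om\,(x_1\otimes\dots\otimes x_d)g$, with the sign bookkeeping trivialized by $v_\om$ being even. The only cosmetic difference is that the paper packages the two identifications as explicit isomorphisms $\al$ and $\be$ and takes $\phi=\al^{-1}\circ\be$, whereas you define the inverse $\psi$ directly and check bijectivity by hand; the content is the same.
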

\begin{proof}
Using Lemma~\ref{LSchurIso}, we have an isomorphism of superalgebras 
\begin{align*}
\al\colon \xi_\om S^A(n,d)\xi_\om\iso \End_{W_d^A}(\xi_\om \Tens^d V)
\end{align*}
which maps $s\in \xi_\om S^A(n,d)\xi_\om$ to the left multiplication by $s$. On the other hand, by Lemma~\ref{LTLaMLa}, there is an isomorphism of right $W_d^A$-supermodules $\xi_\om \Tens^d V=\Tens^\om V\iso M_{\om}^A,\ v_\om\mapsto m_\om$. But the $W_d^A$-module $M_{\om}^A$ is free of rank $1$ with generator $m_\om$. So there is an  isomorphism  
$\be\colon W_d^A\iso \End_{W_d^A}(\xi_\om \Tens^d V)$ of superalgebras which sends $w\in  W_d^A$ to the endomorphism $v_\om\mapsto v_\om w$. 

Generalizing the notation (\ref{EVR}), we set 
\begin{equation}\label{EVGen}
v_r^x:=(0,\dots,0,x,0,\dots,0)\in V \qquad(1\leq r\leq n,\ x\in A),
\end{equation}
where $x$ is in the $r$th position. Then 
\begin{align*}
\al(\xi^{x_1}_{1,g^{-1}1}*\dots*\xi^{x_d}_{d,g^{-1}d})(v_\om)
&=
(\xi^{x_1}_{1,g^{-1}1}*\dots*\xi^{x_d}_{d,g^{-1}d})(v_1\otimes\dots\otimes v_d)
\\
&=(-1)^{[g;x_1,\dots,x_d]} v_{g1}^{x_{g1}}\otimes\dots\otimes v_{gd}^{x_{gd}}
\\&=(v_{1}^{x_{1}}\otimes\dots\otimes v_{d}^{x_{d}})g
\\&=v_\om(x_1\otimes \dots\otimes x_d)g
\\&=\be((x_1\otimes \dots\otimes x_d)g)(v_\om).
\end{align*}
This proves the lemma.
\end{proof}

Note that $S^A(n,d)\xi_\om$ is a right $\xi_\om S^A(n,d)\xi_\om$-module, so we consider it as a right $W_d^A$-module via the identification of $W_d^A$ with $\xi_\om S^A(n,d)\xi_\om$ coming from the isomorphism $\phi$ of Lemma~\ref{LIdempotentTruncation}.

\begin{Proposition} \label{PSWFund} 
There is a unique isomorphism $S^A(n,d)\xi_\om\iso \Tens^d V$ of $(S^A(n,d),W^A_d)$-superbimodules which maps 
$\xi_\om$ to $v_\om.$
\end{Proposition}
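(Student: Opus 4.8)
The plan is to write down the bimodule map directly and then check that it is bijective and unique, all of which reduce to the idempotent truncation results of this subsection. Since $\om=\eps_1+\dots+\eps_d$ lies in $\La(n,d)$, Lemma~\ref{L189116} gives $\xi_\om v_\om=v_\om$ and says that $\xi_\om$ acts on $\Tens^d V$ as the projection onto $\Tens^\om V$ along $\bigoplus_{\mu\ne\om}\Tens^\mu V$. In particular the left $S^A(n,d)$-module map $S^A(n,d)\to\Tens^d V,\ s\mapsto sv_\om$, sends $s-s\xi_\om$ to $sv_\om-s\xi_\om v_\om=0$, hence factors through a left $S^A(n,d)$-module homomorphism
\[
\theta\colon S^A(n,d)\xi_\om\longrightarrow\Tens^d V,\qquad s\xi_\om\mapsto sv_\om,
\]
with $\theta(\xi_\om)=v_\om$. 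To see $\theta$ is also right $W^A_d$-linear, recall that the right $W^A_d$-action on $S^A(n,d)\xi_\om$ is transported from the action of $\xi_\om S^A(n,d)\xi_\om$ via the isomorphism $\phi$ of Lemma~\ref{LIdempotentTruncation}, so $(s\xi_\om)\cdot w=s\xi_\om\phi(w)=s\phi(w)$ because $\phi(w)\in\xi_\om S^A(n,d)\xi_\om$. As $\phi(w)\in S^A(n,d)$ commutes with the right $W^A_d$-action on $\Tens^d V$ (Lemma~\ref{LSchurIso}) and $\phi(w)v_\om=v_\om w$ (Lemma~\ref{LIdempotentTruncation}), we get $\theta((s\xi_\om)\cdot w)=s\phi(w)v_\om=s(v_\om w)=(sv_\om)\cdot w=\theta(s\xi_\om)\cdot w$, using $\phi(w)=\phi(w)\xi_\om$ so that $s\phi(w)\in S^A(n,d)\xi_\om$. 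Thus $\theta$ is a homomorphism of $(S^A(n,d),W^A_d)$-superbimodules carrying $\xi_\om$ to $v_\om$.

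Next I would establish that $\theta$ is bijective. For injectivity, if $\theta(s\xi_\om)=sv_\om=0$ then, since $\Tens^\om V=v_\om W^A_d$ by Lemma~\ref{LTLaMLa} and $s$ commutes with the $W^A_d$-action, the endomorphism $s$ vanishes on $\Tens^\om V$; because $\xi_\om$ has image $\Tens^\om V$, this forces $s\xi_\om$ to vanish on all of $\Tens^d V$, hence $s\xi_\om=0$ by faithfulness of the $S^A(n,d)$-action (Lemma~\ref{LSchurIso}). For surjectivity, fix $\la\in\La(n,d)$; since $\Tens^\om V$ is free of rank one over $W^A_d$ with generator $v_\om$ (Lemma~\ref{LTLaMLa}), there is a right $W^A_d$-module homomorphism $\Tens^\om V\to\Tens^d V$ sending $v_\om$ to $v_\la$, and extending it by zero on the remaining summands of the decomposition~\eqref{E100216} produces $s_\la\in\End_{W^A_d}(\Tens^d V)=S^A(n,d)$ with $s_\la v_\om=v_\la$. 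Hence each $v_\la$ lies in $\im\theta=S^A(n,d)v_\om$; as $\im\theta$ is a right $W^A_d$-submodule of $\Tens^d V$ it then contains $v_\la W^A_d=\Tens^\la V$ for every $\la$, so $\im\theta=\Tens^d V$ by~\eqref{E100216}.

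Uniqueness is then immediate, since $S^A(n,d)\xi_\om$ is generated as a left $S^A(n,d)$-module by $\xi_\om$, so any left-module homomorphism (a fortiori any bimodule homomorphism) from it is determined by the image of $\xi_\om$. The only step with any moving parts is the right $W^A_d$-linearity of $\theta$ in the first paragraph, where one must juggle the idempotent $\xi_\om$ together with the identification $W^A_d\cong\xi_\om S^A(n,d)\xi_\om$ and the (super) sign conventions; but this is precisely what Lemma~\ref{LIdempotentTruncation}, and in particular its characterization $\phi(w)v_\om=v_\om w$, is designed to make painless, so I do not anticipate a genuine obstacle.
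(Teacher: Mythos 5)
Your proposal is correct and follows essentially the same route as the paper: define the left-module map by $s\xi_\om\mapsto sv_\om$, check right $W^A_d$-linearity using $\phi(w)=\phi(w)\xi_\om=\xi_\om\phi(w)$ and $\phi(w)v_\om=v_\om w$ from Lemma~\ref{LIdempotentTruncation}, prove injectivity via $v_\om W^A_d=\Tens^\om V$, Lemma~\ref{L189116}(ii), and faithfulness of the $S^A(n,d)$-action, and prove surjectivity by producing $s_\la\in S^A(n,d)$ with $s_\la v_\om=v_\la$ so that the image, being a $W^A_d$-submodule, contains every $\Tens^\la V$. The only cosmetic difference is that you realize the elements $s_\la$ by explicitly extending a $W^A_d$-hom $\Tens^\om V\to\Tens^d V$ by zero along the decomposition~\eqref{E100216}, whereas the paper routes through the isomorphism $\Tens^\mu V\cong M^A_\mu$ of Lemma~\ref{LTLaMLa} — these are the same argument.
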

\begin{proof}
Since $\xi_\om v_\om=v_\om$, there is a unique homomorphism $\psi$ of left $S^A(n,d)$-modules 
$S^A(n,d)\xi_\om\to \Tens^d V$ mapping  $\xi_\om$ to $v_\om$. 
Using Lemma~\ref{LIdempotentTruncation}, we compute for any $s\in S^A(n,d)$ and any $w\in W^A_d$:
\begin{align*}
\psi((s\xi_\om) w)&=\psi((s\xi_\om)\phi(w)) = \psi(s\xi_\om\phi(w)) 
=\psi(s\phi(w)\xi_\om) 
\\
&=s\phi(w)v_\om = sv_\om w =s\xi_\om v_\om w=\psi(s\xi_\om)w,
\end{align*}
so $\psi$ is a homomorphism of $(S^A(n,d),W^A_d)$-superbimodules. 

Moreover, $\psi$ is injective since $\psi(s\xi_\om)=0$ only if $s\xi_\om v_\om=0$, which implies that $s\xi_\om \Tens^\om V=0$ because $v_\om W_d^A=\Tens^\om V$.
On the other hand, by Lemma~\ref{L189116}(ii), we have $s\xi_\om \Tens^\mu V=0$ for all $\mu\neq \om$, hence $s\xi_\om \Tens^d V=0$, so $s\xi_\om=0$. 

Finally,  for every $\mu\in\La(n,d)$ there is a homomorphism of right $W^A_d$-modules $M^A_\om\to M^A_\mu,\ m_\om\mapsto m_\mu$, and so there is a homomorphism of right $W^A_d$-modules $\Tens^\om V\to \Tens^\mu V,\ v_\om\mapsto v_\mu$, see Lemma~\ref{LTLaMLa}. 
By Lemma~\ref{LSchurIso}, there is $s\in S^A(n,d)$ with $sv_\om=v_\mu$.  
As $v_\mu$ generates $\Tens^\mu V$ as a $W^A_d$-module for every $\mu\in\La(n,d)$, we now deduce that $v_\om$ generates $\Tens^d V$ as an $(S^A(n,d),W^A_d)$-bimodule. Hence $\psi$ is surjective. 
\end{proof}

Denote the center of an algebra $Y$ by $Z(Y)$. 
Recall from Section~\ref{SSuper} the notation $|X|$ for a superalgebra $X$.
The following technical result, in which we forget the superstructures, will be needed in \S\ref{SSSymLat}:

\begin{Lemma}\label{LInvertible}
Let $d\leq n$. If $z\in Z(|S^A (n,d)|)$ and  
$\xi_{\om} \in |S^A (n,d)| z$, then $z$ is invertible. 
\end{Lemma}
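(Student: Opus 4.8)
The plan is to use the $(S^A(n,d),W^A_d)$-bimodule isomorphism $S^A(n,d)\xi_\om\iso \Tens^d V$ of Proposition~\ref{PSWFund} together with the decomposition $\Tens^d V=\bigoplus_{\mu\in\La(n,d)}\Tens^\mu V$ of~\eqref{E100216} to turn the hypothesis $\xi_\om\in |S^A(n,d)|z$ into a statement saying that the left ideal $|S^A(n,d)|z$ contains a generating set for the whole of $\Tens^d V$ as a bimodule. Concretely, write $\xi_\om = s z$ for some $s\in |S^A(n,d)|$. Since $z$ is central, for any $t\in |S^A(n,d)|$ we get $t\xi_\om = ts z = z ts$, so $t\xi_\om \in z\,|S^A(n,d)|$; thus the left ideal $I:=|S^A(n,d)|z=z|S^A(n,d)|$ (it is two-sided, being generated by a central element) contains $|S^A(n,d)|\xi_\om$.

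The key step is then to show $|S^A(n,d)|\xi_\om=|S^A(n,d)|$, i.e.\ that $\xi_\om$ generates $S^A(n,d)$ as a left ideal. This follows from the argument already used at the end of the proof of Proposition~\ref{PSWFund}: for every $\mu\in\La(n,d)$ there is $s_\mu\in S^A(n,d)$ with $s_\mu v_\om=v_\mu$ (via $\Tens^\om V\to\Tens^\mu V$, $v_\om\mapsto v_\mu$, and Lemma~\ref{LSchurIso}), so under the identification $S^A(n,d)\xi_\om\cong \Tens^d V$ the elements $s_\mu\xi_\om$ map onto generators $v_\mu$ of the summands $\Tens^\mu V$; since the $s_\mu\xi_\om$ lie in $S^A(n,d)\xi_\om$ and $S^A(n,d)\xi_\om$ is carried isomorphically onto $\Tens^d V$, and since the $\xi_\mu\in S^A(n,d)$ project onto $\Tens^\mu V$ (Lemma~\ref{L189116}(ii)), we get $\xi_\mu = (\text{something})\cdot(s_\mu\xi_\om)$ inside $S^A(n,d)$. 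Hence each $\xi_\mu\in |S^A(n,d)|\xi_\om$, and by Lemma~\ref{L189116}(i) $\sum_\mu \xi_\mu=1$, so $1\in |S^A(n,d)|\xi_\om=|S^A(n,d)|z$. Therefore there is $u\in |S^A(n,d)|$ with $uz=1$; since $z$ is central, $zu=uz=1$ as well, so $z$ is two-sided invertible.

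I should be slightly careful about the precise mechanism by which $\xi_\mu$ is recovered from $s_\mu\xi_\om$: the cleanest way is to work directly on the bimodule side. Under $\psi\colon S^A(n,d)\xi_\om\iso\Tens^d V$ (Proposition~\ref{PSWFund}), a left-module argument shows $\xi_\mu\cdot(S^A(n,d)\xi_\om)$ corresponds to $\xi_\mu\Tens^d V=\Tens^\mu V$ (Lemma~\ref{L189116}(ii)), and this subspace is spanned over $\Tens^d A$ by $v_\mu=\psi(s_\mu\xi_\om)$; chasing back through $\psi^{-1}$, which is a left $S^A(n,d)$-module map, one obtains that $\xi_\mu S^A(n,d)\xi_\om$ equals the $S^A(n,d)$-submodule of $S^A(n,d)\xi_\om$ generated by $s_\mu\xi_\om$. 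In particular $\xi_\mu=\xi_\mu\cdot\xi_\mu\in\xi_\mu S^A(n,d)\xi_\om\subseteq S^A(n,d)\xi_\om\subseteq S^A(n,d)\xi_\om$, and more to the point $\xi_\mu\in S^A(n,d)s_\mu\xi_\om\subseteq |S^A(n,d)|\xi_\om$.

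The main obstacle is purely bookkeeping: making sure the module identifications are applied consistently (left vs.\ right actions, and the fact that we are forgetting the superstructure, so that Lemma~\ref{LSchurIso} and Proposition~\ref{PSWFund} are invoked for $|S^A(n,d)|$ and $|W^A_d|$). There is no genuinely hard analytic or combinatorial content; once $1\in |S^A(n,d)|z$ is established, invertibility of the central element $z$ is immediate.
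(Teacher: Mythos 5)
Your proof contains a genuine error. You reduce the problem to showing that $\xi_\om$ generates $S := |S^A(n,d)|$ as a \emph{left} ideal, i.e.\ that $S\xi_\om = S$, and in particular that each idempotent $\xi_\mu$ lies in $S\xi_\om$. This is false. If $\xi_\mu = t\xi_\om$ for some $t\in S$, then multiplying on the right by $\xi_\om$ gives $\xi_\mu\xi_\om = t\xi_\om = \xi_\mu$; but by Lemma~\ref{L189116}(i), $\xi_\mu\xi_\om = 0$ whenever $\mu\neq\om$. So $\xi_\mu\notin S\xi_\om$ for every $\mu\neq\om$, and $S\xi_\om$ is always a \emph{proper} left ideal. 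The same elementary computation shows your intermediate assertions (``$\xi_\mu = \xi_\mu\cdot\xi_\mu\in\xi_\mu S\xi_\om$'' and ``$\xi_\mu\in Ss_\mu\xi_\om$'') cannot hold.

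A natural repair would be to replace $S\xi_\om=S$ by fullness of $\xi_\om$, i.e.\ $S\xi_\om S=S$: since $z$ is central, $Sz$ is a two-sided ideal, and $\xi_\om\in Sz$ would then force $S = S\xi_\om S\subseteq Sz$, whence $1\in Sz$. But this fallback also fails in general: $\xi_\om S\xi_\om\cong W_d^A$, so if $\xi_\om$ were full, $S$ and $W_d^A$ would be Morita equivalent, and already in the classical case $A=\k$ a field of small positive characteristic, $S(n,d)$ and $\k\Si_d$ are not Morita equivalent. The entire point of the lemma is that one cannot simply use a fullness argument. The paper's proof instead decomposes $z=\sum_\la z_\la$ with $z_\la=z\xi_\la=\xi_\la z$, replaces $y_\om$ (with $y_\om z=\xi_\om$) by $\xi_\om y_\om\xi_\om$, observes that its image $\tilde y_\om$ under the identification $\xi_\om S\xi_\om\cong W_d^A$ of Lemma~\ref{LIdempotentTruncation} is \emph{central} in $W_d^A$, and then uses this centrality to build, for each $\la$, an element $y_\la\in\xi_\la S\xi_\la$ by transporting $\tilde y_\om$ through the right $W_d^A$-endomorphism $m_\la\mapsto m_\la\tilde y_\om$ of $M_\la$. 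A direct computation on $v_\la$ then shows $z_\la y_\la=\xi_\la$, and summing over $\la$ gives $z\big(\sum_\la y_\la\big)=1$. You will want to reorganize your argument along these lines; there is no shortcut via the left ideal or two-sided ideal generated by $\xi_\om$.
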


\begin{proof}
Let $S:=|S^A (n,d)|$ and $W:=|W^{A}_d|$. 
First, note that $\xi_{\la} z \xi_{\mu} = z \xi_{\la} \xi_{\mu} =0$ 
for any distinct $\la,\mu\in \La(n,d)$. So $z= \sum_{\la\in \La(n,d)} z_{\la}$, where $z_{\la} := z \xi_{\la}= \xi_{\la} z$. 

Let $\la\in \La(n,d)$. There is a unique $W$-module homomorphism sending $m_{\om}$ to $m_{\la}$, so 
by Lemmas~\ref{LSchurIso} and~\ref{LTLaMLa}, there exists a unique element 
$\xi_{\la,\om} \in \xi_{\la} S\xi_{\om}$ 
such that $\xi_{\la,\om} v_{\om} = v_{\la}$.

 
 By the hypothesis, there exists $y_{\om} \in S$ such that $y_{\om} z = \xi_{\om}$. Replacing $y_{\om}$ with $\xi_{\om} y_{\om} \xi_{\om}$, we may (and do) assume that $y_{\om} \in \xi_{\om} S \xi_{\om}$, and then it is easy to see that $y_{\om} \in Z(\xi_{\om} S \xi_{\om})$. 
 Let $\tilde{y}_{\om}\in W$ be the image of $y_{\om}$ under the isomorphism $\xi_{\om} S \xi_{\om} \iso W$ of 
Lemma~\ref{LIdempotentTruncation}. Then 
$\tilde{y}_{\om} \in Z( W)$ and 
$y_{\om} v_{\om} = v_{\om} \tilde{y}_{\om}$.
For any $g\in \Si_\la$, we have 
$
m_{\la} \tilde{y}_{\om} g = m_{\la} g \tilde{y}_{\om} = 
m_{\la} \tilde{y}_{\om}
$. 
Hence, there is a right $W$-module endomorphism of 
$M_\la$ sending $m_{\la}$ to $m_{\la} \tilde{y}_{\om}$. 
By Lemmas~\ref{LSchurIso} and~\ref{LTLaMLa}, this implies that 
there exists $y_{\la} \in \xi_{\la} S \xi_{\la}$ such that 
$y_{\la} v_{\la} = v_{\la} \tilde{y}_{\om}$. 
 Therefore,
\begin{align*}
z_{\la} y_{\la} v_{\la} &= z_{\la} v_{\la} \tilde{y}_\om = 
z_{\la} \xi_{\la,\om} v_{\om} \tilde{y}_\om =
z_{\la} \xi_{\la,\om} y_{\om} v_{\om} 
=z \xi_{\la,\om} y_{\om} v_{\om} 
 = \xi_{\la, \om} z y_{\om}v_{\om}  \\
 &= \xi_{\la,\om} \xi_{\om}v_{\om}
 =  \xi_{\la,\om}v_{\om} = v_{\la}.
\end{align*}
By Lemma~\ref{LSchurIso}, it follows that $z_{\la} y_{\la} = \xi_{\la}$. Setting 
$y:=\sum_{\la\in\La(n,d)} y_{\la}$, we have $zy=1$.
\end{proof}

\subsection{Idempotent refinements}\label{SSColored}
In this subsection we suppose that we are given a fixed finite family $\{e_1, \dots,e_l\}$ of non-zero orthogonal idempotents in $A$ with $\sum_{i=1}^l e_i=1_A$. 
Moreover, we assume that every $e_iA$ is free  as a $\k$-supermodule with a (homogeneous) finite basis ${}_i\ttB$, so that $\ttB=\bigsqcup_{i=1}^l \,{}_i\ttB$ is a $\k$-basis of $A$. 

Set $I:=\{1,\dots,l\}$. We order $[1,n]\times I$ lexicographically
and, recalling the theory of \S\ref{SSCosets}, consider the set of  compositions 
$
\La([1,n]\times I,d).
$
Given $\ula\in \La([1,n]\times I,d)$, we denote $\la_r^{(i)}:=\ula_{(r,i)}$ for $(r,i)\in [1,n]\times I$. We have the map
$$
\pi\colon \La([1,n]\times I,d)\to\La(n,d),\ \ula\mapsto (\textstyle \sum_{i\in I}\la^{(i)}_1,\sum_{i\in I}\la^{(i)}_2,\dots,\sum_{i\in I}\la^{(i)}_n).
$$

Let $\ula\in\La([1,n]\times I,d)$. We have the idempotent 
\begin{equation}\label{EIdempTensor}
e_{\ula}^A:=
e_{1}^{\otimes \la_1^{(1)}}\otimes\dots\otimes 
e_{l}^{\otimes \la_1^{(l)}}\otimes\dots\otimes 
e_{1}^{\otimes \la_n^{(1)}}\otimes\dots\otimes 
e_{l}^{\otimes \la_n^{(l)}}\in \Tens^d A. 
\end{equation}
Recalling the notation of \S\ref{SSCosets},  note that 
\begin{equation}\label{EBMu}
\ttB_{\ula}^A:=\{\ttb_1\otimes\dots\otimes \ttb_d\mid \ttb_a\in {}_{i}\ttB\ \text{if $a\in \Om^\ula_{(r,i)}$}\}
\end{equation}
is a $\k$-basis of $e^A_\ula \Tens^d A$. 
We define the {\em parabolic subalgebra} 
$$
W_{\ula}^A:=e^A_{\ula} \otimes \k\Si_{\ula}\subseteq W_d^A.
$$

Note that $e^A_{\ula}$ is the identity in $W_{\ula}^A$, and  $W_{\ula}^A$ is a (usually {\em non-unital}) subsuperalgebra in $W_d^A$, isomorphic to the group algebra $\k \Si_\ula$. So we may consider the {\em trivial}\, right supermodule $\triv^A_{\ula}=\k\cdot 1^A_{\ula}$ over $W_{\ula}^A$ with the action on the basis element $1^A_{\ula}$ given by $1^A_{\ula}\cdot(e^A_{\ula}\otimes g)=1^A_{\ula}$ for any $g\in\Si_\ula$. 

As usual, we view $\Tens^d A$ and $\k\Si_d$ as subsuperalgebras of $W_d^A$, so we can also view $e^A_{\ula}$ as an element of $W_d^A$. Then $e^A_{\ula}W_d^A$ is naturally a left $W_{\ula}^A$-module.  We now define the {\em colored permutation supermodule}
$$
M_{\ula}^A:=\triv^A_{\ula}\otimes_{W_{\ula}^A}e^A_{\ula}W_d^A
$$
with generator $m_{\ula}^A:=1^A_{\ula}\otimes e^A_{\ula}$.

\begin{Lemma} 
The following set is a $\k$-basis of $M_{\ula}^A$:
\begin{equation}\label{EBUMu}
\{m_{\ula}^A (\ttb_1\otimes \dots\otimes \ttb_d)g\mid \ttb_1\otimes\dots\otimes \ttb_d\in \ttB_{\ula}^A,\ g\in{}^\ula\D\}.
\end{equation}
In particular, $\dim M_{\ula}^A=|\Si_d:\Si_\ula|\prod_{i\in I}(\dim e_i A)^{\sum_{r=1}^n\la_r^{(i)}}$. 
\end{Lemma}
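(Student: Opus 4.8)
The plan is to understand the right ideal $N:=e^A_{\ula}W^A_d$ first as a $\k$-module and then as a left $W^A_{\ula}$-module, and finally to read off $M^A_{\ula}=\triv^A_{\ula}\otimes_{W^A_{\ula}}N$. The $\k$-module structure of $N$ is immediate: the $e_i$ are even orthogonal idempotents and $e_i$ acts as the identity on $e_iA$, so $e_j\ttb=\de_{i,j}\ttb$ for $\ttb\in{}_i\ttB$; multiplying factorwise (and no Koszul sign arises since the $e_i$ are even) gives $e^A_{\ula}\cdot(\ttb_1\otimes\dots\otimes\ttb_d)=\ttb_1\otimes\dots\otimes\ttb_d$ when $\ttb_1\otimes\dots\otimes\ttb_d\in\ttB^A_{\ula}$ and $=0$ otherwise. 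Since $\{(\ttb_1\otimes\dots\otimes\ttb_d)h\mid\ttb_a\in\ttB,\ h\in\Si_d\}$ is a $\k$-basis of $W^A_d$, it follows that $\mathcal C:=\{(\ttb_1\otimes\dots\otimes\ttb_d)h\mid\ttb_1\otimes\dots\otimes\ttb_d\in\ttB^A_{\ula},\ h\in\Si_d\}$ is a $\k$-basis of $N$.

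Next I would compute the left $W^A_{\ula}$-action on $N$. Since $\Si_{\ula}$ stabilizes each block $\Om^\ula_{(r,i)}$ setwise and the $e_i$ are even, one has $g\,e^A_{\ula}=e^A_{\ula}\,g$ for all $g\in\Si_{\ula}$; this both yields the superalgebra isomorphism $W^A_{\ula}\iso\k\Si_{\ula}$, $e^A_{\ula}\otimes g\mapsto g$, and shows that, transported along it, the action of $g\in\Si_{\ula}$ on $\mathcal C$ is $g\cdot\big((\ttb_1\otimes\dots\otimes\ttb_d)h\big)=(-1)^{[g^{-1};\ttb_1,\dots,\ttb_d]}(\ttb_{g^{-1}1}\otimes\dots\otimes\ttb_{g^{-1}d})(gh)$, by~\eqref{ESGAction}--\eqref{EWreathProductDetermined}; note that $\ttb_{g^{-1}1}\otimes\dots\otimes\ttb_{g^{-1}d}$ again lies in $\ttB^A_{\ula}$ because $g$ stabilizes the blocks. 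Thus $\Si_{\ula}$ permutes $\mathcal C$ up to signs $\pm1$, and the underlying set-theoretic permutation action is \emph{free}: a nontrivial $g$ sends $h$ to $gh\neq h$, so every stabilizer is trivial and every orbit has size $|\Si_{\ula}|$. It follows that $N$ is a free left $\k\Si_{\ula}$-module, equivalently a free $W^A_{\ula}$-module, on any transversal for this action: on each orbit $O$ and any $t\in O$ the map $\k\Si_{\ula}\to\bigoplus_{c\in O}\k c$, $g\mapsto g\cdot t$, carries the basis $\{g\}$ to $\{\pm c\mid c\in O\}$ and is therefore a $\k$-isomorphism, the signs being units.

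It remains to pick the transversal. Writing any $h\in\Si_d$ as $h=g''g'$ with $g''\in\Si_{\ula}$ and $g'\in{}^\ula\D$, applying $(g'')^{-1}$ moves any element of $\mathcal C$ to one whose $\Si_d$-coordinate lies in ${}^\ula\D$; and if $(\ttb_1\otimes\dots\otimes\ttb_d)g$ and $(\ttb'_1\otimes\dots\otimes\ttb'_d)g'$ with $g,g'\in{}^\ula\D$ lie in one orbit, then $g=g'$ and $\ttb_a=\ttb'_a$ for all $a$. Hence $\mathcal T:=\{(\ttb_1\otimes\dots\otimes\ttb_d)g\mid\ttb_1\otimes\dots\otimes\ttb_d\in\ttB^A_{\ula},\ g\in{}^\ula\D\}$ is a transversal, so $N$ is $W^A_{\ula}$-free on $\mathcal T$ and $M^A_{\ula}=\triv^A_{\ula}\otimes_{W^A_{\ula}}N$ is a free $\k$-module with basis $\{1^A_{\ula}\otimes t\mid t\in\mathcal T\}$. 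Since $1^A_{\ula}\otimes(\ttb_1\otimes\dots\otimes\ttb_d)g=1^A_{\ula}\otimes e^A_{\ula}(\ttb_1\otimes\dots\otimes\ttb_d)g=m^A_{\ula}(\ttb_1\otimes\dots\otimes\ttb_d)g$, this is exactly the set~\eqref{EBUMu}. The dimension formula then follows by counting, using $|\Om^\ula_{(r,i)}|=\la^{(i)}_r$ and $|{}^\ula\D|=|\Si_d:\Si_{\ula}|$: $\dim M^A_{\ula}=|\mathcal T|=|{}^\ula\D|\cdot|\ttB^A_{\ula}|=|\Si_d:\Si_{\ula}|\prod_{(r,i)\in[1,n]\times I}(\dim e_iA)^{\la^{(i)}_r}=|\Si_d:\Si_{\ula}|\prod_{i\in I}(\dim e_iA)^{\sum_{r=1}^n\la^{(i)}_r}$.

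The main obstacle will be the bookkeeping of Koszul signs, together with the accompanying need to verify that a \emph{signed} free $\Si_{\ula}$-action on the basis $\mathcal C$ still produces a free $\k\Si_{\ula}$-module over an arbitrary commutative ring $\k$; once that is handled, the remaining steps are routine.
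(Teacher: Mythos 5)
Your proof is correct and takes essentially the same approach as the paper: identify $e^A_{\ula}W^A_d$ explicitly as a $\k$-module, show it is free as a left $W^A_{\ula}$-module with basis $\{e^A_{\ula}(\ttb_1\otimes\dots\otimes\ttb_d)g\mid \ttb_1\otimes\dots\otimes\ttb_d\in\ttB^A_{\ula},\ g\in{}^\ula\D\}$, and conclude by tensoring with the rank-one trivial module. You simply spell out the freeness check (the signed free $\Si_{\ula}$-action on a transversal yields a free $\k\Si_{\ula}$-module over any commutative $\k$), which the paper leaves implicit.
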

\begin{proof}
Note that  $$e^A_{\ula} W_d^A=
(e_{1}A)^{\otimes \la_1^{(1)}}\otimes\dots\otimes 
(e_{l}A)^{\otimes \la_1^{(l)}}\otimes\dots\otimes 
(e_{1}A)^{\otimes \la_n^{(1)}}\otimes\dots\otimes 
(e_{l}A)^{\otimes \la_n^{(l)}}\otimes \k\Si_d,$$ 
and so 
$$
\{e^A_{\ula} (\ttb_1\otimes \dots\otimes \ttb_d)g\mid \ttb_1\otimes\dots\otimes \ttb_d\in \ttB_{\ula}^A,\ g\in{}^\ula\D\}.
$$
is a basis of $e^A_{\ula}W_d^A$ as a left $W_{\ula}^A$-module. The lemma follows.
\end{proof}


Recalling (\ref{EVGen}), we define
$$
v_{r,i}:=v_r^{e_i}=(0,\dots,0,e_i,0,\dots,0)\in V \qquad(1\leq r\leq n,\ i\in I),
$$
where $e_i$ is in the $r$th position. For $\ula\in\La([1,n]\times I,d)$, we denote by $\Tens^\ula V\subseteq \Tens^d V$ the (right) $\Tens^d A$-span of all $v_{r_1,i_1}\otimes\dots\otimes v_{r_d,i_d}$ such that for every $(r,i)\in [1,n]\times I$ we have $\sharp\{a\in [1,d]\mid  (r_a,i_a)=(r,i)\}=\la^{(i)}_r$. We say that a sequence $((r_1,\ttb_1),\dots,(r_d,\ttb_d))$ of elements of $[1,n]\times \ttB$ is {\em of type $\ula$} if $\sharp\{a\in [1,d]\mid  r_a=r\  \text{and $\ttb_a\in {}_i\ttB$})\}=\la^{(i)}_r$. 
It is easy to see  that 
\begin{equation}\label{EBTensUMu}
\{v_{r_1}^{\ttb_1}\otimes \dots\otimes v_{r_d}^{\ttb_d}\mid 
\text{$((r_1,\ttb_1),\dots,(r_d,\ttb_d))$  
is  of type $\ula$}\}
\end{equation}
is a $\k$-basis of $\Tens^\ula V$. Hence for any $\la\in\La(n,d)$, we have a decomposition 
\begin{equation}\label{E290116}
\Tens^\la V=\bigoplus_{\ula\in\pi^{-1}(\la)}\Tens^\ula V
\end{equation}
of $\k$-modules. We have a special vector
$$
v_\ula:=v_{1,1}^{\otimes \la^{(1)}_1}\otimes\dots \otimes v_{1,l}^{\otimes \la^{(l)}_1}\otimes\dots\otimes
v_{n,1}^{\otimes \la^{(1)}_n}
\otimes\dots \otimes v_{n,l}^{\otimes \la^{(l)}_n}
\in\Tens^\ula V.
$$

\begin{Lemma} \label{L100216} 
We have:
\begin{enumerate}
\item[{\rm (i)}] For any $\ula\in \La([1,n]\times I,d)$, we have that  $\Tens^\ula V$ is a submodule of the right $W_d^A$-module $\Tens^d V$. Moreover, there is an isomorphism of right $W_d^A$-modules $\Tens^\ula V\iso M^A_\ula$ which maps $v_\ula$ to $m_\ula^A$. 
\item[{\rm (ii)}] For any $\la\in\La(n,d)$, we have $\Tens^\la V=\bigoplus_{\ula\in\pi^{-1}(\la)}\Tens^\ula V$ as right $W_d^A$-modules. In particular, $\Tens^d V =\bigoplus_{\ula\in\La([1,n]\times I,d)}\Tens^\ula V$ and $M^A_\la\cong \bigoplus_{\ula\in\pi^{-1}(\la)} M^A_\ula$ as right $W_d^A$-modules. 
\end{enumerate}
\end{Lemma}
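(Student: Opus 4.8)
The plan is to prove (i) directly from the definitions and then obtain (ii) formally.

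For (i), I would first check that $\Tens^\ula V$ is a right $W^A_d$-submodule of $\Tens^d V$. By Lemma~\ref{LWrChar} it is enough to verify stability under the subalgebras $\Tens^d A$ and $\k\Si_d$. Stability under $\k\Si_d$ is immediate from~\eqref{ESGAction}: permuting the tensor factors of a type-$\ula$ vector $v_{r_1}^{\ttb_1}\otimes\dots\otimes v_{r_d}^{\ttb_d}$ only permutes the pairs $(r_a,\ttb_a)$, and being of type $\ula$ depends only on the multiset $\{(r_a,i_a)\}_a$, where $i_a$ denotes the colour with $\ttb_a\in{}_{i_a}\ttB$. Stability under $\Tens^d A$ uses that each $e_iA$ is a right ideal of $A$: acting by $a_1\otimes\dots\otimes a_d$ sends the vector above (up to a sign) to $v_{r_1}^{\ttb_1a_1}\otimes\dots\otimes v_{r_d}^{\ttb_da_d}$, and expanding each $\ttb_ja_j\in e_{i_j}A$ in the basis ${}_{i_j}\ttB$ rewrites it as a $\k$-linear combination of type-$\ula$ basis vectors.

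I would then build the isomorphism. The idempotents $e_i$ are even (left multiplication by $e_i$ is the graded projection of $A$ onto $e_iA$, so $e_i=e_i 1_A$ is even), hence $v_\ula$ is constant, equal to the even vector $v_{r,i}$, on each block $\Om^\ula_{(r,i)}$; together with $v_{r,i}e_i=v_{r,i}$ and $v_{r,i}e_j=0$ for $j\ne i$ this gives $v_\ula\cdot(e^A_\ula\otimes g)=v_\ula$ for every $g\in\Si_\ula$. Thus $v_\ula$ spans a trivial $W^A_\ula$-submodule, and by the universal property of the induced module $M^A_\ula=\triv^A_\ula\otimes_{W^A_\ula}e^A_\ula W^A_d$ there is a homomorphism of right $W^A_d$-modules $\theta\colon M^A_\ula\to\Tens^\ula V$ with $\theta(m^A_\ula)=v_\ula$. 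Applying $\theta$ to the basis~\eqref{EBUMu} — first using $v_{r,i}\ttb_a=v_r^{\ttb_a}$ when $\ttb_a\in{}_i\ttB$, then letting $g\in{}^\ula\D$ act — produces, up to unit signs, exactly the basis~\eqref{EBTensUMu} of $\Tens^\ula V$; this is a refinement, block by block according to the colours, of the bijection~\eqref{ESingleCosetSeq}, and it can be cross-checked against the dimension formula for $M^A_\ula$ recorded in the preceding lemma. Hence $\theta$ is an isomorphism, proving (i).

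Part (ii) is then formal. The decomposition $\Tens^\la V=\bigoplus_{\ula\in\pi^{-1}(\la)}\Tens^\ula V$ holds as $\k$-modules by~\eqref{E290116}, and each summand is a $W^A_d$-submodule by (i), so it is a decomposition of right $W^A_d$-modules; summing over $\la\in\La(n,d)$ and using~\eqref{E100216} yields $\Tens^d V=\bigoplus_{\ula}\Tens^\ula V$; finally, combining $M^A_\la\cong\Tens^\la V$ from Lemma~\ref{LTLaMLa} with $M^A_\ula\cong\Tens^\ula V$ from (i) gives $M^A_\la\cong\bigoplus_{\ula\in\pi^{-1}(\la)}M^A_\ula$. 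The only genuine work is in (i): the main obstacle I anticipate is the simultaneous bookkeeping of the standard set partition $\Om^\ula$, the colours $i\in I$, and the coset representatives in ${}^\ula\D$ when matching the bases~\eqref{EBUMu} and~\eqref{EBTensUMu}, together with the harmless signs coming from the super place-permutation action — everything else being a routine verification.
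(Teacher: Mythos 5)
Your proof is correct and follows essentially the same route as the paper: observe that $v_\ula$ is $W^A_\ula$-invariant, invoke the universal property of induction (the paper calls it adjointness of induction and restriction) to get a $W^A_d$-homomorphism $M^A_\ula\to\Tens^d V$ sending $m^A_\ula\mapsto v_\ula$, and then check that this map carries the basis~\eqref{EBUMu} bijectively, up to signs, onto the basis~\eqref{EBTensUMu} of $\Tens^\ula V$, with (ii) following formally from~\eqref{E290116}, \eqref{E100216}, and Lemma~\ref{LTLaMLa}. The only organizational difference is that you verify directly that $\Tens^\ula V$ is a $W^A_d$-submodule (using that each $e_iA$ is a right ideal and that place permutations preserve the type); the paper instead lets this fall out of the basis comparison, since once the map hits a $\k$-basis of $\Tens^\ula V$, that subspace is automatically the image of a module homomorphism. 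Both are valid and the extra verification costs little.
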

\begin{proof}
Note that $v_{\ula}e^A_{\ula}=v_{\ula}$ and $v_{\ula}g= v_\ula$ for any $g\in \Si_{\ula}$. 
So, by the adjointness of induction and restriction, there exists a homomorphism of right $W_d^A$-modules $M^A_\ula\to \Tens^d V$
under which 
$m_\ula^A$ is mapped to $v_\ula$. It maps the elements of the $\k$-basis (\ref{EBUMu}) of $M_\ula^A$ to the elements of the $\k$-basis (\ref{EBTensUMu}) of $\Tens^\ula V$ up to signs. This proves (i). Part (ii) follows from (i),  (\ref{E290116}), (\ref{E100216}) and Lemma~\ref{LTLaMLa}. 
\end{proof}

By~Lemma~\ref{LSchurIso}, the superalgebra $S^A(n,d)$ acts naturally on $\Tens^d V$ with $W_d^A$-homomorphisms. 
But by Lemma~\ref{L100216}, we have an explicit identification of right $W_d^A$-modules 
$$
\Tens^d V =\bigoplus_{\ula\in\La([1,n]\times I,d)}\Tens^\ula V=\bigoplus_{\ula\in\La([1,n]\times I,d)} M^A_\ula.
$$ 
So, for any $y\in S^A(n,d)$, the endomorphism $v\mapsto yv$ of $\Tens^d V$ becomes identified with an endomorphism which we denote by $\phi(y)$ of $\bigoplus_{\ula\in\La([1,n]\times I,d)} M^A_\ula$. Recalling Lemma~\ref{LSchurIso} again, we deduce:

\begin{Corollary} \label{C100116} 
Let $M^A(n,d):=\bigoplus_{\ula\in \La([1,n]\times I,d)}M_\ula^A$. 
Then $\phi\colon S^A(n,d)\to\End_{W_d^A}(M^A(n,d))$ is a superalgebra isomorphism. 
\end{Corollary}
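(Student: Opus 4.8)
The plan is to deduce this directly from Lemma~\ref{LSchurIso} together with Lemma~\ref{L100216}; no new computation is needed. By Lemma~\ref{LSchurIso}, the map sending $y\in S^A(n,d)$ to the left multiplication $v\mapsto yv$ on $\Tens^d V$ is an isomorphism of superalgebras $S^A(n,d)\iso \End_{W_d^A}(\Tens^d V)$. By Lemma~\ref{L100216}, assembling the right $W_d^A$-module isomorphisms $\Tens^\ula V\iso M_\ula^A$ over all $\ula\in\La([1,n]\times I,d)$ yields a right $W_d^A$-module isomorphism
\[
\Theta\colon \Tens^d V=\bigoplus_{\ula\in\La([1,n]\times I,d)}\Tens^\ula V\ \iso\ \bigoplus_{\ula\in\La([1,n]\times I,d)} M_\ula^A= M^A(n,d).
\]

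First I would record the standard fact that if $\Theta\colon M\iso N$ is an isomorphism of right $W_d^A$-supermodules, then $f\mapsto \Theta\circ f\circ\Theta^{-1}$ is an isomorphism of superalgebras $\End_{W_d^A}(M)\iso\End_{W_d^A}(N)$: it clearly respects composition, and it respects the $\Z_2$-grading since $\Theta$ is homogeneous of degree $\0$. Applying this with $M=\Tens^d V$ and $N=M^A(n,d)$ gives a superalgebra isomorphism $\End_{W_d^A}(\Tens^d V)\iso\End_{W_d^A}(M^A(n,d))$. Composing it with the isomorphism of the previous paragraph gives a superalgebra isomorphism $S^A(n,d)\iso\End_{W_d^A}(M^A(n,d))$ sending $y$ to $\Theta\circ(v\mapsto yv)\circ\Theta^{-1}$.

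Finally I would check that this composite is precisely the map $\phi$ defined just above the statement: by construction $\phi(y)$ is the endomorphism of $M^A(n,d)$ obtained by transporting the endomorphism $v\mapsto yv$ of $\Tens^d V$ through the identification $\Tens^d V = M^A(n,d)$ of Lemma~\ref{L100216}, i.e.\ $\phi(y)=\Theta\circ(v\mapsto yv)\circ\Theta^{-1}$; hence $\phi$ is a superalgebra isomorphism. The only mildly delicate point is the bookkeeping needed to see that the module isomorphism $\Theta$ used here coincides with the identification implicit in the definition of $\phi$, but since both are the direct sum over $\ula$ of the canonical isomorphisms $\Tens^\ula V\iso M_\ula^A$, $v_\ula\mapsto m_\ula^A$, of Lemma~\ref{L100216}(i), there is nothing to reconcile. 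I do not expect any genuine obstacle: the statement is a formal corollary of the two lemmas already proved.
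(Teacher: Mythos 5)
Your argument is exactly the paper's: use Lemma~\ref{LSchurIso} to identify $S^A(n,d)$ with $\End_{W_d^A}(\Tens^d V)$ and Lemma~\ref{L100216} to identify $\Tens^d V$ with $M^A(n,d)$ as right $W_d^A$-supermodules, then transport the endomorphism-ring structure; the paper treats this as immediate from the surrounding discussion and gives no separate proof. Your extra remark verifying that the conjugation by $\Theta$ agrees with the map $\phi$ as defined in the text is a correct and harmless bit of bookkeeping.
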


\subsection{Desuperization}\label{SSDesup} 
Recall 
from Section~\ref{SSuper}
that $|X|$ denotes the algebra obtained from a $\k$-superalgebra $X$ by forgetting the $\Z_2$-grading.
In particular, we have the associative algebra $|A|$ and the usual wreath product $W_d^{|A|}$, where the symmetric group acts on $|A|^{\otimes d}$ by place permutations without signs. On the other hand, we can consider the associative algebra $|W_d^A|$. In general, the algebras $W_d^{|A|}$and $|W_d^A|$ are not isomorphic. However, we describe one important situation when they are. 

Let $e^\0$ and $e^\1$ be orthogonal idempotents in $A$ with 
$1:=1_A=e^\0+e^\1$.
 We call such a pair of idempotents {\em adapted} if 
$A_\0=e^\0Ae^\0\oplus e^\1Ae^\1$ and $A_\1=e^\0Ae^\1\oplus e^\1Ae^\0$. 
Let $1\leq r<d$.  We denote  the elementary transposition $(r,r+1)\in\Si_d$ by $\tau_r$. If in addition $\eps_1,\eps_2\in \Z_2$,  we set 
$$
e^{\eps_1,\eps_2}[r]:=e^{\eps_1}[r]e^{\eps_2}[r+1]=1^{\otimes r-1}\otimes e^{\eps_1}\otimes e^{\eps_2}\otimes 1^{\otimes d-r-1}\in A^{\otimes d}.
$$

\begin{Lemma} \label{LDesup} 
Let $(e^\0,e^\1)$ be an adapted pair of idempotents in $A$. 
Then there is an isomorphism of associative $\k$-algebras 
\begin{align*}
\si\colon W_d^{|A|}&\iso |W_d^A|,
\\ 
x[t]&\mapsto \sum_{\eps_1,\dots,\eps_{t-1}\in\Z_2}(-1)^{(\eps_1+\dots+\eps_{t-1})\bar x}e^{\eps_1}\otimes\dots\otimes e^{\eps_{t-1}}\otimes x\otimes 1^{\otimes d-t},
\\
\tau_r&\mapsto \tau_r(e^{\0,\0}[r]+e^{\0,\1}[r]+e^{\1,\0}[r]-e^{\1,\1}[r]). 
\end{align*}
\end{Lemma}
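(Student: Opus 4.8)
The plan is to obtain $\si$ and its inverse from the presentation of a wreath product superalgebra given in Lemma~\ref{LWrChar}. Viewing $|A|$ as a superalgebra concentrated in even degree and replacing the generators $\Si_d$ by the Coxeter generators $\tau_1,\dots,\tau_{d-1}$, that lemma shows $W_d^{|A|}$ is presented by generators $x[c]$ ($x\in A$, $1\le c\le d$, depending $\k$-linearly on $x$) and $\tau_1,\dots,\tau_{d-1}$ subject to: (R1) $x[c]y[c]=(xy)[c]$ and $1_A[c]=1$; (R2) $x[b]y[c]=y[c]x[b]$ for $b\ne c$; (R3) the Coxeter relations $\tau_r^2=1$, $\tau_r\tau_{r+1}\tau_r=\tau_{r+1}\tau_r\tau_{r+1}$, $\tau_b\tau_c=\tau_c\tau_b$ ($|b-c|\ge2$); and (R4) $\tau_rx[c]=x[\tau_rc]\tau_r$. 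The same lemma gives $|W_d^A|$ the identical presentation, except that (R2) is replaced by the signed relation (R2$'$): $x[b]y[c]=(-1)^{\bar x\bar y}y[c]x[b]$ for $b\ne c$. Hence it suffices to verify that the proposed images of $x[c]$ and $\tau_r$ satisfy (R1)--(R4) in $|W_d^A|$, and then to exhibit an inverse.

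Set $P:=e^\0-e^\1\in A_\0$, so $P^2=1_A$; by the adapted hypothesis, $P$ implements the parity automorphism of $A$, i.e.\ $aP=(-1)^{\bar a}Pa$ for homogeneous $a$. Writing $P_{<t}:=P[1]\cdots P[t-1]$ (with $P_{<1}:=1$), $q_s:=e^\1[s]$ and $Q_r:=1-2q_rq_{r+1}=e^{\0,\0}[r]+e^{\0,\1}[r]+e^{\1,\0}[r]-e^{\1,\1}[r]$, the defining formulas of the statement become $\si(x[t])=P_{<t}^{\bar x}x[t]$ and $\si(\tau_r)=\tau_rQ_r$ (for homogeneous $x$, extended $\k$-linearly); note $P_{<t}^2=Q_r^2=1$. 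Relation (R1) is immediate, since $x[c]$ commutes with each $P[s]$ ($s<c$) and $P_{<c}^{\bar x}P_{<c}^{\bar y}=P_{<c}^{\overline{xy}}$. For (R2) with $b<c$: when $\bar y=\bar1$ the product $P_{<c}^{\bar y}$ contains the single factor $P[b]$, and the parity property yields $x[b]P_{<c}^{\bar y}=(-1)^{\bar x\bar y}P_{<c}^{\bar y}x[b]$; this sign converts the signed relation (R2$'$) of $|W_d^A|$ into the unsigned (R2), so both hold. For (R3): $Q_r$ is symmetric under $r\leftrightarrow r+1$, hence commutes with $\tau_r$, giving $\si(\tau_r)^2=\tau_r^2Q_r^2=1$; for $|b-c|\ge2$ all relevant factors lie on disjoint strands; and the braid relation, after cancelling $\tau_r\tau_{r+1}\tau_r=\tau_{r+1}\tau_r\tau_{r+1}$, reduces to
\[
(1-2q_{r+1}q_{r+2})(1-2q_rq_{r+2})(1-2q_rq_{r+1})=(1-2q_rq_{r+1})(1-2q_rq_{r+2})(1-2q_{r+1}q_{r+2}),
\]
which holds because the three displayed factors commute with one another.

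The only subtle relation is (R4). If $c\notin\{r,r+1\}$, then $\tau_r$, $Q_r$, $P_{<c}$ and $x[c]$ pairwise commute in the way needed, and both sides coincide. If $c=r$ (the case $c=r+1$ being symmetric), I would use $\tau_rx[r]=x[r+1]\tau_r$, $Q_r\tau_r=\tau_rQ_r$, $\tau_rP[r+1]\tau_r^{-1}=P[r]$ and $P_{<r+1}=P_{<r}P[r]$ to reduce $\si(\tau_r)\si(x[r])=\si(x[r+1])\si(\tau_r)$ to the equality
\[
Q_r\,x[r+1]=x[r+1]\,Q_r\,P[r]^{\bar x}
\]
in $A^{\otimes d}$, involving only positions $r,r+1$. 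Expanding both sides in the Peirce components of $e^\0,e^\1$ and invoking the adapted identities (which give, for homogeneous $x$: $e^\1x=xe^\1$ when $\bar x=\bar0$, and $e^\1x+xe^\1=x$ when $\bar x=\bar1$) verifies this. I expect this to be the main obstacle, since it is the one step in which the adapted hypothesis is genuinely used. With all the relations checked, $\si$ is a well-defined homomorphism of $\k$-algebras.

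Finally, the same formulas $x[t]\mapsto P_{<t}^{\bar x}x[t]$ and $\tau_r\mapsto\tau_rQ_r$, now read inside $W_d^{|A|}$, define a $\k$-algebra homomorphism $\rho\colon|W_d^A|\to W_d^{|A|}$: the verification of (R1),(R2$'$),(R3),(R4) for $\rho$ is word for word the one above, with the roles of the signs in (R2) and (R2$'$) interchanged. Since both $\si$ and $\rho$ fix every $e^\eps[s]$ (these being even), they fix $P_{<t}$ and each $Q_r$, and then $P_{<t}^2=Q_r^2=1$ forces $\rho\si$ and $\si\rho$ to act as the identity on the generators $x[c]$ and $\tau_r$; hence both composites are the identity and $\si$ is an isomorphism. (Alternatively: $\im\si$ contains each $q_s$, hence every $Q_r$, hence $\tau_r=\si(\tau_r)Q_r$, and every $x[c]$, so $\si$ is surjective; as $W_d^{|A|}$ and $|W_d^A|$ are free $\k$-modules of the same finite rank $(\dim A)^d\,d!$, a surjection between them is an isomorphism.)
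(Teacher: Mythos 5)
Your proof is correct and follows the same strategy as the paper's: verify that the proposed images of the generators satisfy the wreath-product presentation from Lemma~\ref{LWrChar} and then conclude bijectivity. What you add is a useful compression of the bookkeeping: packaging the defining formulas as $\si(x[t])=P_{<t}^{\bar x}x[t]$ and $\si(\tau_r)=\tau_rQ_r$, with $P=e^\0-e^\1$ satisfying $P^2=1$ and $aP=(-1)^{\bar a}Pa$, turns the paper's case-by-case checks (notably the one for $x[t]\cdot y[s]$ with $s<t$ and the one for $\tau_r$ with $x[r]$) into short commutator manipulations, and the adapted hypothesis enters exactly once, through the identity $aP=(-1)^{\bar a}Pa$. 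Your final step is also handled slightly differently: the paper observes that $\si$ preserves each summand $A^{\otimes d}\otimes g$ and acts invertibly there, while you instead exhibit a two-sided inverse $\rho$ built from the same formulas inside $W_d^{|A|}$; both are short, and yours has the small virtue of making the inverse explicit. One tiny imprecision: for (R4) with $c=r+1$ the word ``symmetric'' is better replaced by the explicit observation (which you do record) that $\si(\tau_r)^2=1$, which converts the $c=r$ identity into the $c=r+1$ one.
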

\begin{proof}
It is straightforward to check for all admissible $r,t,x,y$ that the elements $\si(\tau_1),\dots,\si(\tau_{d-1})$ satisfy the Coxeter relations, that $\si(x[t])\si(y[t])=\si(xy[t])$, and that $\si(\tau_r)\si(x[t])=\si(x[t])\si(\tau_r)$ if $t\neq r,r+1$. 

Let $1\leq s< t\leq d$. Then $\si(x[t])\si(y[s])$ equals
\begin{align*}
\sum_{\eps_1,\dots,\eps_{t-1}\in\Z_2}(-1)^{p}e^{\eps_1}\otimes\dots\otimes 
e^{\eps_{s-1}}\otimes e^{\eps_s}y\otimes e^{\eps_{s-1}}\otimes\dots\otimes 
e^{\eps_{t-1}}\otimes x\otimes 1^{\otimes d-t},
\end{align*}
where 
$$p=(\eps_1+\dots+\eps_{t-1})\bar x+(\eps_1+\dots+\eps_{s-1})\bar y+\bar x\bar y,$$
and 
$\si(y[s])\si(x[t])$ equals
\begin{align*}
\sum_{\eps_1,\dots,\eps_{t-1}\in\Z_2}(-1)^{q}e^{\eps_1}\otimes\dots\otimes 
e^{\eps_{s-1}}\otimes ye^{\eps_s}\otimes e^{\eps_{s-1}}\otimes\dots\otimes 
e^{\eps_{t-1}}\otimes x\otimes 1^{\otimes d-t},
\end{align*}
where 
$$q=(\eps_1+\dots+\eps_{t-1})\bar x+(\eps_1+\dots+\eps_{s-1})\bar y.$$
Considering the $e^{\eps}ye^{\eps'}$ components in the $s$th tensor position for all $\eps,\eps'\in\Z_2$ in the expressions above, and taking into account that $e^{\eps}ye^{\eps'}=0$ unless $\bar y=\eps+\eps'$ since $(e^\0,e^\1)$ is adapted, we see that $\si(x[t])\si(y[s])=\si(y[s])\si(x[t])$. 

Let $x\in A$ and $1\le r<d$. Then, writing 
\begin{align*}
u:=\sum_{\eps_1,\dots,\eps_{r-1}\in \Z_2} 
(-1)^{(\eps_1+\cdots +\eps_{r-1}) \bar x} e^{\eps_1} \otimes \cdots \otimes e^{\eps_{r-1}}, \quad 
v:= \underbrace{1\otimes \cdots \otimes 1}_{d-r-1 \text{ times}},
\end{align*}
we have 
\begin{align*}
\si(x[r+1]) \si(\tau_r) &= \\
&\hspace{-1cm}=
(u \otimes (e^\0+(-1)^{\bar x}e^\1) \otimes x \otimes v) (e^{\0,\0}[r]+e^{\0,\1}[r]+e^{\1,\0}[r]-e^{\1,\1}[r]) \tau_r  \\
&\hspace{-1cm}=
 (u\otimes (e^\0+e^\1) \otimes e^\0 x \otimes v + u \otimes (e^\0-e^\1) 
\otimes e^\1x \otimes v) \tau_r \\
&\hspace{-1cm}=
\tau_r (u\otimes e^{\0}x\otimes 1 \otimes v
+ u\otimes e^{\1} x \otimes (e^{\0}-e^\1) \otimes v) \\
&\hspace{-1cm}=
\si(\tau_r) \si(x[r]),
\end{align*}
where the second equality is proved by a case-by-case check using the adaptedness of $(e^\0,e^\1)$.  
Since $\si(\tau_r)^2=1$, it follows also that 
$\si(\tau_r) \si(x[r+1])= \si(x[r]) \si(\tau_r)$. 

In view of Lemma~\ref{LWrChar}, we have an algebra homomorphism $\si$ as in the statement of the lemma. Moreover, it is easy to see that for each $g\in \Si_d$, the map
$\si$ restricts to an automorphism of the 
$\k$-submodule $A^{\otimes d} \otimes g$,  whence $\si$ is an isomorphism. 
\end{proof}

Let again $(e^\0,e^\1)$ be an adapted pair of idempotents in $A$. Assume in addition that we are given two finite families of non-zero orthogonal idempotents $\{e_i\mid i\in I^\0\}$ and 
$\{e_i\mid i\in I^\1\}$ such that 
$e^\0=\sum_{i\in I^\0} e_i$ and $e^\1=\sum_{i\in I^\1}e_i$. 
Let $I=I^\0\sqcup I^\1$ and recall the theory of \S\ref{SSColored}. In particular, $I$ is identified with $\{1,\dots,l\}$ for some $l$ and for any $\ula\in\La([1,n]\times I,d)$, we have the colored permutation supermodule $M_\ula^A$ over the superalgebra $W_d^A$. Forgetting the $\Z_2$-gradings, we get the $|W_d^A|$-module $|M_\ula^A|$. On the other hand, by Lemma~\ref{LDesup}, there is an isomorphism of algebras  $\si\colon W_d^{|A|}\iso |W_d^A|$. Composing with this isomorphism, we get the $W_d^{|A|}$-module $|M_\ula^A|^\si$. In other words, $|M_\ula^A|^\si=M_\ula^A$ as a $\k$-module, but the action is defined by $v h=v\si(h)$ for all $v\in M_\ula^A$ and $h\in W_d^{|A|}$.

For every $i\in I$ we define the sign $\zeta_i$ as follows:
$$
\zeta_i:=
\left\{
\begin{array}{ll}
+1 &\hbox{if $i\in I^\0$,}\\
-1 &\hbox{if $i\in I^\1$.}
\end{array}
\right.
$$
Recall the parabolic subgroup
$$
\Si_\ula=\Si_{\la^{(1)}_1}\times \dots\times \Si_{\la^{(l)}_1}\times\dots\times \Si_{\la^{(1)}_n}\times \dots\times \Si_{\la^{(l)}_n}\leq \Si_d.
$$
Let $\ell$ be the usual length function on a symmetric group, 
cf.~\S\ref{SSCosets}. 
Define the function 
$\eps_{\ula} \colon \Si_{\ula} \to \{ \pm 1\} \subseteq \k$ 
by 
\begin{equation}\label{Eeps}
\eps_{\ula} (g^{(1)}_1, \dots, g^{(l)}_1,\dots, g^{(1)}_n, \dots, g^{(l)}_n):=\zeta_{1}^{\ell(g^{(1)}_1)} \cdots \zeta_{l}^{\ell(g^{(l)}_1)}\dots \zeta_{1}^{\ell(g^{(1)}_n)} \cdots \zeta_{l}^{\ell(g^{(l)}_n)}
\end{equation}
for all $(g^{(1)}_1, \dots, g^{(l)}_1,\dots, g^{(1)}_n, \dots, g^{(l)}_n)\in \Si_\ula$.

The algebra $W_d^{|A|}$ has the parabolic subalgebra $W_{\ula}^{|A|}:=e_\ula^{|A|}\otimes \k\Si_\ula\cong \k\Si_\ula$ defined by analogy with the parabolic subsuperalgebra $W_\ula^{A}\subseteq W_d^A$. 
We define the {\em alternating}\, right module 
$\alt_{\ula}^{|A|}=\k\cdot 1_{\ula}^{|A|}$ 
over $W_{\ula}^{|A|}$ with the action on the basis element $1_{\ula}^{|A|}$ given by 
$$1_{\ula}^{|A|}\cdot(e_{\ula}^{|A|}\otimes g)=
\eps_{\ula} (g)
1_{\ula}^{|A|}  \qquad (g\in \Si_{\ula}).$$
As in the superalgebra situation,   $e_{\ula}^{|A|}W_d^{|A|}$ is naturally a left $W_{\ula}^{|A|}$-module.  We now define the {\em colored permutation module}
\begin{equation}\label{EMLaC}
M_{\ula}^{|A|}:=\alt_{\ula}^{|A|}\otimes_{W_{\ula}^{|A|}}e_{\ula}^{|A|}W_d^{|A|}
\end{equation}
with generator $m_{\ula}^{|A|}:=1_{\ula}^{|A|}\otimes e_{\ula}^{|A|}$.

\begin{Proposition} \label{PDesupM} 
There is an isomorphism of right $W_d^{|A|}$-modules 
$$
M_{\ula}^{|A|}\iso |M_\ula^A|^\si, \ m_\ula^{|A|}\mapsto m_\ula^A.
$$ 
\end{Proposition}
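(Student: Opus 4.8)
Here is how I would approach Proposition~\ref{PDesupM}.

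\textbf{Strategy.} The plan is to realize $|M_\ula^A|^\si$ as an induced module for $W_d^{|A|}$ by transport of structure along $\si$, and then match it with $M_\ula^{|A|}$. The first observation is that $\si$ fixes the idempotent $e_\ula^{|A|}$ (viewed as an element of $A^{\otimes d}$, so as an element of both $W_d^{|A|}$ and $|W_d^A|$): since each $e_i$ is even, the formula in Lemma~\ref{LDesup} gives $\si(e_i[t])=\sum_{\eps_1,\dots,\eps_{t-1}\in\Z_2}e^{\eps_1}\otimes\dots\otimes e^{\eps_{t-1}}\otimes e_i\otimes 1^{\otimes d-t}=e_i[t]$ because $\sum_{\eps\in\Z_2}e^{\eps}=1$, and $e_\ula^{|A|}$ is a product of such $e_i[t]$. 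Hence $\si$ carries the right ideal $e_\ula^{|A|}W_d^{|A|}$ isomorphically onto $e_\ula^A W_d^A$, and — the crucial point — restricts to an isomorphism of the (non-unital) parabolic subalgebras $W_\ula^{|A|}\iso W_\ula^A$.

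\textbf{The key identity.} I would next show that this restriction is precisely the $\eps_\ula$-twist:
\[
\si(e_\ula^{|A|}\otimes g)=\eps_\ula(g)\,(e_\ula^A\otimes g)\qquad(g\in\Si_\ula).
\]
For $g\in\Si_\ula$ we have $g\,e_\ula^{|A|}=e_\ula^{|A|}g$ in $W_d^{|A|}$, so applying $\si$ shows $\si(g)$ commutes with $e_\ula^A$; consequently $g\mapsto\si(e_\ula^{|A|}\otimes g)=e_\ula^A\si(g)$ and $g\mapsto\eps_\ula(g)(e_\ula^A\otimes g)$ are both algebra homomorphisms $\k\Si_\ula\to |W_d^A|$ (the latter because $\eps_\ula\colon\Si_\ula\to\{\pm1\}$ is a homomorphism, as $\ell$ is additive mod $2$), and both send $1$ to $e_\ula^A$. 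So it suffices to check agreement on the Coxeter generators of $\Si_\ula$, i.e.\ on the transpositions $\tau_r=(r,r+1)$ with $r,r+1$ in a common block $\Om^\ula_{(s,i)}$. For such $r$, the $r$th and $(r+1)$st tensor legs of $e_\ula^A$ both equal $e_i$, so $e_\ula^A\,e^{\eps_1,\eps_2}[r]$ vanishes unless $\eps_1$ and $\eps_2$ both equal the parity of $i$ (the unique $\eps$ with $i\in I^{\eps}$), in which case it equals $e_\ula^A$; combining this with $\si(\tau_r)=\tau_r(e^{\0,\0}[r]+e^{\0,\1}[r]+e^{\1,\0}[r]-e^{\1,\1}[r])$ and with $\tau_r e_\ula^A=e_\ula^A\tau_r$ yields $\si(e_\ula^{|A|}\otimes\tau_r)=\zeta_i\,(e_\ula^A\otimes\tau_r)=\eps_\ula(\tau_r)(e_\ula^A\otimes\tau_r)$, as needed. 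Thus the restricted isomorphism $W_\ula^{|A|}\iso W_\ula^A$ identifies $W_\ula^{|A|}\cong\k\Si_\ula\cong W_\ula^A$ with the twist-by-$\eps_\ula$ automorphism, and therefore pulls the trivial module $\triv^A_\ula$ back to $\alt^{|A|}_\ula$.

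\textbf{Conclusion.} Granting the key identity, one gets the statement either conceptually — induction commutes with transport of structure along the compatible isomorphisms $W_\ula^{|A|}\iso W_\ula^A$ and $e_\ula^{|A|}W_d^{|A|}\iso e_\ula^A W_d^A$, giving $M_\ula^{|A|}=\alt^{|A|}_\ula\otimes_{W_\ula^{|A|}}e_\ula^{|A|}W_d^{|A|}\iso|M_\ula^A|^\si$ sending $m_\ula^{|A|}\mapsto m_\ula^A$ — or concretely as follows. Frobenius reciprocity (as in the proof of Lemma~\ref{L100216}(i)) produces a homomorphism of right $W_d^{|A|}$-modules $\theta\colon M_\ula^{|A|}\to|M_\ula^A|^\si$, $m_\ula^{|A|}\mapsto m_\ula^A$, once one checks $m_\ula^A\cdot e_\ula^{|A|}=m_\ula^A$ and $m_\ula^A\cdot(e_\ula^{|A|}\otimes g)=\eps_\ula(g)m_\ula^A$ for $g\in\Si_\ula$; both follow from the key identity together with $m_\ula^A\,e_\ula^A=m_\ula^A$ and $m_\ula^A g=m_\ula^A$ (the last holding in $M_\ula^A$ since $g\in\Si_\ula$). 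Then $\theta$ is surjective because $m_\ula^A$ generates $M_\ula^A$ over $W_d^A=\si(W_d^{|A|})$, and the argument giving the basis~\eqref{EBUMu} of $M_\ula^A$ yields verbatim an analogous basis of $M_\ula^{|A|}$, so $M_\ula^{|A|}$ and $|M_\ula^A|^\si$ are free $\k$-modules of the same finite rank; a surjection between such is an isomorphism. The main obstacle is the sign bookkeeping in the key identity $\si(e_\ula^{|A|}\otimes g)=\eps_\ula(g)(e_\ula^A\otimes g)$ — in particular organizing the generator check around the fact that $\si(g)$ commutes with $e_\ula^A$ — while everything else is formal.
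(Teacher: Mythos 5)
Your proof is correct and takes essentially the same route as the paper's: verify the identity $\si(e_\ula^{|A|}\otimes \tau_r)=\zeta_i(e_\ula^{A}\otimes \tau_r)$ on elementary transpositions in $\Si_\ula$ using the explicit formula from Lemma~\ref{LDesup}, propagate it (you do this at the element level $\si(e_\ula^{|A|}\otimes g)=\eps_\ula(g)(e_\ula^A\otimes g)$, the paper states only the module-level consequence $m_\ula^A\si(e_{\ula}^{|A|}\otimes g)=\eps_{\ula}(g) m_\ula^A$, but the content is the same), then invoke adjointness of induction and restriction to get the map, and finish by surjectivity plus equality of $\k$-ranks. The only difference is bookkeeping detail, not substance.
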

\begin{proof}
Let $\tau_r$ be an elementary transposition which belongs to 
$\Si_\ula$. This means that $r,r+1\in\Omega^\ula_{(s,i)}$ for some $(s,i)\in [1,n]\times I$. By Lemma~\ref{LDesup}, we have $\si(e_\ula^{|A|}\otimes \tau_r)=\zeta_i(e_\ula^{A}\otimes \tau_r)$. This implies that 
$m_\ula^A\si(e_{\ula}^{|A|}\otimes g)=\eps_{\ula} (g) m_\ula^A
$
for all $g\in\Si_\ula$. By adjointness of induction and restriction, we get a homomorphism of $W_d^{|A|}$-modules as in the statement. Since $|M_\ula^A|^\si$ is generated by $m_\ula^{A}$, this homomorphism is surjective. Now, since $M_{\ula}^{|A|}$ and $|M_\ula^A|^\si$ are free as $\k$-modules 
and have the same rank, the result follows. 
\end{proof}

Let $y\in S^A(n,d)$.
By Lemmas~\ref{LSchurIso} and~\ref{L100216}, for any $\ula\in\La([1,n]\times I,d)$, we can write
\begin{equation}\label{EVAction}
y v_\ula=\sum_{\umu\in \La([1,n]\times I,d)} v_\umu h_{\umu,\ula}
\end{equation}
for some $h_{\umu,\ula}\in W_d^A$. 
If $\phi\colon S^A(n,d) \iso \End_{W_d^A} (M^A(n,d))$
is the isomorphism of Corollary~\ref{C100116}, then 
$$\phi(y)(m_\ula^A)=\sum_{\umu\in \La([1,n]\times I,d)} m_\umu^A h_{\umu,\ula}.
$$ 

Consider the right $W_d^{|A|}$-module 
\begin{equation}\label{EM(n,d)}
M^{|A|}(n,d):=\bigoplus_{\ula \in\La([1,n]\times I,d)} M_\ula^{|A|}.
\end{equation}
By Lemma~\ref{LDesup} and Proposition~\ref{PDesupM}, there exists $\psi(y)\in \End_{W_d^{|A|}}(M^{|A|}(n,d))$ such that for any $\ula\in \La([1,n]\times I,d)$, 
\begin{equation}\label{EPsi}
\psi(y)(m_\ula^{|A|})=\sum_{\umu\in \La([1,n]\times I,d)} m_\umu^{|A|} \si^{-1}(h_{\umu,\ula}).
\end{equation}

\begin{Corollary} \label{CImportant} 
The map $\psi \colon |S^A(n,d)|\to\End_{W_d^{|A|}}(M^{|A|}(n,d))$ is an algebra isomorphism. 
\end{Corollary}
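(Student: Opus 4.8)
The plan is to deduce the statement from Corollary~\ref{C100116} by forgetting the $\Z_2$-gradings and transporting the resulting isomorphism along the algebra isomorphism $\si$ of Lemma~\ref{LDesup} and the module isomorphisms of Proposition~\ref{PDesupM}; the only real work is then to check that the composite isomorphism so obtained is precisely $\psi$.

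First I would observe that forgetting superstructures in Corollary~\ref{C100116} gives an algebra isomorphism $|\phi|\colon |S^A(n,d)|\iso |\End_{W_d^A}(M^A(n,d))|$, and that a $\k$-linear self-map of a right $W_d^A$-supermodule $M$ commutes with the $W_d^A$-action if and only if it commutes with the $|W_d^A|$-action on $|M|$, so that $|\End_{W_d^A}(M^A(n,d))| = \End_{|W_d^A|}(|M^A(n,d)|)$ as $\k$-algebras. Next, for any right $|W_d^A|$-module $N$, restriction along $\si$ turns $N$ into a right $W_d^{|A|}$-module $N^\si$ with the same underlying $\k$-module, and since $\si$ is surjective one gets an equality of $\k$-algebras $\End_{|W_d^A|}(N) = \End_{W_d^{|A|}}(N^\si)$. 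Taking $N = |M^A(n,d)| = \bigoplus_\ula |M^A_\ula|$ and using that $(-)^\si$ commutes with direct sums, one has $|M^A(n,d)|^\si = \bigoplus_\ula |M^A_\ula|^\si$. By Proposition~\ref{PDesupM} there are isomorphisms of right $W_d^{|A|}$-modules $\theta_\ula\colon M^{|A|}_\ula\iso |M^A_\ula|^\si$ with $\theta_\ula(m^{|A|}_\ula)=m^A_\ula$; summing over $\ula$ and invoking~\eqref{EM(n,d)} gives an isomorphism of right $W_d^{|A|}$-modules $\theta\colon M^{|A|}(n,d)\iso |M^A(n,d)|^\si$, hence an algebra isomorphism $\Theta\colon \End_{W_d^{|A|}}(|M^A(n,d)|^\si)\iso \End_{W_d^{|A|}}(M^{|A|}(n,d))$, $g\mapsto \theta^{-1}\circ g\circ\theta$. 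Chaining these isomorphisms yields an algebra isomorphism $\Theta\circ|\phi|\colon |S^A(n,d)|\iso \End_{W_d^{|A|}}(M^{|A|}(n,d))$.

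It then remains to verify that $\Theta\circ|\phi| = \psi$. Since each $M^{|A|}_\ula$ is generated by $m^{|A|}_\ula$ over $W_d^{|A|}$, any endomorphism in $\End_{W_d^{|A|}}(M^{|A|}(n,d))$ is determined by its values on the $m^{|A|}_\ula$, so it suffices to evaluate $\Theta(|\phi|(y))$ at $m^{|A|}_\ula$ for $y\in S^A(n,d)$ and compare with~\eqref{EPsi}. By construction $\theta(m^{|A|}_\ula)=m^A_\ula$, and by the identity displayed after~\eqref{EVAction} we have $\phi(y)(m^A_\ula)=\sum_\umu m^A_\umu h_{\umu,\ula}$. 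In $|M^A(n,d)|^\si$ the element $m^A_\umu h_{\umu,\ula}$ coincides with $m^A_\umu$ acted on by $\si^{-1}(h_{\umu,\ula})$, simply because $v\cdot\si^{-1}(w)=vw$ for $v\in |M^A(n,d)|$, $w\in |W_d^A|$, by definition of the $W_d^{|A|}$-action on $N^\si$. Applying the $W_d^{|A|}$-linear map $\theta^{-1}$, and using $\theta^{-1}(m^A_\umu)=m^{|A|}_\umu$, one obtains $\Theta(|\phi|(y))(m^{|A|}_\ula)=\sum_\umu m^{|A|}_\umu\,\si^{-1}(h_{\umu,\ula})$, which is exactly $\psi(y)(m^{|A|}_\ula)$ by~\eqref{EPsi}. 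Hence $\psi=\Theta\circ|\phi|$ is an algebra isomorphism.

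There is no genuinely hard step here; the one point requiring care is to keep the three layers of identification straight — forgetting the grading ($|\cdot|$), restriction along $\si$ ($(-)^\si$), and the module isomorphism $\theta$ of Proposition~\ref{PDesupM} — and to confirm that their composite reproduces the coefficient-wise definition~\eqref{EPsi} of $\psi$ rather than some sign-twisted variant, which is precisely what the bookkeeping in the last paragraph of the plan accomplishes.
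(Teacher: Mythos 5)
Your proof is correct and is precisely the bookkeeping that the paper leaves implicit: you transport the isomorphism of Corollary~\ref{C100116} through forgetting the grading, restriction along $\si$ (Lemma~\ref{LDesup}), and the module isomorphisms of Proposition~\ref{PDesupM}, and then verify that the composite agrees with the defining formula~\eqref{EPsi}. This matches the paper's intended argument exactly.
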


\begin{Remark}\label{RGradings} 
{\rm 
If the superalgebra $A$ is graded, the theory of this section goes through, yielding gradings on $S^A(n,d)$ and $W_d^A$, as well as all the modules over them that were considered. 
To be more precise, $M_n(A)$ inherits a grading from $A$, and then so do $\Tens M_n(A)$ and $\Inv M_n(A)$. On the other hand, $W_d^A=A^{\otimes d}\otimes \k\Si_d$ is graded with $\k\Si_d$ in degree $0$. 
In particular, the isomorphism $\psi$ from Corollary~\ref{CImportant} is an isomorphism of graded algebras. 
}
\end{Remark}

\section{Schur doubles}\label{SSymm}

Let $d\in \Z_{>0}$ and $n\in \Z_{\ge 0}$.
For an $\O$-superalgebra $A$ which is free of finite rank as an $\O$-supermodule, we denote 
$$
{}'{D^A(n,d)}:={}'{D^d M_n (A)}\quad\text{and}\quad D^A(n,d):=D^d M_n (A).
$$
The main result of this section is Theorem~\ref{Tsym}, which roughly speaking asserts that $D^A(n,d)$ is a maximal symmetric subalgebra
 of ${}'{D^A(n,d)}$. But first, we develop the results of \S\ref{SSGen} on generation in this set-up. 

\subsection{Generating $D^A(n,d)$}\label{SSGeneratingND}
Let $T_A=A\oplus A^*$ be the trivial extension superalgebra of $A$, 
cf.~\S\ref{SSTEA}. 
In view of Lemma~\ref{LTMx}, we identify  $M_n(T_A)$ with $T_{M_n(A)}$ so that $\xi_{r,s}^{(a,\al)}\in M_n(T_A)$ corresponds to $(\xi_{r,s}^a, x_{s,r}^\al)\in T_{M_n(A)}$ 
for all $1\leq r,s\leq n$, $a\in A$ and $\al\in A^*$, where $x_{r,s}^\al\in M_n(A)^*$ is the element defined in (\ref{EXRSAl}).

We also identify ${}'{D^A(n,d)}={}'{D^d M_n(A)}$ with $\Inv^d T_{M_n(A)}$ via the explicit isomorphism of Theorem~\ref{TFund}. Combining this with the identification $T_{M_n(A)}=M_n(T_A)$ from the previous paragraph, we now can and do identify ${}'{D^A(n,d)}$ with $\Inv^d M_n(T_A)=S^{T_A}(n,d)$. Since $D^A(n,d)$ is a subsuperalgebra of ${}'{D^A(n,d)}$, we now identify it as a subsuperalgebra of $S^{T_A}(n,d)$. 
As $A_\0$ is a subalgebra of $T_A$, the algebra $S^{A_\0}(n,d)=\Inv^d M_n(A_\0)$ is also a subalgebra of the superalgebra $S^{T_A}(n,d)=\Inv^d M_n(T_A)$ in the natural way.

\begin{Theorem} \label{TGenerationDND} 
The subsuperalgebra $D^A(n,d)\subseteq S^{T_A}(n,d)$ is precisely the subalgebra generated by $S^{A_\0}(n,d)$ and the set
$
\{\xi_{1,1}^y*1^{\otimes (d-1)}\mid y\in T_A\}\subseteq S^{T_A}(n,d).
$
\end{Theorem}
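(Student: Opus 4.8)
The plan is to deduce Theorem~\ref{TGenerationDND} from the abstract generation result Theorem~\ref{Lgen} (and Corollary~\ref{CGen}) applied to the superalgebra $X = M_n(A)$. Recall that under the identifications in force, $\DC^d X = D^A(n,d) \subseteq \Inv^d T_X = \Inv^d M_n(T_A) = S^{T_A}(n,d)$, and that $X_\0 = M_n(A_\0)$, so $\Inv^d (X_\0) = S^{A_\0}(n,d)$. By Theorem~\ref{Lgen}, $D^A(n,d)$ is generated by $S^{A_\0}(n,d)$ together with $1^{\otimes(d-1)} * Y$, where $Y = X_\1 \oplus X^* = M_n(A)_\1 \oplus M_n(A)^*$. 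So it suffices to show that $S^{A_\0}(n,d)$ together with the proposed set $\{\xi_{1,1}^y * 1^{\otimes(d-1)} \mid y\in T_A\}$ generates everything that $1^{\otimes(d-1)} * Y$ contributes. (Note $\xi_{1,1}^y * 1^{\otimes(d-1)} = 1^{\otimes(d-1)} * \xi_{1,1}^y$ by supercommutativity of $*$, so these elements do lie in $1^{\otimes(d-1)}*Y$ once we check $\xi_{1,1}^y \in Y$ for $y\in T_A$.)

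First I would translate the identification $M_n(T_A) \cong T_{M_n(A)}$ of Lemma~\ref{LTMx} carefully: the element $\xi_{r,s}^{(a,\al)} \in M_n(T_A)$ corresponds to $(\xi_{r,s}^a, x_{s,r}^\al) \in T_{M_n(A)} = M_n(A) \oplus M_n(A)^*$. Writing $y = (a,\al) \in T_A = A \oplus A^*$, the generator $\xi_{1,1}^y$ thus corresponds to $\xi_{1,1}^a + x_{1,1}^\al \in X_\0 \oplus X_\1 \oplus X^*$; its components in $X_\1$ and $X^*$ (i.e.\ its image in $Y = X_\1\oplus X^*$) run, as $y$ ranges over $T_A$, exactly over $U := (M_n(A)_\1)_{(1,1)} \oplus (M_n(A)^*)_{(1,1)}$, meaning the span of all $\xi_{1,1}^a$ with $a\in A_\1$ and all $x_{1,1}^\al$ with $\al\in A^*$. (Adding the $X_\0$-part $\xi_{1,1}^a$, $a\in A_\0$, is harmless since that lies in $S^{A_\0}(n,d)$-land after taking $1^{\otimes(d-1)}*(-)$; more precisely $1^{\otimes(d-1)}*\xi_{1,1}^{a}$ for $a\in A_\0$ should be shown to lie in the subalgebra generated by $S^{A_\0}(n,d)$ — this is where one uses that $\Inv^d(X_\0)$ contains $1^{\otimes(d-1)}*X_\0$, which is clear from the definition of $\Inv$.) So, modulo that bookkeeping, the theorem reduces to the statement $\sum_{r\ge 0} \ad^r(X_\0)(U) = Y$, and then Corollary~\ref{CGen} finishes the job.

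The main step is therefore the computation $\sum_{r\ge 0}\ad^r(M_n(A_\0))(U) = M_n(A)_\1 \oplus M_n(A)^*$, where $U$ is the ``$(1,1)$-corner'' of $Y$. This is a direct matrix-unit calculation. Using $\ad(\xi_{r,r}^{1_A})$ one moves a matrix entry from position $(1,1)$ to an arbitrary position $(r,s)$: indeed $\xi_{r,s}^{1} \cdot \xi_{1,1}^a - \xi_{1,1}^a \cdot \xi_{r,s}^{1}$-type brackets (and compositions of two such, for $r,s \neq 1$) produce $\xi_{r,s}^a$ up to sign, and dually on $M_n(A)^*$ one checks the analogous identities for the $x_{r,s}^\al$ using the formulas~\eqref{ELRegNew},~\eqref{ERRegNew} for the dual regular action and the product rule $\xi_{r,s}^a\xi_{t,u}^b = \de_{s,t}\xi_{r,u}^{ab}$. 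Since $A_\0 \ni 1_A$, the idempotents $\xi_{r,r}^{1}$ and the off-diagonal units $\xi_{r,s}^{1}$ (for $r\neq s$, these have $1_A$ in the $(r,s)$-slot, $1_A \in A_\0$) all lie in $M_n(A_\0)$, so all these brackets are allowed. This shows every $\xi_{r,s}^a$ ($a\in A_\1$) and every $x_{r,s}^\al$ ($\al\in A^*$) is reached, i.e.\ the $\ad$-closure of $U$ is all of $Y$.

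I expect the main obstacle to be purely organizational rather than deep: one must be scrupulous about the two identifications (Theorem~\ref{TFund} sending $D^A(n,d)$ into $\Inv^d T_X$, and Lemma~\ref{LTMx} sending $M_n(T_A)$ to $T_{M_n(A)}$) so that the ``generators'' named in the theorem statement genuinely correspond to elements of $1^{\otimes(d-1)}*Y$ in the form required by Theorem~\ref{Lgen}/Corollary~\ref{CGen}, and about the sign conventions in~\eqref{ELRegNew}--\eqref{ERRegNew} and~\eqref{EdAction} when verifying the $\ad$-orbit computation on the dual side $M_n(A)^*$. The one genuinely nontrivial input — that the bracket closure of a single corner generates the whole matrix space and its dual as an $M_n(A_\0)$-bimodule — is a standard matrix-unit argument. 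No further results beyond Theorem~\ref{Lgen}, Corollary~\ref{CGen}, Lemma~\ref{LTMx}, and the elementary product/action formulas of \S\ref{SSTEA} and \S\ref{SSInv} are needed.
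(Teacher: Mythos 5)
Your proposal is correct and follows the same route as the paper: apply Corollary~\ref{CGen} with $X = M_n(A)$, take $U$ to be the $(1,1)$-corner of $Y = X_\1\oplus X^*$, and verify the $\ad$-closure hypothesis by a matrix-unit computation. The paper leaves both the bookkeeping (absorbing the $A_\0$-component of $y$ into $S^{A_\0}(n,d)$) and the $\ad$-closure check as "easily seen," and your write-up supplies exactly those details.
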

\begin{proof}
This follows from Corollary~\ref{CGen}. Indeed, we consider  that corollary
 with $M_n(A)$ in place of $X$. Then, taking 
into account the identifications made in this subsection, $\Inv^d(A_\0)$ in Corollary~\ref{CGen} corresponds to $S^{A_\0}(n,d)$, and 
$Y$  in Corollary~\ref{CGen} corresponds to $M_n(A_\1\oplus A^*)$. It remains to take $U:=\{\xi_{1,1}^y\mid y\in A_\1\oplus A^*\}$, which is easily seen to satisfy the assumptions of Corollary~\ref{CGen}. 
\end{proof}

\begin{Corollary} \label{C130216} 
Let $A$ and $A'$ be $\O$-superalgebras which are free of finite rank as $\O$-supermodules. If we have an isomorphism $\phi\colon T_A\iso T_{A'}$ which restricts to an isomorphism $A_\0\iso A'_\0$, then the isomorphism ${}S^{T_A}(n,d)\iso S^{T_{A'}}(n,d)$ induced by $\phi$ restricts to an isomorphism $D^A(n,d)\iso D^{A'}(n,d)$.
\end{Corollary}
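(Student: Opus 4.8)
The plan is to exploit Theorem~\ref{TGenerationDND}, which gives an intrinsic description of $D^A(n,d)$ inside $S^{T_A}(n,d)$ in terms of a specific generating set, and then check that the isomorphism induced by $\phi$ carries this generating set for $D^A(n,d)$ onto the corresponding generating set for $D^{A'}(n,d)$. Denote by $\Phi\colon S^{T_A}(n,d)\iso S^{T_{A'}}(n,d)$ the isomorphism induced by $\phi$, i.e.\ the restriction to invariants of the isomorphism $\Tens^d M_n(T_A)\iso \Tens^d M_n(T_{A'})$ obtained from $\phi$ by applying $M_n(-)$ and then $(-)^{\otimes d}$ (this clearly commutes with the $\Si_d$-action, so it restricts to invariants). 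By Theorem~\ref{TGenerationDND}, $D^A(n,d)$ is generated by $S^{A_\0}(n,d)$ together with the set $\{\xi_{1,1}^y*1^{\otimes(d-1)}\mid y\in T_A\}$, and likewise $D^{A'}(n,d)$ is generated by $S^{A'_\0}(n,d)$ and $\{\xi_{1,1}^{y'}*1^{\otimes(d-1)}\mid y'\in T_{A'}\}$.

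The two verifications needed are then the following. First, $\Phi$ maps $S^{A_\0}(n,d)$ onto $S^{A'_\0}(n,d)$: since $\phi$ restricts to an isomorphism $A_\0\iso A'_\0$, the induced map $M_n(T_A)\iso M_n(T_{A'})$ restricts to an isomorphism $M_n(A_\0)\iso M_n(A'_\0)$, hence $\Tens^d M_n(A_\0)\iso \Tens^d M_n(A'_\0)$, and taking $\Si_d$-invariants gives $S^{A_\0}(n,d)\iso S^{A'_\0}(n,d)$ as the restriction of $\Phi$. Second, $\Phi$ sends each generator $\xi_{1,1}^y*1^{\otimes(d-1)}$ to a generator of the required form: under the identification $M_n(T_A)=T_{M_n(A)}$ of Lemma~\ref{LTMx} the element $\xi_{1,1}^y$ (for $y\in T_A$) is simply a general element of the $(1,1)$-entry copy of $T_A$ inside $M_n(T_A)$; the induced map on matrix algebras sends $\xi_{1,1}^y$ to $\xi_{1,1}^{\phi(y)}$, and the $*$-product and the identity element $1_{M_n(T_A)}=E_{1,1}+\dots+E_{n,n}$ (whose image is $1_{M_n(T_{A'})}$, as $\phi$ is unital) are preserved by the algebra homomorphism $\Tens^d M_n(T_A)\to \Tens^d M_n(T_{A'})$. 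Hence $\Phi(\xi_{1,1}^y*1^{\otimes(d-1)})=\xi_{1,1}^{\phi(y)}*1^{\otimes(d-1)}$, and as $y$ ranges over $T_A$, $\phi(y)$ ranges over all of $T_{A'}$.

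Combining these, $\Phi$ maps a generating set of $D^A(n,d)$ onto a generating set of $D^{A'}(n,d)$; since $\Phi$ is an algebra isomorphism it therefore restricts to an isomorphism $D^A(n,d)\iso D^{A'}(n,d)$. (One should also note in passing that $\Phi$ does restrict to an isomorphism ${}'{D^A(n,d)}\iso {}'{D^{A'}(n,d)}$, which is just the statement that $\Phi$ is the map on all of $S^{T_A}(n,d)$ under our identifications; the content is entirely about the subalgebra.)

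I do not expect a genuine obstacle here: the statement is essentially functoriality of the construction $A\mapsto D^A(n,d)$ with respect to isomorphisms of trivial extension algebras preserving the even part, and Theorem~\ref{TGenerationDND} has already done the real work by pinning down $D^A(n,d)$ by generators that are visibly natural in $T_A$ and $A_\0$. The one point requiring a little care is bookkeeping with the chain of identifications $D^A(n,d)={}'{D^d M_n(A)}\cap(\text{something})$, then ${}'{D^A(n,d)}\iso\Inv^d T_{M_n(A)}$ via Theorem~\ref{TFund}, then $T_{M_n(A)}=M_n(T_A)$ via Lemma~\ref{LTMx}: one must check that the isomorphism $\Phi$ defined "downstairs" as $\Inv^d M_n(\phi)$ is compatible with all of these, i.e.\ that the square relating $\phi$, the Theorem~\ref{TFund} isomorphisms for $A$ and $A'$, and the naturality of $\kappa$ commutes. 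This is routine but is the only place where something could in principle go wrong, so it is where I would spend the bulk of the written proof.
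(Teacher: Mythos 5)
Your proof is correct and is essentially the paper's intended argument: Corollary~\ref{C130216} appears immediately after Theorem~\ref{TGenerationDND} precisely because the generating set described there (namely $S^{A_\0}(n,d)$ together with $\{\xi_{1,1}^y*1^{\otimes(d-1)}\mid y\in T_A\}$) is manifestly natural in the pair $(T_A,A_\0)$, and the induced isomorphism carries generators to generators. The bookkeeping point you flag about compatibility of the identifications in \S\ref{SSGeneratingND} (Theorem~\ref{TFund} and Lemma~\ref{LTMx}) is indeed the only place requiring care, and you handle it correctly.
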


If $A_\1 A_\1=0$, then $M_n(A)_\1 M_n(A)_\1=0$, and so we have Turner's grading on $D^A(n,d)$, cf.~Lemma~\ref{LGrDNew}. 

\begin{Corollary} 
If $A_\1 A_\1=0$, then  $D^A(n,d)$ is generated by the elements of degrees $0$, $1$ and $2$ with respect to Turner's grading. 
\end{Corollary}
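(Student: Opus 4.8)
The plan is to read the corollary off from the explicit generating set of Theorem~\ref{TGenerationDND}, keeping track of Turner's grading. First I would observe that the hypothesis $A_\1 A_\1 = 0$ forces $M_n(A)_\1 M_n(A)_\1 = 0$, so Lemma~\ref{LGrDNew} applies: $D^A(n,d) = D^d M_n(A)$ carries Turner's $\Z_{\ge 0}$-grading, the isomorphism of Theorem~\ref{TFund} is an isomorphism of graded superalgebras, and one checks that the identification $M_n(T_A) = T_{M_n(A)}$ of Lemma~\ref{LTMx} is graded as well. Consequently $D^A(n,d)$ sits in $S^{T_A}(n,d) = \Inv^d M_n(T_A)$ as a graded subalgebra, and it suffices to show that the generators provided by Theorem~\ref{TGenerationDND} lie in degrees $0$, $1$, $2$.

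Next I would recall from \S\ref{SSGr} the grading on $T_{M_n(A)}$: with $X = M_n(A)$, $X^0 = M_n(A_\0)$, $X^1 = M_n(A_\1)$, the summand $X^*$ is equipped with the canonical grading shifted by $2$, so that $M_n(A_\0)$ lies in degree $0$, $M_n(A_\1)$ together with the summand of $M_n(A)^*$ dual to $M_n(A_\1)$ lie in degree $1$, and the summand of $M_n(A)^*$ dual to $M_n(A_\0)$ lies in degree $2$. The $*$-product on $\Inv M_n(T_A)$ is additive in this grading and $1_{M_n(A)} \in M_n(A_\0)$ has degree $0$; hence $S^{A_\0}(n,d) = \Inv^d M_n(A_\0)$ is concentrated in degree $0$, and for a homogeneous $y \in T_A$ the element $\xi_{1,1}^y * 1^{\otimes (d-1)}$ has degree $\deg y \in \{0,1,2\}$.

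Finally, by Theorem~\ref{TGenerationDND} the algebra $D^A(n,d)$ is generated by $S^{A_\0}(n,d)$ and the elements $\xi_{1,1}^y * 1^{\otimes (d-1)}$ with $y \in T_A$; since $y \mapsto \xi_{1,1}^y * 1^{\otimes (d-1)}$ is $\O$-linear, it is enough to let $y$ run over a homogeneous basis of $T_A$, and then by the previous paragraph every generator lies in $(D^A(n,d))^0 \oplus (D^A(n,d))^1 \oplus (D^A(n,d))^2$. Therefore $D^A(n,d)$ is generated by its components of degrees $0$, $1$ and $2$. I do not expect a genuine obstacle here; the only delicate point is keeping the shift-by-$2$ convention on $M_n(A)^*$ straight, which is exactly what produces the bound $2$ rather than $1$ on the degrees of the generators.
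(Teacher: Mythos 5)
Your proof is correct and follows exactly the argument the paper leaves implicit: since the corollary is stated with no separate proof, it is understood to follow from Theorem~\ref{TGenerationDND} by observing that $S^{A_\0}(n,d)$ sits in degree $0$ and the elements $\xi_{1,1}^y * 1^{\otimes(d-1)}$ for homogeneous $y\in T_A$ have degree $\deg y\in\{0,1,2\}$ under the shift-by-$2$ convention of \S\ref{SSGr}. Your careful remarks about passing to a homogeneous basis of $T_A$ and about the identification $M_n(T_A)=T_{M_n(A)}$ being graded are exactly the right points to check.
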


\begin{Corollary} \label{C110216_2} 
Let $n\geq d$. Then the subsuperalgebra $D^A(n,d)\subseteq S^{T_A}(n,d)$ is precisely the subalgebra generated by $S^{A_\0}(n,d)$ and the set
$$
\{\xi_{1,1}^y*E_{2,2}^{\otimes \la_2}*\dots* E_{n,n}^{\otimes \la_n} \mid y\in T_A,\ (\la_2,\dots,\la_n)\in\La(n-1,d-1)\}\subseteq S^{T_A}(n,d).
$$
\end{Corollary}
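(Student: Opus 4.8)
Write $\GC$ for the subalgebra of $S^{T_A}(n,d)$ generated by $S^{A_\0}(n,d)$ together with the set displayed in the statement; we must show $\GC=D^A(n,d)$. Throughout we work inside $S^{T_A}(n,d)=\Inv^d M_n(T_A)=\Inv^d T_{M_n(A)}$ and identify $D^A(n,d)$ with the subalgebra $\DC^d M_n(A)=\bigoplus_{e=0}^d\Inv^{d-e}(M_n(A)_\0)*\Star^e Y$ of Lemma~\ref{LY}, where $Y=M_n(A)_\1\oplus M_n(A)^*$. For the inclusion $\GC\subseteq D^A(n,d)$ it suffices to check that the generators lie in $D^A(n,d)$: first $S^{A_\0}(n,d)=\Inv^d(M_n(A)_\0)$ is the $e=0$ summand; and for a generator $\xi_{1,1}^y*E_{2,2}^{\otimes\la_2}*\cdots*E_{n,n}^{\otimes\la_n}$ one splits $\xi_{1,1}^y=p+q$ according to $M_n(T_A)=M_n(A)_\0\oplus Y$ and observes, using that each $E_{r,r}$ lies in $M_n(A)_\0$, that the $p$-part lies in the $e=0$ summand while the $q$-part, moved past the $*$-factors by supercommutativity of $*$, lies in $\Inv^{d-1}(M_n(A)_\0)*\Star^1 Y$, the $e=1$ summand.

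For the reverse inclusion, by Theorem~\ref{TGenerationDND} it is enough to show $\xi_{1,1}^y*1^{\otimes(d-1)}\in\GC$ for all $y\in T_A$ (as $S^{A_\0}(n,d)\subseteq\GC$ is automatic). Writing $1=\sum_{r=1}^n E_{r,r}$ and expanding — using that the $E_{r,r}$ are commuting even idempotents, so a tensor power of their sum decomposes into a sum of $*$-products of the individual $\otimes$-powers — gives
\[
\xi_{1,1}^y*1^{\otimes(d-1)}=\sum_{k=0}^{d-1}\ \sum_{(\la_2,\dots,\la_n)\in\La(n-1,d-1-k)}\xi_{1,1}^y*E_{1,1}^{\otimes k}*E_{2,2}^{\otimes\la_2}*\cdots*E_{n,n}^{\otimes\la_n}.
\]
So it suffices to show each summand $T:=\xi_{1,1}^y*E_{1,1}^{\otimes k}*E_{2,2}^{\otimes\la_2}*\cdots*E_{n,n}^{\otimes\la_n}$ lies in $\GC$. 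If $k=0$, then $T$ is one of the given generators; if $y\in A_\0$, then $T\in\Inv^d(M_n(A)_\0)=S^{A_\0}(n,d)\subseteq\GC$. This leaves the case $k\ge 1$ and $\xi_{1,1}^y\ne E_{1,1}$.

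Here the hypothesis $n\ge d$ enters. Since $\{r\in[2,n]\mid\la_r>0\}$ has at most $d-1-k\le d-2$ elements and $|[2,n]|=n-1\ge d-1$, there is a \emph{fresh} index $r_0\in[2,n]$ with $\la_{r_0}=0$. Then $g:=\xi_{1,1}^y*E_{r_0,r_0}^{\otimes k}*E_{2,2}^{\otimes\la_2}*\cdots*E_{n,n}^{\otimes\la_n}$ is, after collecting the diagonal factors, of the form $\xi_{1,1}^y*E_{2,2}^{\otimes\mu_2}*\cdots*E_{n,n}^{\otimes\mu_n}$ with $(\mu_2,\dots,\mu_n)\in\La(n-1,d-1)$, hence a generator of $\GC$. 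Put $\Pi:=\sum_{r\in[1,n],\,r\ne r_0}\xi_{r,r}^{1_A}+\xi_{1,r_0}^{1_A}$ and $\Pi':=\sum_{r\in[1,n],\,r\ne r_0}\xi_{r,r}^{1_A}+\xi_{r_0,1}^{1_A}$, both in $M_n(A_\0)$, so $\Pi^{\otimes d},\Pi'^{\otimes d}\in\Inv^d(M_n(A_\0))=S^{A_\0}(n,d)$. A tensor-slotwise matrix computation (with no signs, since $\Pi,\Pi'$ have parity $\0$) gives $\Pi\cdot\xi_{1,1}^y=\xi_{1,1}^y$, $\Pi\cdot E_{r_0,r_0}=\xi_{1,r_0}^{1_A}$, $\Pi\cdot E_{r,r}=E_{r,r}$ for $r\ne r_0$, and symmetrically $\xi_{1,1}^y\cdot\Pi'=\xi_{1,1}^y$, $\xi_{1,r_0}^{1_A}\cdot\Pi'=E_{1,1}$, $E_{r,r}\cdot\Pi'=E_{r,r}$ for $r\ne r_0$. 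The key point is that $E_{r_0,r_0}$ occurs in $g$ with multiplicity exactly $k$ and is carried, via the intermediate atom $\xi_{1,r_0}^{1_A}$ (again of multiplicity $k$), onto $E_{1,1}$ of multiplicity $k$, with no collisions among the remaining atoms — this is where $\xi_{1,1}^y\ne E_{1,1}$ is used — so the shuffle sum defining $g$ is carried bijectively onto the shuffle sum defining $T$. Hence $\Pi^{\otimes d}\,g\,\Pi'^{\otimes d}=T\in\GC$, which finishes $D^A(n,d)\subseteq\GC$ and hence the equality.

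The step I expect to be the main obstacle is this last ``merging'' construction. The naive attempt — merge $k$ distinct fresh rows (and columns) into row (and column) $1$ — produces $k!\,T$ rather than $T$, because the $k$ images collide into a multiplicity-$k$ copy of $E_{1,1}$ and over a general domain $\O$ one cannot divide by $k!$; the fix is to use a single fresh index carrying the full multiplicity $k$, so that multiplicities are preserved throughout and no factorials arise. The requirement $n\ge d$ is exactly what guarantees such a fresh index exists. Everything else is routine manipulation of $*$-products together with the direct sum decomposition of Lemma~\ref{LY}.
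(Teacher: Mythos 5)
Your proof is correct, and the reverse inclusion is established by a genuinely different mechanism than the one in the paper. Both arguments reduce (via Theorem~\ref{TGenerationDND}) to showing that each summand $T=\xi_{1,1}^y*E_{1,1}^{\otimes k}*E_{2,2}^{\otimes\la_2}*\cdots*E_{n,n}^{\otimes\la_n}$ of $\xi_{1,1}^y*1^{\otimes(d-1)}$ lies in the subalgebra generated by the given elements, and both exploit $n\geq d$ to obtain a fresh index. But the mechanisms diverge. The paper takes the fresh index $k$ with $\la_k=0$ and performs a \emph{bulk shift}: it constructs $*$-products $b$, $b'$ whose $*$-factors are off-diagonal blocks $E_{1,2}^{\otimes\la_1},\dots,E_{k-1,k}^{\otimes\la_{k-1}}$ (resp.\ their transposes), sandwiches the generator $c$, and invokes Lemma~\ref{LExercise} to evaluate $bcb'$ slot-by-slot. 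You instead take a single fresh index $r_0$ with $\la_{r_0}=0$, form the generator $g$ with $E_{r_0,r_0}^{\otimes k}$ in place of $E_{1,1}^{\otimes k}$, and multiply by the $\otimes$-powers $\Pi^{\otimes d}$ and $\Pi'^{\otimes d}$, where $\Pi,\Pi'$ are even permutation-like matrices transporting $E_{r_0,r_0}$ onto $E_{1,1}$ in one step. Since $\Pi^{\otimes d}$ and $\Pi'^{\otimes d}$ are not $*$-products, Lemma~\ref{LExercise} does not apply to your product, and you correctly replace it with the observation that the slot-wise map $a\mapsto\Pi a\Pi'$ carries the shuffle terms of $g$ bijectively and sign-preservingly onto those of $T$, with no collisions because $\xi_{1,1}^y\neq E_{1,1}$ and no multiplicity inflation because all $k$ copies ride on a single transported index. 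Your explicit remark that the ``naive'' merge of $k$ distinct fresh indices into index $1$ would produce $k!\,T$ — forcing an illegal division by $k!$ over a general domain $\O$ — is exactly the right diagnosis; the paper's bulk shift avoids the same pitfall by never merging, keeping the correspondence of $*$-factors one-to-one. For the forward inclusion you also deviate slightly, splitting the generator according to $M_n(T_A)=M_n(A)_{\bar 0}\oplus Y$ and quoting Lemma~\ref{LY}, whereas the paper truncates $\xi_{1,1}^y*1^{\otimes(d-1)}$ by idempotents $\xi_\la$ and appeals to Theorem~\ref{TGenerationDND}. Both are fine. Overall, the paper's route is shorter once Lemma~\ref{LExercise} is on the table, while yours is more self-contained but requires the careful ``bijection of shuffle terms'' verification, which you carry out correctly.
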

\begin{proof}
Let $D$ be the subalgebra generated by the elements in the statement of the corollary. Let $\la=(1,\la_2,\dots,\la_n)\in\La(n,d)$. 
Recalling the idempotent $\xi_\la=E_{1,1}*E_{2,2}^{\otimes \la_2}*\dots* E_{n,n}^{\otimes \la_n}$ from (\ref{EXiLa}) and using Lemma~\ref{LExercise}, we have 
$$\xi_\la(\xi_{1,1}^y*1^{\otimes (d-1)})\xi_\la=\xi_{1,1}^y*E_{2,2}^{\otimes \la_2}*\dots* E_{n,n}^{\otimes \la_n}.
$$
By Theorem~\ref{TGenerationDND}, this shows that $D\subseteq  D^A(n,d)$. For the reverse inclusion, it suffices to show that the elements of the form $\xi_{1,1}^y*1^{\otimes (d-1)}$ with 
$y\in T_A$ belong to $D$. 
For any $\la\in\La(n,d-1)$, define
$$
x(y,\la):=\xi_{1,1}^y*E_{1,1}^{\otimes \la_1}*E_{2,2}^{\otimes \la_2}*\dots* E_{n,n}^{\otimes \la_n}\in S^{T_A}(n,d). 
$$
Then $\xi_{1,1}^y*1^{\otimes (d-1)}=\sum_{\la\in\La(n,d-1)} x(y,\la)$, so
 it suffices to prove that each $x(y,\la)\in D$. Fix $\la\in \La(n,d-1)$. Since $d-1<n$, there is $k\in[1,n]$ with $\la_k=0$. Let
\begin{align*}
b&:=E_{1,1}*E_{1,2}^{\otimes \la_1}*\dots*E_{k-1,k}^{\otimes \la_{k-1}}*E_{k+1,k+1}^{\otimes \la_{k+1}}*\dots*E_{n,n}^{\otimes \la_{n}},
\\
b'&:=E_{1,1}*E_{2,1}^{\otimes \la_1}*\dots*E_{k,k-1}^{\otimes \la_{k-1}}*E_{k+1,k+1}^{\otimes \la_{k+1}}*\dots*E_{n,n}^{\otimes \la_{n}},
\\
c&:=\xi_{1,1}^y*E_{2,2}^{\otimes \la_1}*\dots*E_{k,k}^{\otimes \la_{k-1}}*E_{k+1,k+1}^{\otimes \la_{k+1}}*\dots* E_{n,n}^{\otimes \la_n}.
\end{align*}
Then $b,b',c\in D$, and 
$
bcb'=x(y,\la)
$ 
by  Lemma~\ref{LExercise}, completing the proof. 
\end{proof}

For every $\la\in\La(n,d)$, the idempotent $\xi_\la\in S^{T_A}(n,d)$ defined in (\ref{EXiLa}) belongs to $S^{A_\0}(n,d)$, and so, by Corollary~\ref{C110216_2}, to $D^A(n,d)\subseteq S^{T_A}(n,d)$. 
The following is known, cf.~\cite[Lemma 13]{TurnerT}:

\begin{Corollary} 
If $d\leq n$, then $\xi_\om D^A(n,d)\xi_\om=\xi_\om S^{T_A}(n,d)\xi_\om$ and there is a superalgebra isomorphism 
$$
\phi\colon W^{T_A}_d\iso \xi_\om D^A(n,d)\xi_\om,\ (x_1\otimes\dots\otimes x_d)g\mapsto \xi^{x_1}_{1,g^{-1}1}*\dots*\xi^{x_d}_{d,g^{-1}d}.
$$ 
\end{Corollary}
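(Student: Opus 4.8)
The plan is to deduce both assertions from Lemma~\ref{LIdempotentTruncation} applied with $T_A$ in place of $A$. That lemma already provides a superalgebra isomorphism $\phi\colon W_d^{T_A}\iso \xi_\om S^{T_A}(n,d)\xi_\om$ given by the displayed formula, the $*$-products being taken inside $S^{T_A}(n,d)=\Inv^d M_n(T_A)$. Since $D^A(n,d)\subseteq S^{T_A}(n,d)$, one trivially has $\xi_\om D^A(n,d)\xi_\om\subseteq \xi_\om S^{T_A}(n,d)\xi_\om=\im\phi$. So the whole statement will follow once I show the reverse inclusion $\im\phi\subseteq D^A(n,d)$: indeed any $z\in\im\phi$ then satisfies $z=\xi_\om z\xi_\om\in\xi_\om D^A(n,d)\xi_\om$ because $\xi_\om\in S^{A_\0}(n,d)\subseteq D^A(n,d)$, giving $\im\phi=\xi_\om D^A(n,d)\xi_\om=\xi_\om S^{T_A}(n,d)\xi_\om$, and $\phi$ is then exactly the asserted isomorphism onto $\xi_\om D^A(n,d)\xi_\om$.

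To prove $\im\phi\subseteq D^A(n,d)$ I would use that, by Lemma~\ref{LWrChar}, the superalgebra $W_d^{T_A}$ is generated by $\Si_d$ together with the elements $x[c]$ ($x\in T_A$, $1\le c\le d$), and that $\phi$ is an algebra homomorphism; so it suffices to check that $\phi(g)\in D^A(n,d)$ and $\phi(x[c])\in D^A(n,d)$ for all such $g,x,c$. For $g\in\Si_d$ the formula gives $\phi(g)=\xi_{1,g^{-1}1}^{1_{T_A}}*\dots*\xi_{d,g^{-1}d}^{1_{T_A}}$, and under the identification $M_n(T_A)=T_{M_n(A)}$ of Lemma~\ref{LTMx} the element $\xi_{r,s}^{1_{T_A}}$ is the matrix unit $E_{r,s}\in M_n(A_\0)$; hence $\phi(g)$ is a $*$-product of elements of $\Inv^1 M_n(A_\0)$ and so lies in the subalgebra $S^{A_\0}(n,d)$ of $S^{T_A}(n,d)$, which is contained in $D^A(n,d)$. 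For $c=1$ the formula gives $\phi(x[1])=\xi_{1,1}^x*E_{2,2}*\dots*E_{d,d}$, which (using $n\ge d$) is precisely the generator of $D^A(n,d)$ occurring in Corollary~\ref{C110216_2} and indexed by $(\la_2,\dots,\la_n)=(1,\dots,1,0,\dots,0)\in\La(n-1,d-1)$, with $d-1$ ones. For general $c$, the relation $g\cdot x[1]=x[g1]\cdot g$ of Lemma~\ref{LWrChar} taken with the transposition $g=(1,c)\in\Si_d$ yields $x[c]=(1,c)\,x[1]\,(1,c)$ in $W_d^{T_A}$, so $\phi(x[c])=\phi((1,c))\,\phi(x[1])\,\phi((1,c))$ is a product of three elements of $D^A(n,d)$. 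This exhausts the generators, so $\im\phi\subseteq D^A(n,d)$, and the proof is complete.

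This argument is essentially formal once the earlier results are in hand; the only point requiring care is the bookkeeping that identifies $\phi(g)$ with $E_{1,g^{-1}1}*\dots*E_{d,g^{-1}d}$ and $\phi(x[1])$ with $\xi_{1,1}^x*E_{2,2}*\dots*E_{d,d}$ by tracking the element $\xi_{r,s}^{1_{T_A}}$ and the Corollary~\ref{C110216_2} generator through the chain of identifications $S^{T_A}(n,d)=\Inv^d M_n(T_A)=\Inv^d T_{M_n(A)}\supseteq D^A(n,d)$ furnished by Lemma~\ref{LTMx}, Theorem~\ref{TFund} and Corollary~\ref{C110216_2}. I do not expect any genuine obstacle beyond this bookkeeping.
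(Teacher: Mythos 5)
Your proof is correct and takes essentially the same route as the paper's: both reduce via Lemma~\ref{LIdempotentTruncation} to showing that the image of $\phi$ lies in $D^A(n,d)$, and both then verify this on a generating set of $W^{T_A}_d$ (the paper works with $\k\Si_d$ and $T_A^{\otimes d}$, dispatching $T_A^{\otimes d}$ via Lemma~\ref{LExercise} and Corollary~\ref{C110216_2}; you work with $\Si_d$ and the elements $x[c]$, reducing $x[c]$ to $x[1]$ by conjugation through $\phi$, which is the same computation repackaged using the homomorphism property). Your version spells out the conjugation step and the identification $\xi_{r,s}^{1_{T_A}}=E_{r,s}$ a bit more explicitly, but there is no substantive difference in the argument.
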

\begin{proof}
First, we claim that every element of the form $\xi(x_1,\dots,x_d;g):=\xi^{x_1}_{1,g^{-1}1}*\dots*\xi^{x_d}_{d,g^{-1}d}$ belongs to $D^A(n,d)$. Indeed, the case $g=1$ is handled using Lemma~\ref{LExercise} and Corollary~\ref{C110216_2}, and the case $x_1=\dots=x_d=1$ is clear since $\xi(1,\dots,1;g)\in S^{A_\0}(n,d)$. 
By Lemma~\ref{LIdempotentTruncation}, the elements $\xi(x_1,\dots,x_d;g)$ span in $S^{T_A}(n,d)$ a copy of $W_d^{T_A}$, with the elements 
$\xi(x_1,\dots,x_d;1)$ spanning $T_A^{\otimes d}$ and the elements $\xi(1,\dots,1;g)$ spanning $\k\Si_d$. The claim follows.
 
Now, using Lemma~\ref{LIdempotentTruncation} and Corollary~\ref{C110216_2}, we conclude that $\xi_\om D^A(n,d)\xi_\om=\xi_\om S^{T_A}(n,d)\xi_\om$, and another application of  Lemma~\ref{LIdempotentTruncation} completes the proof.
\end{proof}

\subsection{Symmetric lattices in ${}'{D^A(n,d)}$}
\label{SSSymLat}

In this subsection, in addition to the hypotheses specified at the beginning of Section~\ref{Sprel}, we assume that $\O$ is a {\em principal ideal domain}. 
Let $A$ be an $\O$-superalgebra which is free of finite rank as an 
$\O$-supermodule. 
The following result shows, in particular, that $D^A(n,d)$ is maximal among the symmetric subalgebras of\, ${}'{D^A(n,d)}$.
The superstructure on  
${}'{D^A (n,d)}$ plays no role in the theorem, so the content of the statement does not change if ${}'D^A (n,d)$ is replaced by  $|{}'D^A (n,d)|$ and $D^A(n,d)$ by $|D^A (n,d)|$.

\begin{Theorem}\label{Tsym}
Let $d\in \Z_{\ge 0}$, $n\in\Z_{>0}$, and assume that $d\le n$.
Let $C$ be an $\O$-subalgebra of\, ${}'{D^A(n,d)}$ such that 
$D^A(n,d)\subseteq C \subseteq {}'{D^A(n,d)}$. 
Suppose that for every maximal ideal $\m$ of $\O$ 
the $(\O/\m)$-algebra 
$C\otimes_{\O} (\O/\m)$ is symmetric. Then $C=D^A(n,d)$. 
\end{Theorem}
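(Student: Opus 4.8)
The plan is to compare the associative (symmetric) bilinear forms on $C$ and on $D^A(n,d)$ via their behaviour on the idempotent truncation by $\xi_\om$, exploiting that $d\le n$ makes $\xi_\om$ a ``full'' idempotent in the sense of the preceding subsection. First I would reduce to the case where $C$ is a ``lattice'' between $D^A(n,d)$ and ${}'D^A(n,d)$: since $\O$ is a PID and all three modules are free of finite rank with $D^A(n,d)\otimes_\O\K = {}'D^A(n,d)\otimes_\O\K$, the quotient ${}'D^A(n,d)/D^A(n,d)$ is a finite torsion $\O$-module, so it suffices to prove that $C=D^A(n,d)$ after localizing at each maximal ideal $\m$; thus I may assume $\O$ is a DVR with maximal ideal $\m=(\pi)$ and residue field $\Bbbk=\O/\m$, and that $C\otimes_\O\Bbbk$ is symmetric.

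Next I would set up the forms. By Corollary~\ref{CSymmetric}, $D:=D^A(n,d)$ carries the non-degenerate associative symmetric form $(\cdot,\cdot)$ with $(t,u)=F(tu)$, where $F\in(D^d M_n(A))^*$ vanishes on all standard graded components of positive degree and sends $1\otimes x$ to $x(1^{\otimes d})$. The key point is that, working inside ${}'D^A(n,d)=S^{T_A}(n,d)$, the linear functional $F$ extends to all of ${}'D^A(n,d)$ (it is again ``evaluate the top graded component at $1^{\otimes d}$'', using the divided-power components), and the formula $(t,u)=F(tu)$ then defines an associative symmetric bilinear form on ${}'D^A(n,d)$ and by restriction on $C$; one checks this extended form is still non-degenerate over $\K$ but is $\O$-valued and non-degenerate on $D$, while on ${}'D^A(n,d)$ its Gram determinant (relative to an $\O$-basis) is divisible by a positive power of $\pi$ precisely measuring the index $[{}'D^A(n,d):D]$. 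Since $C\otimes_\O\Bbbk$ is symmetric, it has its own non-degenerate associative symmetric form $\bar\beta$; the task is to show $\bar\beta$ forces $C$ to equal $D$.

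The mechanism I expect to use is: a symmetric algebra over a field has a one-dimensional socle in each direct summand, equivalently its associative form is non-degenerate, and any two associative symmetric forms on a connected symmetric algebra differ by multiplication by a unit of the center. Concretely, I would argue that the restriction to $C$ of the extended form $(\cdot,\cdot)$ reduces mod $\pi$ to an associative symmetric form on $C\otimes_\O\Bbbk$; comparing it with the non-degenerate $\bar\beta$ and using that they are related by an element $z$ of the center, I would then lift $z$ to a central element of $C\subseteq {}'D^A(n,d)$ lying in $Z(|S^{T_A}(n,d)|)$ with the property that $\xi_\om\in |S^{T_A}(n,d)|\,z$; Lemma~\ref{LInvertible} (applied with $T_A$ in place of $A$, using $d\le n$) then shows $z$ is invertible. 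Invertibility of $z$ forces the reduction mod $\pi$ of the $\O$-valued form on $C$ to already be non-degenerate, which means the Gram determinant of $C$ (in an $\O$-basis) is a unit; since $D\subseteq C\subseteq {}'D^A(n,d)$ and the Gram determinant of $D$ relative to its dual-pairing basis is a unit while that of any strictly larger lattice would be non-unit, we get $C=D$. The main obstacle will be the middle step: constructing the extended associative functional $F$ on ${}'D^A(n,d)$ and verifying that ``$C\otimes_\O\Bbbk$ symmetric'' produces a central element $z$ of $C$ with $\xi_\om\in |S^{T_A}(n,d)|z$ — this requires carefully tracking how the socle/Casimir element of the reduced symmetric algebra $C\otimes_\O\Bbbk$ interacts with the idempotent $\xi_\om$ and lifts back to $\O$, which is exactly where the hypothesis $d\le n$ and Lemma~\ref{LInvertible} must be brought to bear.
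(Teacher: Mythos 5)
Your proposal shares two ingredients with the paper's argument: the localization reduction to the DVR case, and the eventual invocation of Lemma~\ref{LInvertible} to show a certain central element is invertible. Beyond that the routes diverge, and the proposal contains a genuine gap that I don't think can be repaired without essentially abandoning the plan.

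The central difficulty is your claim that the functional $F$ ``extends to all of ${}'D^A(n,d)$'' and that the resulting form $(t,u)=F(tu)$ is $\O$-valued there (or at least on $C$). This is false. Writing $X=M_n(A)$, one has $F(1\otimes x)=\langle x, 1_X^{\otimes d}\rangle$, and the pairing between ${}'\Sym^d(X^*)$ and $\Inv^d X$ is not $\O$-valued: for example if $A=\O$, $n=d$, $\O=\Z_{(p)}$ with $p\le d$, and $x=(E_{1,1}^*)^{(d)}\in {}'\Sym^d(M_n(\O)^*)$, then $\langle x, 1_X^{\otimes d}\rangle=1/d!\notin\O$ (Corollary~\ref{CPerfPair} pairs ${}'\Sym(X^*)$ integrally with $\Star X$, not with $\Inv X$, and $1_X^{\otimes d}\notin\Star^d X$). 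Moreover, if the form $(t,u)=F(tu)$ \emph{were} $\O$-valued on $C$, the theorem would already follow with no symmetricity hypothesis at all: $C\subseteq C^\perp\subseteq D^\perp=D$ inside $D_\K$. So whenever $C\supsetneq D$ the $F$-form necessarily fails to be $\O$-valued on $C$, and there is no ``reduction mod $\pi$ of the $\O$-valued form on $C$'' to compare with $\bar\beta$. The entire form-comparison mechanism, and with it the Gram-determinant finish, collapses.

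You also identify, but do not resolve, the second sticking point: how to produce a central $z$ with $\xi_\om$ in its image, and where that $z$ should live. Here the paper's actual route is instructive. It never compares two forms on all of $C_\k$; rather it uses symmetricity of $C_\k$ only to manufacture a \emph{perfect pairing between the bottom and top graded pieces} $C^{d,0}_\k$ and $C^{0,d}_\k$ (via a direct degree argument showing $C^{>0}_\k$ is orthogonal to $C^{0,d}_\k$). It then replaces $C^{0,d}$ by its dual lattice $N$ inside $S:=S^A(n,d)$ (note: \emph{not} $S^{T_A}(n,d)$, so Lemma~\ref{LInvertible} is applied to $A$, not to $T_A$ as you suggest), shows $\xi_\om\in{}'S\subseteq N$ via Corollary~\ref{CPerfPair}, extracts $z+\pi S\in Z(S_\k)$ from the $S_\k$-bimodule isomorphism $N_\k\cong S_\k$, invokes Lemma~\ref{LInvertible} and Nakayama to get $N=S$, and hence $C^{0,d}=D^{0,d}$. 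Finally it deduces $C=D$ not by a Gram determinant, but by a contradiction: if $x\in C\setminus D$ then, using $F$ on $D$ (where it genuinely is $\O$-valued) and the standard grading, one finds $u\in D$ with $xu\in{}'D^{0,d}\setminus D^{0,d}={}'D^{0,d}\setminus C^{0,d}$, contradicting $xu\in C$. These graded-piece and dual-lattice manoeuvres are the content that your proposal leaves as ``the main obstacle'', and they cannot be reached by the global form comparison you outline because that comparison is not well-posed.
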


\begin{proof}
If the theorem is true in the case where $\O$ is a discrete valuation ring (DVR), then it is true in general. 
Indeed, for every maximal ideal $\m$ of $\O$ the localisation 
$\O_{\m}$ is a DVR, and so by the DVR case of the theorem,
$C\otimes_{\O} \O_{\m}$ is equal to the $\O_\m$-span of 
$D^A(n,d) \otimes 1_{\O_\m}$. 
Then we have $(C / D^A(n,d)) \otimes_{\O} \O_\m=0$ for all $\m$, 
whence the $\O$-module $C/ D^A(n,d)$ is $0$.

In the rest of the proof, $\O$ is a DVR with the maximal ideal $(\pi)$ for some $\pi\in\O$, and $\k:=\O/(\pi)$. 
For any free $\O$-module $V$ of finite rank, we have the $\k$-vector space $V_\k=V\otimes_{\O} \k$, which we identify with  $V/\pi V$. 

Recall the notation (\ref{EDEF}) and (\ref{EDEFPrime}). 
In this proof, for all $e=0,\dots,d$, we use the following shorthands:
\begin{align*}
D&:=D^A(n,d), \quad {}'{D}:={}'{D^A(n,d)},\quad S:=S^A(n,d), 
\\
D^{d-e,e}&:= D^{d-e,e}M_n(A)=\Inv^{d-e} M_n (A) \otimes \Sym^e (M_n(A)^*)\subseteq D,
\\ 
{}'{D^{d-e,e}}&:= {}'{D^{d-e,e}M_n(A)}=\Inv^{d-e} M_n (A) \otimes {}'{\Sym^e (M_n(A)^*})\subseteq {}'{D},
\\
{}'{D^{>e}}&:=\bigoplus_{e+1\leq f\leq d} {}'{D^{d-f,f}}, \quad C^{d-e,e}: = C\cap {}'{D^{d-e,e}},\quad C^{>e}:=C\cap {}'{D^{>e}}. 
\end{align*} 

\vspace{2mm}
\noindent
{\em Claim 1.} The $\O$-submodule $C^{d-e,e}$ is pure in $C$.

\vspace{1mm}
\noindent
This follows immediately from the fact that ${}'{D^{d-e,e}}$ is pure in ${}'{D}$.

\vspace{2mm}
\noindent
{\em Claim 2.} We have $C^{d,0} = D^{d,0}={}'{D^{d,0}}$ and $C=C^{d,0}\oplus C^{>0}$.

\vspace{2 mm}
\noindent
Since ${}'{D^{d,0}}=D^{d,0}$ by definition, the assumption $D\subseteq C\subseteq {}'{D}$ implies that $C^{d,0} = D^{d,0}$. The second assertion of the claim follows easily from the first one.

\vspace{2mm}

\noindent
{\em Claim 3.} We have $\dim C^{d,0}_\k=\dim C^{0,d}_\k$. 

\vspace{1mm}
\noindent
Indeed, 
$$
\dim C^{d,0}_\k=\rank_\O C^{d,0}=\rank_\O D^{d,0}=\rank_\O D^{0,d}=\rank_\O C^{0,d}=\dim C^{0,d}_\k,
$$
where the penultimate equality comes from  $D^{0,d} \subseteq  C^{0,d} \subseteq {}'{D^{0,d}}$. 

\vspace{2 mm}

Since ${}'{D^{0,d}}$ is an ideal in ${}'{D}$ and $C\subseteq {}'{D}$ is a subalgebra, $C^{0,d}$ is an ideal in $C$, and so 
naturally a $C^{d,0}$-bimodule. After extending scalars, $C^{0,d}_\k$ becomes a $C^{d,0}_\k$-bimodule.

\vspace{2mm}
\noindent
{\em Claim 4.} The $C^{d,0}_{\k}$-bimodule $C^{0,d}_\k$ is isomorphic to $(C^{d,0}_{\k})^*$. 

\vspace{1mm}
\noindent
Since $C_\k$ is symmetric by assumption, there is a function $G\in C_\k^*$ such that the bilinear form on $C_\k$ defined by $(x,y):=G(xy)$ is symmetric and non-degenerate. By Claims 1 and 2, we can naturally identify  $C^{0,d}_\k$, $C^{d,0}_\k$, and $C^{>0}_\k$ with $\k$-subspaces of $C_\k$. Using the standard grading on ${}'{D}$, we see that the orthogonal complement to $C^{0,d}_\k$ in $C_\k$ contains  $C^{>0}_\k$. Comparing dimensions using Claim 3, we deduce that $(\cdot,\cdot)$ restricts to a perfect pairing between $C^{0,d}_\k$ and  $C^{d,0}_\k$, which yields the required isomorphism.

\vspace{2 mm}
In view of Remark~\ref{R123}, we  identify $D^{d,0}$ with $S$ and $D^{0,d}$ with $S^*$, so that $S^*\subseteq C^{0,d}\subseteq {}'{D^{0,d}}$. Extending scalars to the field of fractions $\K$ of $\O$, we identify $S^*_\K= C^{0,d}_\K= {}'{D^{0,d}_\K}$, and so we can consider $C^{0,d}$ and ${}'{D^{0,d}}$ as $\O$-submodules of $S^*_\K$. Now, define 
\begin{align*}
{}'{S} = \{ x\in S_{\K} \mid  
\langle x, {}'{D^{0,d}}  \rangle \subseteq \O\}
\quad\text{and}\quad 
N = \{ x\in S_\K  \mid \langle x, C^{0,d} \rangle \subseteq \O\}.
\end{align*}
Then ${}'{S} \subseteq N \subseteq S$. The following claim follows easily from the definitions:

\vspace{2mm}
\noindent
{\em Claim 5.}  
We have that ${}'{S}$ and $N$ are $S$-subbimodules of $S$, and there are isomorphisms of $S$-bimodules  ${}'{S}\cong ({}'{D^{0,d}})^*$ and $N\cong (C^{0,d})^*$. 
\vspace{2mm}

\noindent
{\em Claim 6.}  
We have $\xi_\om\in {}'{S}$.

\vspace{1mm}
\noindent
By Corollary~\ref{CPerfPair}, we have ${}'{S}=\Star^d M_n(A)$, and the claim follows from the definition of $\xi_\om$. 

\vspace{2mm}
\noindent
{\em Claim 7.}  
We have $N= S$.

\vspace{1mm}
\noindent
By Claims 2, 4 and 5, we have  
isomorphisms $S_\k=C_\k^{d,0}\cong (C_\k^{0,d})^*\cong N_\k$ of $S_{\k}$-bimodules. Let $z+\pi N\in N/\pi N=N_\k$ be the image of $1\in S_\k$ under this isomorphism. Then $x(z+ \pi N) = (z+\pi N)x$ for all $x\in S_{\k}$. Since $\pi S\supseteq \pi N$, 
it follows that $z+ \pi S \in Z(S/\pi S) = Z(S_{\k})$. 
Since $S_{\k}$ is generated by $1$ as a left $S_{\k}$-module, 
$N_\k$ is generated by $z + \pi N$ as a left $S_\k$-module. 
Moreover, $\xi_{\om} \in {}'{S}$ by Claim 6, so $\xi_{\om} \in N$. 
Hence  there exists $y\in S_{\k}$ such that $y(z+\pi N) = \xi_{\om} + \pi N$, 
whence $y(z+ \pi S) = \xi_{\om} + \pi S$. By Lemma~\ref{LInvertible}, 
$z+ \pi S$ is invertible in 
$S_{\k}$. So $N+\pi S = S_{\k} (z+ \pi S) S_{\k} = S_{\k}$. By Nakayama's Lemma, this implies that $N=S$. 

\vspace{2mm}
Now we complete the proof of the theorem. By Claim 7, we have $C^{0,d} = D^{0,d}$. 
Assume for a contradiction that $C\ne D$. 
Choose an element $x\in C \setminus D$ such that 
$x$ lies in ${}'{D^{>e-1}}$ with $e$ maximal possible. Then we can write $x= x_e  + \cdots + x_d$, where  
$x_f\in {}'{D^{d-f,f}}$ for $f=e,\dots,d$. 
By the maximality of $e$, we have 
$x_e \notin D^{d-e,e}$. Hence $x_e=cy$ for some $c\in\K\setminus \O$ and $y\in D^{d-e,e} \setminus \pi D^{d-e,e}$.

Let $F \in (D_\k)^*$ be as in Lemma~\ref{LFunction}. Taking into account Corollary~\ref{CSymmetric} and the standard grading on $D$, we conclude that there exists $u\in D^{e,d-e}$ such that 
$F(yu+\pi D) \ne 0$ in $\k$, whence
$y u \notin \pi D^{0,d}$. By the standard grading again,  $x_{f} u =0$ for all $f>e$, and hence $x u= c y u$. Since $c\notin \O$, it follows that $xu \notin D^{0,d} = C^{0,d}$. This is a contradiction, since $x\in C$ and $u\in D^{e,d-e}\subseteq C$. 
\end{proof}

\begin{Example} 
{\rm 
Continuing with Example~\ref{ExTriv}, assume that $d=2e+1$ for some $e\in\Z_{>0}$, and define the $\Z$-algebra $C$ to be the subalgebra of $\Q[z]_d$ spanned over $\Z$ by the elements $1,z,\dots,z^e,z^{e+1}/2,\dots,z^{2e+1}/2$. 
We then have $D^d \Z\cong\Z[z]_d\subsetneq C\subsetneq {}'{\Z[z]_d}\cong {}'{D^d \Z}$. However, it is easy to see that $C\otimes_\Z \F_p$ is symmetric for all primes $p$. This shows that the assumption $d\leq n$ in Theorem~\ref{Tsym} is essential. 
}
\end{Example}

\subsection{Bases and product rules}\label{SSBases}
Let $\ttB_\0$ be an $\O$-basis of $A_\0$, $\ttB_\1$ be an $\O$-basis of $A_\1$, and $\ttB=\ttB_\0\sqcup \ttB_\1$. 
The {\em structure constants} $\kappa^\ttb_{\ttb'\ttb''}\in\O$ of $A$ are determined from 
$$\ttb'\ttb''=\sum_{\ttb\in \ttB}\kappa^\ttb_{\ttb'\ttb''}\ttb
\qquad(\ttb,\ttb'\in \ttB). 
$$
Then 
\begin{equation}\label{EBasisM_n(A)}
\{\xi_{r,s}^\ttb\mid 1\leq r,s\leq n,\ \ttb\in \ttB\}
\end{equation}
 is a homogeneous basis of $M_n(A)$ with $\bar \xi_{r,s}^\ttb=\bar \ttb$, and 
\begin{equation}\label{EXiProduct}
\xi_{r,s}^{\ttb'}\xi_{t,u}^{\ttb''}=\de_{s,t}\sum_{\ttb\in \ttB}\kappa^\ttb_{\ttb'\ttb''}\xi_{r,u}^{\ttb}
\qquad(\ttb',\ttb''\in \ttB,\ 1\leq r,s,t,u\leq n).
\end{equation}
We fix a total order $<$ on the basis (\ref{EBasisM_n(A)}) as follows. First, we fix a total order $<$ on 
$\ttB$ so that the elements of $\ttB_\0$ precede the elements of $\ttB_\1$. Then for $\ttb',\ttb''\in \ttB$ and $1\leq r,s,t,u\leq n$, we set $\xi_{r,s}^{\ttb'}< \xi_{t,u}^{\ttb''}$ if and only if one of the following happens: (1) $\ttb'<\ttb''$, (2) $\ttb'=\ttb''$ and $r<t$, (3) $\ttb'=\ttb''$, $r=t$ and $s<u$. 

Recall the notation of~\S\ref{SSPairsSeq}. 
For $\bC=(C^\ttb)_{\ttb\in \ttB}\in\Mat^\ttB(n)$, we have the element 
$$
\xi_\bC:=*_{\ttb,r,s}\, \big( (\xi^\ttb_{r,s})^{\otimes c^\ttb_{r,s}} \big) \in S^A(n),
$$ 
where the $*$-product is taken in the order just defined. This agrees with (\ref{EBasis3}), so 
\begin{equation*}\label{EBasesSP}
\{\xi_\bC\mid\bC\in \Mat^\ttB(n)\}\quad\text{and}\quad
\{\xi_\bC\mid\bC\in \Mat^\ttB(n,d)\}
\end{equation*}
are bases of $S^A(n)$ and $S^A(n,d)$, respectively. The parity of a basis element is 
$
\bar \xi_\bC=\bar \bC:=|\bC|_\1\pmod 2 \!.
$ 

Let $\bC = (C^\ttb)_{\ttb\in \ttB} \in \Mat^\ttB (n,d)$ and 
$(\br,\ttbb,\bs) \in \bC$. Let $(\br^0,\ttbb^0,\bs^0)\in \bC$ be the tuple defined by the property that the triples 
$(r^0_1, \ttb^0_1, s^0_1),\dots, (r^0_d,\ttb^0_d, s^0_d)$ appear in the increasing order, i.e.~for $1\le k\le l\le d$ we have 
$\xi_{r^0_k,s^0_k}^{\ttb^0_k} \le \xi_{r^0_l,s^0_l}^{\ttb^0_l}$.
Let $g\in \Si_d$ be an element such that 
$(\br^0, \ttbb^0, \bs^0)g = (\br,\ttbb, \bs)$, 
and define 
\[
 [\br, \ttbb, \bs]:= [g; \ttb^0_1, \dots,\ttb^0_d ],
\]
cf.~\eqref{EGSign}. It follows from the definition of $\Seq^\ttB(n,d)^2$ that $[\br, \ttbb, \bs]$ does not depend on the choice of $g$. 
By the definition of the $*$-product, we have 
\begin{equation}\label{EXiC}
\xi_{\bC}  = \sum_{(\br,\ttbb,\bs)\in \bC} 
(-1)^{[\br, \ttbb, \bs]}\,
\xi_{r_1,s_1}^{\ttb_1} \otimes \cdots \otimes \xi_{r_d,s_d}^{\ttb_d}.
\end{equation}

For $\bC,\bD\in \Mat^\ttB(n)$, we let  
\begin{equation*}\label{EEpsBABB}
\eps_{\bC\bD}:=
\left\{
\begin{array}{ll}
(-1)^{\sum c^{\ttb'}_{r,s}d^{\ttb''}_{t,u}} &\hbox{if $\bC+\bD\in\Mat^\ttB(n)$,}\\
0 &\hbox{otherwise.}
\end{array}
\right.
\end{equation*}
where the summation is over all $1\leq r,s,t,u\leq n$ and $\ttb',\ttb''\in \ttB_\1$ such that $\xi_{r,s}^{\ttb'}>\xi_{t,u}^{\ttb''}$. Using  Lemma~\ref{LstDe}, we obtain for all $\bC\in \Mat^\ttB(n)$:
\begin{equation} \label{ECopOnSi} 
\De(\xi_\bC)=\sum_{\bD,\bE\in\Mat^\ttB(n),\ \bD+\bE=\bC}\eps_{\bD\bE}\,\xi_{\bD}\otimes\xi_{\bE}.
\end{equation}

Define the structure constants $f_{\bC\bD}^\bE\in\O$ from
\begin{equation}\label{EFBABBBCNew}
\xi_\bC\xi_\bD=\sum_{\bE\in\Mat^\ttB(n)} f_{\bC\bD}^\bE \xi_\bE\qquad(\bC,\bD\in\Mat^\ttB(n)). 
\end{equation}
In particular, $f_{\bC\bD}^\bE =0$ unless $|\bC|=|\bD|=|\bE|$. 
These structure constants are uniquely determined by the structure constants $\kappa_{\ttb',\ttb''}^\ttb$. More precisely, if 
$(\br,\ttbb, \bs) \in \Seq^{\ttB} (n,d)^2$ and $\bE=\bM[\br,\ttbb,\bs]$, then using~\eqref{EXiProduct} and~\eqref{EXiC} we obtain the formula
\begin{equation*}
f_{\bC\bD}^{\bE} = \sum_{\ttbb', \bt, \ttbb''} 
(-1)^{[\br, \ttbb, \bs]+ [\br, \ttbb', \bt]+[\bt,\ttbb'', \bs]+[\ttb'_1,\dots,\ttb'_d; \ttb''_1,\dots,\ttb''_d]} \, 
\kappa_{\ttb'_1,\ttb''_1}^{\ttb_1}  \cdots 
\kappa_{\ttb'_d,\ttb''_d}^{\ttb_d}
\end{equation*}
where the sum is over all triples
$(\ttbb',\bt, \ttbb'') \in \ttB^d \times \Seq(n,d) \times \ttB^d$
such that $(\br, \ttbb', \bt) \in \bC$ and $(\bt, \ttbb'', \bs)\in \bD$. 
In the case when $A=\O$ and $\ttB=\{1 \}$, this is 
Green's formula \cite[(2.3b)]{Green} for the 
structure constants of the Schur algebra.

Let  
$
\{x^\bC\mid \bC\in\Mat^\ttB(n)\}
$
be the basis of $S^A(n)^*=(\Inv M_n(A))^*$ dual to the basis $\{\xi_\bC\mid\bC\in \Mat^\ttB(n)\}$ of $S^A(n)$. As the product and the coproduct on $S^A(n)^*$ are by definition dual to the coproduct and the product on $S^A(n)$, respectively, we have in view 
of (\ref{ECopOnSi}), (\ref{EFBABBBCNew}) and~\eqref{EDualTensor}:
\begin{eqnarray}\label{EProduct2}
x^{\bC}x^\bD&=&
(-1)^{\bar\bC\bar\bD}\eps_{\bC\bD}\, x^{\bC+\bD}\qquad(\bC,\bD\in \Mat^\ttB(n)),
\\
\label{ECoProduct2}
\nabla(x^\bC)&=&\sum_{\bD,\bE\in\Mat^\ttB(n,d)}(-1)^{\bar\bD\bar\bE}f^\bC_{\bD\bE} x^\bD\otimes x^\bE\qquad(\bC\in\Mat^\ttB(n,d)).
\end{eqnarray}
It is easy to see that $S^A(n)^*$ is the free supercommutative superalgebra on the even variables $\{x_{r,s}^\ttb\mid \ttb\in \ttB_\0,\ 1\leq r,s\leq n\}$ and the odd variables $\{x_{r,s}^\ttb\mid \ttb\in \ttB_\1,\ 1\leq r,s\leq n\}$, and 
$$
x^{\bC}=(-1)^{|\bC|_\1(|\bC|_\1-1)/2}\prod_{\ttb\in \ttB,\ 1\leq r,s\leq n}(x_{r,s}^\ttb)^{c_{r,s}^\ttb},
$$
with the product taken in the total order on the variables $x_{r,s}^\ttb$ which is the same as the one on the basis $\{\xi_{r,s}^\ttb\}$ fixed above. 

Let $x^{(\bC)}:=\frac{x^\bC}{\bC!}$. 
By (\ref{EProduct2}), we have  
\begin{equation*}\label{EDivPowBaNew}
x^{(\bC)}x^{(\bD)}=
(-1)^{\bar\bC\bar\bD}\eps_{\bC\bD}\, {\bC+\bD\choose \bD}\, x^{(\bC+\bD)}
\qquad(\bC,\bD\in\Mat^\ttB(n)).
\end{equation*}
Then ${}'{\Sym(M_n(A)^*)}$ is the $\O$-span in $S^A(n)^*_\K$ of all $x^{(\bC)}$ with $\bC\in \Mat^\ttB(n)$. 
Let 
$$
f^{(\bE)}_{(\bC)\bD}:=\frac{f_{\bC\bD}^\bE \bC! }{\bE!}
\quad
\text{and}
\quad 
f^{(\bE)}_{\bC(\bD)}:=\frac{ f_{\bC\bD}^\bE \bD!}{\bE!}\qquad (\bC,\bD,\bE\in\Mat^\ttB(n)).
$$
A priori, these are elements of $\K$, but by Lemma~\ref{LDeinv}, they actually belong to $\O$, and for $\bC\in\Mat^\ttB(n,d)$ we have 
\begin{equation*}\label{ECoprodIntSmallNew}
\nabla(x^{(\bC)})=\sum_{\bD,\bE\in \Mat^\ttB(n,d)}\hspace{-2mm}(-1)^{\bar\bD\bar\bE}f^{(\bC)}_{(\bD)\bE}\, x^{(\bD)}\otimes x^{\bE}
=\sum_{\bD,\bE\in \Mat^\ttB(n,d)}\hspace{-2mm}(-1)^{\bar\bD\bar\bE}f^{(\bC)}_{\bD(\bE)}\, x^{\bD}\otimes x^{(\bE)}.
\end{equation*}

Denoting 
\begin{equation*}\label{EOP2}
\Mat^\ttB_2(n,d):=\{(\bC,\bD)\mid \bC,\bD\in\Mat^\ttB(n),\ |\bC|+|\bD|=d\},
\end{equation*}
we have bases
$$
\{\xi_\bC\otimes x^\bD\mid (\bC,\bD)\in\Mat^\ttB_2(n,d)\}\quad 
\text{and}\quad
\{\xi_\bC\otimes x^{(\bD)}\mid (\bC,\bD)\in\Mat^\ttB_2(n,d)\}
$$
of $D^A(n,d)$ and $'{D^A(n,d)}$, respectively. If $A_\1 A_\1=0$, then $M_n(A)_\1 M_n(A)_\1=0$, and the Turner gradings on $D^A(n,d)$ and ${}'{D^A(n,d)}$ satisfy 
$$
\deg(\xi_\bC\otimes x^\bD)=\deg(\xi_\bC\otimes x^{(\bD)})=|\bC|_\1+2|\bD|_\0+|\bD|_\1,
$$
for all $(\bC,\bD)\in\Mat^\ttB_2(n,d)$, cf. Lemma~\ref{LGrDNew}.

For $(\bC,\bD),(\bE,\bF)\in\Mat^\ttB_2(n,d)$, we have the following {\em product rules}, which come from (\ref{EProductDoubleGen}):
\begin{equation*}
\begin{split}
(\xi_\bC\otimes x^\bD)(\xi_\bE\otimes x^\bF)
&=\sum (-1)^s
\eps_{\bC_1\bC_2}
\eps_{\bE_1\bE_2}
 f^\bD_{\bE_2 \bD'}
f^\bF_{\bF' \bC_1}
(\xi_{\bC_2}\xi_{\bE_1}\otimes x^{\bD'}x^{\bF'})
\\
&=\sum (-1)^t
\eps_{\bC_1\bC_2}
\eps_{\bE_1\bE_2}
\eps_{\bD' \bF'}
f^\bD_{\bE_2 \bD'}
f^\bF_{\bF' \bC_1}
f^\bG_{\bC_2 \bE_1}
(\xi_{\bG}\otimes x^{\bD'+\bF'}),
\end{split}
\end{equation*}
\begin{equation*} 
\begin{split}
(\xi_\bC\otimes x^{(\bD)})(\xi_\bE\otimes x^{(\bF)})
&=\sum (-1)^s
\eps_{\bC_1\bC_2}
\eps_{\bE_1\bE_2}
f^{(\bD)}_{\bE_2 (\bD')}
f^{(\bF)}_{(\bF') \bC_1}
(\xi_{\bC_2}\xi_{\bE_1}\otimes x^{(\bD')}x^{(\bF')})
\\
&=\sum (-1)^t
\eps_{\bC_1\bC_2}
\eps_{\bE_1\bE_2}
\eps_{\bD' \bF'}
f^{(\bD)}_{\bE_2 (\bD')}
f^{(\bF)}_{(\bF') \bC_1}
\\
&
\quad\quad\quad
\times
f^\bG_{\bC_2 \bE_1}
{\bD'+\bF'\choose \bD'}
 (\xi_{\bG}\otimes x^{(\bD'+\bF')})
\end{split}
\end{equation*}
where the first sums in both formulas are over all $\bC_1,\bC_2,\bD',\bE_1,\bE_2,\bF'\in\Mat^\ttB(n)$ such that $\bC_1+\bC_2=\bC$, $\bE_1+\bE_2=\bE$, the second sums have an additional summation parameter $\bG\in\Mat^\ttB(n)$, and
\begin{eqnarray*}
s=\bar\bC_1\bar\bC_2+\bar\bC_1\bar\bE_1+\bar \bC_1\bar \bD'+ \bar \bD'\bar \bE_1+\bar \bE_1 \bar \bE_2,\quad 
t=s+\bar \bD'\bar \bF'.
\end{eqnarray*}

\section{The quiver case}\label{SQuiver}

In this section we consider an important class of algebras $D^A(n,d)$ sometimes referred to as {\em schiver doubles}, from  `schiver=Schur+quiver' \cite{Turner}. 

\subsection{Quivers and quiver algebras}\label{SSQ}
Let $Q$ be a quiver with a finite set of vertices $I=\{1,\dots,l\}$ and a finite set of directed edges $E$. For an edge $\be\in E$, we denote by $s(\be)\in I$ the source of $\be$ and by $t(\be)\in I$ the target of $\be$. 
We denote by $\Gamma$ the underlying graph of $Q$. 
We assume that $\Gamma$ is connected and has no loops or multiple edges. 
If $i,j\in I$, we say that $i$ and $j$ are {\em neighbors} if they are connected by an edge in $\Gamma$. 

We define the algebra $P_Q$ to be the quotient of the path algebra $\k Q$ by all quadratic relations. We consider $P_Q$ as a  superalgebra with vertices in parity $\0$ and edges in parity $\1$. The parity $\0$ component $P_{Q,\0}$ has a basis $\{e_i\mid i\in I\}$, and the parity $\1$ component $P_{Q,\1}$ has  a basis $\{\be\mid \be\in E\}$. Note that  $P_{Q,\1}P_{Q,\1}=0$, so $P_Q$ is also $\Z$-graded with the degree $0$ component $P_{Q}^0=P_{Q,\0}$ and 
degree $1$ component $P_{Q}^1=P_{Q,\1}$.

Let $\{e_i^*, \be^* \mid i\in I,\be\in E\}$ be the basis of $P_Q^*$ dual to the basis $\{e_i,\be\mid i\in I,\be\in E\}$ of $P_Q$. 
According to the agreement made in \S\ref{SSGr}, we always work with the $\Z$-grading on $P_Q^*$ which is the {\em shift by $2$} of the canonical grading, i.e.~$\deg e_i^*=2$ and $\deg \be^*=1$ for all $i\in I$ and $\be\in E$. Then the trivial extension superalgebra  $T_{P_Q}=P_Q\oplus P_Q^*$ is also graded. This superalgebra has an easy description as a zigzag algebra, which we introduce next.

The {\em  zigzag algebra $\Zig=\Zig_\Ga$ of type $\Gamma$} is defined in \cite{HK} as follows. 
First assume that $l>1$. Let $\overline{\Gamma}$ be the quiver obtained by doubling all edges in $\Ga$ and then orienting the edges so that if $i$ and $j$ are neighboring vertices in $\Gamma$, then there is a directed edge $\za^{i,j}$ from $j$ to $i$ and a directed edge $\za^{j,i}$ from $i$ to $j$. 
Then $\Zig$ is the path algebra $\k\overline{\Gamma}$, generated by length $0$ paths $\ze_i$ for $i\in I$ and length $1$ paths $\za^{i,j}$, subject only to the following relations:
\begin{enumerate}
\item All paths of length three or greater are zero.
\item All paths of length two that are not cycles are zero.
\item All length-two cycles based at the same vertex are equal.
\end{enumerate}
The algebra $\Zig$ inherits the path length grading from $\k\overline{\Gamma}$. 
If $l =1$, i.e.~$\Gamma$ is of type ${A}_1$, we define $\Zig_{A_1}:= \k[\zc]/(\zc^2)$, where $\zc$ is an indeterminate in degree $2$. 

If $l>1$, for every vertex $i$ pick its neighbor $j$ and denote $\zc^{(i)}:=\za^{i,j}\za^{j,i}$.  The relations in $\Zig$ imply that 
$\zc^{(i)}=\ze_i\zc^{(i)} \ze_i$ 
is independent of choice of $j$. Define $\zc:= \sum_{i \in V} \zc^{(i)}$. 
 Then in all cases $\Zig$ has a basis
$$
\{\za^{i,j} \mid i,j \in I,\ j \textup{ is a neighbor of }i\} \cup \{\zc^m\ze_i \mid i \in I,\ m \in \{0,1\}\}, 
$$
and the graded $\k$-rank of $\Zig$ equals 
$
l(1+q^2)+2 |E|q\in\Z[q],
$ where $q$ is an indeterminate. Moreover, we consider $\Zig$ as a superalgebra with $\Zig_{\0}=\Zig^0\oplus\Zig^2$ and $\Zig_{\1}=\Zig^1$. 

The following is known \cite[Lemma 6]{TurnerT} and easy to check:

\begin{Lemma} \label{LTZ} 
There is an isomorphism of graded superalgebras $T_{P_Q}\iso \Zig$
given by 
$
e_i \mapsto \ze_i, \; e_i^* \mapsto \zc^{(i)}, \; \be\mapsto \za^{i,j}, \; 
\be^* \mapsto \za^{j,i}
$
if $s(\be)=j$ and $t(\be)=i$. 
\end{Lemma}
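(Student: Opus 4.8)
The plan is to write down the proposed assignment explicitly, check it is well-defined and compatible with the multiplication and grading, and then observe that it carries a basis to a basis, hence is an isomorphism. First I would recall from \S\ref{SSTEA} and Lemma~\ref{LTZ}'s statement that $T_{P_Q}=P_Q\oplus P_Q^*$, with the product given by~\eqref{ETrivInt} and the $P_Q$-bimodule structure on $P_Q^*$ given by~\eqref{ELRegNew}--\eqref{ERRegNew} (equivalently the dual regular actions~\eqref{EActions}). Using the distinguished basis $\{e_i,\be\}$ of $P_Q$ and its dual basis $\{e_i^*,\be^*\}$ of $P_Q^*$, one writes out the relevant structure constants of $P_Q$: the only nonzero products among basis vectors are $e_ie_i=e_i$, $e_{t(\be)}\be=\be=\be e_{s(\be)}$, and all others vanish since $P_{Q,\1}P_{Q,\1}=0$. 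Dualizing via $m^*$ as in \S\ref{SSTEA}, this pins down all the actions $a\cdot\al$ and $\al\cdot a$ appearing in~\eqref{ETrivInt}; in particular $e_i^*=e_{t(\be)}\cdot\be^*\cdot$(\dots) type computations show, for $s(\be)=j,\ t(\be)=i$, that $\be\cdot\be^*=e_i^*$ and $\be^*\cdot\be=e_j^*$ in $T_{P_Q}$, while $\be\cdot\ga^*=0$ for $\ga\neq\be$, and $e_i\cdot e_j^*=\de_{ij}e_i^*=e_j^*\cdot e_i$.

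Next I would define $\theta\colon T_{P_Q}\to\Zig$ on generators by $e_i\mapsto\ze_i$, $e_i^*\mapsto\zc^{(i)}$, $\be\mapsto\za^{i,j}$, $\be^*\mapsto\za^{j,i}$ for $s(\be)=j$, $t(\be)=i$, and extend $\O$-linearly (this is forced, since the listed images together with the $\zc^{(i)}$ form precisely the distinguished $\O$-basis of $\Zig$, so $\theta$ is automatically an $\O$-module isomorphism). It remains to verify $\theta$ is an algebra homomorphism, i.e.\ that it respects products on basis elements. This is a finite case check driven by the zigzag relations (i)--(iii): paths of length $\ge 3$ vanish, non-cyclic length-two paths vanish, and length-two cycles at a vertex all equal $\zc^{(i)}$. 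Concretely one checks $\theta(e_i)\theta(e_j)=\de_{ij}\ze_i=\theta(e_ie_j)$; $\theta(\be)\theta(\ga)=\za^{t(\be),s(\be)}\za^{t(\ga),s(\ga)}$, which in $\Zig$ is nonzero only when it is a length-two cycle, i.e.\ $\ga=\be$ with the source/target pattern making it $\zc^{(t(\be))}$ — matching $\theta(\be\cdot\be^*)$? No: here $\theta(\be)\theta(\be)$ should be compared with $\theta$ of the product of $\be$ with itself in $T_{P_Q}$, which is $0$ since $P_{Q,\1}P_{Q,\1}=0$ and $\be\in P_Q$ pairs trivially; and indeed $\za^{i,j}\za^{i,j}=0$ in $\Zig$ as it is not a cycle. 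The genuinely informative checks are $\theta(\be)\theta(\be^*)=\za^{i,j}\za^{j,i}=\zc^{(i)}=\theta(e_i^*)=\theta(\be\cdot\be^*)$ and symmetrically $\theta(\be^*)\theta(\be)=\za^{j,i}\za^{i,j}=\zc^{(j)}=\theta(\be^*\cdot\be)$; that $\theta(\be)\theta(\ga^*)=\za^{i,j}\za^{t,u}=0=\theta(\be\cdot\ga^*)$ whenever the composite is not a length-two cycle at the relevant vertex; and $\theta(e_i)\theta(e_j^*)=\ze_i\zc^{(j)}=\de_{ij}\zc^{(i)}=\theta(e_i\cdot e_j^*)$, using that $\zc^{(j)}=\ze_j\zc^{(j)}\ze_j$. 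One must also handle the degenerate case $l=1$ separately, where $P_Q=\k e$, $T_{P_Q}=\k e\oplus\k e^*$ with $(e^*)^2=0$, mapping to $\Zig_{A_1}=\k[\zc]/(\zc^2)$ via $e\mapsto 1$, $e^*\mapsto\zc$. Finally, both the super-grading (vertices in parity $\0$, edges in parity $\1$; $P_Q^*$ dualized so that $e_i^*$ is even and $\be^*$ is odd) and the $\Z$-grading (with the shift-by-$2$ convention on $P_Q^*$ recorded in \S\ref{SSGr}, so $\deg e_i^*=2$, $\deg\be^*=1$) are manifestly preserved, since $\ze_i,\zc^{(i)},\za^{i,j}$ sit in $\Zig$-degrees $0,2,1$ respectively.

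The main obstacle is purely bookkeeping rather than conceptual: getting the dual regular bimodule actions on $P_Q^*$ exactly right (including keeping track of which of $\be\cdot\be^*$ versus $\be^*\cdot\be$ lands on $e_i^*$ versus $e_j^*$, and confirming there are no stray signs — here there are none since everything relevant is concentrated in degree $\le 1$ and the sign in~\eqref{ELRegNew}--\eqref{ERRegNew} is trivial on the pieces that contribute), and then matching these against the zigzag multiplication, which itself is only defined up to the identification $\zc^{(i)}=\za^{i,j}\za^{j,i}$ being independent of the neighbor $j$ — a point one should cite from the setup in \S\ref{SSQ} (it follows from relation~(iii)). Once the dictionary between the two presentations is written out, all verifications reduce to inspecting finitely many products of basis vectors, and since $\theta$ is already known to be a bijection on the chosen bases, homomorphism on generators plus the presentation of $\Zig$ by generators and relations (as recalled in \S\ref{SSQ}) suffices to conclude $\theta$ is an isomorphism of graded superalgebras.
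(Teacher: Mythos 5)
Your proposal is correct. The paper itself gives no proof of Lemma~\ref{LTZ} (it simply cites \cite[Lemma~6]{TurnerT} and declares the verification easy), and the direct generators-and-relations check you carry out — matching the dual regular bimodule actions $\be\cdot\be^*=e_{t(\be)}^*$, $\be^*\cdot\be=e_{s(\be)}^*$, $e_i\cdot e_j^*=\de_{ij}e_i^*$ against the zigzag relations, observing $\theta$ sends the distinguished basis of $T_{P_Q}$ bijectively onto the distinguished basis of $\Zig$, and confirming the $\Z$- and $\Z_2$-gradings match, with the $l=1$ case treated separately — is exactly the intended argument.
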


\subsection{Schiver doubles} \label{SSSchiver}
From now on we will work over $\O$. 
For a quiver $Q$  as in the previous subsection, we define
$$
D_Q(n,d):=D^{P_Q}(n,d),\quad {}'{D_Q(n,d)}:={}'{D^{P_Q}(n,d)}.
$$
In view of Lemma~\ref{LTZ}, we identify $T_{P_Q}$ with $\Zig$, and so, as in \S\ref{SSGeneratingND}, we identify ${}'{D_Q(n,d)}$ with $S^{T_{P_Q}}(n,d)=S^{\Zig}(n,d)$. In this way, we identify $D_Q(n,d)$ with a subalgebra of $S^{\Zig}(n,d)$. 
By Corollary~\ref{C130216}, the superalgebra $D_Q(n,d)$ does not depend on the choice of orientation on $Q$, cf.~\cite[Theorem 157]{Turner}. 
As $P_{Q,\1}P_{Q,\1}=0$, we have Turner's gradings on $D_Q(n,d)$ and ${}'{D_Q(n,d)}$, see \S\ref{SSGr}. 
We also have a grading on $S^{\Zig}(n,d)$, see Remark~\ref{RGradings}.
All our identifications respect gradings. 

Note that the degree zero component of $P_Q$ is $P_{Q}^0=\sum_{i=1}^l\O e_i\cong \O^{\oplus l}$. Recall that $S(n,d)=S^\O(n,d)$ is the classical Schur algebra.   
By Corollary~\ref{CExercise}, 
\begin{equation}\label{E110216}
S^{P_{Q}^0}(n,d)\cong \bigoplus_{(d_1,\dots,d_l)\in\La(l,d)} S(n,d_1)\otimes\dots\otimes S(n,d_l).
\end{equation}

\begin{Lemma} \label{L110216_2} 
The image of the natural embedding $S^{P_{Q}^0}(n,d)\to S^{\Zig}(n,d)$ is exactly the degree zero component $S^{\Zig}(n,d)^0$. 
\end{Lemma}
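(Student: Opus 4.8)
The plan is to compute both sides explicitly in terms of the zigzag algebra $\Zig$ and the grading on $S^\Zig(n,d)$. First I would recall that under the identification $T_{P_Q}=\Zig$ of Lemma~\ref{LTZ}, the degree zero part of $\Zig$ is exactly $\Zig^0=\bigoplus_{i=1}^l\O\ze_i$, which corresponds to $P_Q^0=\bigoplus_{i=1}^l\O e_i$; moreover $P_Q^0$ is a subalgebra of $\Zig$ consisting of homogeneous degree $0$ elements, and it is precisely the span of the idempotents $\ze_i$. The embedding $S^{P_Q^0}(n,d)\hookrightarrow S^\Zig(n,d)$ is the one induced functorially by the inclusion $M_n(P_Q^0)\hookrightarrow M_n(\Zig)$ on taking $\Inv^d$; since this inclusion is homogeneous of degree $0$ (by Remark~\ref{RGradings}, the grading on $S^\Zig(n,d)=\Inv^d M_n(\Zig)$ is induced from that on $\Zig$, with $M_n(\cdot)$ and $\Tens^d$ and $\Inv^d$ all preserving degrees), its image lands inside $S^\Zig(n,d)^0$. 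So the content is the reverse inclusion: every element of $S^\Zig(n,d)$ of degree $0$ already lies in the image of $S^{P_Q^0}(n,d)$.

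For the reverse inclusion I would use the explicit basis of $S^\Zig(n,d)$ from \S\ref{SSBases}. Fixing the homogeneous basis $\ttB$ of $\Zig$ adapted to the grading (the $\ze_i$ in degree $0$, the $\za^{i,j}$ in degree $1$, the $\zc^{(i)}=\zc\,\ze_i$ in degree $2$), the set $\{\xi_\bC\mid \bC\in\Mat^\ttB(n,d)\}$ is an $\O$-basis, and each $\xi_\bC$ is homogeneous of degree $\sum_{\ttb,r,s} c^\ttb_{r,s}\deg(\ttb)$. Hence $\xi_\bC$ has degree $0$ if and only if $c^\ttb_{r,s}=0$ for every $\ttb$ of positive degree, i.e.\ $\bC$ is supported entirely on the degree-$0$ basis vectors $\{\ze_i\}$. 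The set of such $\bC$ is exactly $\Mat^{\ttB_0'}(n,d)$ where $\ttB_0'=\{\ze_1,\dots,\ze_l\}$ is the chosen basis of $P_Q^0$, and the corresponding $\xi_\bC$ are precisely the basis elements of $S^{P_Q^0}(n,d)$ (using the basis~\eqref{EBasis3} for $\Inv^d M_n(P_Q^0)$ and the matching of $\ze_i$ with $e_i$). Therefore $\{\xi_\bC\mid \bC \text{ supported on } \ttB_0'\}$ spans both $S^\Zig(n,d)^0$ and the image of $S^{P_Q^0}(n,d)$, giving equality.

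There is essentially no hard obstacle here; the only thing to be careful about is bookkeeping — making sure that (a) the grading on $S^\Zig(n,d)$ really is the one induced termwise from $\deg$ on $\Zig$ as recorded in Remark~\ref{RGradings}, so that the basis $\xi_\bC$ is homogeneous with the stated degree, and (b) the isomorphism~\eqref{E110216} of Corollary~\ref{CExercise}, restricted along $P_Q^0\hookrightarrow\Zig$, sends the basis elements $\xi_\bC$ (for $\bC$ supported on $\{\ze_i\}$) to the corresponding $*$-products of matrix units, so the two spanning sets literally coincide. Both are immediate from the constructions in \S\ref{SSBases} and \S\ref{SSuper}, so the proof is short: identify degree-$0$ basis elements of $S^\Zig(n,d)$, observe they are exactly the image of a basis of $S^{P_Q^0}(n,d)$, and conclude.
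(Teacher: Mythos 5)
Your proof is correct and rests on the same essential observation as the paper's: $P_Q^0=\Zig^0$, hence $M_n(P_Q^0)=M_n(\Zig)^0$, and since the grading on $M_n(\Zig)$ is non-negative, the degree-zero part of $\Inv^d M_n(\Zig)$ is precisely $\Inv^d$ of the degree-zero part. The paper states this abstractly in two sentences; you unwind it via the basis $\{\xi_\bC\}$ and degree bookkeeping, which is a more explicit verification of the same fact rather than a different route.
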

\begin{proof}
As $P_{Q}^0=\Zig^{0}$, we have $M_n(P_{Q}^0)=M_n(\Zig)^0$, which implies the lemma. 
\end{proof}

\begin{Theorem} \label{TGenerationAgain} 
Let $n\geq d$. Then the subsuperalgebra $D_Q(n,d)\subseteq S^{\Zig}(n,d)$ is precisely the subalgebra generated by $S^{\Zig}(n,d)^0$ and the set
$$
\{\xi_{1,1}^z*E_{2,2}^{\otimes \la_2}*\dots* E_{n,n}^{\otimes \la_n} \mid z\in \Zig,\ (\la_2,\dots,\la_n)\in\La(n-1,d-1)\}\subseteq S^{\Zig}(n,d).
$$
\end{Theorem}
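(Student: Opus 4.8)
The plan is to derive Theorem~\ref{TGenerationAgain} directly from Corollary~\ref{C110216_2}, applied to the superalgebra $A=P_Q$, by tracking the statement through the identifications fixed at the beginning of this subsection.

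First I would recall the set-up. By definition $D_Q(n,d)=D^{P_Q}(n,d)$, and via Lemma~\ref{LTMx} together with Theorem~\ref{TFund} we identify ${}'{D^{P_Q}(n,d)}$ with $\Inv^d M_n(T_{P_Q})$; applying Lemma~\ref{LTZ} to replace $T_{P_Q}$ by $\Zig$ then gives the identification ${}'{D_Q(n,d)}=S^{\Zig}(n,d)$, with $D_Q(n,d)$ sitting inside as a subsuperalgebra. Under the isomorphism of Lemma~\ref{LTZ} the identity $1_{T_{P_Q}}$ corresponds to $1_{\Zig}=\sum_{i\in I}\ze_i$, so the matrix unit $E_{r,r}\in M_n(T_{P_Q})$ corresponds to the usual matrix unit $E_{r,r}\in M_n(\Zig)$, and for $z\in\Zig$ the element $\xi_{1,1}^z$ means $zE_{1,1}$ exactly as in~\eqref{EXiX}. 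These are precisely the elements appearing in the statement.

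Next I would identify the degree-zero piece. Since $P_Q$ carries its superstructure with vertices in parity $\0$ and edges in parity $\1$, we have $(P_Q)_\0=P_{Q,\0}=P_Q^0$ as a subalgebra of $P_Q$, hence $S^{(P_Q)_\0}(n,d)=S^{P_Q^0}(n,d)$. By Lemma~\ref{L110216_2}, the image of the natural embedding $S^{P_Q^0}(n,d)\to S^{\Zig}(n,d)$ is exactly the degree-zero component $S^{\Zig}(n,d)^0$. Thus the subalgebra $S^{(P_Q)_\0}(n,d)$ occurring in the statement of Corollary~\ref{C110216_2} becomes $S^{\Zig}(n,d)^0$ under our identifications.

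Finally, Corollary~\ref{C110216_2} — applicable since $n\ge d$ and $P_Q$ is free of finite rank over $\O$ — says that $D^{P_Q}(n,d)$ is generated inside $S^{T_{P_Q}}(n,d)$ by $S^{(P_Q)_\0}(n,d)$ together with all $\xi_{1,1}^y*E_{2,2}^{\otimes\la_2}*\dots*E_{n,n}^{\otimes\la_n}$ for $y\in T_{P_Q}$ and $(\la_2,\dots,\la_n)\in\La(n-1,d-1)$. Replacing $T_{P_Q}$ by $\Zig$, so that $y$ ranges over $z\in\Zig$, and replacing $S^{(P_Q)_\0}(n,d)$ by $S^{\Zig}(n,d)^0$, yields exactly the asserted generating set. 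There is essentially no new computation involved; the only thing requiring care — and hence the main potential obstacle, though it is really just bookkeeping — is checking that the chain of identifications $D_Q(n,d)=D^{P_Q}(n,d)$, $T_{P_Q}\cong\Zig$, $M_n(T_{P_Q})\cong T_{M_n(P_Q)}$, and $(P_Q)_\0=P_Q^0$ is mutually compatible and preserves both the $\Z$-gradings and the $*$-algebra structures, which follows from Lemmas~\ref{LTMx}, \ref{LTZ} and \ref{L110216_2}.
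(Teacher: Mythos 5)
Your proposal is correct and follows exactly the paper's argument: the paper's own proof is the one-liner ``In view of Lemma~\ref{L110216_2}, this is a restatement of Corollary~\ref{C110216_2},'' and your write-up simply unpacks the identifications $A=P_Q$, $T_{P_Q}\cong\Zig$, $A_\0=P_Q^0$, and $S^{P_Q^0}(n,d)=S^\Zig(n,d)^0$ that make that restatement precise.
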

\begin{proof}
In view of Lemma~\ref{L110216_2}, this is a restatement of Corollary~\ref{C110216_2}. 
\end{proof}

Note that $D_Q(n,d)$, ${}'{D_Q(n,d)}$ and $S^{\Zig}(n,d)$ are graded {\em super}algebras, whose constructions depend on the superalgebra structures on $P_Q$ and $\Zig$. However, {\em after} we construct them, we want to forget the superalgebra structures and work with $D_Q(n,d)$ and $S^{\Zig}(n,d)$ as usual graded algebras. In order to do that, recall the theory of \S\ref{SSDesup}. From now on, we assume that $\Ga$ has no odd cycles. Then to every vertex $i\in I$, we can assign a sign $\zeta_i\in\{\pm 1\}$ such that $\zeta_i\zeta_j=-1$ whenever $i$ and $j$ are neighbors. 
Let 
$$e^\0=\sum_{i\in I,\ \zeta_i=1}\ze_i\quad \text{and}\quad e^\1=\sum_{i\in I,\ \zeta_i=-1}\ze_i.
$$ 
One can easily check that $(e^\0,e^\1)$ is an adapted pair of idempotents for the superalgebra $\Zig$ in the sense of \S\ref{SSDesup}.

By Lemma~\ref{LDesup}, there is an explicit isomorphism of graded algebras 
$\si\colon W_d^{|\Zig|}\iso |W_d^{\Zig}|$. Moreover, as in (\ref{EMLaC}) and (\ref{EM(n,d)}), we have the colored permutation modules $M_\ula^{|\Zig|}$ labeled by $\ula\in\La([1,n]\times I,d)$ and set
$$
M^{|\Zig|}(n,d):=\bigoplus_{\ula \in\La([1,n]\times I,d)} M_\ula^{|\Zig|}.
$$

For $\ula\in\La([1,n-1]\times I,d-1)$ and $k\in J$, we define 
$\hat\ula^{\!k}\in \La([1,n]\times I,d)$ by 
$$
\hat\ula^{\!k}_{(r,i)}=
\left\{
\begin{array}{ll}
\ula_{(r-1,i)} &\hbox{if $r>1$,}\\
1 &\hbox{if $r=1$ and $i= k$,}\\
0&\hbox{if $r=1$ and $i\neq k$.}
\end{array}
\right.
$$

\begin{Lemma}\label{LILAZ} 
Let $z\in \ze_j\Zig\ze_k$ for some $j,k\in I$ and $\ula\in\La([1,n-1]\times I,d-1)$. Then there is a unique 
$\mathtt{i}^\ula(z)\in \End_{W_d^{|\Zig|}}(M^{|\Zig|}(n,d))$ such that 
$$\mathtt{i}^\ula(z)\colon m_{\umu}^{|\Zig|}\mapsto 
\left\{
\begin{array}{ll}
(m_{\hat\ula^{\!j}}^{|\Zig|})z[1] &\hbox{if $\umu=\hat\ula^{\!k}$,}\\
0 &\hbox{otherwise,}
\end{array}
\right.
$$
where $z[1]=z\otimes 1_\Zig^{\otimes d-1}\in W_d^{|\Zig|}$. 
\end{Lemma}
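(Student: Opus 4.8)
The plan is to construct $\mathtt{i}^\ula(z)$ one summand of $M^{|\Zig|}(n,d)=\bigoplus_\umu M_\umu^{|\Zig|}$ at a time. Recall that $M_\umu^{|\Zig|}=\alt_\umu^{|\Zig|}\otimes_{W_\umu^{|\Zig|}}e_\umu^{|\Zig|}W_d^{|\Zig|}$ is generated by $m_\umu^{|\Zig|}$, so by the adjointness of induction and restriction (used already in the proofs of Lemma~\ref{LTLaMLa} and Proposition~\ref{PDesupM}), for any right $W_d^{|\Zig|}$-module $N$ the homomorphisms $M_\umu^{|\Zig|}\to N$ correspond bijectively to the elements $v\in N$ satisfying $v\cdot(e_\umu^{|\Zig|}g)=\eps_\umu(g)\,v$ for all $g\in\Si_\umu$ (this includes $v\,e_\umu^{|\Zig|}=v$ as the case $g=1$), a homomorphism being sent to the image of $m_\umu^{|\Zig|}$. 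In particular, since the $m_\umu^{|\Zig|}$ generate $M^{|\Zig|}(n,d)$, any endomorphism with the prescribed effect on these generators is unique; and to produce it one need only check that $v:=m_{\hat\ula^{\!j}}^{|\Zig|}\,z[1]\in M_{\hat\ula^{\!j}}^{|\Zig|}$ satisfies $v\cdot(e_{\hat\ula^{\!k}}^{|\Zig|}g)=\eps_{\hat\ula^{\!k}}(g)\,v$ for all $g\in\Si_{\hat\ula^{\!k}}$. Then $\mathtt{i}^\ula(z)$ is defined to send $m_{\hat\ula^{\!k}}^{|\Zig|}$ to $v$ and every other $m_\umu^{|\Zig|}$ to $0$.

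The next step is to unwind the combinatorics. The elements $\hat\ula^{\!k}$ and $\hat\ula^{\!j}$ of $\La([1,n]\times I,d)$ differ only in the single element $1$ of $[1,d]$, which lies in the block labelled $(1,k)$ for $\hat\ula^{\!k}$ and in the block labelled $(1,j)$ for $\hat\ula^{\!j}$; since the blocks labelled $(1,i)$ have total size $1$ in both cases, the standard set partitions $\Om^{\hat\ula^{\!k}}$ and $\Om^{\hat\ula^{\!j}}$ have exactly the same nonempty parts, whence $\Si_{\hat\ula^{\!k}}=\Si_{\hat\ula^{\!j}}$ as subgroups of $\Si_d$, a subgroup each of whose elements fixes $1\in[1,d]$. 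Because the blocks labelled $(1,i)$ contribute trivially to~\eqref{Eeps}, the functions $\eps_{\hat\ula^{\!k}}$ and $\eps_{\hat\ula^{\!j}}$ agree on this common subgroup. Finally, writing $\hat e_\ula\in|\Zig|^{\otimes d}$ for the idempotent $1\otimes e_\ula^{|\Zig|}$ (i.e.\ $e_\ula^{|\Zig|}$ shifted into tensor positions $2,\dots,d$), one reads off from the definition of $e_\umu^{|\Zig|}$ that $e_{\hat\ula^{\!k}}^{|\Zig|}=\ze_k[1]\,\hat e_\ula$ and $e_{\hat\ula^{\!j}}^{|\Zig|}=\ze_j[1]\,\hat e_\ula$.

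With these identifications the verification is a short computation in $W_d^{|\Zig|}$ using Lemma~\ref{LWrChar}. From $z\in\ze_j\Zig\ze_k$ one gets $\ze_j[1]\,z[1]=z[1]=z[1]\,\ze_k[1]$; any $g\in\Si_{\hat\ula^{\!k}}$ fixes $1$, hence commutes with $z[1]$ and with $\ze_k[1]$, and it commutes with $\hat e_\ula$ because it permutes within blocks all of whose tensor factors coincide; and $z[1]$ commutes with $\hat e_\ula$ (disjoint tensor positions). Together with the defining relation $m_{\hat\ula^{\!j}}^{|\Zig|}g=\eps_{\hat\ula^{\!j}}(g)\,m_{\hat\ula^{\!j}}^{|\Zig|}$ (valid since $m_{\hat\ula^{\!j}}^{|\Zig|}e_{\hat\ula^{\!j}}^{|\Zig|}=m_{\hat\ula^{\!j}}^{|\Zig|}$) and $m_{\hat\ula^{\!j}}^{|\Zig|}\hat e_\ula=m_{\hat\ula^{\!j}}^{|\Zig|}$, one then computes
\[
m_{\hat\ula^{\!j}}^{|\Zig|}\,z[1]\,(e_{\hat\ula^{\!k}}^{|\Zig|}g)
=m_{\hat\ula^{\!j}}^{|\Zig|}\,z[1]\,\ze_k[1]\,\hat e_\ula\,g
=m_{\hat\ula^{\!j}}^{|\Zig|}\,\hat e_\ula\,g\,z[1]
=\eps_{\hat\ula^{\!j}}(g)\,m_{\hat\ula^{\!j}}^{|\Zig|}\,z[1],
\]
and since $\eps_{\hat\ula^{\!j}}(g)=\eps_{\hat\ula^{\!k}}(g)$ this is the required equality. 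The only genuine difficulty I anticipate is the bookkeeping identifying $\Si_{\hat\ula^{\!k}}$ with $\Si_{\hat\ula^{\!j}}$ together with their sign functions and pinning down the form of $e_{\hat\ula^{\!k}}^{|\Zig|}$; once that is settled, both the existence and the uniqueness of $\mathtt{i}^\ula(z)$ are formal.
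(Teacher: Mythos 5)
Your proof is correct and follows essentially the same route as the paper: identify $e_{\hat\ula^{\!i}}^{|\Zig|}=e_i\otimes e_\ula^{|\Zig|}$ and $\Si_{\hat\ula^{\!k}}=\Si_1\times\Si_\ula=\Si_{\hat\ula^{\!j}}$, verify the invariance relation $m_{\hat\ula^{\!j}}^{|\Zig|}\,z[1]\,(e_{\hat\ula^{\!k}}^{|\Zig|}g)=\eps_{\hat\ula^{\!k}}(g)\,m_{\hat\ula^{\!j}}^{|\Zig|}\,z[1]$, and conclude by adjointness of induction and restriction. You simply spell out in more detail the commutation arguments and the equality $\eps_{\hat\ula^{\!j}}=\eps_{\hat\ula^{\!k}}$ that the paper leaves to the reader under ``It follows that.''
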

\begin{proof}
Recalling~\eqref{EIdempTensor}, for any $\umu\in \La(n,d)$ set 
$e_{\umu}:=e_{\umu}^{|A|} \in \Tens^d |A| \subseteq W_d^{|A|}$.
Note that for all $i\in I$, we have 
$
e_{\hat\ula^{\!i}}=e_i\otimes e_\ula
$ and $\Si_{\hat\ula^{\!i}}=\Si_1\times \Si_\ula$. 
It follows that  
$$m_{\hat\ula^{\!j}}^{|\Zig|} \, z[1](e_{\hat\ula^{\!k}}\otimes g)=\eps_{\hat\ula^{\!k}}(g)m_{\hat\ula^{\!j}}^{|\Zig|} \, z[1]$$ for all $g\in\Si_{\hat\ula^{\!i}}$. 
By adjointness of induction and restriction, there exists a unique  map as in the statement. 
\end{proof}

Using the maps of Lemma~\ref{LILAZ}, define 
$$
\mathtt{i}^\ula\colon |\Zig|\to \End_{W_d^{|\Zig|}}(M^{|\Zig|}(n,d)), \ z\mapsto \sum_{j,k\in I}\mathtt{i}^\ula(\ze_j z\ze_k).
$$
The following is easy to see:

\begin{Lemma} 
For any $\ula\in\La([1,n-1]\times I,d-1)$, the map $\mathtt{i}^\ula$ is an injective homomorphism of graded algebras. 
\end{Lemma}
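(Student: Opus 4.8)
The plan is to verify in turn that $\mathtt{i}^\ula$ is $\k$-linear and multiplicative, that it preserves the gradings, and that it is injective. The organizing principle is that a right $W_d^{|\Zig|}$-module endomorphism of $M^{|\Zig|}(n,d)=\bigoplus_{\umu}M^{|\Zig|}_\umu$ is determined by its values on the cyclic generators $m^{|\Zig|}_\umu$, so every identity will be checked on these generators. (Note $\mathtt{i}^\ula$ is not unital: $\mathtt{i}^\ula(1)$ is the idempotent projecting $M^{|\Zig|}(n,d)$ onto $\bigoplus_{k\in I}M^{|\Zig|}_{\hat\ula^{\!k}}$.)

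Linearity is immediate, since $z=\sum_{j,k\in I}\ze_j z\ze_k$ and each $\mathtt{i}^\ula(\ze_j z\ze_k)$ is the $\k$-linear datum of Lemma~\ref{LILAZ}. For multiplicativity it suffices, by bilinearity, to treat $z_1\in\ze_{j_1}\Zig\ze_{k_1}$ and $z_2\in\ze_{j_2}\Zig\ze_{k_2}$. Since $\mathtt{i}^\ula(z_1)$ is a morphism of right $W_d^{|\Zig|}$-modules it commutes with right multiplication by $z_2[1]\in W_d^{|\Zig|}$; using $x[1]y[1]=(xy)[1]$ and the fact that $z_1 z_2\in\ze_{j_1}\Zig\ze_{k_2}$ vanishes unless $k_1=j_2$, a short computation on generators shows that both $\mathtt{i}^\ula(z_1)\circ\mathtt{i}^\ula(z_2)$ and $\mathtt{i}^\ula(z_1 z_2)$ send $m^{|\Zig|}_{\hat\ula^{\!k_2}}$ to $m^{|\Zig|}_{\hat\ula^{\!j_1}}(z_1 z_2)[1]$ when $k_1=j_2$ and annihilate every other $m^{|\Zig|}_\umu$; hence the two endomorphisms coincide. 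For the grading, $m^{|\Zig|}_\ula=1^{|\Zig|}_\ula\otimes e^{|\Zig|}_\ula$ is homogeneous of degree $0$ because $e^{|\Zig|}_\ula$ is by~\eqref{EIdempTensor} a tensor product of the degree-$0$ idempotents $\ze_i$, while $z[1]=z\otimes 1_\Zig^{\otimes(d-1)}$ is homogeneous of degree $\deg z$ since $\k\Si_d$ lies in degree $0$ of $W_d^{|\Zig|}$. Therefore $\mathtt{i}^\ula(z)$ is homogeneous of degree $\deg z$ whenever $z$ is, so $\mathtt{i}^\ula$ respects the gradings.

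For injectivity, suppose $\mathtt{i}^\ula(z)=0$. Fixing $k\in I$ and evaluating at $m^{|\Zig|}_{\hat\ula^{\!k}}$ gives $\sum_{j\in I}m^{|\Zig|}_{\hat\ula^{\!j}}(\ze_j z\ze_k)[1]=0$; since the $j$-th term lies in the summand $M^{|\Zig|}_{\hat\ula^{\!j}}$ and distinct $\hat\ula^{\!j}$ index distinct summands, each term vanishes, so $m^{|\Zig|}_{\hat\ula^{\!j}}w[1]=0$ where $w:=\ze_j z\ze_k$. Now $e^{|\Zig|}_{\hat\ula^{\!j}}=\ze_j\otimes e^{|\Zig|}_\ula$ by~\eqref{EIdempTensor} and $\ze_j w=w$, so $m^{|\Zig|}_{\hat\ula^{\!j}}w[1]=1^{|\Zig|}_{\hat\ula^{\!j}}\otimes(w\otimes e^{|\Zig|}_\ula)$. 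Expanding $w=\sum_{\ttb}c_\ttb\ttb$ in the basis ${}_j\ttB$ of $\ze_j\Zig$ (with $c_\ttb=0$ unless $\ttb\in\ze_j\Zig\ze_k$), this equals $\sum_\ttb c_\ttb\,m^{|\Zig|}_{\hat\ula^{\!j}}(\ttb\otimes e^{|\Zig|}_\ula)$, a linear combination of pairwise distinct members of the $\k$-basis of $M^{|\Zig|}_{\hat\ula^{\!j}}$ given by the $|\Zig|$-analogue of~\eqref{EBUMu}: one checks that $\ttb\otimes e^{|\Zig|}_\ula$ lies in $\ttB^{|\Zig|}_{\hat\ula^{\!j}}$, with position $1$ in the block $\Om^{\hat\ula^{\!j}}_{(1,j)}$ and positions $2,\dots,d$ reproducing $\Om^{\ula}$ shifted by one, and with trivial double-coset part. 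Hence all $c_\ttb=0$, so $w=0$; as this holds for all $j,k\in I$ we get $z=\sum_{j,k}\ze_j z\ze_k=0$.

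The only step requiring genuine care is this last identification of $\ttb\otimes e^{|\Zig|}_\ula$ with a prescribed basis vector of $M^{|\Zig|}_{\hat\ula^{\!j}}$, which amounts to unwinding the combinatorics of $\hat\ula^{\!j}$, its standard set partition, and the bases $\ttB^{|\Zig|}_\ula$; this is routine bookkeeping rather than a real obstacle, and all the other steps are direct checks on module generators.
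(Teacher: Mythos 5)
Your proof is correct, and since the paper discharges this lemma with the phrase ``easy to see'' and gives no argument at all, yours is the only proof on the table; what you do is exactly the natural verification one would expect the authors to have in mind.

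A few remarks. Your observation that $\mathtt{i}^\ula$ is a non-unital algebra homomorphism (with $\mathtt{i}^\ula(1_\Zig)$ the idempotent projecting onto $\bigoplus_{k\in I}M^{|\Zig|}_{\hat\ula^{\!k}}$) is worth making explicit, since the lemma as stated could otherwise be misread. The multiplicativity check is clean: the key point, which you use correctly, is that $\mathtt{i}^\ula(z_1)$ is a right-module map and so commutes past $z_2[1]$, and that $z_1z_2=z_1\ze_{k_1}\ze_{j_2}z_2$ vanishes unless $k_1=j_2$, matching the vanishing pattern of the composite. For injectivity, your reduction to showing $m^{|\Zig|}_{\hat\ula^{\!j}}\,w[1]=0\Rightarrow w=0$ for $w\in\ze_j\Zig\ze_k$ is right, and the distinctness of the $\hat\ula^{\!j}$ for distinct $j$ (because $\hat\ula^{\!j}_{(1,i)}=\delta_{ij}$) is correctly used to separate the summands. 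The final identification of $\ttb\otimes e^{|\Zig|}_\ula$ with a basis element of $M^{|\Zig|}_{\hat\ula^{\!j}}$ does rely on the standing convention that the chosen basis ${}_i\ttB$ of $\ze_i\Zig$ contains $\ze_i$ itself; this is harmless but worth stating (alternatively, one can bypass the basis altogether by noting that $e^{|\Zig|}_{\hat\ula^{\!j}}W_d^{|\Zig|}$ is a free left $W_{\hat\ula^{\!j}}^{|\Zig|}$-module and $w\mapsto w\otimes e^{|\Zig|}_\ula$ lands in a free direct summand of it). Overall this is a complete and correct filling-in of a gap the paper leaves to the reader.
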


By Corollary~\ref{CImportant} and Remark~\ref{RGradings}, there is an explicit isomorphism of graded algebras
$$\psi\colon |S^{\Zig}(n,d)|\iso\End_{W_d^{|\Zig|}}(M^{|\Zig|}(n,d)).$$
We use this isomorphism to identify the graded algebra  $|{}'{D_Q(n,d)}|=|S^{\Zig}(n,d)|$ with the graded algebra $\End_{W_d^{|\Zig|}}(M^{|\Zig|}(n,d))$. 

\begin{Theorem}\label{TGenerationAgainAgain} Let $n\geq d$. The subalgebra $|D_Q(n,d)|\subseteq |S^{\Zig}(n,d)|$ is precisely the subalgebra generated by the degree zero component $|S^{\Zig}(n,d)|^0$ and the set 
$$\bigcup_{\ula\in\La([1,n-1]\times I,d-1)}\mathtt{i}^\ula(\Zig).$$ 
\end{Theorem}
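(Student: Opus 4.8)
The plan is to desuperize Theorem~\ref{TGenerationAgain} along the graded isomorphism $\psi\colon|S^{\Zig}(n,d)|\iso\End_{W_d^{|\Zig|}}(M^{|\Zig|}(n,d))$ used to formulate the statement. For $z\in\Zig$ and $\mu=(\mu_1,\dots,\mu_{n-1})\in\La(n-1,d-1)$ set
\[
\eta_z^\mu:=\xi_{1,1}^z*E_{2,2}^{\otimes\mu_1}*\dots*E_{n,n}^{\otimes\mu_{n-1}}\in S^{\Zig}(n,d).
\]
Since $S^{\Zig}(n,d)^0$ lies in parity $\0$, since each $\eta_z^\mu$ is $\Z_2$-homogeneous for $\Z_2$-homogeneous $z$, and since $\eta_z^\mu$ is linear in $z$, the generating set of Theorem~\ref{TGenerationAgain} may be taken $\Z_2$-homogeneous; hence the subsuperalgebra it generates coincides with the subalgebra it generates, so $|D_Q(n,d)|$ is the subalgebra of $|S^{\Zig}(n,d)|$ generated by $|S^{\Zig}(n,d)|^0$ and $\{\eta_z^\mu\}$. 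It is therefore enough to prove that the subalgebra of $\End_{W_d^{|\Zig|}}(M^{|\Zig|}(n,d))$ generated by $\psi(|S^{\Zig}(n,d)|^0)$ and $\{\psi(\eta_z^\mu)\}$ equals the one generated by $\psi(|S^{\Zig}(n,d)|^0)$ and $\bigcup_{\ula}\mathtt{i}^\ula(\Zig)$.

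The heart of the matter is the identity
\[
\psi(\eta_z^\mu)=\sum_{\ula\in\La([1,n-1]\times I,d-1),\ \pi(\ula)=\mu}\mathtt{i}^\ula(z)\qquad(z\in\Zig,\ \mu\in\La(n-1,d-1)),
\]
where $\pi\colon\La([1,n-1]\times I,d-1)\to\La(n-1,d-1)$ is the weight map. To prove it I would use Lemma~\ref{L100216} to identify $\Tens^d V=\bigoplus_\umu M^{\Zig}_\umu$ and compute the action of $\eta_z^\mu$ on the generators $v_\umu\in\Tens^d V$. Expanding the shuffle product, one checks that $\eta_z^\mu v_\umu=0$ unless $\pi(\umu)=(1,\mu)$, that is, $\umu=\hat\ula^{\!k}$ for some $k\in I$ and some $\ula$ with $\pi(\ula)=\mu$, and that for such $\umu$ exactly one summand contributes, giving
\[
\eta_z^\mu\, v_{\hat\ula^{\!k}}=v_1^{\,z\ze_k}\otimes v'=\sum_{j\in I}v_{\hat\ula^{\!j}}\cdot(\ze_j z\ze_k)[1],
\]
where $v'$ (the common tail of the $v_{\hat\ula^{\!j}}$) does not depend on $k$. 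Every super-sign arising here---whether from \eqref{ESGAction} or from \eqref{EdAction}---is trivial, because the only $\Z_2$-odd tensor factor involved occupies the first slot, which is always paired with the even vectors $v_{r,i}$. Reading off the coefficients in \eqref{EVAction} from this formula, substituting into \eqref{EPsi}, using that $\si^{-1}$ fixes the elements $(\ze_j z\ze_k)[1]$ of $\Zig^{\otimes d}$ supported in the first slot, and comparing with the definition of $\mathtt{i}^\ula$ in Lemma~\ref{LILAZ}, the displayed identity follows.

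Granting this, one inclusion is immediate, since $\psi(\eta_z^\mu)$ is literally a sum of maps $\mathtt{i}^\ula(z)$. For the other inclusion I would use that $S^{\Zig}(n,d)^0$ contains, for each $\umu\in\La([1,n]\times I,d)$, the idempotent $e_\umu$ projecting $\Tens^d V$ onto the summand $\Tens^\umu V$: this projection belongs to $\End_{W_d^{\Zig}}(\Tens^d V)=S^{\Zig}(n,d)$ by Lemma~\ref{LSchurIso}, and it is homogeneous of degree $0$ because the decomposition $\Tens^d V=\bigoplus_\umu\Tens^\umu V$ is effected by the degree-$0$ idempotents $\ze_i\in\Zig$. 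Under $\psi$ the element $e_\umu$ maps to the projection of $M^{|\Zig|}(n,d)$ onto $M^{|\Zig|}_\umu$, so $p_\ula:=\psi\!\big(\textstyle\sum_{k\in I}e_{\hat\ula^{\!k}}\big)$ lies in $\psi(|S^{\Zig}(n,d)|^0)$. Since $\mathtt{i}^\ula(z)$ maps $\bigoplus_{k\in I}M^{|\Zig|}_{\hat\ula^{\!k}}$ into itself and annihilates every other $M^{|\Zig|}_\umu$, while $\mathtt{i}^{\ula'}(z)$ with $\ula'\neq\ula$ annihilates each $M^{|\Zig|}_{\hat\ula^{\!k}}$, the identity above gives
\[
\mathtt{i}^\ula(z)=p_\ula\,\psi(\eta_z^{\pi(\ula)})\,p_\ula,
\]
so $\mathtt{i}^\ula(z)$ lies in the subalgebra generated by $\psi(|S^{\Zig}(n,d)|^0)$ and $\psi(\eta_z^{\pi(\ula)})$. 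Hence the two subalgebras coincide, which together with the first paragraph proves the theorem.

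The step I expect to be the main obstacle is the identity $\psi(\eta_z^\mu)=\sum_\ula\mathtt{i}^\ula(z)$: one has to carry out the shuffle-sum expansion of $\eta_z^\mu$ carefully, pin down exactly which vectors $v_\umu$ it does not annihilate, and verify that all super-signs from the place-permutation and bimodule actions, together with the desuperizing twist $\si$, act trivially on the elements involved. A secondary point requiring care is checking that the projections $e_\umu$ are genuinely homogeneous of degree $0$, so that they belong to $|S^{\Zig}(n,d)|^0$.
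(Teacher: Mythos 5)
Your proof is correct and follows essentially the same route as the paper's: both reduce the statement to Theorem~\ref{TGenerationAgain} by establishing the key identity $\psi(\eta_z^\mu)=\sum_{\ula\in\pi^{-1}(\mu)}\mathtt{i}^\ula(z)$ via a direct evaluation of the shuffle generator on the vectors $v_{\hat\ula^{k}}$, observing that all super-signs vanish. The one place where you are more explicit than the paper is the reverse inclusion: the paper simply writes ``which implies the result by Theorem~\ref{TGenerationAgain},'' whereas you spell out that $\mathtt{i}^\ula(z)=p_\ula\,\psi(\eta_z^{\pi(\ula)})\,p_\ula$ for the degree-zero projections $p_\ula$ --- a detail worth including, since without it the claim only gives one containment.
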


\begin{proof}
Let $\la=(\la_1,\dots,\la_{n-1})\in\La(n-1,d-1)$ and $z\in e_j\Zig e_k$ for some $j,k\in I$. We claim that  
$$\xi_{1,1}^z*E_{2,2}^{\otimes \la_1}*\dots* E_{n,n}^{\otimes \la_{n-1}}
=\sum_{\ula\in\pi^{-1}(\la)}\mathtt{i}^\ula(z),
$$
which implies the result by Theorem~\ref{TGenerationAgain}. To prove the claim, let $\unu\in \La([1,n]\times I,d)$. 
Note that $(\xi_{1,1}^z*E_{2,2}^{\otimes \la_1}*\dots* E_{n,n}^{\otimes \la_{n-1}})v_\unu=0$ unless $\unu$ is of the form $\hat \umu^{k}$ for some $\umu\in\pi^{-1}(\la)$ and $k\in I$. Moreover, for $\umu\in\pi^{-1}(\la)$, we have 
\begin{align*}
(\xi_{1,1}^z*E_{2,2}^{\otimes \la_1}*\dots* E_{n,n}^{\otimes \la_{n-1}})v_{\hat \umu^{k}}&=zv_{1,k}\otimes v_\umu=v_{1,j}z\otimes v_\umu
\\
&=(v_{1,j}\otimes v_\umu)z[1]
=v_{\hat \umu^j}z[1],
\end{align*}
where $z[1]=z\otimes 1^{\otimes d-1}$ is viewed as an element of $W_d^\Zig$. Comparing with (\ref{EVAction}) and (\ref{EPsi}), we deduce that 
\begin{align*}
(\xi_{1,1}^z*E_{2,2}^{\otimes \la_1}*\dots* E_{n,n}^{\otimes \la_{n-1}})(m_{\hat \umu^k}^{|\Zig|})
&=
m_{\hat \umu^j}^{|\Zig|} \, \si^{-1}(z[1])
=m_{\hat \umu^j}^{|\Zig|}\, z[1]=
\mathtt{i}^\umu(z)(m_{\hat \umu^k}^{|\Zig|})
\\
&=\sum_{\ula\in\pi^{-1}(\la)}\mathtt{i}^\ula(z)(m_{\hat \umu^k}^{|\Zig|}),
\end{align*}
where we have used the fact that $\si^{-1}(z[1])=z[1]$, see Lemma~\ref{LDesup}.  The claim is proved. 
\end{proof}

\end{document}